\tikzset{
	ch/.style={circle,draw,on chain,inner sep=2pt},
	chj/.style={ch,join},
	every path/.style={shorten >=4pt,shorten <=4pt}
	}
\newcommand{\dnode}[2][chj]{%
	\node[#1,label={below:#2}] (#1) {};}
\newcommand{\dnodenj}[1]{%
	\dnode[ch]{#1}}
\newcommand{\dydots}{%
	\node[chj,draw=none,inner sep=1pt] {\dots};}
\newtheorem{theorem}{Theorem}[section]
\newtheorem{lemma}[theorem]{Lemma}
\newtheorem{corollary}[theorem]{Corollary}
\newtheorem{proposition}[theorem]{Proposition}
\theoremstyle{definition}
\newtheorem{definition}[theorem]{Definition}
\newtheorem{example}[theorem]{Example}
\theoremstyle{remark}
\newtheorem{remark}[theorem]{Remark}
\numberwithin{equation}{section}
\newcommand{\qbinom}[2]{\begin{bmatrix} #1\\#2 \end{bmatrix} }
\def\A{\mathcal{A}}
\def\ff{\mathbf{f}}
\def\H{\mathbf{H}}
\def\HA{\mathbf{H}_{\mathcal{A}}}
\def\mf{\mathfrak}
\def\I{\mathbb{I}}
\def\Ib{\I_{\bullet}}
\def\Iw{\I_{\circ}}
\def\Ii{\I^\imath}
\def\Ij{\I^\jmath}
\def\la{\lambda}
\def\C{\mathbb C}
\def\N{\mathbb N}
\def\Q{\mathbb Q}
\def\Z{\mathbb Z}
\def\U{\mathbf{U}}
\def\Ui{\mathbf{U}^{\imath}}
\def\tU{\widetilde{\mathbf{U}}}
\def\tUi{\widetilde{\mathbf{U}}^{\imath}}
\def\UA{\mathbf{U}_\mathcal{A}}
\def\V{\mathbb{V}}
\def\VA{\mathbb{V}_\A}
\def \bvs{{\boldsymbol{\varsigma}}}
\newcommand{\vs}{\varsigma}
\title[Lectures on dualities ABC in representation theory]
{Lectures on dualities ABC in representation theory}
\author[Li Luo]{Li Luo}
\address{Department of mathematics,
Shanghai Key Laboratory of Pure Mathematics and Mathematical Practice,
    East China Normal University, Shanghai 200241, China}
\email{lluo@math.ecnu.edu.cn (Luo)}
\author[Weiqiang Wang]{Weiqiang Wang}
\address{Department of Mathematics\\ University of Virginia\\ Charlottesville, VA 22904}
\email{ww9c@virginia.edu (Wang)}
\begin{document}

\begin{abstract}
In these lecture notes for a summer mini-course, we provide an exposition on quantum groups and Hecke algebras, including (quasi) R-matrix, canonical basis, and $q$-Schur duality. Then we formulate their counterparts in the setting of $\imath$quantum groups arising from quantum symmetric pairs, including (quasi) K-matrix, $\imath$-canonical basis, and $\imath$Schur duality. As an application, the ($\imath$-)canonical bases are used to formulate Kazhdan-Lusztig theories and character formulas in the BGG categories for Lie (super)algebras of type A-D. Finally, geometric constructions for $q$-Schur and $\imath$Schur dualities are provided.
\end{abstract}

\maketitle
\setcounter{tocdepth}{2}
\tableofcontents

\section{Introduction}

\subsection{A quick overview}

The classic Schur duality relates the representation theories of general linear Lie algebras and the symmetric groups as well as Young tableau combinatorics of Schur functions. On one hand, it can be reformulated and then generalized to Howe dualities between classical groups. On the other hand, it admits a remarkable $q$-deformation, which allows further generalizations and interpretations.
These notes address $q$-Schur dualities of type ABC(D) as well as their categorical and geometric incarnations.

We first review some basic constructions for Hecke algebras and quantum groups of finite type, including (quasi) R-matrix, canonical bases, $q$-Schur duality and its geometric interpretation. We further use the canonical basis in a (generalized) setting of $q$-Schur duality to formulate the Kazhdan-Lusztig theory for the BGG category $\mathcal O$ of (super) type A. The materials on quantum groups and Hecke algebras are widely known  with a possible exception of the canonical basis formulation of (super) Kazhdan-Lusztig theory.

Then we present the parallel developments in the past few years on $\imath$quantum groups arising from quantum symmetric pairs of finite type, including (quasi) K-matrix, $\imath$-canonical bases, and $\imath$Schur duality. For the sake of simplicity, we have restricted ourselves to the {\em quasi-split} $\imath$quantum groups in these notes. We use the $\imath$-canonical basis in a (generalized) setting of $\imath$Schur duality to formulate the Kazhdan-Lusztig theory of (super) classical type.

The notes contain no original results. We provide or sketch proofs in the notes whenever it is practical to do so. For several major theorems, we refer to the references for complete proofs. For subjects like super Kazhdan-Lusztig theories or geometric realizations of ($\imath$-) quantum groups, our sketchy exposition hopefully helps to provide some roadmap to read the original papers.

These notes can serve as a quick introduction for graduate students to quantum groups in action; they may also help to guide researchers who are already familiar with quantum groups into the area of $\imath$quantum groups, which has been expanding rapidly in recent years.

\subsection{Quantum groups}

Since their introduction by Drinfeld and Jimbo \cite{Dr86, Jim86} independently, quantum groups have played a central role in representation theory, quantum topology, and mathematical physics. The universal R-matrix introduced by Drinfeld \cite{Dr86} provides solutions to the Yang-Baxter equation.

Just as (Iwahori-) Hecke algebras are deformations of Weyl group algebras, quantum groups are deformations of universal enveloping algebras of semisimple Lie algebras. As a profound analogue of Kazhdan-Lusztig basis for a Hecke algebra \cite{KL79} and motivated by Ringel's Hall algebra realization of half a quantum group \cite{R90}, Lusztig introduced canonical bases arising from quantum groups \cite{Lus90}.

Further generalizations and different approaches toward canonical bases for halves of quantum groups and integrable modules have been developed \cite{Ka91, Lus91}. Canonical bases for tensor products of modules and modified quantum groups were constructed in \cite{Lus92}, where a key component of R-matrix, known as quasi R-matrix, is used to define a bar involution on the tensor products.

Schur duality between a general linear Lie algebra and a symmetric group admits a $q$-deformation \cite{Jim86}, where the Lie algebra is replaced by the corresponding quantum group and the symmetric group by the Hecke algebra of type A. A different formulation of the $q$-Schur duality via $q$-Schur algebra was given in \cite{DJ89}; also cf. \cite{GL92}. As an analogue of Iwahori's realization of Hecke algebras \cite{Iw64}, a geometric realization for $q$-Schur algebras and their canonical bases via partial flag varieties of type A is given in \cite{BLM90}, and it further leads to a construction of the type A quantum group and its canonical basis from the family of $q$-Schur algebras.

Our exposition on KL basis for Hecke algebras, (quasi) R-matrix for quantum groups, and $q$-Schur duality is largely self-contained. On the other hand, we only give a very rough outline of a construction of canonical bases on $\U^-$ and finite-dimensional simple $\U$-modules. Then we present in detail the construction of a bar involution (via quasi R-matrix) and the canonical basis on a tensor product of based $\U$-modules; this in turn leads to a construction of the canonical basis on the modified quantum group $\dot\U$.

The comultiplication $\Delta$ on $\U$ here (which is the same as \cite{Ka91, Br03, CLW15, BW18a} but different from \cite{Lus93}) is chosen in these notes to make the canonical basis on the tensor power fit most naturally with the partial ordering arising from the BGG category; we have also chosen to work with $\Z[q]$-lattices instead of Lusztig's choice of $\Z[q^{-1}]$-lattices for canonical bases. These choices have led to different conventions in the notes, e.g., on quasi R-matrix, than those used in \cite{Lus93, Jan95}.

\subsection{$\imath$quantum groups}

A theory of quantum symmetric pairs $(\U, \Ui)$ was systematically developed by Gail Letzter \cite{Let99, Let02}, with Satake diagrams (which classify the real forms of simple Lie algebras) as inputs. A Satake diagram is encoded in a pair  $(\I =\Iw \cup \Ib, \tau)$ subject to some natural constraints, where $\I =\Iw \cup \Ib$ is a 2-colored partition of the vertex set of a Dynkin diagram and $\tau$ is a diagram involution (with $\tau=\text{id}$ allowed). By definition, $\Ui$ is a coideal subalgebra of $\U$ in the sense that the comultiplication $\Delta$ on $\U$ satisfies
\[
\Delta (\Ui) \subset \Ui \otimes \U,
\]
and at the $q\mapsto 1$ limit $(\U, \Ui)$ becomes the classical symmetric pairs $(\mf g, \mf g^\theta)$, where $\theta$ is an involution of a semisimple Lie algebra $\mf g$.

We shall refer to $\Ui$ as an {\em $\imath$quantum group}. An $\imath$quantum group is {\em quasi-split} if $\Ib =\emptyset$, and it is {\em split} if $\Ib =\emptyset$ and $\tau=\text{id}$. Quantum groups can be regarded as examples of quasi-split $\imath$quantum groups. Indeed, the embedding $(\omega \otimes 1) \circ \Delta : \U \rightarrow \U \otimes \U$ makes $\U$ a coideal subalgebra of $\U \otimes \U$, and hence $(\U \otimes \U, \U)$ is a quantum symmetric pair of diagonal type. This is in the same spirit as viewing  a complex Lie group as a real group.

The $\imath$quantum groups come with parameters, which correspond to a family of embeddings of $\Ui$ into $\U$.
A Serre presentation for an arbitrary $\imath$quantum group of finite type was given by Letzter \cite{Let02}; however Letzter's convention for quantum groups is not quite standard and differs from those used by \cite{Lus93, CP94, Jan95}. A Kac-Moody generalization of $\imath$quantum groups, which is consistent with Lusztig's convention on braid group symmetries, was provided by Kolb \cite{Ko14}.

An $\imath$-program was initiated in 2013 by Huanchen Bao and the second-named author \cite{BW18a}, aiming at generalizing ``{\em all}'' fundamental (algebraic, geometric, and categorical) constructions from quantum groups to $\imath$quantum groups.
A new $q$-Schur duality (called {\em $\imath$Schur duality}) between Hecke algebra of type B and the $\imath$quantum group $\Ui$ of type AIII (with suitable parameters) was constructed {\em loc. cit.}

As an $\imath$-analogue of quasi R-matrix, a notion of {\em quasi K-matrix} (initially called an {\em intertwiner}) was formulated  in \cite{BW18a} for the first time, and this further gives rise to a $\Ui$-module isomorphism $\mathcal T$ on a $\U$-module (called a {\em K-matrix} later). The $\imath$-canonical basis on a based $\U$-module (a.k.a. a $\U$-module with canonical basis) was constructed {\em loc. cit.}, where a new bar involution on a based $\U$-module is constructed via the quasi K-matrix. The notion of $\imath$-canonical basis was in turn motivated by the formulation of a super Kazhdan-Lusztig theory of classical type; see below.

The (quasi) K-matrix is a key new notion in the program of $\imath$-canonical basis announced in \cite{BW18a}, and it is expected to be valid in general. The quasi K-matrix $\Upsilon$ lies in a completion of $\U^-$, and its formulation via a bar involution $\psi_\imath$ on $\Ui$ (which is not a restriction of the bar involution $\psi$ of $\U$) is uniform for all types (see Theorem~\ref{thm:quasi2}):
\[
    \imath( \psi_\imath (u) ) \circ\Upsilon=\Upsilon\circ \psi ( \imath (u)), \quad \mbox{for all $u\in\Ui$},
 \]
where $\imath: \Ui \rightarrow \U$ denotes the inclusion.

The existence proof of $\Upsilon$ was initially carried out in \cite{BW18a} for quasi-split $\imath$quantum groups of type AIII. The extension to quasi-split $\imath$quantum groups was straightforward and known to Bao and Wang (\cite[Remark~4.9]{BW18b}). Balagovic and Kolb \cite{BK19} have established the existence of (quasi) K-matrix in great generality of $\imath$quantum groups of Kac-Moody type, which requires much technical preparations on a bar involution (cf. \cite{BK15}); they renamed ``intertwiner" to ``quasi K-matrix".

A general theory of $\imath$-canonical bases on based $\U$-modules and modified $\imath$quantum groups has been developed in \cite{BW18b, BW18c}, for which the quasi K-matrix in the generality of \cite{BK15, BK19} is used together with a key integrality property. In the setting of quantum symmetric pairs of diagonal type, we recover Lusztig's construction of canonical basis on tensor products of $\U$-modules. A further generalization is made in \cite{BWW20}, where an $\imath$-canonical basis  is constructed on the tensor product of a based $\Ui$-module and a based $\U$-module.

In these notes, we restrict ourselves to {\em quasi-split} $\imath$quantum groups of {\em finite type}, which greatly simplify the Serre presentation; on the other hand, this class of $\imath$quantum groups is more than sufficient for the applications to $\imath$Schur duality, KL theory, and a geometric realization via flag varieties.

\subsection{Applications to super Kazhdan-Lusztig theory}

Kazhdan-Lusztig  (KL) basis (and respectively, parabolic KL) of a Hecke algebra $\H_q(W)$ has been used to formulate a character formula for the principal block (and respectively, integral singular blocks) of the BGG category $\mathcal O$ of a semisimple Lie algebra with Weyl group $W$. This is the celebrated Kazhdan-Lusztig  conjecture \cite{KL79}, now a theorem proved in \cite{BB81, BK81}.

However, this formulation does not extend to Lie superalgebras such as general linear or ortho-symplectic Lie superalgebras, as the Weyl groups of these superalgebras do not control the linkage in the corresponding BGG category $\mathcal O$.

Consider $\U = \U_q(\mf{sl}_N)$ and its natural representation $\V$ (and its dual representation $\V^*$); we allow $N=\infty$. Then $\V^{\otimes m}$, or more generally $\V^{\otimes m} \otimes  \V^{* \otimes n}$, admits a standard basis (which is given by the tensor products of the natural basis of $\V$ and $\V^*$) as well as a canonical basis. A basic fact here \cite{FKK98} is that the canonical basis on $\V^{\otimes m}$ coincides with the KL basis on $\V^{\otimes m}$ when viewed as a direct sum of permutation modules over the Hecke algebra $\H_q(\mf S_m)$.

Let $\mathfrak{gl}(m|n)$ be the general linear Lie superalgebra with its standard triangular decomposition. We form the  BGG category $\mathcal O^{m|n}$ of $\mathfrak{gl}(m|n)$-modules (with weights in the weight lattice $X^{m|n}$), cf. \cite{CW12}. According to Brundan's conjecture \cite{Br03} (which is now a theorem due to \cite{CLW15}), the character formulas for the tilting and irreducible modules in $\mathcal O^{m|n}$ are described in terms of the canonical and dual canonical bases of $\V^{\otimes m} \otimes  \V^{* \otimes n}$ via the following isomorphism; see \eqref{eq:Psi}:
\begin{align*}
\Psi: [\mathcal{O}^{m|n}] \stackrel{\cong}{\longrightarrow} \V_\Z^{\otimes m} \otimes \V_{\Z}^{*\otimes n}.
\end{align*}
 In other words, the entries of the transition matrix from the canonical basis to the standard basis for $\V^{\otimes m} \otimes  \V^{* \otimes n}$ are the super KL polynomials for the formulation of a KL theory of super type A.

On the other hand, as a module over the $\imath$quantum group $\Ui_q(\mf{sl}_N)$ of type AIII (which we can take $N = \infty$), $\V^{\otimes m}$, or more generally $\V^{\otimes m} \otimes  \V^{* \otimes n}$, admits an $\imath$-canonical basis. The basis for $\V$ are parameterized by integers or half-integers, depending on the parity of $N$. Here we shall choose the parameters of $\Ui_q(\mf{sl}_N)$ corresponding to $p=q$ (or $p=1$) in the relevant $\imath$Schur duality.

Let $\mathfrak{osp}(2m+1|2n)$ be the ortho-symplectic Lie superalgebra (of type B) with its standard triangular decomposition. Then we can form the BGG category $\mathcal O^{m|n}_{\mf b}$ (and respectively, $\mathcal O^{m|n}_{\mf b, \frac12}$) of $\mathfrak{osp}(2m+1|2n)$-modules with weights in the ``integer weight" lattice \eqref{XZ} (and respectively, ``half-integer" weight lattice \eqref{XZhalf}), cf. \cite{CW12}. A main theorem of \cite{BW18a} states that the character formulas for the tilting and irreducible modules in $\mathcal O^{m|n}_{\mf b}$ and $\mathcal O^{m|n}_{\mf b, \frac12}$ are described in terms of the canonical and dual $\imath$-canonical bases in  $\V^{\otimes m} \otimes  \V^{* \otimes n}$ (with the choice of parameters $p=q$). In other words, the entries of the transition matrix from the $\imath$-canonical basis to the standard basis for $\V^{\otimes m} \otimes  \V^{* \otimes n}$ are the super KL polynomials for the formulation of a KL theory of super type $B$.

Bao's theorem \cite{Bao17} asserts that a KL theory of super type D for Lie superalgebra $\mathfrak{osp}(2m|2n)$ is formulated in terms of the canonical and dual $\imath$-canonical bases in  $\V^{\otimes m} \otimes  \V^{* \otimes n}$ (with the choice of parameter $p=1$).

We refer to the original papers \cite{CLW15, BW18a, Bao17} for the proofs regarding these super KL theories. One main ingredient used is the {\em super duality} developed by Cheng, Lam and the second author (cf. \cite[Chapter 6]{CW12}). The super duality is an equivalence between parabolic BGG categories for Lie algebras and Lie superalgebras of infinite rank.

\subsection{Further developments on $\imath$quantum groups}
\label{subsec:out}

There has been an explosive activity on $\imath$quantum groups in the last few years, including many works which go beyond the scope of these notes. It would require another extensive survey article or a book to cover some of these additional topics. Below we sketch some of these developments, so an interested reader can look up the references to explore further.
(Various aspects of Drinfeld-Jimbo quantum groups have been well known for decades, and hence we refrain from repeating much here.)

The $\imath$-analogue of Yang-Baxter equation is known as the reflection equation. Just as R-matrices arising from quantum groups provide solutions to Yang-Baxter equation \cite{Dr86}, K-matrices arising from $\imath$quantum groups provide solutions to reflection equation \cite{BK19}. The K-matrix has been further used \cite{Ko20} to construct a braided module category of an $\imath$quantum group $\Ui$ of finite type over the braided monoidal category of $\U$. The K-matrix can also be applied to formulate a Kohno-Drinfeld type theorem for $\imath$quantum groups \cite{DNTY20}.
Similar to the quasi R-matrix, the quasi K-matrix admits (modulo some conjectural steps) a factorization into a product of rank one factors; cf. \cite{DK19}.


A geometric realization of the modified $\imath$quantum groups and $\imath$-canonical bases (along the line of \cite{BLM90}; also see \cite{GV93, Lus99} in affine type A) was obtained in \cite{BKLW18, LiW18} for quasi-split type AIII and then extended to quasi-split affine type AIII in \cite{FLLLW}. Another Hecke-algebraic approach toward the modified $\imath$quantum groups and $\imath$-canonical bases was given in \cite{FLLLWb}, generalizing the affine type A construction in \cite{DF15}.

As an $\imath$-analogue of Nakajima quiver varieties, a class of ``$\imath$quiver varieties" (corresponding to quasi-split $\imath$quantum groups of finite type) has been constructed in \cite{Li19}; it includes the cotangent bundles of partial flags of type B/C as a basic example (compare \cite{BKLW18}).

The $\imath$Schur duality has been extended to the affine type \cite{FLW20} (also cf. \cite{FLLLW}). A further extension and formulation via affine $q$-Schur algebras of arbitrary type can be found in \cite{CLW20b}.

A  (global) crystal basis theory of $\imath$quantum groups $\Ui$ of type AIII (under some restrictive assumption on unequal parameters) has been developed in \cite{Wat18}; for recent advance in type AI see \cite{Wat21}. Letzter \cite{Let19} has constructed a Cartan subalgebra in any $\imath$quantum group of finite type (which gives the right classical limit), and this plays a basic role in a further development of a highest weight theory for a class of $\imath$quantum groups; see \cite{Wat19}.

There has been an $\imath$Hall algebra realization of the quasi-split $\imath$quantum groups in a program developed by Lu and the second author, cf. \cite{LuW19a, LuW20}. The universal $\imath$quantum groups were introduced therein with additional central elements, and $\imath$quantum groups with various parameters \`a la Letzter are recovered by central reductions.

The $\imath$Hall algebra approach  has provided a reflection functor realization of braid group symmetries on (universal) $\imath$quantum groups \cite{LuW19b}; earlier constructions of braid group symmetries of $\imath$quantum groups of finite type required computer calculations \cite{KP11} (see however \cite{Dob19}).

For the modified $\imath$quantum groups of $\mathfrak{sl}_2$, the $\imath$-canonical basis (known as $\imath$divided powers) admits explicit formulas \cite{BW18a, BeW18}. Variants of the $\imath$divided powers play a fundamental role in the Serre presentations, Serre-Lusztig relations, braid group symmetries, and $\imath$Hall algebra realization for $\imath$quantum groups \cite{CLW18, CLW20, LuW20}.

A first step toward categorification of modified $\imath$quantum groups was taken in \cite{BSWW}; this generalizes the Khovanov-Lauda categorification of modified quantum groups of type A.


Yet a new star product deformation approach toward realizing (quasi-split) $\imath$quantum groups has been developed in \cite{KY20}.




%
%
\subsection{Organization}

Sections~\ref{sec:Schur}--\ref{sec:qSchur} contain more standard materials on quantum groups, Hecke algebras, and their canonical bases; we put a special emphasis on type A and $q$-Schur duality.
Sections~\ref{sec:iQG}--\ref{sec:geom} cover the $\imath$quantum groups (in particular, of type AIII), $\imath$Schur duality, and applications to KL theory and character formulas in super category $\mathcal O$ of type ABCD.

The (classical) Schur duality between the general linear Lie algebra and the symmetric group is described in Section~\ref{sec:Schur}.
In Section~\ref{sec:QG}, we formulate the quasi R-matrix as an intertwiner between the comultiplication $\Delta$ and its bar conjugate $\overline{\Delta}$. Then we upgrade it to the R-matrix and show that it induces a $\U$-module isomophism on tensor products and provides a solution to Yang-Baxter equation.

In Section~\ref{sec:Hecke}, we construct the Kazhdan-Lusztig basis for a Hecke algebra; detailed proofs are provided.

In Section~\ref{sec:CB}, we formulate the canonical bases for halves of quantum groups, finite-dimensional simple $\U$-modules, their tensor products, and the modified quantum groups. The quasi R-matrix is used in defining the bar involution on a tensor product. Some main steps of proofs are outlined.

In Section~\ref{sec:qSchur}, we formulate the $q$-Schur duality via explicit actions of quantum group and Hecke algebra of type A, and  explain that the Hecke algebra action can be realized by R-matrix. We show the bar involution on the tensor power $\V^{\otimes m}$ of the natural representation of $\U$ can be interpreted from either quantum group or Hecke algebra side. Then we show the canonical  basis on $\V^{\otimes m}$ as a tensor product of $\U$-modules coincides with the type A KL basis from the Hecke algebra side.

In Section~\ref{sec:iQG}, we review the basic constructions of quasi-split $\imath$quantum groups and formulate the quasi K-matrix as an $\imath$-generalization of the quasi R-matrix. The quasi K-matrix is used to define a new bar involution $\psi_\imath$ and then $\imath$-canonical basis on a based $\U$-module. This generalizes Lusztig's construction of canonical bases on tensor products. We sketch the construction of $\imath$-canonical basis on a modified $\imath$quantum group.

In Section~\ref{sec:iSchur}, we describe in detail the $\imath$quantum groups $\Ui$ of type AIII. We establish an $\imath$Schur duality between $\Ui$ and Hecke algebra of type B in 2 parameters $q, p$, with the most important cases being $p=q$ or $p=1$. Then we show that the $\imath$-canonical basis on the $\Ui$-module $\V^{\otimes m}$ for $p=q$ (respectively, $p=1$) coincide with KL basis of type B (and respectively, type D).

In Section~\ref{sec:KL}, we first review the formulation of KL theory and character formulas in a BGG category via the KL basis of Hecke algebras. The $q$-Schur (and respectively, $\imath$Schur) dualities allows a reformulation of KL theory in type A (and respectively, type B or D) in terms of canonical basis (respectively, $\imath$-canonical basis with $p=q$ or $p=1$) on $\V^{\otimes m}$. We then describe a formulation of KL theory of super type A-D in terms of ($\imath$-) canonical bases.

In  Section \ref{sec:geom}, we realize the Hecke algebra as a convolution algebra of (complete) flag varieties, the $q$-Schur algebra (respectively, the $\imath$Schur algebra) as a convolution algebra of partial flag varieties of type A (and respectively, type B). This leads to a realization of the modified ($\imath$-) quantum groups.

\vspace{2mm}

\noindent{\bf Acknowledgment.}
This is an expanded version of lecture notes for WW's zoom mini-course in summer 2020 organized by LL at ECNU. We thank ECNU for its support. WW thanks his collaborators and friends on the $\imath$-program, especially Huanchen Bao, for insightful collaborations, on which these lecture notes are partially built on. We thank Hideya Watanabe for his careful reading and corrections.

\section{Schur duality}
  \label{sec:Schur}

In this section, we will review the classical Schur duality, which establishes a double centralizer property between the general linear Lie algebra $\mathfrak{gl}(V)$ and the symmetric group $\mathfrak{S}_m$. The Schur duality provides a decomposition of the tensor space $V^{\otimes m}$ as a $(\mathfrak{gl}(V),\mathfrak{S}_m)$-bimodule.
The materials in this section are standard; cf. \cite{H92, Mac95, CW12}.

\subsection{General linear Lie algebra and symmetric group}

Let $\mathfrak{g}=\mathfrak{gl}_N$ be the general linear Lie algebra over $\mathbb{C}$. Let $\mf g= \mf n^- \oplus \mf h \oplus \mf n$ be the triangular decomposition into lower triangular, diagonal, and upper triangular matrices. Its root system is
$$\Phi=\{\epsilon_i-\epsilon_j~|~1\leq i\neq j\leq N\}$$
with positive roots $\{\epsilon_i-\epsilon_j~|~1\leq i<j\leq N\}$, and its simple system is
$$\Pi=\{\epsilon_i-\epsilon_{i+1}~|~1\leq i\leq N-1\}.$$
Each finite dimensional simple $\mathfrak{g}$-module $L(\lambda)$ is a highest weight module of highest weight $\lambda\in X^+$, where $$X^+=\{\lambda=\sum_{i=1}^N\lambda_i\epsilon_i\in\mathfrak{h}^*~|~\lambda_i-\lambda_{i+1}\in\N,i=1,\ldots,N-1\}$$ is the set of dominant integral weights.
Let $V=L(\epsilon_1)$ be the natural representation of $\mathfrak{g}$ with standard basis $\{v_1,v_2,\ldots,v_N\}$.

Let $\mathfrak{S}_m$ be the symmetric group of $m$ letters. The $m$-fold tensor space $V^{\otimes m}$ admits a left $\mathfrak{g}$-module structure via
$$x(v_{i_1}\otimes v_{i_2}\otimes\cdots\otimes v_{i_m})=
\sum_{a=1}^m v_{i_1}\otimes \ldots \otimes v_{i_{a-1}} \otimes x v_{i_a}\otimes  v_{i_{a+1}} \otimes \cdots\otimes v_{i_m}
$$
for any $x\in\mathfrak{g}$, and a right $\mathfrak{S}_m$-module structure via
$$
(v_{i_1}\otimes v_{i_2}\otimes\cdots\otimes v_{i_m})\sigma =v_{i_{\sigma(1)}}\otimes v_{i_{\sigma(2)}}\otimes\cdots\otimes v_{i_{\sigma(m)}} \quad \mbox{for  $\sigma\in\mathfrak{S}_m$.}
$$
In other words, the tensor space $V^{\otimes m}$ is a $(U(\mathfrak{g}),\mathbb{C}\mathfrak{S}_m)$-bimodule, for which we denote
\begin{align}  \label{eq:bimodule}
U(\mathfrak{g})\stackrel{\Phi}{\curvearrowright} V^{\otimes m}\stackrel{\Psi}{\curvearrowleft} \mathbb{C}\mathfrak{S}_m.\end{align}

\subsection{Double centralizer property}

\begin{lemma}\label{lem:1}
  Let $W$ be a finite dimensional vector space over $\mathbb{C}$. Assume $B$ is a semisimple subalgebra of $\mathrm{End}(W)$. Let $A=\mathrm{End}_B(W)$. Then
  \begin{itemize}
    \item[(1)] $A$ is semisimple;
    \item[(2)] $\mathrm{End}_A(W)=B$;
    \item[(3)] As an $A\otimes B$-module, $W$ is multiplicity-free.
  \end{itemize}
\end{lemma}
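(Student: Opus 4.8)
The plan is to reduce everything to the classical structure theory of finite-dimensional semisimple algebras (Wedderburn/Artin--Wedderburn and the double centralizer theorem). Since $B$ is a semisimple subalgebra of $\mathrm{End}(W)$, the module $W$ is a semisimple $B$-module, so I would first decompose $W$ as a $B$-module into isotypic components. Writing the simple $B$-modules appearing in $W$ as $S_1,\dots,S_r$ (with $D_i=\mathrm{End}_B(S_i)$ a division algebra over $\mathbb{C}$, hence $D_i=\mathbb{C}$ since $\mathbb{C}$ is algebraically closed and $\dim_{\mathbb{C}} W<\infty$), we have $W\cong\bigoplus_{i=1}^r S_i^{\oplus m_i}$ as a $B$-module for positive integers $m_i$.

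The core computation is Schur's lemma applied componentwise: $\mathrm{Hom}_B(S_i,S_j)=0$ for $i\neq j$ and $\mathrm{End}_B(S_i)=\mathbb{C}$, whence
\[
A=\mathrm{End}_B(W)\cong\bigoplus_{i=1}^r \mathrm{End}_{\mathbb{C}}(\mathbb{C}^{m_i})\cong\bigoplus_{i=1}^r M_{m_i}(\mathbb{C}).
\]
This is a product of matrix algebras over $\mathbb{C}$, hence semisimple, giving (1). For (2), the same decomposition realizes $W\cong\bigoplus_{i=1}^r (\mathbb{C}^{m_i})\otimes S_i$ as an $A\otimes B$-module with $A$ acting on the left tensor factors; computing $\mathrm{End}_A(W)$ now reverses the roles of the two factors (again by Schur's lemma over $\mathbb{C}$: $\mathrm{End}_{M_{m_i}(\mathbb{C})}(\mathbb{C}^{m_i})=\mathbb{C}$), yielding $\mathrm{End}_A(W)\cong\bigoplus_i \mathrm{End}_{\mathbb{C}}(S_i)$. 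One must then check this identification is compatible with the given embeddings into $\mathrm{End}(W)$, i.e. that the resulting subalgebra of $\mathrm{End}(W)$ is exactly $B$ and not merely abstractly isomorphic to it; this follows because $B$ itself, acting on $W\cong\bigoplus_i \mathbb{C}^{m_i}\otimes S_i$, already surjects onto $\bigoplus_i \mathrm{End}_{\mathbb{C}}(S_i)$ by the Jacobson density theorem (or directly from semisimplicity of $B$ and the fact that each $S_i$ occurs in $W$), and $B\subseteq\mathrm{End}_A(W)$ trivially. Finally (3) is immediate from the decomposition $W\cong\bigoplus_{i=1}^r (\mathbb{C}^{m_i})\boxtimes S_i$: each summand $(\mathbb{C}^{m_i})\boxtimes S_i$ is a simple $A\otimes B$-module (a tensor product of simples over $\mathbb{C}$), and the $r$ summands are pairwise non-isomorphic because they have distinct restrictions to $B$.

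The main obstacle, such as it is, is bookkeeping rather than depth: one must keep careful track of which side of each $\mathrm{Hom}$-space is a left versus right module (the paper uses $V^{\otimes m}$ as a $(U(\mathfrak g),\mathbb{C}\mathfrak S_m)$-bimodule, so the handedness conventions matter when this lemma is applied), and one must invoke algebraic closedness of $\mathbb{C}$ at the two points where Schur's lemma is used, so that all the division algebras collapse to $\mathbb{C}$ and the multiplicity spaces become honest $\mathbb{C}$-vector spaces. Once the $B$-isotypic decomposition of $W$ is fixed, parts (1)--(3) all fall out of the single identification $W\cong\bigoplus_i \mathbb{C}^{m_i}\otimes S_i$ together with Schur's lemma applied in both directions.
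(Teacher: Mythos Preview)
Your proposal is correct and follows essentially the same route as the paper: decompose $W$ into $B$-isotypic components $W\cong\bigoplus_i \mathbb{C}^{m_i}\otimes S_i$, apply Schur's lemma over $\mathbb{C}$ to identify $A\cong\bigoplus_i\mathrm{End}(\mathbb{C}^{m_i})$, then reverse the roles for~(2) and read off~(3) from the same decomposition. The paper writes the multiplicity spaces as $U_a=\mathrm{Hom}_B(V_a,W)$ rather than $\mathbb{C}^{m_i}$, and for~(2) it shortcuts your Jacobson density step by invoking the Wedderburn decomposition $B=\bigoplus_a\mathrm{End}(V_a)$ at the outset (implicitly using that $B\subseteq\mathrm{End}(W)$ acts faithfully, so every simple $B$-module already appears in $W$); your version makes that identification more explicit, but the content is the same.
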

\begin{proof}
  Let $\{V_a\}$ be all the pairwise non-isomorphic simple $B$-modules. Then $B=\bigoplus_{a}\mathrm{End}(V_a)$ since $B$ is semisimple. So as a $B$-module,
  \begin{equation}\label{eq:1}
    W=\bigoplus_a U_a\otimes V_a, \quad\mbox{where $U_a=\mathrm{Hom}_B(V_a,W)$}.
  \end{equation}

 (1). By Schur's Lemma, $\mathrm{Hom}_B(V_a, V_b) \cong \delta_{a,b} \C \mathrm{id}_{V_a}$, and we have
    \begin{align*}
      A=\mathrm{End}_B(W)&=\mathrm{End}_B \Big(\bigoplus_a(U_a\otimes V_a) \Big)\\
      &\cong \bigoplus_a\mathrm{End}(U_a)\otimes \mathrm{id}_{V_a}\cong\bigoplus_a\mathrm{End}(U_a).
    \end{align*}
    This shows $A$ is semisimple. Moreover, $\{U_a\}$ are all the pairwise non-isomorphic simple $A$-modules.

(2). Similar to the above calculation, we have
    \begin{align*}
      \mathrm{End}_A(W)&=\mathrm{End}_A \Big(\bigoplus_a(U_a\otimes V_a) \Big)\\
      &\cong \bigoplus_a\mathrm{id}_{U_a}\otimes\mathrm{End}(V_a)\cong\bigoplus_a\mathrm{End}(V_a)=B.
    \end{align*}

The formula \eqref{eq:1} can now be viewed as a decomposition of the $(A\otimes B)$-module $W$. This proves (3).
\end{proof}

\begin{theorem}[Schur duality I]\label{schurduality1}
  The images of $\Phi$ and $\Psi$ in \eqref{eq:bimodule} form double centralizers, i.e.,
  \begin{align*}
    \Phi(U(\mathfrak{g}))= &\mathrm{End}_{\mathfrak{S}_m}(V^{\otimes m}),\\
    &\mathrm{End}_{\mathfrak{g}}(V^{\otimes m})^{\mathrm{op}} =\Psi(\mathbb{C}\mathfrak{S}_m).
  \end{align*}
\end{theorem}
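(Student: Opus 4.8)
The plan is to extract both identities from Lemma~\ref{lem:1} once we know the single ``new'' input that $\Phi(U(\mathfrak{g})) = \mathrm{End}_{\mathfrak{S}_m}(V^{\otimes m})$, which is exactly the first asserted equality. Granting this, I would put $B = \Psi(\C\mathfrak{S}_m)$, regarded as a subalgebra of $\mathrm{End}(V^{\otimes m})$ (the right $\mathfrak{S}_m$-action seen as left operators; this is harmless since $\C\mathfrak{S}_m\cong\C\mathfrak{S}_m^{\mathrm{op}}$). Then $B$ is semisimple, being a homomorphic image of the semisimple algebra $\C\mathfrak{S}_m$ (Maschke). Now Lemma~\ref{lem:1} applies with $W=V^{\otimes m}$ and $A=\mathrm{End}_B(W)=\mathrm{End}_{\mathfrak{S}_m}(V^{\otimes m})$; part~(2) gives $\mathrm{End}_A(W)=B$, i.e. $\mathrm{End}_{\mathfrak{g}}(V^{\otimes m})^{\mathrm{op}}=\Psi(\C\mathfrak{S}_m)$, which is the second asserted equality (the $\mathrm{op}$ merely records that $\mathfrak{S}_m$ acts on the right). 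So the whole theorem reduces to the first equality, and moreover the decomposition $V^{\otimes m}=\bigoplus_\lambda L(\lambda)\otimes S^\lambda$ then falls out of Lemma~\ref{lem:1}(3).

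For the first equality, the inclusion $\Phi(U(\mathfrak{g}))\subseteq\mathrm{End}_{\mathfrak{S}_m}(V^{\otimes m})$ is the content of the bimodule statement \eqref{eq:bimodule}: each $\Phi(x)=\sum_{a}1^{\otimes a-1}\otimes x\otimes 1^{\otimes m-a}$ is symmetric in the tensor slots, hence commutes with the permutation action. For the reverse inclusion I would first identify $\mathrm{End}(V^{\otimes m})\cong\mathrm{End}(V)^{\otimes m}$ as algebras, under which $\mathfrak{S}_m$ permutes the tensor factors, so that $\mathrm{End}_{\mathfrak{S}_m}(V^{\otimes m})$ equals the space $S^m(\mathrm{End}\,V)$ of symmetric tensors (the range of the idempotent $\tfrac1{m!}\sum_\sigma\sigma$, a genuine projection since we are over $\C$). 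By the polarization identity, $S^m(\mathrm{End}\,V)$ is spanned by the pure powers $x^{\otimes m}$, $x\in\mathrm{End}\,V$, so everything comes down to showing $x^{\otimes m}\in\Phi(U(\mathfrak{g}))$ for every $x$.

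This last step is the crux and the place I expect trouble. The argument I have in mind: since $\Phi(x)$ satisfies its characteristic polynomial, $\exp(\Phi(x))$ is a polynomial in $\Phi(x)$, hence lies in the subalgebra $\Phi(U(\mathfrak{g}))$; and because the $m$ pairwise commuting summands of $\Phi(x)$ exponentiate factorwise, $\exp(\Phi(x))=(\exp x)^{\otimes m}$. As $\exp\colon\mathfrak{gl}(V)\to GL(V)$ is surjective, $\Phi(U(\mathfrak{g}))$ contains $g^{\otimes m}$ for all $g\in GL(V)$. Finally, $GL(V)$ is Zariski dense in $\mathrm{End}(V)$ and $x\mapsto x^{\otimes m}$ is polynomial, so ``$x^{\otimes m}\in\Phi(U(\mathfrak{g}))$'' is a Zariski-closed condition holding on all of $GL(V)$, hence on all of $\mathrm{End}(V)$; therefore $\mathrm{End}_{\mathfrak{S}_m}(V^{\otimes m})=S^m(\mathrm{End}\,V)=\mathrm{span}\{x^{\otimes m}\}\subseteq\Phi(U(\mathfrak{g}))$, finishing the proof. (Equivalently one can invoke the general principle that a connected complex algebraic group and its Lie algebra generate the same associative subalgebra in any finite-dimensional rational representation, applied to the diagonal $GL(V)$-action on $V^{\otimes m}$.) The delicate point is precisely this passage from the Lie algebra to the group and back — confirming that the ``derivation-type'' action of $U(\mathfrak{g})$ already exhausts the diagonal group action and not some proper subalgebra; the rest (semisimplicity, the polarization identity, and the application of Lemma~\ref{lem:1}) is formal.
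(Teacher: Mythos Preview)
Your argument is correct and complete; each step (the identification $\mathrm{End}_{\mathfrak{S}_m}(V^{\otimes m})\cong S^m(\mathrm{End}\,V)$, polarization, the exponential trick, and the Zariski closure) is sound, and the reduction of the second equality to the first via Lemma~\ref{lem:1} is exactly how that lemma is meant to be used. The ``delicate point'' you flag is in fact fully handled by your own argument: once you observe that $\exp(\Phi(x))$ is a polynomial in the finite-dimensional operator $\Phi(x)$, the containment $g^{\otimes m}\in\Phi(U(\mathfrak{g}))$ for $g\in GL(V)$ is immediate, with no further principle needed.

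As for comparison with the paper: the paper does not actually prove this theorem. It records the easy commutativity and then refers to \cite[Theorem~3.10]{CW12} for the double centralizer property. Your write-up is therefore strictly more than what the paper provides. The route you take (symmetric tensors plus the passage through $GL(V)$) is the classical one going back to Weyl and is almost certainly what lies behind the cited reference, so there is no substantive divergence in method --- you have simply filled in what the paper chose to outsource.
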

\begin{proof}
  It is easy to check the actions of $(\mathfrak{g},\Phi)$ and $(\mathfrak{S}_m,\Psi)$ on $V^{\otimes m}$ commute. We skip the proof of double centralizer property, and refer to \cite[Theorem~ 3.10]{CW12} for details.
\end{proof}

\subsection{Partitions and polynomial weights}

Denote by $\mathcal{P}_m$ the set of partitions of $m$. For $\lambda=(\lambda_1,\ldots,\lambda_l)\in\mathcal{P}_m$ with $\lambda_l>0$, we call $l$ the length of $\la$ and denote $\ell(\lambda)=l$. We identify $(\lambda_1,\ldots,\lambda_l)=(\lambda_1,\ldots,\lambda_l,0,\ldots,0)$ as partitions.
Let
$$
\mathcal{P}_m(N)=\Big\{\lambda=(\lambda_1,\ldots,\lambda_N)~|~\lambda_1\geq\cdots\geq\lambda_N\geq0, \sum_{i=1}^N\lambda_i=m \Big\}
$$
be the set of partitions of $m$ of length not exceeding $N$.

Partitions $\lambda$ can be visualized as Young diagrams; we shall identify them. For example,
$$(5)=\ydiagram{5}  \qquad
(1^3)=\ydiagram{1,1,1} \qquad
  (2,2,1)=\ydiagram{2,2,1} \quad .
  $$

For symmetric group $\mathfrak{S}_m$, there is a one-to-one correspondence:
\begin{align*}
  \{\mbox{conjugacy classes of } \mf S_m\} \quad &\longleftrightarrow \quad \mathcal{P}_m,\\
  \mbox{conjugacy classes of cycle type $\lambda$}\quad  &\longmapsfrom \quad \lambda.
\end{align*}
The irreducible $\mathfrak{S}_m$-modules over $\mathbb{C}$ are parametrized by $\mathcal{P}_m$ and known as
Specht modules; we shall denote them by $S^\lambda$ for $\lambda\in\mathcal{P}_m$. For example,
\begin{align*}
S^{(m)} &=\mbox{ trivial module}, \qquad
S^{(1^m)} =\mbox{ sign module}, \\
S^{(m-1,1)} &=\mbox{reflection representation of dimension $m-1$}.
\end{align*}

On the other hand, a partition $\lambda=(\lambda_1,\lambda_2,\ldots,\lambda_N)\in\mathcal{P}_m(N)$ can be regarded as the polynomial dominant weight $\lambda=\lambda_1\epsilon_1 +\lambda_2\epsilon_2 +\cdots+\lambda_N\epsilon_N$ of $\mathfrak{gl}_N$.

There are two partial orders on partitions. One is the dominance order $\geq$ of weights on $\mathfrak{gl}_N$ side. 
For $\la, \mu \in \mathcal{P}_m(N)$,
\[
\lambda\geq\mu \Leftrightarrow \lambda-\mu\in\N\Pi=\N\Phi^+.
\]
The other is the dominance order $\unrhd$ of partitions on the symmetric group side, for $\la, \mu \in \mathcal{P}_m$, let $\lambda\rightarrow\mu$ denote that $\mu$ is obtained from Young diagram $\lambda$ by moving down a box. Then we say $\lambda\unrhd\mu$ if there exists a sequence of partitions $\la^i$, for $0\le i \le k$, such that
\[
\lambda =\la^0 \rightarrow \la^1 \rightarrow \la^2  \rightarrow \ldots \rightarrow \la^k =\mu.
\]

These two partial orders are compatible with each other.

\subsection{Weight subspaces of $V^{\otimes m}$}

Let
$$\mathcal{CP}_m(N):= \big\{\mu=(\mu_1,\mu_2,\ldots,\mu_N)~|~{\textstyle \sum_{i=1}^N} \mu_i=m, \mu_i\in\N \big\}$$
be the set of compositions of $m$ into $N$ parts, where a zero part is allowed in a composition. For $\mu \in \mathcal{CP}_m(N)$, we denote $\mu=(\mu_1,\mu_2,\ldots,\mu_N) \models m$.

Let $\mu \in \mathcal{CP}_m(N)$. The $\mu$-weight subspace $(V^{\otimes m})_\mu$ of $V^{\otimes m}$, with respect to the diagonal (Cartan) subalgebra of $\mathfrak{gl}_N$, is the subspace spanned by $\{v_{i_1}\otimes\cdots\otimes v_{i_m}\}$, where $(i_1,i_2,\ldots,i_m)$ satisfies the multiset identity
\begin{equation}\label{eq:3}
  \{i_1,i_2,\ldots,i_m \}=\{\underbrace{1,\ldots,1}_{\mu_1},\underbrace{2\ldots,2}_{\mu_2},\ldots,\underbrace{N,\ldots,N}_{\mu_N}\},
\end{equation}
i.e., $\mu_k=\sharp\{s~|~i_s=k\}$, for $1\leq k\leq N$.
The subspace $(V^{\otimes m})_\mu$ is preserved by $\mathfrak{S}_m$, as $(i_1,i_2,\ldots,i_m)$ satisfying \eqref{eq:3} forms a single $\mathfrak{S}_m$-orbit.

Denoted by $$\mathfrak{S}_{\mu}=\mathfrak{S}_{\mu_1}\times\mathfrak{S}_{\mu_2}\times\cdots\times\mathfrak{S}_{\mu_N}$$ a Young subgroup of $\mathfrak{S}_m$.
Write $$v_\mu:=v_1^{\otimes \mu_1}\otimes v_2^{\otimes \mu_2}\otimes\cdots\otimes v_N^{\otimes \mu_N}\in (V^{\otimes m})_{\mu}.$$
Then $\mathbf{1}_\mu=\mathbb{C}v_\mu$ is a trivial $\mathfrak{S}_\mu$-submodule of $(V^{\otimes m})_{\mu}$. Thanks to Frobenius reciprocity, this induces a surjective $\mathfrak{S}_m$-homomorphism $\mathrm{Ind}_{\mathfrak{S}_\mu}^{\mathfrak{S}_m}\mathbf{1}_\mu\rightarrow (V^{\otimes m})_\mu$, which is an isomorphism by a dimension counting, that is,
\[
(V^{\otimes m})_\mu \cong \mathrm{Ind}_{\mathfrak{S}_\mu}^{\mathfrak{S}_m}\mathbf{1}_\mu.
\]

%
It is known that
$$\mathrm{Ind}_{\mathfrak{S}_\mu}^{\mathfrak{S}_m}\mathbf{1}_\mu=\bigoplus_{\lambda\unrhd \bar\mu}K_{\lambda\mu}S^{\lambda},$$
where $K_{\lambda\mu}$ is the Kostka number defined to be the number of semistandard tableaux (SST) of shape $\lambda$ and content $\mu$, and $\bar\mu$ denotes the partition obtained from $\mu$ by rearrangement of its parts. If $\lambda\unrhd\mu$ for $\mu \in \mathcal{P}_m(N)$, then $\la  \in \mathcal{P}_m(N)$.

\begin{example}\label{ex:1}
Let $N\ge 5$. Let
\[
\lambda =
\ydiagram{7,5,2} \quad.
\]
Then $T=\begin{ytableau}
   1&1&2&3&3&3&5\\
   2&2&3&4&4\\
 3&4 \\
\end{ytableau}$
is an example of SST of shape $\la$ (or simply, $\la$-SST) with content
$\mathrm{cont}(T)=(2,3,5,3,1)$, which corresponds to the $\mathfrak{gl}_N$-weight $2\epsilon_1+3\epsilon_2+5\epsilon_3+3\epsilon_4+\epsilon_5$.
\end{example}

\subsection{Decomposition of $V^{\otimes m}$ as a bimodule}
\begin{example}
  The symmetric group $\mathfrak{S}_2$ is abelian, and has two 1-dimensional (trivial and sign) modules. Recall $V$ is the natural representation of $\mathfrak{gl}_N$. Then
  \begin{equation}\label{eq:2}
    V^{\otimes 2}\cong S^2 V\oplus \Lambda^2 V.
  \end{equation}
  Here $S^mV$ denotes the $m$th symmetric power of $V$ and
  $\Lambda^mV$ denotes the $m$th skew-symmetric power of $V$.
  As a $\mathfrak{gl}_N$-module, $S^2 V$ (resp. $\Lambda^2 V$) is irreducible of highest weight $2\epsilon_1$ (resp. $\epsilon_1+\epsilon_2$). So \eqref{eq:2} can be viewed as a multiplicity-free decomposition of $(\mathfrak{gl}_N,\mathfrak{S}_2)$-modules.
\end{example}

We have the following generalization of the above example to $V^{\otimes m}$.

\begin{theorem} [Schur duality II]   \label{schurduality2}
As a $(\mathfrak{gl}_N,\mathfrak{S}_m)$-module, we have
$$V^{\otimes m}\cong\bigoplus_{\lambda\in\mathcal{P}_m(N)}L(\lambda)\otimes S^{\lambda}.
$$
\end{theorem}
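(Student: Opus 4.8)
\smallskip
\noindent\textbf{Proof proposal.}
The plan is to deduce the statement from the abstract double-centralizer mechanism of Lemma~\ref{lem:1}, the already-established Schur duality~I (Theorem~\ref{schurduality1}), and the description $(V^{\otimes m})_\mu\cong\mathrm{Ind}_{\mathfrak{S}_\mu}^{\mathfrak{S}_m}\mathbf{1}_\mu$ of the weight spaces obtained in the previous subsection. First I would put $B:=\Psi(\mathbb{C}\mathfrak{S}_m)\subseteq\mathrm{End}(V^{\otimes m})$, which is semisimple since $\mathbb{C}\mathfrak{S}_m$ is, and $A:=\mathrm{End}_B(V^{\otimes m})=\mathrm{End}_{\mathfrak{S}_m}(V^{\otimes m})$, which equals $\Phi(U(\mathfrak{g}))$ by Theorem~\ref{schurduality1}. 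Lemma~\ref{lem:1}(1),(3) then gives a multiplicity-free decomposition
\[
V^{\otimes m}\;\cong\;\bigoplus_{\lambda\in\Lambda}\widetilde{L}_\lambda\otimes S^\lambda
\]
as an $(A\otimes B)$-module, hence as a $(\mathfrak{gl}_N,\mathfrak{S}_m)$-module, where $\Lambda:=\{\lambda\in\mathcal{P}_m\mid\mathrm{Hom}_{\mathfrak{S}_m}(S^\lambda,V^{\otimes m})\neq0\}$ and the $\widetilde{L}_\lambda:=\mathrm{Hom}_{\mathfrak{S}_m}(S^\lambda,V^{\otimes m})$ form a complete list, up to isomorphism, of the pairwise non-isomorphic simple $\mathfrak{gl}_N$-submodules of $V^{\otimes m}$. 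It then remains to identify $\widetilde{L}_\lambda$ with $L(\lambda)$ and $\Lambda$ with $\mathcal{P}_m(N)$.

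Each $\widetilde{L}_\lambda$, being a simple finite-dimensional $\mathfrak{gl}_N$-module, equals $L(\nu_\lambda)$ for a dominant weight $\nu_\lambda$, and $\nu_\lambda\in\mathcal{P}_m(N)$ because every weight of $V^{\otimes m}$ has the form $\sum_i\mu_i\epsilon_i$ with $\mu_i\in\N$ and $\sum_i\mu_i=m$. To pin $\nu_\lambda$ down I would compute the weight multiplicities of $\widetilde{L}_\lambda$: for a composition $\mu\models m$, Frobenius reciprocity together with $(V^{\otimes m})_\mu\cong\mathrm{Ind}_{\mathfrak{S}_\mu}^{\mathfrak{S}_m}\mathbf{1}_\mu$ gives
\[
\dim(\widetilde{L}_\lambda)_\mu=\dim\mathrm{Hom}_{\mathfrak{S}_m}\!\big(S^\lambda,(V^{\otimes m})_\mu\big)=\dim\mathrm{Hom}_{\mathfrak{S}_\mu}\!\big(\mathrm{Res}_{\mathfrak{S}_\mu}S^\lambda,\mathbf{1}_\mu\big)=K_{\lambda\mu},
\]
the last equality being the multiplicity of $S^\lambda$ in $\mathrm{Ind}_{\mathfrak{S}_\mu}^{\mathfrak{S}_m}\mathbf{1}_\mu=\bigoplus_{\nu\unrhd\bar\mu}K_{\nu\mu}S^\nu$. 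Thus the weights of $\widetilde{L}_\lambda$ are exactly the $\mu\models m$ with $K_{\lambda\mu}\neq0$; each such $\mu$ satisfies $\lambda\unrhd\bar\mu$, and $\bar\mu$ dominates $\mu$ in the weight order (reordering the parts decreasingly only adds positive roots), so $\lambda$ is $\geq$ every weight of $\widetilde{L}_\lambda$, while $K_{\lambda\lambda}=1$ shows $\lambda$ itself is a weight. Hence $\lambda$ is the highest weight of $\widetilde{L}_\lambda$, i.e. $\widetilde{L}_\lambda\cong L(\lambda)$.

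Next, $\lambda\in\Lambda$ iff $S^\lambda$ occurs in $\mathrm{Ind}_{\mathfrak{S}_\mu}^{\mathfrak{S}_m}\mathbf{1}_\mu$ for some composition $\mu\models m$, i.e. iff $\lambda\unrhd\bar\mu$ for some such $\mu$; since such a $\bar\mu$ has at most $N$ (nonzero) parts, this forces $\ell(\lambda)\leq N$, as observed just before the statement, and conversely the choice $\mu=\lambda$ (with $K_{\lambda\lambda}=1$) puts every $\lambda\in\mathcal{P}_m(N)$ into $\Lambda$. Therefore $\Lambda=\mathcal{P}_m(N)$, and combining the three steps yields $V^{\otimes m}\cong\bigoplus_{\lambda\in\mathcal{P}_m(N)}L(\lambda)\otimes S^\lambda$.

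The expected main obstacle is the label-matching of the middle step: once Theorem~\ref{schurduality1} is granted, the only genuine content is deciding which simple $\mathfrak{gl}_N$-module is paired with which $S^\lambda$, and the computation above reduces this to the (standard, and already recorded) Kostka-number decomposition of $\mathrm{Ind}_{\mathfrak{S}_\mu}^{\mathfrak{S}_m}\mathbf{1}_\mu$. An alternative is to argue via Young symmetrizers $c_\lambda$, using that $V^{\otimes m}c_\lambda$ is a nonzero scalar multiple of a copy of $\widetilde{L}_\lambda$ and that the column-antisymmetrization built into $c_\lambda$ exhibits an explicit $\mathfrak{gl}_N$-highest weight vector of weight $\lambda$ inside it; this is the route taken in the references cited at the start of this section.
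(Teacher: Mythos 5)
Your proposal is correct and follows essentially the same route as the paper's proof: both start from the abstract double-centralizer mechanism of Lemma~\ref{lem:1} together with Theorem~\ref{schurduality1}, both identify the multiplicity space $\mathrm{Hom}_{\mathfrak{S}_m}(S^\lambda,V^{\otimes m})$ as the simple $\mathfrak{gl}_N$-module paired with $S^\lambda$, and both pin its highest weight to $\lambda$ using the Kostka-number facts $K_{\lambda\lambda}=1$ and $K_{\lambda\mu}=0$ unless $\lambda\unrhd\mu$, applied through the decomposition $(V^{\otimes m})_\mu\cong\mathrm{Ind}_{\mathfrak{S}_\mu}^{\mathfrak{S}_m}\mathbf{1}_\mu$. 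The only difference is one of exposition: you make explicit the Frobenius-reciprocity computation $\dim(\widetilde{L}_\lambda)_\mu=K_{\lambda\mu}$ and the verification that $\Lambda=\mathcal{P}_m(N)$, both of which the paper leaves implicit in the single sentence ``since $L_\lambda^{[\lambda]}=\C$ and $L_\mu^{[\lambda]}=0$ unless $\lambda\geq\mu$.''
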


\begin{proof}
Note that $K_{\lambda\lambda}=1$ and $K_{\lambda\mu}=0$ unless $\lambda\unrhd\mu$. Thus, as an $\mf S_m$-module, we have
$$
V^{\otimes m}=\bigoplus_{\mu\in\mathcal{CP}_m(N)}(V^{\otimes m})_\mu\cong\bigoplus_{\lambda\in\mathcal{P}_m(N)}L^{[\lambda]}\otimes S^{\lambda},
$$
where $L^{[\lambda]}:=\mathrm{Hom}_{\mathfrak{S}_m}(S^\lambda, V^{\otimes m})$. It follows by Lemma~\ref{lem:1} and Theorem~\ref{schurduality1} that $L^{[\lambda]}$ is an irreducible $\mathfrak{gl}_N$-module, since $\Psi(\mathbb{C}\mathfrak{S}_m)$ is semisimple. Furthermore, since $L_{\lambda}^{[\lambda]}=\mathbb{C}$ and $L_{\mu}^{[\lambda]}=0$ unless $\lambda\geq\mu$, we can identify the irreducible $\mathfrak{gl}_N$-module $L^{[\lambda]}$ with $L(\lambda)$.
\end{proof}

\subsection{Schur functions}

The Schur functions have intimate connections to the irreducible $\mf S_m$-modules and the irreducible $\mathfrak{gl}_N$-modules arising from the Schur duality.

The Schur function $s_\lambda$ (in infinitely many variables $x_1,x_2,\ldots$) is defined to be
$$s_\lambda=\sum_{T\in \lambda\mbox{-SST}}X^T,$$ where $$X^T=X^{\mathrm{cont}(T)}=x_1^{\mu_1}x_2^{\mu_2}\cdots$$ if $\mathrm{cont}(T)=(\mu_1,\mu_2,\ldots)$. For example, $X^T=x_1^2x_2^3x_3^5x_4^3x_5$ if $T$ is the semistandard tableaux in Example~\ref{ex:1}.

Let $\mathrm{Rep}(\mathfrak{S}_m)$ be the category of finite dimensional $\mathfrak{S}_m$-modules. We denote its Grothendieck group by $[\mathrm{Rep}(\mathfrak{S}_m)]$, which has a $\Z$-basis $[S^\lambda]$, for $\lambda\in\mathcal{P}_m$. Let $\Lambda$ be the ring of symmetric functions in infinitely many variables $x_1,x_2,\cdots$. The (Frobenius) characteristic map is given by
\begin{align*}
\mathrm{ch}: & \bigoplus_{m=0}^{\infty}[\mathrm{Rep}(\mathfrak{S}_m)]\rightarrow \Lambda,\\
 \mathrm{ch}(\chi) & =\sum_{\mu\in\mathcal{P}_m}z_\mu^{-1}\chi_\mu p_\mu, \quad \text{ for }\chi \in [\mathrm{Rep}(\mathfrak{S}_m)],
\end{align*}
where $z_\mu$ is the order of the centralizer of a permutation in $\mathfrak{S}_m$ of type $\mu$, $p_\mu=(x_1^{\mu_1}+x_2^{\mu_1}+\ldots)(x_1^{\mu_2}+x_2^{\mu_2}+\ldots)\ldots$ is the power sums, and $\chi_\mu$ is the character value of $\chi$ at a permutation of cycle type $\mu$.

Denote by $h_\lambda$ the complete homogeneous symmetric function. The following is classic (cf. \cite{Mac95}, \cite[Appendix A.1]{CW12}).

\begin{theorem}
For any partition $\la$, we have
  \begin{itemize}
\item[(1)] $\mathrm{ch}([S^\lambda])=s_{\lambda}$;
\item[(2)] $\mathrm{ch}([\mathrm{Ind}_{\mathfrak{S}_\lambda}^{\mathfrak{S}_m}\mathbf{1}_\lambda])=h_\lambda$.
\end{itemize}
\end{theorem}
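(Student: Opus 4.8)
The plan is to prove the two identities by reducing everything to known facts about the characteristic map, in particular the fact that $\mathrm{ch}$ is a ring isomorphism onto $\Lambda$ (or at least an isometry for the standard inner products) together with the standard change-of-basis relations among symmetric functions. For part (2), I would first observe that the Frobenius characteristic sends induction products to products of symmetric functions: $\mathrm{ch}\big([\mathrm{Ind}_{\mathfrak{S}_\mu \times \mathfrak{S}_\nu}^{\mathfrak{S}_{m+n}}(M\boxtimes M')]\big) = \mathrm{ch}([M])\,\mathrm{ch}([M'])$. Applying this iteratively to $\mathbf{1}_\lambda = \mathbf{1}_{\lambda_1}\boxtimes \cdots \boxtimes \mathbf{1}_{\lambda_\ell}$, and using that $\mathrm{ch}([\mathbf{1}_k]) = \mathrm{ch}$ of the trivial $\mathfrak{S}_k$-module equals the complete homogeneous symmetric function $h_k$ (a direct computation from the definition of $\mathrm{ch}$, since the trivial character is identically $1$ and $\sum_{\mu \vdash k} z_\mu^{-1} p_\mu = h_k$ is the classical generating-function identity), one gets $\mathrm{ch}([\mathrm{Ind}_{\mathfrak{S}_\lambda}^{\mathfrak{S}_m}\mathbf{1}_\lambda]) = h_{\lambda_1}\cdots h_{\lambda_\ell} = h_\lambda$.

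For part (1), I would combine part (2) with the combinatorial decomposition already recorded in the text, namely $\mathrm{Ind}_{\mathfrak{S}_\mu}^{\mathfrak{S}_m}\mathbf{1}_\mu = \bigoplus_{\lambda \unrhd \bar\mu} K_{\lambda\mu} S^\lambda$. Applying $\mathrm{ch}$ and using (2) yields $h_\mu = \sum_\lambda K_{\lambda\mu}\,\mathrm{ch}([S^\lambda])$. On the other hand, the purely combinatorial identity $h_\mu = \sum_\lambda K_{\lambda\mu} s_\lambda$ holds for the Schur functions $s_\lambda = \sum_{T \in \lambda\text{-SST}} X^T$ (this is immediate by splitting the sum defining $\prod_i h_{\mu_i}$, i.e. counting arbitrary fillings, according to their rectification / RSK shape, or more directly: grouping monomials of a product of complete symmetric functions by content gives exactly $\sum_\lambda K_{\lambda\mu} s_\lambda$). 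Since the Kostka matrix $(K_{\lambda\mu})$ is unitriangular with respect to the dominance order (again noted in the text: $K_{\lambda\lambda}=1$ and $K_{\lambda\mu}=0$ unless $\lambda \unrhd \mu$), it is invertible over $\Z$, and hence the system $h_\mu = \sum_\lambda K_{\lambda\mu} \mathrm{ch}([S^\lambda]) = \sum_\lambda K_{\lambda\mu} s_\lambda$ forces $\mathrm{ch}([S^\lambda]) = s_\lambda$ for every $\lambda$.

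The one genuine input I am invoking that is not spelled out in the excerpt is that $\mathrm{ch}$ is multiplicative for induction products; this is the step I expect to be the main obstacle in the sense that it is where the real content lies, though it is a standard computation (expand both sides in the power-sum basis and use that the conjugacy classes of $\mathfrak{S}_{m+n}$ meeting $\mathfrak{S}_m \times \mathfrak{S}_n$ correspond to pairs of cycle types, with the centralizer orders multiplying appropriately). An alternative route that sidesteps multiplicativity entirely would be to take the known fact $\mathrm{ch}([S^\lambda]) = s_\lambda$ as the statement of (1) proved directly via the Frobenius character formula, and then derive (2) from the decomposition $\mathrm{Ind}_{\mathfrak{S}_\mu}^{\mathfrak{S}_m}\mathbf{1}_\mu = \bigoplus_\lambda K_{\lambda\mu} S^\lambda$ plus the identity $h_\mu = \sum_\lambda K_{\lambda\mu} s_\lambda$; I would mention this as the cleaner packaging if one is willing to quote the Frobenius character formula, but since the excerpt phrases the theorem as "classic (cf. \cite{Mac95}, \cite[Appendix A.1]{CW12})", the honest proof is really the bootstrapping argument above together with a pointer to those references for the multiplicativity of $\mathrm{ch}$ and the monomial expansion $h_\mu = \sum_\lambda K_{\lambda\mu} s_\lambda$.
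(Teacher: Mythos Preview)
The paper does not actually prove this theorem: it merely states it as ``classic'' and points to \cite{Mac95} and \cite[Appendix~A.1]{CW12} for a proof. So there is no proof in the paper to compare against.

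Your argument is correct and is essentially one of the standard proofs found in those references. The only comment I would make is on the justification of $h_\mu = \sum_\lambda K_{\lambda\mu} s_\lambda$: the cleanest route is not ``grouping monomials of $\prod_i h_{\mu_i}$ by rectification shape'' (which tacitly invokes RSK), but rather to note that the tableau definition $s_\lambda = \sum_T X^T$ immediately gives $s_\lambda = \sum_\mu K_{\lambda\mu} m_\mu$, and then $h_\mu = \sum_\lambda K_{\lambda\mu} s_\lambda$ follows by duality of the bases $\{h_\mu\}$ and $\{m_\mu\}$ under the Hall inner product together with orthonormality of the $s_\lambda$. Either way, once you have this identity and the multiplicativity of $\mathrm{ch}$ under induction product (which you correctly identify as the substantive input), the unitriangularity of the Kostka matrix finishes the job exactly as you wrote.
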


For $\lambda\in\mathcal{P}_m(N)$, the character $\mathrm{ch} L(\lambda):=\sum_{\mu}\dim L(\lambda)_\mu e^\mu$ is equal to $$\mathrm{Tr}~ x_1^{E_{11}}\cdots x_N^{E_{NN}}|_{L(\lambda)}$$ by the identification $x_i=e^{\epsilon_i}$.
By the Weyl character formula, we have
\[
\mathrm{ch} L(\lambda)=\frac{\sum_{w\in W}(-1)^{\ell(w)}e^{w(\lambda+\rho)}}{\sum_{w\in W}(-1)^{\ell(w)}e^{w(\rho)}},
\]
which can be further rewritten as the ratio of 2 determinants with the denominator being the Vandermonde determinant.
Then by one of the equivalent descriptions of the Schur functions, we can identify
\[
\mathrm{ch} L(\lambda) =s_\lambda(x_1, \ldots,x_N,0,0,\ldots).
\]

\section{Quantum groups and R-matrix}
  \label{sec:QG}

We introduce the quasi R-matrix and R-matrix for quantum groups, and show that the R-matrix provides a solution for Yang-Baxter equation. Standard references for quantum groups are books \cite{Lus93, CP94, Jan95}.

\subsection{Quantum groups}

For $n\in\Z, r\in\N$, we denote the $q$-integers and $q$-binomials by
\begin{align*}
[n]:=\frac{q^n-q^{-n}}{q-q^{-1}}, &
\qquad
[r]^! =[r][r-1]\cdots [1],
\\
\qbinom{n}{r}  &=\frac{[n][n-1]\cdots[n-r+1]}{[r]^!}.
\end{align*}

Let $\mf g$ be  a complex simple Lie algebra, with root system $\Phi$ relative to a Cartan subalgebra $\mf h$ and Weyl group $W$. Fix a simple system $\I$ of $\Phi$, and we identify $\Z \I$ with the root lattice; sometimes we write the simple roots as $\alpha_i$, for $i\in \I$.
There exists a unique $W$-invariant symmetric bilinear form  $\Z \I \times \Z \I \rightarrow \Z$, $x, y \mapsto x \cdot y$, normalized such that $\min \{i \cdot i \mid i\in \I \} =2$. Denote by $X$ the weight lattice and by $X^+$ the set of dominant integral weights. We extend the bilinear pairing on $\Z \I \times \Z \I$ to $\Z \I \times X$. It is known that $i \cdot \la \in \Z$, for all $\la \in X$, $i\in \I$.

Then $C=(c_{ij})_{i,j\in \I}$ is the Cartan matrix of $\mf g$, where $c_{ij} = 2 i\cdot j/i\cdot i$. We set
\[
q_i =q^{\frac{i\cdot i}2}, \qquad \text{ for } i\in \I.
\]
We denote by $[n]_i, \qbinom{n}{r}_i$ the variants of $[n], \qbinom{n}{r}$ where $q$ is replaced by $q_i$.


\begin{definition}
  The quantum group $\U=\U_q(\mathfrak{g})$ is a $\mathbb{Q}(q)$-algebra generated by
  $E_i, F_i, K_i, K_i^{-1}\, (i\in \I)$, subject to the following relations, for $i, j \in \I$:
  \begin{align*}
    &K_iK_i^{-1}=1=K_i^{-1}K_i, \quad K_iK_j=K_jK_i, \\
    &K_iE_jK_i^{-1}=q^{i \cdot j}E_j, \quad K_iF_jK_i^{-1}=q^{-i \cdot j}F_j, \\
    &E_iF_j-F_jE_i = \delta_{i,j} \frac{K_i-K_i^{-1}}{q_i -q_i^{-1}}, \\
      \end{align*}
and, for $i \neq j$,
  \begin{align}
   \label{Serre}
  \begin{split}
 \sum_{r=0}^{1-c_{ij}} (-1)^r E_i^{(1-c_{ij} -r)} E_j E_i^{(r)} =0,\qquad
 \sum_{r=0}^{1-c_{ij}} (-1)^r F_i^{(1-c_{ij} -r)} F_j F_i^{(r)} =0.
 \end{split}
\end{align}
\end{definition}
\noindent In \eqref{Serre} above, we have used the {\em divided powers}
\[
F_i^{(r)}=\frac{F_i^r}{[r]_i^!},\quad E_i^{(r)}=\frac{E_i^r}{[r]_i^!}.
\]
The relations \eqref{Serre} are called {\em $q$-Serre relations}, which do not show up in the rank one case:
\[
\U_q(\mathfrak{sl}_2)=\left\langle E,F,K^{\pm1}~\middle|~
  \begin{array}{c}
   KE=q^2EK, KF=q^{-2}FK, \\
   {[E,F]}=EF-FE=\frac{K-K^{-1}}{q-q^{-1}}
  \end{array}\right\rangle.
\]

\begin{remark}
  \begin{itemize}
  \item[(1)] Classically, the Serre relations are $[e_i,[e_i,e_j]]=0$ in $\mathfrak{g}$,
  or $e_i^2e_j-2e_ie_je_i+e_je_i^2=0$ in $U(\mathfrak{g})$ when $c_{ij}=-1$.

  \item[(2)]
  One can embed $\U_q(\mathfrak{sl}_N)=\langle E_i,F_i,K_i^{\pm1}~|~1\leq i\leq N-1 \rangle$ into $\U_q(\mathfrak{gl}_N)=\langle E_i,F_i,D_a^{\pm1}~|~1\leq i\leq N-1, 1\leq a\leq N\rangle$ by $E_i\mapsto E_i, F_i\mapsto F_i, K_i\mapsto D_iD_{i+1}^{-1}$, where
  \[
  D_iE_iD_i^{-1}=qE_i,\;\; D_{i}E_{i-1} D_{i}^{-1}=q^{-1}E_{i-1} , \;\; D_iE_jD_i^{-1}=E_j \,\; (j\neq i,i-1),
  \]
 and similar relations hold between $D_i$ and $F_j$ with $q$ replaced by $q^{-1}$.
  \end{itemize}
\end{remark}

  The quantum group $\U=\U_q(\mathfrak{g})$ is a Hopf algebra with comultiplication $\Delta:\U\rightarrow \U\otimes\U$  given by
      \begin{equation} \label{comult}
        \Delta(K_i)=K_i\otimes K_i,\quad \Delta(E_i)=1\otimes E_i+E_i\otimes K_i^{-1},\quad
        \Delta(F_i)=F_i\otimes 1+K_i\otimes F_i.
      \end{equation}
      (There are different versions of comultiplication in the literature.)\\

      Given $\U$-modules $M$ and $M'$, the tensor product $M\otimes M'$ is again a $\U$-module by $$u(x\otimes x'):=\Delta(u)(x\otimes x')=\sum u_1 x\otimes u_2 x',$$ where $\Delta(u)=\sum u_1\otimes u_2$.

\begin{example}
Let $\U=\U_q(\mathfrak{gl}_N)$ or $\U_q(\mathfrak{sl}_N)$.
Let
\[
\V=\bigoplus_{i=1}^N\mathbb{Q}(q)v_i
\]
be the natural representation of $\U$, i.e.,
\begin{align*}
  D_iv_i=qv_i, & \qquad  F_iv_i=v_{i+1},  \qquad  E_i v_{i+1} =v_i,   \\
  D_iv_j=v_j, & \qquad F_iv_j=0, \qquad E_i v_{j+1} = 0 \quad (j\neq i),
\end{align*}
and hence
$K_iv_i=qv_i, \;K_{i}v_{i+1} =q^{-1}v_{i+1}, \; K_iv_j =v_j \; (j\neq i,i+1).$
The $m$th tensor space $\V^{\otimes m}$ is a $\U$-module by using the comultiplication $\Delta$ repeatedly.
\end{example}

\subsection{Comultiplication and twistings}
%

\begin{lemma} \label{lem:inv}
The algebra $\U$ admits the following symmetries:
\begin{itemize}
  \item[(1)]
  Chevalley involution (of $\mathbb{Q}(q)$-algebra) $\omega: E_i\leftrightarrow F_i, K_i\leftrightarrow K_i^{-1}$;
  \item[(2)]
  anti-involution (of $\mathbb{Q}(q)$-algebra) $\sigma: E_i, F_i\ \mbox{fixed}, K_i\leftrightarrow K_i^{-1}$;
  \item[(3)]
  bar involution of $\mathbb{Q}$-algebra $\psi=\bar{\phantom{x}} : E_i, F_i \ \mbox{fixed}, K_i\mapsto K_i^{-1}, q\mapsto q^{-1}$.
\end{itemize}
\end{lemma}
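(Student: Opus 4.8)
The plan is to verify each of the three claimed symmetries directly by checking that the defining relations of $\U$ are preserved; since all three maps are defined on generators, it suffices to confirm that the image of each relation holds.

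First I would set up the general mechanism: for an involution $\phi$ (or anti-involution) of the free algebra on the generators $E_i, F_i, K_i^{\pm 1}$, checking that it descends to $\U$ amounts to applying $\phi$ to each relation in the Definition and confirming the result is again a relation. For $\omega$, the relations $K_iK_j=K_jK_i$ and $K_iK_i^{-1}=1$ are visibly symmetric under $K_i \leftrightarrow K_i^{-1}$; the relation $K_iE_jK_i^{-1}=q^{i\cdot j}E_j$ is sent to $K_i^{-1}F_jK_i=q^{i\cdot j}F_j$, which is equivalent to $K_iF_jK_i^{-1}=q^{-i\cdot j}F_j$, the second relation — so the pair of $K$-conjugation relations is swapped with itself; the commutator relation $[E_i,F_j]=\delta_{ij}(K_i-K_i^{-1})/(q_i-q_i^{-1})$ is sent to $[F_i,E_j]=\delta_{ij}(K_i^{-1}-K_i)/(q_i-q_i^{-1})$, which rearranges to the same relation; and the two $q$-Serre relations in \eqref{Serre} are interchanged (noting $\omega$ fixes the $q$-integers, hence the divided powers transform correctly, $\omega(E_i^{(r)})=F_i^{(r)}$). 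For $\sigma$, being an anti-homomorphism, I would reverse the order of products when applying it: e.g.\ $\sigma(E_iF_j - F_jE_i) = F_j E_i - E_i F_j = -(E_iF_j - F_jE_i)$, and since $\sigma$ fixes the right-hand side's numerator $K_i - K_i^{-1}$ up to the sign coming from $K_i \leftrightarrow K_i^{-1}$, the commutator relation is preserved; the $K$-conjugation relations $\sigma(K_i E_j K_i^{-1}) = K_i^{-1} E_j K_i$ become $K_i E_j K_i^{-1} = q^{i\cdot j} E_j$ again after rearranging; the $q$-Serre relations are preserved because reversing a palindromic-in-structure sum $\sum_r (-1)^r E_i^{(1-c_{ij}-r)} E_j E_i^{(r)}$ just re-indexes $r \mapsto 1-c_{ij}-r$, which leaves the relation invariant (using $(-1)^{1-c_{ij}-r} = \pm(-1)^r$ and that the whole thing equals zero). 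For $\psi$, which is a $\Q$-algebra homomorphism (not $\Q(q)$-linear) with $q \mapsto q^{-1}$, the key point is that $q$-integers satisfy $\overline{[n]_i} = [n]_i$ and $\overline{[r]_i^!} = [r]_i^!$, so divided powers are fixed and the $q$-Serre relations go to themselves; the $K$-conjugation relations become $K_i^{-1} E_j K_i = q^{-i\cdot j} E_j$, which rearranges to the original; and the commutator relation's right-hand side $(K_i - K_i^{-1})/(q_i - q_i^{-1})$ is sent to $(K_i^{-1} - K_i)/(q_i^{-1} - q_i) = (K_i - K_i^{-1})/(q_i - q_i^{-1})$, hence preserved.

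The remaining routine checks are that each map is an involution, i.e.\ squares to the identity — immediate from the generator formulas — and for $\sigma$ that it is genuinely an anti-involution, which follows since $\sigma^2 = \mathrm{id}$ on generators and $\sigma$ reverses products by construction.

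I expect no serious obstacle here; this is a standard verification. The one point requiring mild care is bookkeeping the signs and index reversals in the $q$-Serre relations under $\sigma$ (the order-reversal) and confirming $\psi$ interacts correctly with the $q$-dependence of the divided powers — i.e.\ that $\psi(E_i^{(r)}) = E_i^{(r)}$ rather than picking up a factor from $q \mapsto q^{-1}$. Once one records $\overline{[n]} = [n]$, everything falls into place. In the write-up I would present the $\omega$ case in full and then remark that $\sigma$ and $\psi$ are entirely analogous, flagging only these two subtleties.
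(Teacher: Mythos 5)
The paper offers no proof of this lemma—it is asserted as a standard fact—so there is no "paper's approach" to compare against; your plan of verifying each relation from the presentation of $\U$ under each map is exactly the standard argument and is correct in substance. One small slip in the $\sigma$ case: since $\sigma$ both reverses products \emph{and} sends $K_i\mapsto K_i^{-1}$, one has $\sigma(K_iE_jK_i^{-1})=\sigma(K_i^{-1})\sigma(E_j)\sigma(K_i)=K_iE_jK_i^{-1}$, not $K_i^{-1}E_jK_i$ as you wrote — the relation is literally fixed by $\sigma$, which is simpler than "equivalent after rearranging" (had the intermediate expression really been $K_i^{-1}E_jK_i$, the rearranged relation would read $K_iE_jK_i^{-1}=q^{-i\cdot j}E_j$, which is \emph{not} the original). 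The rest — the sign cancellation in the commutator under $\sigma$, the re-indexing $r\mapsto 1-c_{ij}-r$ in the $q$-Serre sum, and the observation that $\overline{[n]_i}=[n]_i$ so $\psi$ fixes divided powers — is correct and captures the only points needing care.
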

Note that a $\Q$-linear map which sends $q^n \mapsto q^{-n}$ for all $n$ will be called {\em anti-linear}.

One has several bialgebra/Hopf algebra structures on $\U$ by ``twisting $\Delta$''. For example, we have
\begin{align}
  \Delta^{\mathrm{op}}& =P\circ\Delta;
   \label{Dop}\\
  {}^{\sigma}\Delta& =(\sigma\otimes \sigma)\circ\Delta\circ \sigma^{-1};
  \\
   {}^{\sigma} \Delta^{\mathrm{op}} & =(\sigma\otimes \sigma)\circ\Delta^{\mathrm{op}} \circ \sigma^{-1};
  \label{Dbartau} \\
  \overline{\Delta}: & \U\rightarrow\U\otimes\U, \quad \overline{\Delta}(u) :=\overline{\Delta(\overline{u})},
   \label{Dbar}
\end{align}
where $P(u_1 \otimes u_2) =  u_2 \otimes u_1$ and $\overline{u_1 \otimes u_2} = \overline{u_1} \otimes \overline{u_2}$, for $u_1, u_2 \in \U$.
Recall from \eqref{comult} that $\Delta (E_i) = 1\otimes E_i+E_i\otimes K_i^{-1}, \Delta(F_i)=F_i\otimes 1+K_i\otimes F_i$. Then, we obtain
\begin{align*}
\overline{\Delta}={}^{\sigma}\Delta : E_i \mapsto 1\otimes E_i+E_i\otimes K_i, &
\quad
F_i \mapsto F_i\otimes 1+K_i^{-1} \otimes F_i;
\\
\Delta^{\mathrm{op}} : E_i\mapsto E_i\otimes 1 + K_i^{-1}\otimes E_i, &
\quad
 F_i \mapsto 1\otimes F_i + F_i\otimes K_i;
\\
{}^{\sigma} \Delta^{\mathrm{op}} : E_i\mapsto E_i\otimes 1 + K_i \otimes E_i, &
\quad
 F_i \mapsto 1\otimes F_i +  F_i\otimes K_i^{-1}.
\end{align*}
(All these twisted comultiplications send $K_i\mapsto K_i\otimes K_i$.)

\subsection{A bilinear form}

Let ${}'\ff$ be a free $\mathbb{Q}(q)$-algebra generated by $\theta_i$ for $i\in \I$ associated with the Cartan datum of type $(\I, \cdot)$. As a $\mathbb{Q}(q)$-vector space, there is a direct sum decomposition
$${}'\ff=\bigoplus_{\mu\in\mathbb{N}\I}{{}'\ff}_\mu,$$
where $\theta_i$ has weight $\alpha_i$ for all $i\in \I$. We denote $|x|=\mu$ if $x\in{{}'\ff}_{\mu}$.
The tensor product ${}'\ff \otimes{}'\ff$ can be regarded as a $\mathbb{Q}(q)$-algebra with multiplication
$$(x_1\otimes x_2)(x_1'\otimes x_2')=q^{|x_2| \cdot |x_1'|}x_1x_1'\otimes x_2x_2'.$$

Let $r$ be the unique algebra homomorphism $r: {}'\ff\rightarrow{}'\ff\otimes {}'\ff$ such that
$$r(\theta_i)=\theta_i\otimes 1+1\otimes \theta_i, \quad\mbox{(for all $1\leq i<n$)}.$$
\begin{lemma} [see {\cite[Proposition~1.2.3]{Lus93}}]
  \label{lem:BF}
  There is a unique symmetric bilinear form $(\cdot,\cdot): {}'\ff\times {}'\ff\rightarrow \mathbb{Q}(q)$ such that
  \begin{itemize}
    \item[(1)] $(1,1)=1$;
    \item[(2)] $(\theta_i,\theta_j)=\delta_{ij}(1-q_i^2)^{-1}$ for all $i,j \in \I$;
    \item[(3)] $(x,yy')=(r(x),y\otimes y')$ for all $x,y,y'\in {}'\ff$;
    \item[(4)] $(xx',y)=(x\otimes x',r(y))$ for all $x,x',y\in{}'\ff$.
  \end{itemize}
  Here the bilinear form $(\cdot,\cdot)$ on ${}'\ff\otimes {}'\ff$ is given by
  $(x_1\otimes x_2,x_1'\otimes x_2')=(x_1,x_1')(x_2,x_2')$.
\end{lemma}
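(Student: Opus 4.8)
The plan is to prove uniqueness and existence separately, both by induction on the $\N\I$-weight --- this is essentially Lusztig's argument, \cite[\S1.2]{Lus93}. For \emph{uniqueness}, I would first observe that any form satisfying (1), (3), (4) must be compatible with the weight grading, i.e.\ $({}'\ff_\mu,{}'\ff_\nu)=0$ for $\mu\neq\nu$; this falls out of a short induction on total weight in which one strips a generator $\theta_i$ off one of the two arguments using (3) or (4). Granting that, fix $\nu\in\N\I$ and induct on $|\nu|$: the case $\nu=0$ is forced by (1), and for $|\nu|\geq1$, since ${}'\ff$ is free on the $\theta_i$, every $x\in{}'\ff_\nu$ is \emph{uniquely} a sum $x=\sum_i\theta_i x_i$ with $x_i\in{}'\ff_{\nu-\alpha_i}$, so combining (4) with weight-compatibility and (2) yields
\[
(x,y)=\sum_i(\theta_i\otimes x_i,\,r(y))=\sum_i\frac{1}{1-q_i^2}\,(x_i,\,{}_ir(y)),
\]
where ${}_ir(y)\in{}'\ff_{\nu-\alpha_i}$ is the unique element such that $\theta_i\otimes{}_ir(y)$ is the $({}'\ff_{\alpha_i}\otimes{}'\ff_{\nu-\alpha_i})$-component of $r(y)$ (meaningful because $\dim{}'\ff_{\alpha_i}=1$ and $r$ is given). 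Since $x_i$ and ${}_ir(y)$ have weight $\nu-\alpha_i$, the inductive hypothesis pins down $(x,y)$.

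For \emph{existence}, I would read this recursion as a definition: declare $({}'\ff_\mu,{}'\ff_\nu):=0$ for $\mu\neq\nu$ and $(1,1):=1$, and for $|\nu|\geq1$ set $(x,y):=\sum_i(1-q_i^2)^{-1}(x_i,{}_ir(y))$ with $x=\sum_i\theta_i x_i$; this is well-posed by freeness of ${}'\ff$. Then (1) and (2) are immediate from $r(\theta_j)=\theta_j\otimes1+1\otimes\theta_j$. Next I would record two structural properties of $r$, each verified on the generators: $r$ is counital, $(\varepsilon\otimes\mathrm{id})r=\mathrm{id}=(\mathrm{id}\otimes\varepsilon)r$ for the algebra map $\varepsilon\colon{}'\ff\to\mathbb{Q}(q)$ killing the $\theta_i$, and coassociative, $(r\otimes\mathrm{id})r=(\mathrm{id}\otimes r)r$ (with respect to the twisted multiplications). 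From these I would extract a twisted Leibniz rule ${}_ir(yy')=({}_ir\,y)\,y'+q^{\,\alpha_i\cdot|y|}\,y\,({}_ir\,y')$, the identity $r({}_ir\,y)=({}_ir\otimes\mathrm{id})r(y)$, and their right-handed analogues for the coderivations $r_i$. Properties (3) and (4) then follow by induction on the weight of the left, resp.\ right, argument, by substituting the expansion of $r$ on a left multiple $\theta_i x_i$ into the recursion.

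The hard part will be \emph{symmetry} of the form just constructed, since the recursion privileges the left argument. Iterating it gives
\[
(\theta_{i_1}\cdots\theta_{i_n},\,y)=\Big(\prod_{k=1}^n(1-q_{i_k}^2)^{-1}\Big)\big({}_{i_n}r\circ\cdots\circ{}_{i_1}r\big)(y)\ \in\ {}'\ff_0=\mathbb{Q}(q),
\]
so symmetry reduces to the identity $\big({}_{i_n}r\circ\cdots\circ{}_{i_1}r\big)(\theta_{j_1}\cdots\theta_{j_n})=\big({}_{j_n}r\circ\cdots\circ{}_{j_1}r\big)(\theta_{i_1}\cdots\theta_{i_n})$, the $q$-prefactors matching because $\{i_k\}$ and $\{j_k\}$ coincide as multisets. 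I would establish this by induction on $n$ out of coassociativity, counitality, and the Leibniz rule. The recurring danger throughout --- and where a careless computation fails --- is bookkeeping the powers $q^{|x_2|\cdot|x_1'|}$ introduced by the twisted multiplication on ${}'\ff\otimes{}'\ff$; everything else is formal. Once symmetry is in hand, (3) and (4) are in any case equivalent, via $(xx',y)=(y,xx')=(r(y),x\otimes x')=(x\otimes x',r(y))$.
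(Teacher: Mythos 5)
The paper itself gives no proof of this lemma; it simply defers to \cite[Proposition~1.2.3]{Lus93}, so there is no in-paper argument to compare against. Your proposal is, in structure, exactly the standard Lusztig argument: weight-compatibility from (3)/(4), uniqueness by stripping $\theta_i$ off the left argument via (4) and inducting on $|\nu|$, and existence by turning that recursion into a definition and then verifying the axioms using counitality, coassociativity of $r$, and the twisted Leibniz rule for the coderivations $_ir$, $r_i$. All of that is sound. The one place you make life harder than necessary is the closing discussion of symmetry. Your uniqueness argument nowhere presupposes symmetry (it uses only (1), (2), (4) and weight-compatibility), so it applies to an \emph{arbitrary} bilinear form satisfying (1)--(4). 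Hence, once you have verified (1)--(4) for the recursively constructed form, symmetry comes for free: set $(x,y)':=(y,x)$, check that $(\cdot,\cdot)'$ again satisfies (1)--(4) --- e.g.\ $(x,yy')'=(yy',x)=(y\otimes y',r(x))=(r(x),y\otimes y')'$ by (4) and the definition of the tensor pairing --- and invoke uniqueness to conclude $(\cdot,\cdot)'=(\cdot,\cdot)$. The iterated-coderivation commutation identity you propose to prove by induction is therefore redundant (and, as you note, the trickiest bookkeeping in the whole argument). The genuine work in the existence half is verifying (3) for the left-recursively defined form, which does require exactly the coassociativity/Leibniz machinery you identify, with careful tracking of the twist exponents $q^{|x_2|\cdot|x_1'|}$.
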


Let $\mathbf{Rad}$ be the radical of the bilinear form $(\cdot,\cdot)$ on ${}'\ff$. The quotient algebra $\ff={{}'\ff}/\mathbf{Rad}$ still has a decomposition (as a $\mathbb{Q}(q)$-vector space) $\ff=\bigoplus_{\mu}\ff_\mu$, where $\ff_\mu$ is the image of ${}'\ff_\mu$. We denote $|x|=\mu$ if $x\in\ff_{\mu}$. The bilinear form $(\cdot,\cdot)$ on ${}'\ff$ induces a non-degenerate symmetric bilinear form on $\ff$, denoted again by $(\cdot,\cdot)$. We denote again by $\theta_i$ the image of $\theta_i$ in $\ff$.

Let $\U^-$ and $\U^+$ be the $\mathbb{Q}(q)$-subalgebra of $\U$ generated by $F_i$ and $E_i$, respectively, for $i\in \I$.
It is known (see \cite{Lus93}) that there are $\mathbb{Q}(q)$-algebra isomorphisms
$${\cdot}^{-}:\ff\cong\U^- \ \mbox{such that}\ \theta_i^-=F_i;
\quad \mbox{and}\quad {\cdot}^{+}:\ff\cong\U^+ \ \mbox{such that}\  \theta_i^+=E_i.$$
Denote $\U^-_\mu$ (resp. $\U^+_\mu$) the isomorphic copy of $\ff_\mu$ in $\U^-$ (resp. $\U^+$).

\subsection{Quasi R-matrix}

Let $\widehat{\U^+ \otimes \U^-}$ be the completion of $\U^+ \otimes \U^-$.
Recall from \eqref{Dbar} that $\overline{\Delta} \neq \Delta$.

\begin{theorem}  \label{thm:quasi}
  There is a unique family of elements $\Theta_\mu\in\U^+_\mu\otimes \U^-_\mu$, $(\mu\in\mathbb{N}\I)$, such that $\Theta_0=1\otimes 1$ and
  $\Theta=\sum_\mu\Theta_\mu\in \widehat{\U^+ \otimes \U^-}$ satisfies
  \begin{equation}\label{eq:6}
    \Delta(u)\Theta=\Theta\overline{\Delta}(u) \quad\mbox{for all $u\in\U$}.
  \end{equation}
More precisely,
  \begin{equation}\label{eq:dual}
\Theta_\mu=\prod_{i\in \I} (-q_i)^{- \mu_i}\sum_{a}u_a^+\otimes w_a^-,
\end{equation}
where $\mu =\sum_i \mu_i \alpha_i$, $\{u_a\}$ is a basis of $\ff_\mu$ and $\{w_a\}$ is its dual basis with respect to $(\cdot,\cdot)$.
\end{theorem}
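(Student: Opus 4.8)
The plan is to reduce the intertwining identity \eqref{eq:6} to the algebra generators of $\U$, and then to translate the resulting recursion on the components $\Theta_\mu$ into the defining properties of the bilinear form $(\cdot,\cdot)$ from Lemma~\ref{lem:BF}. First note that for $u=K_i$ the identity is automatic: $\Delta(K_i)=\overline{\Delta}(K_i)=K_i\otimes K_i$, and since $\Theta_\mu\in\U^+_\mu\otimes\U^-_\mu$ the two weight twists cancel, so $(K_i\otimes K_i)\Theta_\mu=\Theta_\mu(K_i\otimes K_i)$ for \emph{any} choice of $\Theta_\mu\in\U^+_\mu\otimes\U^-_\mu$. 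Hence only the cases $u=E_i$ and $u=F_i$ carry information, and these two are structurally parallel (the first will involve the twisted derivations $r_i$ on $\ff$ and part~(3) of Lemma~\ref{lem:BF}, the second the twisted derivations ${}_ir$ and part~(4)); I would treat $E_i$ in detail and indicate that $F_i$ is handled symmetrically.

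Next I would substitute $\Theta=\sum_\mu\Theta_\mu$ into $\Delta(E_i)\Theta=\Theta\,\overline{\Delta}(E_i)$, using $\Delta(E_i)=1\otimes E_i+E_i\otimes K_i^{-1}$ and $\overline{\Delta}(E_i)=1\otimes E_i+E_i\otimes K_i$ from \eqref{comult} and \eqref{Dbar}, and compare $\N\I$-weight components. This produces, for each $\mu$, a recursion relating $\Theta_\mu$ to the $\Theta_{\mu-\alpha_i}$; to read off the $\U^-$-factors one uses the commutation formula expressing $[E_i,x^-]$ for $x\in\ff$ in terms of $r_i(x)^-$ and ${}_ir(x)^-$ (Lusztig's formula $[E_i,x^-]=(q_i-q_i^{-1})^{-1}\bigl(r_i(x)^-K_i-K_i^{-1}\,{}_ir(x)^-\bigr)$, adapted to the present conventions). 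Because $(\cdot,\cdot)$ is non-degenerate on each $\ff_\nu$, an identity in $\U^+_\nu\otimes\U^-_\nu$ can be checked by pairing the $\U^-$-factor against an arbitrary $x^-$; after this pairing the recursion becomes precisely the statement that multiplication by $\theta_i$ on $\ff$ is adjoint to $r_i$ (equivalently ${}_ir$) with respect to $(\cdot,\cdot)$, which is Lemma~\ref{lem:BF}(3)--(4) together with $(\theta_i,\theta_i)=(1-q_i^2)^{-1}$.

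For uniqueness, I would observe that the recursion just obtained, together with the normalization $\Theta_0=1\otimes1$, determines every $\Theta_\mu$ inductively on the height of $\mu$; hence at most one family can satisfy \eqref{eq:6}. For existence, I would plug the explicit element \eqref{eq:dual} into the recursion and verify it works. The conceptual input is that $\sum_a u_a\otimes w_a\in\ff_\mu\otimes\ff_\mu$ is the canonical element of the (non-degenerate) form and so is independent of the choice of basis $\{u_a\}$; applying $r_i$ to one tensor factor sends it to the analogous canonical element in $\ff_{\mu-\alpha_i}\otimes\ff_{\mu-\alpha_i}$, and the scalar $\prod_i(-q_i)^{-\mu_i}$ is exactly the constant that makes the $q_i-q_i^{-1}$ appearing in the commutation formula match the $(1-q_i^2)^{-1}$ from the normalization of the form as one passes from weight $\mu-\alpha_i$ to weight $\mu$. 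The same explicit formula makes the analogous verification for $F_i$ routine.

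I expect the third step to be the main obstacle: isolating and correctly normalizing the scalar $\prod_i(-q_i)^{-\mu_i}$, and carrying the signs and powers of $q_i$ faithfully through the recursion, is delicate --- all the more so because this paper works with $\Z[q]$-lattices and a comultiplication $\Delta$ differing from \cite{Lus93}, so the constants will not literally match any single reference. By contrast, once the commutation formula for $[E_i,x^-]$ and the adjointness of multiplication by $\theta_i$ and $r_i$ are set up, the intertwining identity \eqref{eq:6} is essentially formal.
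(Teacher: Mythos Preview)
Your proposal is correct and follows exactly the standard Lusztig argument from \cite[Theorem~4.1.2]{Lus93}, which is precisely what the paper cites as its proof (the paper gives no independent argument here, only the reference). Your outline --- reducing to generators, showing the $K_i$ case is trivial, rewriting the $E_i$ and $F_i$ cases as a recursion via the commutation formula $[E_i,x^-]=(q_i-q_i^{-1})^{-1}\bigl(r_i(x)^-K_i-K_i^{-1}\,{}_ir(x)^-\bigr)$, and then matching that recursion to the adjointness of $\theta_i$-multiplication and $r_i$ with respect to $(\cdot,\cdot)$ --- is the proof in Lusztig's book, modulo the convention changes you correctly flag (the $\Z[q]$-lattice, the swapped tensor factors, and the resulting sign/$q$-power in the scalar $\prod_i(-q_i)^{-\mu_i}$).
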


\begin{proof}
See {\cite[Theorem~4.1.2]{Lus93}}.
\end{proof}
We call $\Theta$ the {\em quasi R-matrix} for $\U$.
Note that $\Theta_\mu\in\U^-_\mu\otimes \U^+_\mu$ in \cite{Lus93} (instead of being in $\U^+_\mu\otimes \U^-_\mu$ here), as Lusztig's comultiplication corresponds to $ {}^{\sigma} \Delta^{\mathrm{op}}$ in \eqref{Dbartau} (instead of $\Delta$ in \eqref{comult}). The sign convention in $q$-powers in Lemma~\ref{lem:BF}(2) (different from Lusztig) has led to a different sign in \eqref{eq:dual}.

\begin{corollary}  \label{cor:Theta}
We have
  $\overline{\Theta}=\Theta^{-1}$.
\end{corollary}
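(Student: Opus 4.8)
The plan is to exploit the defining intertwining property \eqref{eq:6} of $\Theta$ together with the uniqueness statement in Theorem~\ref{thm:quasi}. First I would apply the bar involution $\psi$ to both sides of the identity $\Delta(u)\Theta = \Theta\,\overline{\Delta}(u)$. Since $\overline{\Delta(u)\Theta} = \overline{\Delta}(\overline{u})\,\overline{\Theta}$ by definition of $\overline{\Delta}$ in \eqref{Dbar}, and similarly $\overline{\Theta\,\overline{\Delta}(u)} = \overline{\Theta}\,\overline{\overline{\Delta}(u)} = \overline{\Theta}\,\Delta(\overline u)$ (using $\overline{\overline{\Delta}(u)} = \overline{\overline{\Delta(\overline u)}} = \Delta(\overline u)$), barring \eqref{eq:6} yields
\[
\overline{\Delta}(\overline u)\,\overline{\Theta} = \overline{\Theta}\,\Delta(\overline u) \quad\text{for all } u\in\U,
\]
or equivalently, replacing $\overline u$ by $u$,
\[
\Delta(u)\,\overline{\Theta} = \overline{\Theta}\,\overline{\Delta}(u)^{?}
\]
— here I need to be slightly careful: the above says $\overline{\Delta}(u)\,\overline{\Theta} = \overline{\Theta}\,\Delta(u)$, i.e. $\overline{\Theta}$ intertwines $\overline{\Delta}$ and $\Delta$ in the opposite direction.

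Next I would show that multiplying \eqref{eq:6} on the appropriate side by $\overline{\Theta}$ produces a relation characterizing $\Theta\,\overline{\Theta}$. Concretely, from $\Delta(u)\Theta = \Theta\,\overline{\Delta}(u)$ and $\overline{\Delta}(u)\,\overline{\Theta} = \overline{\Theta}\,\Delta(u)$ one computes
\[
\Delta(u)\,\Theta\,\overline{\Theta} = \Theta\,\overline{\Delta}(u)\,\overline{\Theta} = \Theta\,\overline{\Theta}\,\Delta(u),
\]
so $\Theta\,\overline{\Theta}$ commutes with $\Delta(u)$ for all $u\in\U$. Now I examine the weight/triangular structure: $\Theta = \sum_\mu \Theta_\mu$ with $\Theta_\mu \in \U^+_\mu \otimes \U^-_\mu$ and $\Theta_0 = 1\otimes 1$, while $\overline{\Theta} = \sum_\nu \overline{\Theta_\nu}$ with $\overline{\Theta_\nu} \in \U^+_\nu \otimes \U^-_\nu$ as well (the bar involution preserves $\U^\pm$ and weights). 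Hence $\Theta\,\overline{\Theta} = \sum_{\mu,\nu} \Theta_\mu \overline{\Theta_\nu}$ lies in $\widehat{\U^+\otimes\U^-}$ and its component of total weight $0$ is exactly $\Theta_0 \overline{\Theta_0} = 1\otimes 1$, since all other $\Theta_\mu$, $\overline{\Theta_\nu}$ carry strictly positive weight on the first tensor factor.

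The final step is a uniqueness argument. I claim that $1\otimes 1$ is the only element of $\widehat{\U^+\otimes\U^-}$ (with trivial weight-$0$ component equal to $1\otimes1$, and decomposing into homogeneous pieces in $\U^+_\mu\otimes\U^-_\mu$) that is central with respect to all $\Delta(u)$, $u\in\U$. This follows by the same argument used to prove uniqueness in Theorem~\ref{thm:quasi}: if $X = \sum_\mu X_\mu$ with $X_0 = 1\otimes 1$ satisfies $\Delta(u)X = X\Delta(u)$, then testing against $u = E_i$ and $u = F_i$ and comparing weight components forces $X_\mu = 0$ for $\mu \neq 0$ inductively on the height of $\mu$ (this is precisely the computation, run with $\Delta$ in place of the pair $(\Delta,\overline\Delta)$, underlying Lusztig's proof). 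Therefore $\Theta\,\overline{\Theta} = 1\otimes 1$, which gives $\overline{\Theta} = \Theta^{-1}$. The main obstacle is making the uniqueness step precise in the completed tensor algebra: one must check convergence/well-definedness of products in $\widehat{\U^+\otimes\U^-}$ and verify that the inductive vanishing argument genuinely applies to a central element rather than an intertwiner; but since the excerpt grants Theorem~\ref{thm:quasi}, whose proof contains exactly this inductive mechanism, I would simply invoke that machinery. Alternatively, and perhaps more cleanly, one notes that $\Theta^{-1}$ exists in $\widehat{\U^+\otimes\U^-}$ (invert the geometric-type series $1 + \sum_{\mu\neq 0}\Theta_\mu$) and satisfies $\overline{\Delta}(u)\,\Theta^{-1} = \Theta^{-1}\,\Delta(u)$; comparing with the barred identity $\overline{\Delta}(u)\,\overline{\Theta} = \overline{\Theta}\,\Delta(u)$ and using uniqueness of such an intertwiner with constant term $1\otimes1$ yields $\overline{\Theta} = \Theta^{-1}$ directly.
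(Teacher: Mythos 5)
Your proposal is correct, and the ``cleaner alternative'' you offer at the end is essentially the paper's own argument: bar both sides of \eqref{eq:6} with $u$ replaced by $\bar u$ to find that $\overline\Theta$ intertwines $\overline\Delta$ with $\Delta$ in the reverse direction, observe this is the same as saying $\overline\Theta^{-1}$ satisfies \eqref{eq:6} and has constant term $1\otimes1$, and then conclude $\overline\Theta^{-1}=\Theta$ directly from the uniqueness clause of Theorem~\ref{thm:quasi}. Your primary route --- showing $\Theta\,\overline\Theta$ commutes with every $\Delta(u)$ and then arguing that a $\Delta$-central element of $\widehat{\U^+\otimes\U^-}$ with constant term $1\otimes1$ and components $X_\mu\in\U^+_\mu\otimes\U^-_\mu$ must equal $1\otimes1$ --- is also sound, but it requires a fresh uniqueness lemma (for $\Delta$-centralizers of the given triangular shape) rather than reusing the uniqueness already granted by Theorem~\ref{thm:quasi}; you correctly flag this as the step needing the most care. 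Since both uniqueness statements rest on the same induction on the height of $\mu$, the two routes are comparable in content, but the paper's version (your alternative) is shorter because it invokes the stated theorem verbatim instead of re-running its inductive engine in a slightly different guise.
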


\begin{proof}
By replacing $u$ in \eqref{eq:6} by $\overline{u}$, applying the bar involution on both sides and then rewriting, we obtain that $\Delta\circ\overline{\Theta}^{-1}=\overline{\Theta}^{-1}\circ\overline{\Delta}$. Moreover, $\overline{\Theta}^{-1}_0=1\otimes 1$. By the uniqueness in Theorem~\ref{thm:quasi}, we obtain $\overline{\Theta}^{-1} =\Theta$, or equivalently, $\overline{\Theta}=\Theta^{-1}$.
\end{proof}

\subsection{R-matrix}
\label{subsec:R}

Given finite dimensional $\U$-modules $M$ and $M'$, we know $M\otimes M'\cong M'\otimes M$ as $\U$-modules because they are both completely reducible (over $\mathbb{Q}(q)$) and have the same character. But in contrast to the Lie algebra setting, $P: M\otimes M'\rightarrow M'\otimes M$ is not an isomorphism of $\U$-modules, since $\Delta$ is not co-commutative, i.e., $\Delta^{\mathrm{op}} \neq \Delta$; cf. \eqref{Dop}. Drinfeld invented the notion of R-matrix, in place of $P$, to do the job.

Recall $X$ is the weight lattice. The following is elementary.
\begin{lemma}
  \label{lem:2}
  There exists $f: X\times X \rightarrow \mathbb{Q}(q)$ such that, for any $\lambda,\mu \in X$ and $\nu \in \Z \I$,
  \[
  \left\{\begin{array}{l}
    f(\lambda+\nu,\mu)=q^{\nu \cdot \mu}f(\lambda,\mu),\\
    f(\lambda,\mu+\nu)=q^{\nu \cdot\lambda}f(\lambda,\mu).
  \end{array}
  \right.
 \]
 \end{lemma}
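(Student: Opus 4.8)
The plan is to construct $f$ by hand. Since $\mathbb{Z}\I$ is a subgroup of the weight lattice $X$, I fix once and for all a complete set $R$ of coset representatives for $X/\mathbb{Z}\I$, so that each $\la\in X$ has a unique decomposition $\la=\bar\la+\la^{\flat}$ with $\bar\la\in R$ and $\la^{\flat}\in\mathbb{Z}\I$; note that $\overline{\la+\nu}=\bar\la$ and $(\la+\nu)^{\flat}=\la^{\flat}+\nu$ for all $\nu\in\mathbb{Z}\I$. Recall from the excerpt that the pairing $\mathbb{Z}\I\times X\to\Z$ is $\Z$-bilinear and restricts to the symmetric form on $\mathbb{Z}\I\times\mathbb{Z}\I$. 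I then set
\[
f(\la,\mu):=q^{\la^{\flat}\cdot\bar\mu}\,q^{\mu^{\flat}\cdot\bar\la}\,q^{\la^{\flat}\cdot\mu^{\flat}},\qquad\la,\mu\in X,
\]
each exponent being meaningful: the first two via the extended pairing $\mathbb{Z}\I\times X\to\Z$, the last via the form on $\mathbb{Z}\I$. (More generally one could multiply the right-hand side by $g(\bar\la,\bar\mu)$ for an arbitrary function $g\colon R\times R\to\Q(q)$; the choice $g\equiv 1$ above already suffices.)

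It then remains to verify the two identities, which is a direct computation. For $\nu\in\mathbb{Z}\I$, using $(\la+\nu)^{\flat}=\la^{\flat}+\nu$, $\overline{\la+\nu}=\bar\la$, and $\Z$-bilinearity of the pairing in each slot,
\[
f(\la+\nu,\mu)=q^{(\la^{\flat}+\nu)\cdot\bar\mu}\,q^{\mu^{\flat}\cdot\bar\la}\,q^{(\la^{\flat}+\nu)\cdot\mu^{\flat}}=q^{\nu\cdot\bar\mu}\,q^{\nu\cdot\mu^{\flat}}\,f(\la,\mu)=q^{\nu\cdot(\bar\mu+\mu^{\flat})}f(\la,\mu)=q^{\nu\cdot\mu}f(\la,\mu).
\]
Likewise, using in addition the symmetry $\la^{\flat}\cdot\nu=\nu\cdot\la^{\flat}$ of the form on $\mathbb{Z}\I$,
\[
f(\la,\mu+\nu)=q^{\la^{\flat}\cdot\bar\mu}\,q^{(\mu^{\flat}+\nu)\cdot\bar\la}\,q^{\la^{\flat}\cdot(\mu^{\flat}+\nu)}=q^{\nu\cdot\bar\la}\,q^{\nu\cdot\la^{\flat}}\,f(\la,\mu)=q^{\nu\cdot\la}f(\la,\mu),
\]
which completes the proof.

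There is no serious obstacle here; the only point that deserves a second thought is the internal consistency of such a recipe. One could instead define $f$ on $R\times R$ arbitrarily and extend to $X\times X$ by repeatedly applying the two prescribed rules, in which case one must check that shifting the first argument by $\nu$ and then the second by $\nu'$ yields the same value as performing these shifts in the opposite order. The two orders differ by the factor $q^{\nu\cdot\nu'-\nu'\cdot\nu}$, which equals $1$ precisely because the bilinear form on $\mathbb{Z}\I$ is symmetric; the closed formula for $f$ above simply builds this compatibility in from the start, so no case analysis is needed.
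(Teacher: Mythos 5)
The paper states this lemma as ``elementary'' and offers no proof, so there is nothing in the text to compare against directly; the standard reference for this kind of function $f$ is Jantzen's book (cited by the authors for the adjacent Theorem~\ref{thm:R}). Your construction is correct, and it handles the one genuine subtlety here cleanly: the pairing $\cdot$ is only guaranteed to take integer values on $\Z\I\times X$ (it need not extend integrally to $X\times X$), so one cannot simply set $f(\la,\mu)=q^{\la\cdot\mu}$. Decomposing $\la=\bar\la+\la^{\flat}$ against a fixed set of coset representatives for $X/\Z\I$ ensures every exponent in your formula lands in $\Z$, and the two covariance identities then follow by bilinearity; your closing remark correctly identifies the symmetry of the form on $\Z\I$ as exactly the compatibility condition needed if one instead builds $f$ by propagating arbitrary values on $R\times R$. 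The only caveat worth keeping in mind is that in the later application \eqref{ftilde} the authors want a \emph{specific} symmetric $f$ with $f(\epsilon_i,\epsilon_j)=q^{\delta_{ij}}$, which is not literally the $g\equiv1$ instance of your family but is covered by the freedom you note in choosing $g$ (one also wants $f$ symmetric, which your $g\equiv1$ choice does give, since $\la^{\flat}\cdot\bar\mu+\mu^{\flat}\cdot\bar\la+\la^{\flat}\cdot\mu^{\flat}$ is symmetric in $\la,\mu$). So the proposal is a complete, correct proof of the existence claim.
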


Let $M$ and $M'$ be finite dimensional $\U$-modules. The map $f$ induces a $\Q(q)$-linear map
  $\widetilde{f}: M\otimes M'\rightarrow M\otimes M'$ such that
  $$\widetilde{f}(x\otimes x')=f(\lambda,\mu)x\otimes x', \quad \mbox{for $x\in M_\lambda, x'\in M_\mu'$}.$$

\begin{theorem}[R-matrix]\label{thm:R}
The following identities hold:
  \begin{itemize}
    \item[(1)] $\overline{\Delta}(u)\circ \widetilde{f}=\widetilde{f}\circ \Delta^{\mathrm{op}}(u), \forall u\in \U$.
    \item[(2)] Let $\Theta^f=\Theta\circ \widetilde{f}$. Then $\Delta(u)\circ\Theta^f=\Theta^f\circ\Delta^{\mathrm{op}}(u)$, for all $u\in \U$.
    \item[(3)] $R:=\Theta\circ\widetilde{f}\circ P: M\otimes M'\rightarrow M'\otimes M$ is a $\U$-module isomorphism.
  \end{itemize}
\end{theorem}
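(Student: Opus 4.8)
The plan is to verify part (1) by checking the asserted identity on the algebra generators $E_i, F_i, K_i^{\pm 1}$, to deduce part (2) formally from (1) together with Theorem~\ref{thm:quasi}, and to obtain part (3) by composing (2) with the elementary identity $P\circ\Delta(u)=\Delta^{\mathrm{op}}(u)\circ P$ and then observing that $R$ is built out of invertible maps.

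For (1): since $\widetilde f$ acts diagonally in any weight basis and all of $\Delta,\overline\Delta,\Delta^{\mathrm{op}}$ send $K_i\mapsto K_i\otimes K_i$, the identity is immediate on the $K_i^{\pm1}$. For $E_i$ (and for $F_i$ by the same argument) I would evaluate both sides on a weight vector $x\otimes x'$ with $x\in M_\lambda$, $x'\in M'_\mu$, using the formulas $\overline\Delta(E_i)=1\otimes E_i+E_i\otimes K_i$ and $\Delta^{\mathrm{op}}(E_i)=E_i\otimes 1+K_i^{-1}\otimes E_i$ recorded above. After commuting $\widetilde f$ past the relevant operators, the two sides differ only by the scalars $f(\lambda+\alpha_i,\mu)$ versus $q^{i\cdot\mu}f(\lambda,\mu)$, and $q^{-i\cdot\lambda}f(\lambda,\mu+\alpha_i)$ versus $f(\lambda,\mu)$, produced by the Cartan factors $K_i$, $K_i^{-1}$; these agree precisely by the two functional equations for $f$ in Lemma~\ref{lem:2}. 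This bookkeeping (with the analogous computation for $F_i$) is the only step requiring genuine care, and it is where I would be most attentive to the sign and exponent conventions.

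For (2): combine Theorem~\ref{thm:quasi}, i.e. $\Delta(u)\Theta=\Theta\overline\Delta(u)$, with part (1). Setting $\Theta^f=\Theta\circ\widetilde f$, one gets
\[
\Delta(u)\circ\Theta^f=\Delta(u)\,\Theta\,\widetilde f=\Theta\,\overline\Delta(u)\,\widetilde f=\Theta\,\widetilde f\,\Delta^{\mathrm{op}}(u)=\Theta^f\circ\Delta^{\mathrm{op}}(u),
\]
which is exactly (2). Note that $\Theta$ is a well-defined operator on $M\otimes M'$ because $M,M'$ are finite dimensional, so only finitely many $\Theta_\mu$ act nontrivially.

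For (3): write $R=\Theta^f\circ P$. Since $P\circ\Delta(u)=\Delta^{\mathrm{op}}(u)\circ P$ as maps $M\otimes M'\to M'\otimes M$, part (2) gives $R\circ\Delta(u)=\Theta^f\circ\Delta^{\mathrm{op}}(u)\circ P=\Delta(u)\circ\Theta^f\circ P=\Delta(u)\circ R$, i.e. $R$ intertwines the $\U$-actions on $M\otimes M'$ and on $M'\otimes M$. Finally, $R$ is bijective because each of its three factors is: $P$ is a bijection, $\widetilde f$ is diagonal with nonzero eigenvalues $f(\lambda,\mu)$ (the function $f$ is chosen to be unit-valued), and $\Theta$ is invertible with $\Theta^{-1}=\overline\Theta$ by Corollary~\ref{cor:Theta}. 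Hence $R$ is a $\U$-module isomorphism. Since the substantive input is Theorem~\ref{thm:quasi}, which we take as given, there is no deep obstacle here; the only delicate point is the generator computation in (1).
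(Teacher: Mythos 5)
Your proposal is correct and follows essentially the same path as the paper's sketch: verify (1) on the generators $E_i, F_i, K_i^{\pm1}$ by direct computation using the functional equations for $f$, deduce (2) formally from (1) and Theorem~\ref{thm:quasi}, and obtain (3) via $P\circ\Delta(u)=\Delta^{\mathrm{op}}(u)\circ P$ together with invertibility of the three factors. Your generator bookkeeping for (1) and the explicit invertibility justification in (3) simply flesh out what the paper leaves as ``a direct computation'' and ``clearly an isomorphism of vector spaces.''
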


\begin{proof}[Sketch of a proof] (See \cite[Chapter 7 and \S3.13-3.14]{Jan95} for details, up to some different sign convention on $q$-powers.)

(1) By a simple reduction it suffices to verify the identity on generators $u=E_i,F_i,K_i$. This is achieved by a direct computation, and the precise formulas for $f$ in Lemma~\ref{lem:2} actually arise from this computation.

(2) It follows from Part (1) and \eqref{eq:6} formally that $$\Delta(u)\circ\Theta^f=\Delta(u)\Theta\circ\widetilde{f}=\Theta\overline{\Delta}(u)\circ\widetilde{f}=
  \Theta\widetilde{f}\Delta^{\mathrm{op}}(u)=\Theta^f\circ\Delta^{\mathrm{op}}(u).$$

(3) Clearly, $R$ is an isomorphism of vector spaces. Let $v=m\otimes m' \in M\otimes M'$. Then, $P(uv)=P(\Delta(u)v)=\Delta^{\mathrm{op}}(u)P(v)$, for $u\in \U$. Therefore, by Part (2),
 \[
 R(uv)=\Theta^f\circ P(uv)=\Theta^f\circ\Delta^{\mathrm{op}}(u)P(v)=\Delta(u)\Theta^fP(v)=\Delta(u)R(v),
 \]
that is, $R$ is a $\U$-module homomorphism.
\end{proof}

\subsection{Yang-Baxter equation}

Let $M, M', M''$ be finite-dimensional $\U$-modules. Recall from Theorem~\ref{thm:R} that $\Theta^f \in\widehat{\U\otimes\U}$, a certain completion of $\U\otimes\U$. Denote by $R_{ij}$ $(1\leq i<j\leq3)$, a version of the R-matrix $R$ supported at the $i$th and $j$th factors, which gives rise to a $\U$-module isomorphism from $M\otimes M'\otimes M''$,
e.g.,
$R_{12}=R\otimes 1:M\otimes M'\otimes M''\rightarrow M'\otimes M\otimes M''$.

\begin{theorem} [Yang-Baxter Equation] \label{thm:YB}
We have the identification of $\U$-module isomorphisms:
\[
R_{12}\circ R_{23}\circ R_{12}=R_{23}\circ R_{12}\circ R_{23}: M\otimes M'\otimes M''\rightarrow M''\otimes M'\otimes M,
\]
 or equivalently
\[
\Theta^f_{12}\circ\Theta^f_{13}\circ\Theta^f_{23}=\Theta^f_{23}\circ\Theta^f_{13}\circ\Theta^f_{12}:
  M\otimes M'\otimes M''\rightarrow M\otimes M'\otimes M''.
\]
\end{theorem}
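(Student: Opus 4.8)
The plan is to prove the second, ``universal'' form of the identity,
\[
\Theta^f_{12}\circ\Theta^f_{13}\circ\Theta^f_{23}=\Theta^f_{23}\circ\Theta^f_{13}\circ\Theta^f_{12}
\]
as an operator on $M\otimes M'\otimes M''$ (equivalently, as an element of a completion $\widehat{\U\otimes\U\otimes\U}$), and then deduce the braided form $R_{12}R_{23}R_{12}=R_{23}R_{12}R_{23}$. The deduction is pure bookkeeping: by Theorem~\ref{thm:R} one has $R=\Theta^f\circ P$, so each $R_{ij}$ equals $\Theta^f_{ij}$ preceded by the transposition $P_{ij}$ of the two relevant tensor slots; sliding all the $P_{ij}$ to the right through the $\Theta^f_{kl}$ (which merely relabels slots) yields $R_{12}R_{23}R_{12}=\Theta^f_{12}\Theta^f_{13}\Theta^f_{23}\cdot P_{12}P_{23}P_{12}$ and $R_{23}R_{12}R_{23}=\Theta^f_{23}\Theta^f_{13}\Theta^f_{12}\cdot P_{23}P_{12}P_{23}$, and $P_{12}P_{23}P_{12}=P_{23}P_{12}P_{23}$ (both equal the transposition of the first and third slots). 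Note also that both sides of either form are $\U$-module isomorphisms $M\otimes M'\otimes M''\to M''\otimes M'\otimes M$, being composites of such by Theorem~\ref{thm:R}(3).

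The key structural input for the universal identity is the ``hexagon'' (quasi-cocommutativity) identity in $\widehat{\U\otimes\U\otimes\U}$,
\[
(\Delta^{\mathrm{op}}\otimes 1)(\Theta^f)=\Theta^f_{13}\,\Theta^f_{23},
\]
together with its $P$-conjugate $(\Delta\otimes 1)(\Theta^f)=\Theta^f_{23}\,\Theta^f_{13}$, which follows at once from $\Delta=P\circ\Delta^{\mathrm{op}}$ (from \eqref{Dop}, since $P^2=\mathrm{id}$). Granting these, Drinfeld's short formal argument finishes the job. From Theorem~\ref{thm:R}(2), namely $\Delta(u)\Theta^f=\Theta^f\Delta^{\mathrm{op}}(u)$, one gets for elementary tensors $X=u\otimes v$ (hence, by bilinearity and the grading, for $X$ in the completion) the relation $\Theta^f_{12}\circ(\Delta^{\mathrm{op}}\otimes 1)(X)=(\Delta\otimes 1)(X)\circ\Theta^f_{12}$. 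Applying this with $X=\Theta^f$ and using the two hexagon identities:
\[
\Theta^f_{12}\Theta^f_{13}\Theta^f_{23}
=\Theta^f_{12}\,(\Delta^{\mathrm{op}}\otimes 1)(\Theta^f)
=(\Delta\otimes 1)(\Theta^f)\,\Theta^f_{12}
=\Theta^f_{23}\Theta^f_{13}\Theta^f_{12}.
\]

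It remains to prove the hexagon identity, which is where the real work is. I would establish it by the uniqueness principle already used for Theorem~\ref{thm:quasi} and Corollary~\ref{cor:Theta}: apply $\Delta^{\mathrm{op}}\otimes 1$ to the defining relation \eqref{eq:6} for $\Theta$, use the coassociativity of $\Delta$ and of $\overline\Delta$, and then check that $\Theta^f_{13}\Theta^f_{23}$ lies in the appropriate graded completion, has leading term $1\otimes 1\otimes 1$, and satisfies the resulting (iterated) intertwining relation between the relevant pair of comultiplications; uniqueness then forces equality. The subtle point --- and the one place where genuine care replaces formal manipulation --- is the interaction between $\Theta$ and the weight-twisting operator $\widetilde f$: the clean hexagon holds for the composite $\Theta^f=\Theta\circ\widetilde f$ but fails for $\Theta$ (or $\widetilde f$) separately, and the stray Cartan/weight factors produced on the two sides cancel precisely because of the bicharacter identities for $f$ in Lemma~\ref{lem:2}. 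Carefully tracking those weight factors, and checking that the infinite sums make sense in the completions $\widehat{\U^{\otimes k}}$ (harmless, since on any finite-dimensional module only finitely many $\Theta_\mu$ act nontrivially), is the main obstacle; everything after that is the two-line computation above. (As a reassuring special case, for $M=M'=M''=\V$ over $\U_q(\mathfrak{gl}_N)$ the identity reduces to the braid relation among the Hecke-algebra generators on $\V^{\otimes 3}$; cf.\ Section~\ref{sec:qSchur}.)
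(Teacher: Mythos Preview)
Your sketch is correct and follows the standard argument in Jantzen \cite[\S7.4--7.5, \S3.17]{Jan95}, which is precisely what the paper cites in lieu of a proof. The hexagon identities you state for $\Theta^f$, the two-line Drinfeld derivation from them, and the bookkeeping reduction from the braided form to the universal form are all right; your caveat about tracking the weight factors from $\widetilde f$ is exactly the content of Jantzen's \S3.15--3.17.
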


\begin{proof}
  See \cite[\S7.4-7.5, \S3.17]{Jan95}.
\end{proof}


Let $V$ be a finite-dimensional $\U$-module. Let
\begin{align}
  \label{eq:Ri}
R_i=1^{\otimes i-1}\otimes R_{i,i+1}\otimes 1^{\otimes m-i-1}: V^{\otimes m} \longrightarrow V^{\otimes m}, \quad (1\leq i\leq m-1).
\end{align}
Then it follows by Theorem~\ref{thm:YB} that $R_i\in \mathrm{End}_{\U}(V^{\otimes m})$ satisfy the braid relations:
\begin{equation}
  \label{eq:braidR}
R_iR_{i+1}R_i=R_{i+1}R_{i}R_{i+1}, \qquad R_iR_j=R_jR_i\quad (|i-j|>1).
\end{equation}

\section{Kazhdan-Lusztig bases for Hecke algebras}
  \label{sec:Hecke}

In this section, we will construct the Kazhdan-Lusztig (KL) bases for Hecke algebras. KL bases were introduced in the seminal  paper of Kazhdan-Lusztig \cite{KL79}. 

\subsection{Weyl groups}
Let $W$ be a finite Weyl group with identity $\mathbf{e}$; $W$ admits a Coxeter presentation
$$W=\left\langle s_i\; (i\in \I)~|~(s_is_j)^{m_{ij}}=\mathbf{e} \right\rangle,$$
where $m_{ij}=\mathrm{order}(s_is_j)$ in $W$, and $m_{ii}=1$.   There exists a length function $\ell: W\rightarrow \N$ such that $\ell(\sigma)=k$ if $\sigma\in W$ can be written as $\sigma=s_{i_1}s_{i_2}\cdots s_{i_k}$ with $k$ minimal; such $s_{i_1}s_{i_2}\cdots s_{i_k}$ is called a reduced expression of $\sigma$.

Denote by $w_0$ the longest element in $W$.
For example, the type $A_{m-1}$ Weyl group is the symmetric group $\mf S_m$:\
\[
\mathfrak{S}_m=\left\langle s_1,s_2,\ldots,s_{m-1}~|~s_i^2=\mathbf{e}, (s_is_{i+1})^3=\mathbf{e},
s_is_j=s_js_i\ (|i-j|>1) \right\rangle,
\]
where $s_i=(i,i+1)$ are simple transpositions. In this case, $w_0$ sends $i$ to $m+1-i$, for each $1\le i\le m$.

We say $\sigma$ is covered by $\sigma'$ ($\sigma\leftarrow\sigma'$) if $\sigma'=s_\beta\sigma$ for some $\beta\in\Phi$ and $\ell(\sigma')>\ell(\sigma)$. Define a partial order ``$<$'' on $W$ (called Bruhat order):
$\sigma<w$ if there exist $\beta_1,\ldots,\beta_k\in\Phi$ such that $w=s_{\beta_k}\cdots s_{\beta_1}\sigma$ and $\sigma\leftarrow s_{\beta_1}\sigma\leftarrow s_{\beta_2}s_{\beta_1}\sigma\leftarrow\cdots\leftarrow s_{\beta_k}\cdots s_{\beta_{1}}\sigma=w$.

\begin{lemma}
\label{lem:Bruhat}
  \begin{itemize}
    \item[(1)] If $\sigma$ is obtained from $w$ by deleting some elements in a reduced expression of $w$, then $\sigma<w$.
    \item[(2)]    $\sigma\leq w \; \Leftrightarrow \; w_0w\leq w_0\sigma.$
  \end{itemize}
\end{lemma}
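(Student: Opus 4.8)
The statement is the standard Bruhat-order lemma, and the plan is to prove the two parts essentially independently, with part (1) via the strong exchange/subword property and part (2) by a length-reversing symmetry argument. For part (1), I would first reduce to the case of deleting a \emph{single} simple reflection from a fixed reduced expression $w = s_{i_1}\cdots s_{i_k}$, since deleting several letters is just an iteration of that step, and a composite of covering relations $\sigma \leftarrow \cdots \leftarrow w$ is exactly what the definition of $<$ requires. So the crux is: if $\sigma = s_{i_1}\cdots \widehat{s_{i_j}} \cdots s_{i_k}$ (with one letter omitted) is \emph{reduced} of length $k-1$, then $\sigma \leftarrow w$, i.e.\ $w = s_\beta \sigma$ for a single reflection $s_\beta$ with $\ell(w) > \ell(\sigma)$. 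Here one takes $\beta = s_{i_k}s_{i_{k-1}}\cdots s_{i_{j+1}}(\alpha_{i_j})$, a positive root, and checks $s_\beta \sigma = w$ by telescoping the conjugation; the length inequality is automatic since $\ell(\sigma) = k-1 < k = \ell(w)$. The one subtlety is that an arbitrary deletion of several letters need not be reduced on the nose, but one can always pass to a reduced subexpression of it (deleting further letters only helps), so it suffices to treat reduced subwords, and then induct on the number of deleted letters.

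\textbf{Part (2).} For $\sigma \le w \Leftrightarrow w_0 w \le w_0 \sigma$, the plan is to use the well-known identities $\ell(w_0 x) = \ell(w_0) - \ell(x)$ for all $x \in W$ and $w_0 \Phi^+ = \Phi^-$. The map $x \mapsto w_0 x$ is an involution on $W$ that reverses length, and the claim is precisely that it reverses Bruhat order. One direction suffices by symmetry (apply the result to $w_0\sigma, w_0 w$ and use $w_0^2 = \mathbf e$). To get ``$\sigma \le w \Rightarrow w_0 w \le w_0 \sigma$'': take a chain $\sigma = \sigma_0 \leftarrow \sigma_1 \leftarrow \cdots \leftarrow \sigma_k = w$ realizing $\sigma < w$, where $\sigma_{t} \leftarrow \sigma_{t+1}$ means $\sigma_{t+1} = s_{\gamma_t}\sigma_t$ with $\ell(\sigma_{t+1}) > \ell(\sigma_t)$. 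I would conjugate: $w_0 \sigma_{t+1} = (w_0 s_{\gamma_t} w_0)(w_0 \sigma_t) = s_{\gamma_t'}(w_0\sigma_t)$ where $\gamma_t' = w_0(\gamma_t) \in \Phi$ (a root, since $w_0$ permutes $\Phi$). Because $\ell(w_0 \sigma_{t+1}) = \ell(w_0) - \ell(\sigma_{t+1}) < \ell(w_0) - \ell(\sigma_t) = \ell(w_0\sigma_t)$, each step is a covering relation \emph{in the opposite direction}: $w_0 w = w_0\sigma_k \leftarrow w_0 \sigma_{k-1} \leftarrow \cdots \leftarrow w_0\sigma_0 = w_0 \sigma$. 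Reading this chain backwards gives $w_0 w < w_0 \sigma$, and combined with the possibility $\sigma = w$ (handled trivially) we get $w_0 w \le w_0\sigma$.

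\textbf{Main obstacle.} Neither part is deep, but the part requiring the most care is the telescoping identity in (1) — checking that the specific positive root $\beta$ obtained by conjugating $\alpha_{i_j}$ along the tail of the reduced word indeed satisfies $s_\beta \sigma = w$, and confirming positivity of $\beta$ using that the word is reduced (so that the relevant inversion does not already appear, i.e.\ the partial products $s_{i_k}\cdots s_{i_{j+1}}$ keep $\alpha_{i_j}$ positive only up to the point of reflection — here one uses the standard fact that $s_{i_1}\cdots s_{i_k}$ reduced forces the roots $\beta_t := s_{i_1}\cdots s_{i_{t-1}}(\alpha_{i_t})$ to be distinct positive roots). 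If one prefers to avoid this root bookkeeping entirely, an alternative is to cite the subword characterization of Bruhat order (the ``subword property'') as standard, but since the paper defines $<$ via covering relations, carrying out the single-letter-deletion covering step explicitly is the honest route, and that is where I would spend the effort.
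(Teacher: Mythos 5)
The paper states Lemma~\ref{lem:Bruhat} as a standard fact about Bruhat order and does not prove it, so the evaluation is of your argument on its own terms. Part (2) is correct and clean: multiplying a covering chain for $\sigma < w$ on the left by $w_0$, conjugating each reflection, and using $\ell(w_0 x) = \ell(w_0) - \ell(x)$ reverses the chain; the converse follows by applying the same implication to the pair $w_0 w,\, w_0\sigma$ and using $w_0^2=\mathbf e$.

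Part (1), however, has a genuine gap in the reduction to single deletions. (There is also a left--right slip: with the paper's convention $\sigma' = s_\beta\sigma$, the covering root is $s_{i_1}\cdots s_{i_{j-1}}(\alpha_{i_j})$; the root $s_{i_k}\cdots s_{i_{j+1}}(\alpha_{i_j})$ you wrote satisfies $\sigma s_\beta = w$, and is also inconsistent with the family $\beta_t = s_{i_1}\cdots s_{i_{t-1}}(\alpha_{i_t})$ you later invoke for positivity.) The single-deletion step does give $\sigma\leftarrow w$ whenever $\sigma$ is obtained from the \emph{reduced} word for $w$ by deleting one letter --- the paper's covering relation only requires $\ell$ to increase strictly, so $\sigma$ itself need not be reduced there. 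Iterating is where it breaks: after one deletion the intermediate expression need not be reduced, and deleting a further letter from a non-reduced word can \emph{increase} length. For example $s_1s_2s_1s_2$ is a subword of the reduced word $s_1s_2s_3s_1s_2s_1$ for $w_0$ in $A_3$ and has length $2$, yet deleting its first letter gives $s_2s_1s_2$ of length $3$, so that step is not a covering. Your proposed fix --- first pass to a reduced subword for $\sigma$, then ``induct on the number of deleted letters'' --- does not close this either: to run the induction you must produce, at each stage, a reduced subword of the fixed reduced word for $w$ that is one letter longer and still contains the current one, and the existence of such an intermediate is precisely the nontrivial ``chain property'' for reduced subwords. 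That is where the content of the lemma lives; it requires the (strong) exchange condition or the lifting property as an explicit ingredient, which your sketch neither invokes nor establishes.
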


\begin{example}
  Let $W=\mathfrak{S}_3=\langle s_1,s_2\rangle$. The Bruhat graph is as follows:
  \begin{equation}  \label{S3}
\xymatrix@R=0em@C=1em{
& s_1  \ar@{->}[ld]  & s_1s_2  \ar@{->}[l] \ar@{->}[ldd] &
  \\
\mathbf{e} &&& s_1s_2s_1=s_2s_1s_2 \ar@{->}[ld] \ar@{->}[lu]. \\
& s_2 \ar@{->}[lu]  & s_2s_1  \ar@{->}[l] \ar@{->}[luu] &
}
\end{equation}
  \end{example}

\subsection{Hecke algebras}
  \label{subsec:Hecke}

Let $\mathcal{A}=\Z[q,q^{-1}]$.
\begin{definition} \label{def:Hecke}
  The Hecke algebra $\H=\H_q(W)$ is a $\mathbb{Q}(q)$-algebra with unit $H_{\mathbf{e}}=1$, generated by $H_i, (i\in \I)$ subject to the following relations, for $i\neq j \in \I$,
  \begin{align*}
    (H_i+q)(H_i-q^{-1})=0;\qquad
    \underbrace{H_iH_jH_i\cdots}_{m_{ij}}=\underbrace{H_jH_iH_j\cdots}_{m_{ij}}.
  \end{align*}
\end{definition}

We list a few basic properties for the Hecke algebra below.
\begin{itemize}
  \item[(1)]
 Define  $H_{\sigma}=H_{i_1}\cdots H_{i_k}$, for any reduced expression $\sigma=s_{i_1}s_{i_2}\cdots s_{i_k}\in W$. Then $H_\sigma$ is independent of choices of reduced expressions of $\sigma$.
  \item[(2)]
  Denote by $\HA$ the $\A$-submodule of $\H$ spanned by
  $H_\sigma$, for $\sigma\in W$. Then $\HA$ is a free $\A$-module with basis  $\{H_\sigma~|~\sigma\in W\}$.
  \item[(3)]
  The $H_i$, for $i\in \I$,  is invertible with inverse $H_i^{-1}=H_i+(q-q^{-1})$.
  \item[(4)]
 $H_\sigma H_i=\left\{\begin{array}{ll}
    H_{\sigma s_i}, & \mbox{if $\ell(\sigma s_i)=\ell(\sigma)+1$};\\
    H_{\sigma s_i}+(q^{-1}-q)H_{\sigma}, &  \mbox{if $\ell(\sigma s_i)=\ell(\sigma)-1$}.
  \end{array}\right.$
\end{itemize}

The proof of the following lemma is elementary by checking relations.
\begin{lemma}
  There exists an automorphism $\overline{\phantom{x}}$ of the $\mathbb{Q}$-algebra $\H$ (respectively,  the $\Z$-algebra $\H_\A$) such that $\overline{q}=q^{-1}$ and $\overline{H_i}=H_i^{-1}$, for all $i\in \I$.
\end{lemma}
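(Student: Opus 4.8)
The plan is to construct the bar involution on $\H$ directly from the Hecke algebra presentation in Definition~\ref{def:Hecke}, and then check that it restricts to $\H_\A$. First I would define a $\Q$-linear (in fact anti-linear, sending $q^n \mapsto q^{-n}$) map $\overline{\phantom{x}} : \H \to \H$ on generators by $\overline{q} = q^{-1}$ and $\overline{H_i} = H_i^{-1}$ for all $i\in \I$, using property (3) of Section~\ref{subsec:Hecke} that each $H_i$ is invertible with $H_i^{-1} = H_i + (q-q^{-1})$. To see this extends to a well-defined $\Q$-algebra homomorphism, I would verify that the images $\overline{H_i} = H_i^{-1}$ satisfy the defining relations of $\H$ with $q$ replaced by $q^{-1}$.

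The main computational step is checking the two families of relations. For the quadratic relation $(H_i+q)(H_i-q^{-1})=0$: applying the would-be bar map formally turns this into the assertion $(H_i^{-1}+q^{-1})(H_i^{-1}-q)=0$, which one rewrites as $H_i^{-2} + (q^{-1}-q)H_i^{-1} - 1 = 0$, i.e. $H_i^{-1} - H_i = q^{-1} - q$ after multiplying by $H_i$ and rearranging; this is exactly property (3). For the braid relations $\underbrace{H_iH_jH_i\cdots}_{m_{ij}} = \underbrace{H_jH_iH_j\cdots}_{m_{ij}}$, I would invert both sides: the inverse of the left-hand side is $\underbrace{\cdots H_i^{-1}H_j^{-1}H_i^{-1}}_{m_{ij}}$ and of the right-hand side is $\underbrace{\cdots H_j^{-1}H_i^{-1}H_j^{-1}}_{m_{ij}}$, and these are equal precisely because the braid relations are symmetric under reversing a word (equivalently, the two alternating words of length $m_{ij}$ starting with $i$ and with $j$ already coincide, so reading them backwards still gives an identity among the $H_k^{-1}$). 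Hence the map is a well-defined $\Q$-algebra endomorphism.

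Next I would show it is an involution: $\overline{\overline{H_i}} = \overline{H_i^{-1}} = (\overline{H_i})^{-1} = (H_i^{-1})^{-1} = H_i$ and $\overline{\overline{q}} = q$, so $\overline{\phantom{x}}^{\,2} = \mathrm{id}$ on generators, hence on all of $\H$; being a bijective endomorphism it is an automorphism. Finally, for the statement about $\H_\A$: since $H_i^{-1} = H_i + (q-q^{-1}) \in \H_\A$ and $q^{-1} \in \A$, the bar map sends the $\A$-algebra generators of $\H_\A$ into $\H_\A$, so it restricts to a ($\Z$-algebra, anti-linear) automorphism of $\H_\A$; that the restriction is again an involution and a bijection follows from the same argument applied within $\H_\A$, or simply by restricting the already-established involution on $\H$.

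The only point requiring a little care — and the place I'd expect to spend the most attention — is the braid relation check, since one must be confident that inverting a braid-type identity again yields a braid-type identity; this is where writing out that the reversal of an alternating word $iji\cdots$ of length $m_{ij}$ is the alternating word $\cdots iji$ of length $m_{ij}$ (which equals $jij\cdots$ when $m_{ij}$ has the appropriate parity, matching the original relation) makes the argument airtight. Everything else is a routine presentation-check, which is why the statement is flagged as ``elementary'' in the excerpt.
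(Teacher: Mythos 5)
Your proposal is correct and takes exactly the approach the paper intends; the paper itself only remarks that the proof ``is elementary by checking relations,'' and your write-up supplies precisely those checks (quadratic relation via $H_i^{-1}=H_i+(q-q^{-1})$, braid relation via reversal of alternating words, involutivity on generators, preservation of the $\A$-lattice). One small slip: after multiplying $H_i^{-2}+(q^{-1}-q)H_i^{-1}-1=0$ by $H_i$ and rearranging, you should get $H_i^{-1}-H_i=q-q^{-1}$ (not $q^{-1}-q$), which is the correct form of property (3); this is just a sign typo and does not affect the argument.
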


Note that $\overline{\bar{q}}=q$ and $\overline{\overline{H_i}}=\overline{H_{i}^{-1}}=\overline{H_i+(q-q^{-1})}=H_i^{-1}+(q^{-1}-q)=H_i$. Hence $\overline{\phantom{x}}$ is an automorphism of order $2$, which is called the bar involution of $\H$.

\subsection{Kazhdan-Lusztig basis}

\begin{theorem} [Kazhdan-Lusztig \cite{KL79}]
\label{thm:KL}
 For any $\sigma\in W$, there exists a unique element $C_\sigma\in\H$ such that \begin{itemize}
          \item[(1)]
           $\overline{C_\sigma}=C_{\sigma}$;
          \item[(2)]
           $C_\sigma\in H_\sigma+\sum_{w \in W} q\Z[q] H_w$.
        \end{itemize}
\end{theorem}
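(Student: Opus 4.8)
The plan is to mimic the classical Kazhdan--Lusztig argument: establish uniqueness by a lattice-intersection argument, then construct $C_\sigma$ by induction on the Bruhat order (equivalently, on $\ell(\sigma)$). For uniqueness, suppose $C_\sigma$ and $C'_\sigma$ both satisfy (1) and (2). Their difference $D = C_\sigma - C'_\sigma$ is bar-invariant and lies in $\sum_{w} q\Z[q] H_w$ (the leading $H_\sigma$ terms cancel). Writing $D = \sum_w d_w H_w$ with $d_w \in q\Z[q]$, bar-invariance forces $\overline{d_w} = d_w$ after expressing $\overline{H_w}$ in the $H$-basis; a standard downward induction on the Bruhat order (using that $\overline{H_w} \in H_w + \sum_{y < w} \A H_y$, which follows from property (4) of the Hecke algebra) shows each $d_w$ is both in $q\Z[q]$ and fixed by the bar involution, hence $d_w = 0$. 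So $D = 0$.

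For existence, I would induct on $\ell(\sigma)$. The base case $\sigma = \mathbf{e}$ gives $C_{\mathbf{e}} = H_{\mathbf{e}} = 1$, which is bar-invariant. For the inductive step, pick $i \in \I$ with $\ell(s_i\sigma) < \ell(\sigma)$ (so $\sigma = s_i\sigma'$ with $\ell(\sigma') = \ell(\sigma) - 1$, writing $\sigma' = s_i\sigma$), and consider the element $(H_i + q)C_{\sigma'}$, which is bar-invariant since $\overline{H_i + q} = H_i^{-1} + q^{-1} = H_i + (q - q^{-1}) + q^{-1} = H_i + q$ and $C_{\sigma'}$ is bar-invariant by induction. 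Expanding $(H_i + q)C_{\sigma'}$ in the $H$-basis, property (4) shows the coefficient of $H_\sigma$ is $1$ and all other coefficients lie in $\Z[q]$; the potential obstruction is that some coefficients may have nonzero constant term (i.e., lie in $\Z[q] \setminus q\Z[q]$). One checks, using bar-invariance of $(H_i+q)C_{\sigma'}$ together with the uniqueness argument applied to lower terms, that any such ``bad'' coefficient occurs at some $w < \sigma$ and equals $\mu \in \Z$ (a constant), and that one can subtract $\mu\, C_w$ (bar-invariant by induction) to cancel it without disturbing the $H_\sigma$-coefficient. Iterating this correction over the finitely many $w < \sigma$ in decreasing Bruhat order produces the desired $C_\sigma$ satisfying (1) and (2).

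The main obstacle is the bookkeeping in the correction step: one must verify that subtracting multiples of lower $C_w$'s to enforce condition (2) does not create new violations, and that the process terminates. This is handled by processing the $w < \sigma$ in decreasing order of length and invoking the inductive form of condition (2) for each $C_w$, so that each correction only affects coefficients strictly below the current one in Bruhat order. For a complete treatment, including the precise recursion for the KL polynomials $P_{w,\sigma}$ defined by $C_\sigma = \sum_{w \le \sigma} q^{\ell(\sigma) - \ell(w)} \overline{P_{w,\sigma}(q^{-2})}\, H_w$ (or the analogous normalization fitting the conventions of this excerpt), I would refer to \cite{KL79} or to the self-contained account promised in this section.
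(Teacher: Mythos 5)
Your proof is correct, but the existence argument takes a genuinely different (though classical) route from the paper's. The paper starts from $H_\sigma$ — which satisfies condition (2) trivially but not (1) — and corrects for bar-invariance: it writes $\overline{H_\sigma} = H_\sigma + \sum_{\tau<\sigma}P_\tau(q)C_\tau$ using the inductively constructed $C_\tau$, applies the bar map once more to extract the anti-symmetry $P_\tau(q^{-1}) = -P_\tau(q)$, splits uniquely $P_\tau(q) = Q_\tau(q) - Q_\tau(q^{-1})$ with $Q_\tau \in q\Z[q]$, and sets $C_\sigma := H_\sigma + \sum_\tau Q_\tau(q) C_\tau$; bar-invariance then follows in one line. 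You instead start from the bar-invariant element $C_{s_i}C_{\sigma'}$ — which satisfies (1) but may fail (2) — and repair (2) by subtracting off corrections $\mu_w C_w$ in decreasing Bruhat order; this is the original Kazhdan--Lusztig recursion and gives the $\mu$-coefficients directly, at the cost of the bookkeeping you flag (termination and non-interference, which processing in decreasing order handles because each $C_w$ has lower coefficients in $q\Z[q]$ and so does not change constant terms strictly below $w$). One point you should make explicit rather than leave to ``one checks'': the factor $q^{-1}$ coming from the quadratic relation only ever multiplies coefficients of $C_{\sigma'}$ that lie in $q\Z[q]$ by the inductive form of (2), which is exactly why the coefficients of $C_{s_i}C_{\sigma'}$ land in $\Z[q]$ rather than $\Z[q^{\pm1}]$, and why the obstruction is just an integer constant term. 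The paper's one-shot construction avoids the iterative correction entirely; your route buys the explicit recursion $C_\sigma = C_{s_i}C_{\sigma'} - \sum_w \mu_w C_w$. Your uniqueness argument agrees with the paper's Claim 2.
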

\begin{proof}

 \textbf{ Claim 1} (Existence). For any $\sigma\in W$, there exists a bar invariant element $C_\sigma\in H_{\sigma}+\sum_{w<\sigma}q\Z[q]H_w$.

 We prove the Claim by induction on $\sigma$ by the Bruhat order.
Let $\sigma=s_{i_1}\cdots s_{i_r}$ be a reduced expression. It is clear that $$\overline{H_\sigma}=H_{i_1}^{-1}\cdots H_{i_r}^{-1}=(H_{i_1}+(q-q^{-1}))\cdots(H_{i_r}+(q-q^{-1})),$$ and hence by Lemma~\ref{lem:Bruhat},
\begin{equation}\label{eq:Hbar}
\overline{H_\sigma}\in H_\sigma+\sum_{w<\sigma}\Z[q^{\pm1}]H_w.
\end{equation}
By the induction assumption, the above result implies
\begin{equation}\label{eq:9}
  \overline{H_\sigma}=H_\sigma+\sum_{\tau<\sigma}P_{\tau,\sigma}(q)C_\tau,
\end{equation}
and hence by applying the bar map,
\begin{equation}\label{eq:10}
  H_\sigma=\overline{H_\sigma}+\sum_{\tau<\sigma}P_{\tau,\sigma}(q^{-1})C_\tau.
\end{equation}
Comparing \eqref{eq:9} and \eqref{eq:10}, we have $P_{\tau,\sigma}(q^{-1})=-P_{\tau,\sigma}(q)$. Thus there exists unique $Q_{\tau,\sigma}(q)\in q\Z[q]$ such that $P_{\tau,\sigma}(q)=Q_{\tau,\sigma}(q)-Q_{\tau,\sigma}(q^{-1})$.
Define $$C_\sigma :=H_\sigma+\sum_{\tau<\sigma}Q_{\tau,\sigma}(q)C_\tau,$$ which clearly lies in $H_{\sigma}+\sum_{w<\sigma}q\Z[q]H_w$. It follows by \eqref{eq:9} that $\overline{C_\sigma}=C_\sigma$.

\vspace{2mm}
 \noindent
\textbf{ Claim 2} (Uniqueness).
For $H\in\sum_w q\Z[q]H_w$, if $\overline{H}=H$ then $H=0$.

Write $H=\sum h_wH_w$ with $h_w \in q\Z[q]$ and choose $\sigma$ maximal such that $h_\sigma\neq0$.
Then $H=\overline{H}$ and \eqref{eq:Hbar} imply that $h_\sigma=\overline{h_\sigma}$, which contradicts with $h_\sigma\in q\Z[q]$.

Now the uniqueness in the theorem follows from Claim 2.
\end{proof}

The existence established in the above proof is stronger than Theorem~\ref{thm:KL}(2), and will be recorded here.
\begin{corollary}  \label{cor:KL}
  For any $\sigma\in W$, it holds that $C_\sigma\in H_\sigma+\sum_{w<\sigma}q\Z[q]H_w$.
\end{corollary}
If follows by Corollary~\ref{cor:KL} that $\{C_\sigma~|~\sigma\in W\}$ forms an $\A$-basis for the Hecke algebra $\HA$ (or a $\mathbb Q(q)$-basis for $\H$); this is called the \emph{Kazhdan-Lusztig (KL) basis} of $\H$.
Write $C_\sigma=\sum_{w\leq\sigma} h_{w,\sigma}H_w$. Then $h_{w,\sigma}\in\Z[q]$ are called Kazhdan-Lusztig polynomials, with $h_{\sigma, \sigma}=1$.
Here are some examples of $C_\sigma$.

\begin{example}
  \begin{itemize}
    \item[(1)] $C_{s_i}=H_i+q$, for $i\in \I$.
    \item[(2)] ($q$-symmetrizer) Recall $w_0$ denotes the longest element in $W$. Then
    \[
    C_{w_0}=\sum_{\sigma\in W}q^{\ell(w_0)-\ell(\sigma)}H_\sigma,
    \]
    and $C_{w_0}H_i=q^{-1}C_{w_0}$. In particular, $\mathbb Q(q) C_{w_0}$ is a 1-dimensional ``trivial" submodule of the right regular representation $\H$. In Kazhdan-Lusztig theory the formula $C_{w_0}$ recovers the Weyl character formula.
  \end{itemize}
\end{example}
\subsection{Dual KL basis}

The same type of arguments for Theorem~\ref{thm:KL} leads to the following (where $q^{-1}$ plays the role of $q$ therein).
\begin{proposition}
\label{prop:dualKL}
There exists a unique element $L_\sigma \in \HA$, such that
\begin{itemize}
          \item[(1)] $\overline{L_\sigma}=L_{\sigma}$;
          \item[(2)] $L_\sigma\in H_\sigma+\sum_{w<\sigma} q^{-1}\Z[q^{-1}] L_w$.
        \end{itemize}
(The basis $\{L_\sigma~|~\sigma\in W\}$ is called the \emph{dual KL basis} of $\H$.)
\end{proposition}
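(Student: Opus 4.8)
The plan is to mimic the proof of Theorem~\ref{thm:KL} verbatim, with the substitution $q \leftrightarrow q^{-1}$, so almost no new work is needed. Concretely, observe that the bar involution $\overline{\phantom{x}}$ sends $q \mapsto q^{-1}$, and the conditions defining $L_\sigma$ are obtained from those defining $C_\sigma$ by swapping the roles of $q$ and $q^{-1}$: condition (1) is identical, and condition (2) asks for $L_\sigma \in H_\sigma + \sum_{w<\sigma} q^{-1}\Z[q^{-1}] H_w$. Since the structure constants of $\H$ in property~(4) of Section~\ref{subsec:Hecke} and the formula $H_i^{-1} = H_i + (q-q^{-1})$ are stable under $q \leftrightarrow q^{-1}$ (up to an overall sign in $(q-q^{-1})$), the entire argument transports. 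I would phrase the whole thing as: "apply Theorem~\ref{thm:KL} to the Hecke algebra $\H_{q^{-1}}(W)$", but since $\H_{q^{-1}}(W)$ is literally the same algebra with generators rescaled, it is cleaner to just re-run the proof.

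First I would establish the existence claim: for any $\sigma \in W$ there is a bar-invariant $L_\sigma \in H_\sigma + \sum_{w<\sigma} q^{-1}\Z[q^{-1}] H_w$, by induction on $\sigma$ in the Bruhat order. The base case $\sigma = \mathbf{e}$ gives $L_{\mathbf e} = 1$. For the inductive step, pick a reduced expression $\sigma = s_{i_1}\cdots s_{i_r}$; exactly as in \eqref{eq:Hbar} we get $\overline{H_\sigma} \in H_\sigma + \sum_{w<\sigma}\Z[q^{\pm 1}] H_w$, and by the induction hypothesis this rewrites as $\overline{H_\sigma} = H_\sigma + \sum_{\tau<\sigma} \widetilde P_\tau(q) L_\tau$ for some $\widetilde P_\tau \in \Z[q^{\pm1}]$. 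Applying the bar map and comparing yields $\widetilde P_\tau(q^{-1}) = -\widetilde P_\tau(q)$, so there is a unique $\widetilde Q_\tau \in q^{-1}\Z[q^{-1}]$ with $\widetilde P_\tau(q) = \widetilde Q_\tau(q) - \widetilde Q_\tau(q^{-1})$; then $L_\sigma := H_\sigma + \sum_{\tau<\sigma} \widetilde Q_\tau(q) L_\tau$ works. Second, for uniqueness I would prove the analogue of Claim~2: if $H \in \sum_w q^{-1}\Z[q^{-1}] H_w$ and $\overline H = H$, then $H = 0$; this follows by taking $\sigma$ maximal with a nonzero coefficient $h_\sigma \in q^{-1}\Z[q^{-1}]$ and using \eqref{eq:Hbar} to force $h_\sigma = \overline{h_\sigma}$, a contradiction. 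The difference of two candidates for $L_\sigma$ lies in $\sum_{w<\sigma} q^{-1}\Z[q^{-1}] H_w$ and is bar-invariant, hence zero.

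There is essentially no obstacle here — the only mild subtlety is bookkeeping the sign: in passing from $q\Z[q]$ to $q^{-1}\Z[q^{-1}]$ one should check that the elementary fact "a polynomial $P \in \Z[q^{\pm1}]$ with $P(q^{-1}) = -P(q)$ is uniquely $\widetilde Q(q) - \widetilde Q(q^{-1})$ with $\widetilde Q \in q^{-1}\Z[q^{-1}]$" is the mirror image of the fact used in Theorem~\ref{thm:KL}, which it plainly is (substitute $q \mapsto q^{-1}$). One should also note, as in Corollary~\ref{cor:KL}, that the existence argument in fact produces $L_\sigma \in H_\sigma + \sum_{w<\sigma} q^{-1}\Z[q^{-1}] H_w$ with leading coefficient $1$, so $\{L_\sigma\}$ is an $\A$-basis of $\HA$, justifying the name "dual KL basis". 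I would keep the write-up to a few lines, explicitly saying it repeats the proof of Theorem~\ref{thm:KL} with $q$ replaced by $q^{-1}$.
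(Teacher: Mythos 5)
Your proposal is correct and takes exactly the approach the paper intends: the paper's entire proof is the one-line remark that the argument of Theorem~\ref{thm:KL} carries over with $q^{-1}$ in place of $q$, and you simply flesh that out. One small slip: you paraphrase condition~(2) as $L_\sigma \in H_\sigma + \sum_{w<\sigma} q^{-1}\Z[q^{-1}] H_w$, whereas the stated condition has $L_w$ in the sum; but your recursive construction $L_\sigma := H_\sigma + \sum_{\tau<\sigma} \widetilde Q_\tau(q) L_\tau$ with $\widetilde Q_\tau \in q^{-1}\Z[q^{-1}]$ in fact yields condition~(2) exactly as stated, and the $H_w$-version you wrote is the Corollary~\ref{cor:KL}-type consequence (the two are equivalent by the unitriangular change of basis with entries in $q^{-1}\Z[q^{-1}]$).
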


\begin{example}
  \begin{itemize}
    \item[(1)] Here are some canonical and dual canonical basis elements for any (non-trivial) $W$:
    $$C_{s_i}=H_i+q, \quad C_{w_0}=\sum_{w\in W}q^{\ell(w_0)-\ell(w)}H_w,$$
    $$ L_{s_i}=H_i-q^{-1}, \quad L_{w_0}=\sum_{w\in W}(-q^{-1})^{\ell(w_0)-\ell(w)}H_w.$$
  \item[(2)]
  Take $W=\mathfrak{S}_3$ with Bruhat graph \eqref{S3}.  Then,
  \begin{align*}
  C_{s_1s_2}&=C_{s_1}C_{s_2}=H_{s_1s_2}+qH_1+qH_2+q^2,\\
   L_{s_1s_2}&=H_{s_1s_2}-q^{-1}H_1-q^{-1}H_2+q^{-2}.
\end{align*}
The formulas for $C_{s_2s_1}$ and $L_{s_2s_1}$ are similar.
  \end{itemize}
  \end{example}

\subsection{Parabolic KL basis}

Let $J\subset \I$. Define the parabolic subgroup
$$W_J:=\langle s_i~|~i\in J\rangle\leq W,$$
which is again a Coxeter/Weyl group. We denote by $w_{0,J}$ the longest element in $W_J$. The subalgebra $\H_J:=\langle H_j~|~j\in J\rangle$ of $\H$ is also the Hecke algebra associated to the Weyl group $W_J$. Let
\begin{align*}
C_J:=C_{w_{0,J}} &=\sum_{x\in W_J}q^{\ell(w_{0,J})-\ell(x)}H_x\in\H_J,
\\
L_J:=L_{w_{0,J}} &=\sum_{x\in W_J}(-q^{-1})^{\ell(w_{0,J})-\ell(x)}H_x \in\H_J.
\end{align*}
Denote by ${}^JW$ the set of minimal length right coset representatives for $W_J\backslash W$.

One can identify the right $\H$-module $C_J \H$ below with the right ``permutation $\H$-module"
\[
C_J \H \cong \Q(q) \otimes_{\H_J} \H,
\]
where $\mathbb Q(q)$ is the ``trivial" $\H_J$-module with $1\cdot H_j =q^{-1}$, for $j\in J$.
The following is a generalization of KL basis (which corresponds to the case $J=\emptyset$). We refer to \cite{So97} for an exposition of the following theorem.

\begin{theorem}[Deodhar]  \label{thm:Deo}
 For each $\sigma\in{}^JW$, there exists a unique element $C_\sigma^J\in C_J \H$ such that
 \begin{itemize}
   \item[(1)]
   $\overline{C_\sigma^J} =C_\sigma^J$;
   \item[(2)]
   $C_\sigma^J\in C_J(H_\sigma+\sum_{w\in{}^JW, w<\sigma}q\Z[q]H_w)$.
 \end{itemize}
 Hence $\{C_\sigma^J~|~ \sigma\in{}^JW \}$ forms a basis for $C_J \H$ (called parabolic KL basis). 
\end{theorem}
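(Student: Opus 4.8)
The plan is to mimic the existence/uniqueness argument of Theorem~\ref{thm:KL}, working inside the right $\H$-module $C_J\H$ rather than in $\H$ itself. First I would record the combinatorial foundation: every element $w\in W$ factors uniquely as $w = x\cdot\sigma$ with $x\in W_J$ and $\sigma\in{}^JW$, and this factorization is length-additive, $\ell(w)=\ell(x)+\ell(\sigma)$. Consequently the elements $C_J H_\sigma$, for $\sigma\in{}^JW$, form an $\A$-basis of $C_J\H_\A$: indeed $C_J H_\sigma \in H_{w_{0,J}\sigma} + \sum_{w < w_{0,J}\sigma} q\Z[q]H_w$ by the explicit formula $C_J=\sum_{x\in W_J}q^{\ell(w_{0,J})-\ell(x)}H_x$ together with property~(4) of the Hecke algebra and Lemma~\ref{lem:Bruhat}, so the transition matrix from $\{C_J H_\sigma\}$ to a sub-basis of $\{H_w\}$ is unitriangular. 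I would also check that $C_J\H$ is bar-stable: since $\overline{C_J}=C_J$ (it is a KL basis element, Theorem~\ref{thm:KL}(1)) and the bar involution is a ring automorphism, $\overline{C_J H} = C_J\overline H \in C_J\H$ for every $H\in\H$.

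Next comes the existence statement, which I would prove by induction on $\sigma\in{}^JW$ along the Bruhat order restricted to ${}^JW$. For the base case $\sigma=\mathbf e$, take $C_{\mathbf e}^J=C_J$. For the inductive step, expand $\overline{C_J H_\sigma}$: using $\overline{C_J}=C_J$ and $\overline{H_\sigma}\in H_\sigma+\sum_{w<\sigma}\Z[q^{\pm1}]H_w$ (equation~\eqref{eq:Hbar}), and then pushing the lower $H_w$ terms back into the $C_J$-module and re-expressing in the partial basis $\{C_J H_\tau : \tau\in{}^JW\}$ via the unitriangularity above, I get
\[
\overline{C_J H_\sigma} = C_J H_\sigma + \sum_{\tau\in{}^JW,\ \tau<\sigma} P_\tau(q)\, C_\tau^J
\]
for some $P_\tau\in\Z[q^{\pm1}]$, where the $C_\tau^J$ are the already-constructed bar-invariant elements. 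Applying the bar involution and comparing, exactly as in the proof of Theorem~\ref{thm:KL}, forces $P_\tau(q^{-1})=-P_\tau(q)$, hence $P_\tau(q)=Q_\tau(q)-Q_\tau(q^{-1})$ for a unique $Q_\tau\in q\Z[q]$. Then $C_\sigma^J := C_J H_\sigma + \sum_{\tau<\sigma} Q_\tau(q)\,C_\tau^J$ is bar-invariant and, unwinding the definitions of the $C_\tau^J$ back into the $H_w$'s, lies in $C_J\bigl(H_\sigma+\sum_{w\in{}^JW,\ w<\sigma}q\Z[q]H_w\bigr)$, as required.

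For uniqueness I would prove the analogue of Claim~2: if $H\in\sum_{\sigma\in{}^JW} q\Z[q]\,C_J H_\sigma$ satisfies $\overline H = H$, then $H=0$. Writing $H=\sum h_\sigma\, C_J H_\sigma$ with $h_\sigma\in q\Z[q]$, translate into the $\{H_w\}$-basis: the coefficient of $H_{w_{0,J}\sigma}$ in $C_J H_\sigma$ is $1$ and all other $H_w$ appearing have $w<w_{0,J}\sigma$, so choosing $\sigma$ maximal with $h_\sigma\neq 0$ and looking at the coefficient of $H_{w_{0,J}\sigma}$ in $\overline H = H$ (using \eqref{eq:Hbar}) gives $\overline{h_\sigma}=h_\sigma$, contradicting $h_\sigma\in q\Z[q]$. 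The difference of two candidate elements $C_\sigma^J$ lies in this span and is bar-invariant, so it vanishes. The main obstacle, and the only genuinely nontrivial input, is the compatibility of the Bruhat order with the coset factorization $W\cong W_J\times{}^JW$ — concretely, that $w<w'$ for $w,w'\in W_J\sigma$ with appropriate lengths translates correctly across the unitriangular change of basis — which is what makes the induction over ${}^JW$ well-founded and the triangularity statements in property~(2) hold with indices restricted to ${}^JW$; once that is in hand the rest is a verbatim replay of the argument for Theorem~\ref{thm:KL}. Finally, since $\{C_\sigma^J\}$ is related to the basis $\{C_J H_\sigma\}$ of $C_J\H_\A$ by a unitriangular (over $\A$) matrix, it is itself an $\A$-basis of $C_J\H_\A$, hence a $\Q(q)$-basis of $C_J\H$.
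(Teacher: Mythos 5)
The paper itself does not prove Theorem~\ref{thm:Deo}: it points the reader to Soergel's exposition \cite{So97}. So I'll assess your argument on its own merits.

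Your proof is correct, and it follows the standard route: run the argument of Theorem~\ref{thm:KL} inside the right $\H$-module $C_J\H$, using the basis $\{C_J H_\sigma : \sigma\in{}^JW\}$ in place of $\{H_w : w\in W\}$. The three combinatorial facts you need and implicitly or explicitly invoke are: (i) for $x\in W_J$, $\sigma\in{}^JW$, $\ell(x\sigma)=\ell(x)+\ell(\sigma)$, so $H_xH_\sigma=H_{x\sigma}$ and hence $C_JH_\sigma=H_{w_{0,J}\sigma}+\sum_{x<w_{0,J}}q^{\ell(w_{0,J})-\ell(x)}H_{x\sigma}$ is unitriangular; (ii) for $x\in W_J$, $C_JH_x=q^{-\ell(x)}C_J$, so $C_JH_w=q^{-\ell(x)}C_JH_\tau$ when $w=x\tau$, $\tau\in{}^JW$, which lets you contract the $\{H_w\}$-expansion of $C_J\overline{H_\sigma}$ back to the $\{C_JH_\tau\}$-basis; and (iii) Deodhar's lemma, that if $w\le\sigma$ and $w=x\tau$ is the parabolic factorization then $\tau\le\sigma$ (with $\tau<\sigma$ when $w<\sigma$), together with its consequence $\tau\le\sigma\iff w_{0,J}\tau\le w_{0,J}\sigma$ for $\tau,\sigma\in{}^JW$. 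You correctly identify (iii) as the single nontrivial input; it is exactly what keeps the index set in your inductive step and in the triangularity of condition~(2) restricted to ${}^JW$ with a strict inequality, and it is what the uniqueness argument needs when you pick $\sigma$ Bruhat-maximal in ${}^JW$ with $h_\sigma\neq0$ and read off the coefficient of $H_{w_{0,J}\sigma}$. One small presentational remark: in the existence step you jump directly from expanding $\overline{C_JH_\sigma}$ in the $\{C_JH_\tau\}$-basis to an expansion in $\{C_\tau^J\}$; this needs the (easy, inductive) unitriangularity of $\{C_\tau^J\}_{\tau<\sigma}$ over $\{C_JH_\tau\}_{\tau<\sigma}$, which your item~(2) supplies — worth saying explicitly, but not a gap. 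Overall this is the proof Soergel gives, specialized to the notation of the present paper.
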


  Similarly, there exists a \emph{parabolic dual KL basis} $\{L_\sigma^J | \sigma\in{}^JW\}$ such that
  \begin{itemize}
 \item[($1'$)] $\overline{L_\sigma^J} = L_\sigma^J$;
 \item[($2'$)] $L_\sigma^J\in L_J(H_\sigma+\sum_{w\in{}^JW, w<\sigma}q^{-1}\Z[q^{-1}]H_w).$
 \end{itemize}

\section{Canonical bases arising from quantum groups}
  \label{sec:CB}

In this section, we construct canonical bases for $\U^-$, finite-dimensional simple $\U$-modules, their tensor products and modified quantum groups. The quasi R-matrix is used to define the bar involution on these tensor products. Canonical bases for $\U^-$ and finite-dimensional simple $\U$-modules have been widely known. We refer to the books \cite{Lus93, Jan95, DDPW}; for an elementary approach, see \cite{Tin16}. For based $\U$-modules and canonical bases on tensor products, see \cite{Lus92, Lus93}.

\subsection{Canonical basis on $\U^{-}$}
  \label{subsec:CBU}

We assume in this subsection that $\U$ is of ADE type; see however Remark~\ref{rem:KM}. Recall $\A =\Z[q,q^{-1}]$.
The $\A$-subalgebra $\UA^{-}$ of $\U^{-}$ 
 is generated by the divided powers $F_i^{(r)}$, for $i\in \I, r\geq0$.

\begin{remark}
The $\A$-algebra $\UA^{-}$ is a free $\A$-module, and $\U^-=\mathbb{Q}(q)\otimes_{\A}\UA^{-}$. The algebra $\UA^{-}$ admits a (Ringel-)Hall algebra realization; cf. \cite{DDPW}.
\end{remark}

The canonical basis of $\UA^-$ was first constructed by Lusztig \cite{Lus90}.
We outline the main steps toward the canonical basis of $\UA^-$ using the Hall algebra approach \cite{DDPW}.

(1) In the Hall algebra realization of quantum groups, the braid group symmetries $T_i\in\mathrm{Aut}(\U)$, for $i\in \I$ are realized as reflection functors. This gives rise to algebra isomorphisms on some proper subalgebras of $\U^-$, which leads to a construction of $q$-root vectors $F_\beta\in\UA^{-}$, for $\beta\in\Phi^+$, and their realization as indecomposable modules in the Hall algebra.

(2) A suitable sequence of sinks/sources on the Dynkin quiver gives rise to a total ordering on $\Phi^+$; it corresponds to some distinguished reduced expression for the longest element $w_0$ of the Weyl group $W$.

(3) Applying the braid group symmetries to the divided powers $F_i^{(a)}$ gives us the divided powers $F_\beta^{(a)}$, for $\beta\in \Phi^+$.
For any $\underline{n}=(a_\beta)_{\beta\in\Phi^+}\in\N^{\Phi^+}$, define
            $$
            F^{(\underline{n})}:= \overrightarrow{\prod}_{\beta\in\Phi^+}F_\beta^{(a_\beta)} \in\UA^{-}.
            $$
            Then $\left \{F^{(\underline{n})}~\middle|~\underline{n}\in\N^{\Phi^+} \right\}$ forms a PBW basis for $\UA^{-}$. The PBW basis has a Hall algebra interpretation.

(4) For $\underline{n}=(a_\beta)_{\beta\in\Phi^+}$, we denote
\[
||\underline{n}||:=\sum_{\beta\in\Phi^+}a_\beta \beta.
\]
Degeneration of modules of a Dynkin quiver (which corresponds to orbit closure relation) gives rise to a partial order ``$\leq$'' on $\left\{\underline{n}=(a_\beta)_{\beta\in\Phi^+}~\middle|~||\underline{n}|| =\mu \right\}$ for any fixed $\mu\in\mathbb{N}\Phi^+$.

(5) Recall the bar involution ${}^{-}$ on $\U$ preserves $F_i$ and the divided powers $F_i^{(r)}$ (because of $\overline{[r]_i}=[r]_i$), and hence, it preserves the $\A$-algebra $\UA^{-}$.

(6) We have
\begin{equation}\label{eq:12}
          \overline{F^{(\underline{n})}}\in F^{(\underline{n})}+\sum_{\underline{m}<\underline{n}}\A F^{(\underline{m})}.
        \end{equation}
A monomial basis construction is used to ensure that the coefficient of $F^{(\underline{n})}$ is $1$ in \eqref{eq:12}.

\begin{theorem} [Lusztig]
  \label{thm:CBade}
  Let $\U$ be of ADE type.
For $\underline{n}\in\N^{\Phi^+}$, there exists a unique element $b_{\underline{n}}\in\UA^{-}$ such that
        $$ \overline{b_{\underline{n}}}=b_{\underline{n}},\quad b_{\underline{n}}\in F^{(\underline{n})}+\sum_{\underline{m}}q\Z[q]F^{(\underline{m})}.$$
        Moreover, $ b_{\underline{n}}\in F^{(\underline{n})}+\sum_{\underline{m}<\underline{n}}q\Z[q]F^{(\underline{m})}$.
        The set $B:=\{b_{\underline{n}}~|~\underline{n}\in\mathbb{N}^{\Phi^+}\}$ forms an $\A$-basis for $\UA^-$ (called the canonical basis).
  \end{theorem}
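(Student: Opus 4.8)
The plan is to follow the now-standard ``abstract'' argument for constructing canonical bases from a bar-compatible triangular change of basis, exactly as in the proof of Theorem~\ref{thm:KL} for Hecke algebras. The input data is the PBW basis $\{F^{(\underline n)}\}$ of $\UA^-$, the bar involution $\overline{\phantom x}$ which by step (5) preserves $\UA^-$, and the key triangularity property \eqref{eq:12}, namely $\overline{F^{(\underline n)}}\in F^{(\underline n)}+\sum_{\underline m<\underline n}\A F^{(\underline m)}$, where $<$ is the degeneration partial order on PBW monomials of a fixed weight $\mu=||\underline n||$ from step (4). Since each weight space $\UA^-_\mu$ is a free $\A$-module of finite rank with basis the $F^{(\underline n)}$ with $||\underline n||=\mu$, we may work weight space by weight space and the partial order $<$ has no infinite descending chains, so induction is available.

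First I would establish, by induction on $\underline n$ with respect to $<$, the \textbf{existence} of a bar-invariant element $b_{\underline n}\in F^{(\underline n)}+\sum_{\underline m<\underline n}q\Z[q]F^{(\underline m)}$. The base case is $\underline n$ minimal, where \eqref{eq:12} forces $\overline{F^{(\underline n)}}=F^{(\underline n)}$ and we set $b_{\underline n}=F^{(\underline n)}$. For the inductive step, \eqref{eq:12} combined with the inductive hypothesis (rewriting each $F^{(\underline m)}$, $\underline m<\underline n$, in terms of the already-constructed $b_{\underline m}$) gives $\overline{F^{(\underline n)}}=F^{(\underline n)}+\sum_{\underline m<\underline n}r_{\underline m}(q)\,b_{\underline m}$ with $r_{\underline m}\in\A$; applying the bar map again and using $\overline{b_{\underline m}}=b_{\underline m}$ yields $r_{\underline m}(q^{-1})=-r_{\underline m}(q)$, so there is a unique $p_{\underline m}(q)\in q\Z[q]$ with $r_{\underline m}(q)=p_{\underline m}(q)-p_{\underline m}(q^{-1})$, and one checks that $b_{\underline n}:=F^{(\underline n)}+\sum_{\underline m<\underline n}p_{\underline m}(q)\,b_{\underline m}$ is bar-invariant and has the desired shape. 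Next comes \textbf{uniqueness}: if $h\in\sum_{\underline m}q\Z[q]F^{(\underline m)}$ (a fixed weight space) satisfies $\overline h=h$, then $h=0$ — take $\underline m$ maximal with nonzero coefficient $h_{\underline m}$; comparing the $F^{(\underline m)}$-coefficients of $h$ and $\overline h$ via \eqref{eq:12} forces $h_{\underline m}=\overline{h_{\underline m}}$, contradicting $h_{\underline m}\in q\Z[q]$. Two candidates for $b_{\underline n}$ differ by such an $h$, hence coincide. Finally, since $b_{\underline n}=F^{(\underline n)}+\sum_{\underline m<\underline n}(\text{elt of }q\Z[q])F^{(\underline m)}$ is unitriangular over $\A$ with respect to the PBW basis, the change-of-basis matrix (within each finite-rank weight space) is invertible over $\A$, so $B=\{b_{\underline n}\}$ is an $\A$-basis of $\UA^-$, and the finer statement $b_{\underline n}\in F^{(\underline n)}+\sum_{\underline m<\underline n}q\Z[q]F^{(\underline m)}$ is built into the construction.

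I expect the genuinely hard work to lie \emph{not} in this final abstract argument — which is essentially a reprise of the Hecke-algebra proof — but in the inputs supplied by steps (1)–(6), especially the triangularity \eqref{eq:12} with unit leading coefficient. The bar-invariance and $\A$-integrality of $\overline{F^{(\underline n)}}$ are relatively formal, but controlling that the expansion of $\overline{F^{(\underline n)}}$ in the PBW basis involves only strictly lower monomials, with coefficient exactly $1$ on $F^{(\underline n)}$ itself, is the crux; in the Hall algebra approach this is the geometric content of orbit-closure/degeneration combinatorics and the monomial-basis refinement alluded to in step (6). Since the excerpt permits us to assume the results stated earlier, I would cite \eqref{eq:12} and steps (4)–(5) as given and concentrate the written proof on the existence/uniqueness induction above, mirroring the proof of Theorem~\ref{thm:KL}; for the full justification of \eqref{eq:12} one refers to \cite{Lus90, DDPW}.
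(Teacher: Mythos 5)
Your proposal matches the paper's proof exactly: the paper, after setting up steps (1)--(6) and in particular the triangularity \eqref{eq:12}, simply states that "the theorem now follows from \eqref{eq:12}, by exactly the same argument as in the proof of Theorem~\ref{thm:KL}," which is precisely the existence-by-induction and uniqueness-by-maximal-term argument you reproduce. You also correctly identify that the genuine content lies in establishing \eqref{eq:12} with unit leading coefficient, which the paper likewise defers to the Hall-algebra references.
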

\begin{proof}
 We have done all the necessary preparations in Steps (1)-(6) above.  The theorem now follows from \eqref{eq:12}, by exactly the same argument as in the proof of Theorem~\ref{thm:KL} for KL basis starting from the equation \eqref{eq:9} (which is the Hecke counterpart of \eqref{eq:12}).
\end{proof}

\begin{remark}  \label{rem:KM}
The original approach toward canonical basis on $\U^-$ based on monomial basis and PBW bases was given in \cite{Lus90}.
An elementary self-contained proof of Theorem~\ref{thm:CBade} in ADE type can be found in \cite{Tin16}.

The above theorem is actually valid for all finite type and even for Kac-Moody type, cf. \cite{Lus90, Ka91, Lus93} for different approaches. We shall assume this for all finite type below.
\end{remark}

\begin{example} The canonical basis in $\U_q^{-}(\mathfrak{sl}_3)$ is
  \[
  \{F_1^{(a)}F_2^{(b)}F_1^{(c)}, F_2^{(a)}F_1^{(b)}F_2^{(c)}~|~b\geq a+c\},
  \]
  modulo the identification $F_1^{(a)}F_2^{(b)}F_1^{(c)}=F_2^{(c)}F_1^{(b)}F_2^{(a)}$ if $b=a+c$.

 For example, we can write $F_1 F_2 F_1$ (a product of 2 canonical basis elements) as a (positive) linear combination of canonical basis elements $F_1 F_2 F_1 =F_1^{(2)} F_2 + F_2 F_1^{(2)}$, thanks to the $q$-Serre relation. This example also exhibits a deep positivity property of canonical basis \cite{Lus93}, that is, the structure constants of $B$ lie in $\N[q,q^{-1}]$.
%
\end{example}
\subsection{Canonical basis on simple $\U$-modules}
  \label{subsec:CBM}

For $\lambda\in X^+$, let $M(\lambda)$ be the Verma $\U$-module of highest weight $\lambda$ and $L(\lambda)$ its finite dimensional irreducible quotient $\U$-module. Denote by $v^+_\lambda$ the highest weight vector in $L(\lambda)$. Define $L(\lambda)_{\A} :=\UA^{-}\cdot v^+_\lambda$. We list several well-known properties for the $\A$-form $L(\lambda)_{\A}$ below:
  \begin{itemize}
    \item[(1)] $L(\lambda)_{\A}$ is a $\UA$-module;
    \item[(2)] $L(\lambda)_{\A}$ is a free $\A$-module, and $L(\lambda)=\mathbb{Q}(q)\otimes_{\A}L(\lambda)_{\A}$;
    \item[(3)] $L(\lambda)_{\A}$ (or $L(\lambda)$) admits a bar involution such that $\overline{u\cdot v^+_\lambda}=\overline{u}\cdot v^+_\lambda$ for any $u\in\UA^{-}$.
  \end{itemize}

Recall $B$ is the canonical basis for $\U^-$.
\begin{theorem}[Lusztig]
   Let $\lambda\in X^+$. Then $B(\lambda):=\{bv^+_\lambda~|~b\in B\}\setminus\{0\}$ forms a basis for $L(\lambda)_{\A}$ or $L(\la)$ (called a canonical basis). Moreover, the map $\{b\in B~|~bv^+_\lambda\neq0\}\rightarrow B(\lambda), b\mapsto bv^+_\lambda$, is a bijection.
\end{theorem}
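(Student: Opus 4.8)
The plan is to study the surjective $\U^-$-module map $\pi\colon\U^-\to L(\lambda)$, $u\mapsto uv^+_\lambda$ (left multiplication on the source), which identifies $L(\lambda)$ with $\U^-/N_\lambda$ for the left ideal $N_\lambda:=\ker\pi$; under the $\U^-$-module isomorphism $\U^-\stackrel{\sim}{\longrightarrow}M(\lambda)$, the ideal $N_\lambda$ corresponds to the maximal proper submodule of the Verma module and is explicitly $N_\lambda=\sum_{i\in\I}\U^-F_i^{(\langle\lambda,\alpha_i^\vee\rangle+1)}$. On $\A$-forms, $\pi$ restricts to $\UA^-\twoheadrightarrow\UA^-v^+_\lambda=L(\lambda)_{\A}$. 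Since $B$ is an $\A$-basis of $\UA^-$, its image spans $L(\lambda)_{\A}$, so $B(\lambda)$ spans $L(\lambda)_{\A}$ (hence also $L(\lambda)$ upon tensoring with $\Q(q)$); it remains to get linear independence together with bijectivity of $b\mapsto bv^+_\lambda$. Both reduce to the single \emph{compatibility} statement
\[
N_\lambda=\operatorname{span}_{\Q(q)}\bigl(B\cap N_\lambda\bigr).
\]
Indeed, granting it, write $B=(B\cap N_\lambda)\sqcup B'$ with $B':=\{b\in B\mid bv^+_\lambda\neq 0\}$; the $\Q(q)$-spans of these two disjoint subsets of the basis $B$ meet only in $0$, so $\pi$ is injective on $\operatorname{span}_{\Q(q)}(B')$, whence $\pi(B')=B(\lambda)$ is $\Q(q)$-linearly independent and $b\mapsto bv^+_\lambda$ is a bijection $B'\stackrel{\sim}{\longrightarrow}B(\lambda)$. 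Running the same argument over $\A$ (using $\UA^-\cap N_\lambda=\bigoplus_{b\in B\cap N_\lambda}\A\,b$, immediate from the display) shows $B(\lambda)$ is an $\A$-basis of $L(\lambda)_{\A}$.

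So the whole theorem comes down to compatibility of the canonical basis $B$ with the left ideal $N_\lambda$. The tool is the crystal (string) structure on $(\UA^-,B)$: for each $i\in\I$ there is a Kashiwara operator $\tilde f_i$ and a decomposition $\U^-=\bigoplus_{t\geq 0}F_i^{(t)}\,({}^{i}\U^-)$ with which $B$ is compatible, so that each individual left ideal $\U^-F_i^{(n+1)}$ is spanned by the canonical basis vectors it contains. What one actually needs, and what is genuinely hard, is compatibility of $B$ with the \emph{sum} $N_\lambda=\sum_{i\in\I}\U^-F_i^{(\langle\lambda,\alpha_i^\vee\rangle+1)}$ over all $i$ at once; this simultaneous statement is exactly the payoff of Lusztig's development in \cite[Chapters~14, 19--22]{Lus93} (equivalently of Kashiwara's ``grand loop'' \cite{Ka91}; see also \cite{DDPW}), and I would invoke it rather than reprove it.

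An alternative route is to transport the nondegenerate (contravariant) Shapovalov form of $L(\lambda)$ back to $\U^-$, identify its radical with $N_\lambda$, and use the almost-orthonormality of $B$ with respect to the bilinear form of Lemma~\ref{lem:BF}; but controlling the $q$-power weight factors that enter the pulled-back form is itself a nontrivial integrality point. Either way, the main obstacle is not the linear-algebra bookkeeping of the first paragraph but precisely this compatibility of $B$ with $N_\lambda$, which does not follow from the uniqueness argument of Theorem~\ref{thm:CBade} and genuinely requires the crystal-theoretic (or form-theoretic) machinery.
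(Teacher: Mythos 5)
The paper states this theorem without proof --- it is attributed to Lusztig, and the surrounding discussion defers to \cite{Lus93, Jan95, DDPW, Tin16} --- so there is no in-paper argument to compare against. Your reduction is correct: with $\pi\colon\U^-\twoheadrightarrow L(\lambda)$ of kernel the left ideal $N_\lambda=\sum_i\U^-F_i^{(\langle\lambda,\alpha_i^\vee\rangle+1)}$, the full statement (basis over $\Q(q)$, basis over $\A$, and bijectivity of $b\mapsto bv^+_\lambda$) follows from the single compatibility $N_\lambda=\operatorname{span}_{\Q(q)}(B\cap N_\lambda)$ by exactly the bookkeeping you lay out, and you are right that this compatibility is the substantive input and cannot be extracted from the bar-invariance characterization of Theorem~\ref{thm:CBade} alone.

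One point about where the difficulty lies is inverted, however. You present the single-$i$ compatibility --- that each left ideal $\U^-F_i^{(n)}$ is spanned by the canonical basis vectors it contains --- as essentially given by the string decomposition, and then locate the ``genuinely hard'' step in the \emph{simultaneous} compatibility with the sum over all $i$. In fact the passage to the sum is formal: if $\U^-F_i^{(n_i+1)}=\operatorname{span}_{\Q(q)}(B_i)$ with $B_i\subset B$, then $N_\lambda=\sum_i\U^-F_i^{(n_i+1)}=\operatorname{span}_{\Q(q)}\bigl(\bigcup_i B_i\bigr)$, and linear independence of $B$ forces $B\cap N_\lambda=\bigcup_i B_i$, so the sum inherits compatibility for free. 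The genuinely hard theorem is precisely the single-$i$ compatibility, which is the content of the crystal/string machinery in Lusztig's Chapter~14 and of Kashiwara's grand loop \cite{Ka91}. A related notational slip: for the \emph{left} ideal $\U^-F_i^{(n)}$ the divided-power factor in the string decomposition should sit on the \emph{right}, $\U^-=\bigoplus_{t\ge0}({}^i\U^-)F_i^{(t)}$, not on the left as you wrote. Finally, the Shapovalov-form alternative you mention is a legitimate route (and closer in spirit to Kashiwara's original argument); the integrality of the $q$-power weight factors that you flag there is a real, though resolvable, concern.
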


\begin{example}
  Let $\V$ be the natural representation of  $\U_q(\mathfrak{sl}_N)$ with a standard basis $\{v_1,\ldots, v_N\}$. The canonical basis for $\V$ coincides with its standard basis, i.e., $B(\epsilon_1)=\{v_1,\ldots,v_N\}$.
\end{example}

\subsection{Based $\U$-modules}
 \label{subsec:based}

A $\U$-module $M$ equipped with a bar involution $\overline{\phantom{x}}$ is called {\em involutive} if the bar involution on $M$ is compatible with the bar involution on $\U$, i.e., $\overline{u\cdot x}=\overline{u}\cdot\overline{x}$ for all $u\in\U, x\in M$.

A {\em based $\U$-module} $(M,B)$ consists of an involutive weight $\U$-module $M =\oplus_\mu M^\mu$ with a bar invariant basis $B$, such that $B \cap M^\mu$ is a basis for $M^\mu$ (for all $\mu$), $M_{\A}:=\A B$ is a $\U_\A$-module, $M_{\Z[q]}:=\Z[q]B$ is a $\Z[q]$-lattice of $M$, and the image of $B$ forms a basis for $M_{\Z[q]}/qM_{\Z[q]}$; cf. \cite[27.1.1]{Lus93}.
For example, $(L(\lambda),B(\lambda))$ is a based $\U$-module.

Given finite dimensional based $\U$-modules $M, M'$, the ``naive'' bar map $\overline{\cdot\otimes\cdot} =\overline{\cdot}\otimes\overline{\cdot}$ on the $\U$-module $M\otimes M'$ is not involutive in general since
\[
\overline{u\cdot(x\otimes x')}=\overline{\Delta(u)(x\otimes x')}=\overline{\Delta(u)}(\overline{x}\otimes \overline{x'})\neq \Delta(\overline{u})(\overline{x}\otimes \overline{x'})=\overline{u}\cdot(\overline{x}\otimes \overline{x'}),
\]
for all $u\in \U, x\in M, x' \in M'$.

Recall the quasi R-matrix $\Theta\in\widehat{\U\otimes \U}$ from Theorem~\ref{thm:quasi}.
\begin{lemma}
 \label{lem:barMN}
Let $M, M'$ be finite dimensional based $\U$-modules. Then the anti-linear map
\[
\psi=\Theta\circ(\overline{\cdot}\otimes\overline{\cdot}),
\]
 i.e., $\psi(x\otimes x')=\Theta(\overline{x}\otimes \overline{x'})$, makes $M\otimes M'$ an involutive $\U$-module.
\end{lemma}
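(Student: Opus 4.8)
The plan is to verify that $\psi=\Theta\circ(\overline{\cdot}\otimes\overline{\cdot})$ satisfies the three requirements of a bar involution making $M\otimes M'$ an involutive $\U$-module: it is anti-linear, it squares to the identity, and it intertwines the $\U$-action in the sense that $\psi(u\cdot z)=\overline{u}\cdot\psi(z)$ for all $u\in\U$ and $z\in M\otimes M'$. Anti-linearity is immediate, since $\overline{\cdot}\otimes\overline{\cdot}$ is anti-linear and $\Theta$ acts $\Q(q)$-linearly; note here that because $M,M'$ are finite-dimensional weight modules, $\U^-_\mu$ annihilates $M'$ for all but finitely many $\mu$, so $\Theta=\sum_\mu\Theta_\mu$ restricts to a genuine (invertible) linear operator on $M\otimes M'$.

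First I would record two elementary identities concerning the ``naive'' bar map $\overline{\cdot}\otimes\overline{\cdot}$. Writing $\Delta(u)=\sum u_1\otimes u_2$ and $z=x\otimes x'$, and using that $M,M'$ are involutive, one has $(\overline{\cdot}\otimes\overline{\cdot})(u\cdot z)=\sum\overline{u_1 x}\otimes\overline{u_2 x'}=\sum\overline{u_1}\,\overline{x}\otimes\overline{u_2}\,\overline{x'}=\overline{\Delta(u)}\cdot(\overline{\cdot}\otimes\overline{\cdot})(z)$, and by the definition \eqref{Dbar} we have $\overline{\Delta(u)}=\overline{\Delta}(\overline{u})$. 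Similarly, for any $a\otimes b\in\U\otimes\U$ and $w=w_1\otimes w_2\in M\otimes M'$ one has $(\overline{\cdot}\otimes\overline{\cdot})\big((a\otimes b)\cdot w\big)=(\overline{a}\otimes\overline{b})\cdot(\overline{\cdot}\otimes\overline{\cdot})(w)$; applying this term by term to $\Theta=\sum_\mu\Theta_\mu$ yields $(\overline{\cdot}\otimes\overline{\cdot})(\Theta\cdot w)=\overline{\Theta}\cdot(\overline{\cdot}\otimes\overline{\cdot})(w)$, where $\overline{\Theta}$ is as in Corollary~\ref{cor:Theta}.

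Next I would combine these with Theorem~\ref{thm:quasi}. For $z\in M\otimes M'$,
\[
\psi(u\cdot z)=\Theta\cdot(\overline{\cdot}\otimes\overline{\cdot})(u\cdot z)=\Theta\cdot\overline{\Delta}(\overline{u})\cdot(\overline{\cdot}\otimes\overline{\cdot})(z)=\Delta(\overline{u})\cdot\Theta\cdot(\overline{\cdot}\otimes\overline{\cdot})(z)=\overline{u}\cdot\psi(z),
\]
where the third equality is \eqref{eq:6} applied with $u$ replaced by $\overline{u}$. This is the required compatibility. For the involution property,
\[
\psi^2(z)=\Theta\cdot(\overline{\cdot}\otimes\overline{\cdot})\big(\Theta\cdot(\overline{\cdot}\otimes\overline{\cdot})(z)\big)=\Theta\cdot\overline{\Theta}\cdot(\overline{\cdot}\otimes\overline{\cdot})^2(z)=\Theta\,\Theta^{-1}\,z=z,
\]
using the second identity above, that $(\overline{\cdot}\otimes\overline{\cdot})^2=\mathrm{id}$ on $M\otimes M'$, and Corollary~\ref{cor:Theta}. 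Hence $(M\otimes M',\psi)$ is an involutive $\U$-module.

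I do not expect a genuine obstacle: the substance of the lemma is entirely carried by Theorem~\ref{thm:quasi} and Corollary~\ref{cor:Theta}. The only points needing a little care are the well-definedness of the a priori infinite operator $\Theta$ on $M\otimes M'$ (handled by finite-dimensionality) and the bookkeeping showing that $\overline{\cdot}\otimes\overline{\cdot}$ commutes past $\Theta$ with $\Theta$ replaced by $\overline{\Theta}$; both are routine.
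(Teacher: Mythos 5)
Your proof is correct and follows essentially the same route as the paper's: both arguments verify compatibility with the $\U$-action by applying Theorem~\ref{thm:quasi} (after pushing the naive bar past $\Delta(u)$ and using $\overline{\Delta}(\overline{u})=\overline{\Delta(u)}$), and verify $\psi^2=\mathrm{id}$ via Corollary~\ref{cor:Theta}. Your version merely makes explicit a few points the paper leaves implicit, namely anti-linearity and the well-definedness of $\Theta$ on the finite-dimensional $M\otimes M'$.
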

(We shall call $\psi$ the bar involution on $M\otimes M'$.)

\begin{proof}
By Theorem~\ref{thm:quasi} and Corollary~\ref{cor:Theta}, we have
\[
\psi^2(x\otimes x')=\Theta\circ(\overline{\cdot}\otimes\overline{\cdot})\circ\Theta(\overline{x}\otimes \overline{x'})=\Theta\overline{\Theta}(x\otimes x')=x\otimes x'.
\]
Moreover, the bar involution $\psi$ is compatible with the bar involution on $\U$, that is,
\[
\psi(u\cdot(x\otimes x'))=\Theta\overline{\Delta(u)}(\overline{x}\otimes \overline{x'})=\Delta(\overline{u})\Theta(\overline{x}\otimes \overline{x'})=\overline{u}\cdot\psi(x\otimes x'),
\]
for all $u\in \U, x\in M, x' \in M'$. Therefore, $(M\otimes M',\psi)$ is an involutive $\U$-module.
\end{proof}

Given based $\U$-modules $M_1, \ldots, M_m$, the tensor product $M_1\otimes \ldots \otimes M_m$ is an involutive $\U$-module by applying Lemma~\ref{lem:barMN} repeatedly (the resulting bar involution is independent of the choices of the bracketing on the tensor factors).

\subsection{Canonical basis on tensor products}

Let $(M,B), (M', B')$ be based $\U$-modules. Define a partial order $\leq$ on $B\times B'$ by
$$(b_2,b_2')\leq(b,b')\; \Leftrightarrow \; \mathrm{wt}(b)+\mathrm{wt}(b')=\mathrm{wt}(b_2)+\mathrm{wt}(b_2') \ \mbox{and}\ \mathrm{wt}(b_2) \geq \mathrm{wt}(b).$$

\begin{theorem} \cite{Lus93}
 \label{thm:Ltensor}
  \begin{itemize}
    \item[(1)] For any $(b,b')\in B\times B'$, there exists a unique $b\diamond b'\in\Z[q](B\times B')$ such that $\psi(b\diamond b')=b\diamond b'$ and $b\diamond b'\in b\otimes b'+q\Z[q](B\times B')$.
    \item[(2)] $b\diamond b'\in  b\otimes b'+\sum_{(b_2,b_2')<(b,b')}q\Z[q]b_2\otimes b_2'$.
    \item[(3)] $B \diamond B' :=\{b\diamond b'~|~(b,b')\in B\times B'\}$ forms a basis for $M\otimes M'$ (also for $M_{\A}\otimes M'_{\A}$ and for $\Z[q] B\otimes B'$).
  \end{itemize}
\end{theorem}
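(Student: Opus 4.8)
The plan is to mimic the proof of Theorem~\ref{thm:KL} (and its reuse in Theorem~\ref{thm:CBade}): we already have an involutive structure $\psi$ on $M\otimes M'$ from Lemma~\ref{lem:barMN}, a distinguished basis $\{b\otimes b'\}$, and a partial order $\leq$ on $B\times B'$; so the three assertions (1)--(3) should follow from the standard ``unitriangularity + bar-invariance $\Rightarrow$ canonical basis'' package once we verify the key unitriangularity of $\psi$ with respect to $\leq$. Concretely, the first thing I would establish is the analogue of \eqref{eq:Hbar}/\eqref{eq:12}:
\begin{equation*}
\psi(b\otimes b')\in b\otimes b'+\sum_{(b_2,b_2')<(b,b')}\Z[q,q^{-1}]\,b_2\otimes b_2'.
\end{equation*}
To get this, expand $\psi(b\otimes b')=\Theta(\overline b\otimes\overline{b'})$ using $\Theta=\sum_\mu\Theta_\mu$ with $\Theta_0=1\otimes1$ and $\Theta_\mu\in\U^+_\mu\otimes\U^-_\mu$ for $\mu\in\N\I\setminus\{0\}$ (Theorem~\ref{thm:quasi}). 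Since $M,M'$ are based, $\overline b=b+\sum q\Z[q]$-combinations of basis elements of the same weight (and similarly $\overline{b'}$), while a nonzero $\Theta_\mu$-term strictly raises the weight in the first factor by $\mu$ and strictly lowers it in the second, which by the definition of $\leq$ lands us strictly below $(b,b')$; the $\mu=0$ term contributes exactly $\overline b\otimes\overline{b'}$, whose leading term is $b\otimes b'$. Combining these observations gives the displayed unitriangularity, with coefficient $1$ on $b\otimes b'$.

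\textbf{Completing the argument.} With this unitriangularity in hand, I would run the now-familiar two-step argument verbatim. For existence: order $B\times B'$ by $\leq$ and induct; given the bar-invariant candidates $b_2\diamond b_2'$ for all $(b_2,b_2')<(b,b')$, rewrite $\psi(b\otimes b')$ in the basis $\{b_2\diamond b_2'\}$, observe (comparing with its image under $\psi$) that the coefficient polynomials $P_{(b_2,b_2')}$ satisfy $P(q^{-1})=-P(q)$, hence factor uniquely as $Q(q)-Q(q^{-1})$ with $Q\in q\Z[q]$, and set
\begin{equation*}
b\diamond b':=b\otimes b'+\sum_{(b_2,b_2')<(b,b')}Q_{(b_2,b_2')}(q)\,(b_2\diamond b_2').
\end{equation*}
By construction this is $\psi$-invariant and lies in $b\otimes b'+\sum_{(b_2,b_2')<(b,b')}q\Z[q]\,b_2\otimes b_2'$, giving (1) and (2) at once. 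For uniqueness: if two choices differed, the difference would be a $\psi$-invariant element of $\sum q\Z[q]\,(B\times B')$, and the maximality/unitriangularity argument of Claim~2 in Theorem~\ref{thm:KL} forces it to vanish. Part (3) is then immediate from (2): the transition matrix between $B\diamond B'$ and $\{b\otimes b'\}$ is unitriangular with entries in $q\Z[q]$ off the diagonal, hence invertible over $\Z[q]$ (and over $\A$, and over $\Q(q)$), so $B\diamond B'$ is simultaneously a $\Z[q]$-basis of $\Z[q]B\otimes B'$, an $\A$-basis of $M_\A\otimes M'_\A$, and a $\Q(q)$-basis of $M\otimes M'$; the $\U_\A$-stability of $M_\A\otimes M'_\A$ is part of the based-module hypotheses.

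\textbf{Main obstacle.} The only genuinely substantive point is the unitriangularity estimate, and within it the bookkeeping that the weight condition ``$\mathrm{wt}(b_2)\geq\mathrm{wt}(b)$ with equal total weight'' built into the partial order $\leq$ is exactly matched by the support of $\Theta$ in $\bigoplus_\mu\U^+_\mu\otimes\U^-_\mu$; one must be careful that the ``naive'' bar moves only \emph{within} a weight space (so contributes nothing below $(b,b')$ by itself, only the $q\Z[q]$-tail in the \emph{same} weight) while all strict lowering comes from $\Theta_\mu$ with $\mu\neq0$. One also needs finiteness, i.e.\ that only finitely many $\Theta_\mu$ act nontrivially on the finite-dimensional $M\otimes M'$, which is clear since the weights are bounded. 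Everything after that is a mechanical transcription of the Kazhdan--Lusztig existence/uniqueness scheme already carried out twice in the text, so I would present it tersely and refer to \cite[Theorem~24.3.3]{Lus93} for the full details.
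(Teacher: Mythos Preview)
Your approach is the same as the paper's: establish the unitriangularity of $\psi$ on the tensor basis and then run the Kazhdan--Lusztig existence/uniqueness scheme from Theorem~\ref{thm:KL}. Two points deserve correction. First, you overcomplicate the ``naive bar'' step: since $(M,B)$ and $(M',B')$ are \emph{based} $\U$-modules, the basis elements are bar-invariant, so $\overline b=b$ and $\overline{b'}=b'$ exactly; hence $\psi(b\otimes b')=\Theta(b\otimes b')$ with no $q\Z[q]$-tail in the same weight space to worry about. Second, and more importantly, you assert that the coefficients in the unitriangularity expansion lie in $\Z[q,q^{-1}]$ but never justify it. The weight support of $\Theta_\mu\in\U^+_\mu\otimes\U^-_\mu$ only gives triangularity with $\Q(q)$-coefficients; to land in $\A=\Z[q,q^{-1}]$ you need the nontrivial integrality $\Theta\in\widehat{\U_\A^+\otimes\U_\A^-}$ (so that each $\Theta_\mu$ preserves $M_\A\otimes M'_\A$), which the paper explicitly invokes from \cite{Lus93}. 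Without this, the splitting $P(q)=Q(q)-Q(q^{-1})$ with $Q\in q\Z[q]$ is not available, and the whole inductive construction collapses. Add that citation and drop the unnecessary analysis of $\overline b$, and your argument is exactly the paper's.
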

The basis $B \diamond B'$ is called the canonical basis for $M\otimes M'$.
It follows that $(M\otimes M', B \diamond B')$ is a based $\U$-module.

\begin{proof}
Note $\psi (b\otimes b') = \Theta (b\otimes b')$, for $(b,b')\in B\times B'$. Thanks to the fact that $\Theta$ lies in (a completion of) $\UA^+\otimes \UA^-$ \cite{Lus93}, we have
\[
\psi (b\otimes b')  \in b\otimes b'+\sum_{(b_1,b_1')<(b,b')}\A \, b_1\otimes b_1'.
\]
The rest of the argument is standard; see the proof of Theorem~\ref{thm:KL}.
\end{proof}

\begin{corollary}   \label{cor:CBtensor}
  Let $\lambda^1,\ldots,\lambda^k\in X^+$. Then $L(\lambda^1)\otimes\cdots\otimes L(\lambda^k)$ admits a canonical basis.
\end{corollary}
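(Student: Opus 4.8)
The plan is to iterate Theorem~\ref{thm:Ltensor}, proceeding by induction on $k$. The base case $k=1$ is immediate: $(L(\lambda^1), B(\lambda^1))$ is a based $\U$-module, as recorded in Sections~\ref{subsec:CBM}--\ref{subsec:based}, and it is finite-dimensional since $\lambda^1\in X^+$. For the inductive step, suppose $M := L(\lambda^1) \otimes \cdots \otimes L(\lambda^{k-1})$ has already been given the structure of a based $\U$-module $(M, B)$, with $B = B(\lambda^1) \diamond \cdots \diamond B(\lambda^{k-1})$ obtained by repeated application of Theorem~\ref{thm:Ltensor}; note $M$ is finite-dimensional. Taking $M' = L(\lambda^k)$ with its canonical basis $B' = B(\lambda^k)$, Theorem~\ref{thm:Ltensor}(3) together with the remark immediately following it shows that $(M \otimes M', B \diamond B')$ is again a based $\U$-module. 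Hence $L(\lambda^1) \otimes \cdots \otimes L(\lambda^k)$ carries the canonical basis $B(\lambda^1) \diamond \cdots \diamond B(\lambda^k)$.

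The only point that needs care is the compatibility of bar involutions across the induction. In Theorem~\ref{thm:Ltensor} applied to $M \otimes M'$ the relevant bar map is $\psi = \Theta \circ (\overline{\phantom{x}} \otimes \overline{\phantom{x}})$, where $\overline{\phantom{x}}$ on the first factor is the iterated bar involution already built on $M$; one must know this is the same as the bar involution on $L(\lambda^1)\otimes\cdots\otimes L(\lambda^k)$ produced by applying Lemma~\ref{lem:barMN} one tensor factor at a time. This is exactly the assertion in Section~\ref{subsec:based} that the iterated bar involution does not depend on how the tensor factors are bracketed, which in turn rests on a coassociativity-type identity for the quasi R-matrix of the shape $(\Delta \otimes 1)(\Theta) = \Theta_{13}\Theta_{23}$ and $(1 \otimes \Delta)(\Theta) = \Theta_{13}\Theta_{12}$ (with signs adjusted to the comultiplication convention of these notes), itself a consequence of the defining property \eqref{eq:6} of $\Theta$ in Theorem~\ref{thm:quasi}.

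I expect this bracketing-independence to be the main (indeed essentially the only) substantive obstacle; granting it, the induction runs verbatim, and it additionally yields that the canonical basis $B(\lambda^1)\diamond\cdots\diamond B(\lambda^k)$ is independent of the order in which the tensor factors are successively combined. Everything else — that $M_\A \otimes M'_\A$ is a $\UA$-module, that $\Z[q]B$ is a $\Z[q]$-lattice whose image bases $M_{\Z[q]}/qM_{\Z[q]}$, and bar-invariance of the basis — is inherited directly from Theorem~\ref{thm:Ltensor}(1)--(3) at each step.
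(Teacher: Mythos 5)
Your proposal is correct and follows essentially the same route the paper takes: the corollary is an immediate iteration of Theorem~\ref{thm:Ltensor}, using the remark after that theorem that $(M\otimes M', B\diamond B')$ is again a based $\U$-module, with the bracketing-independence of the iterated bar involution (asserted in \S\ref{subsec:based}) ensuring the resulting basis is unambiguous. Your observation that the independence of bracketing ultimately rests on a coassociativity-type identity for $\Theta$ — which itself follows from the uniqueness in Theorem~\ref{thm:quasi} applied to the triple quasi R-matrix — is exactly the implicit content of the paper's remark; just beware that the precise shape of the identities $(\Delta\otimes 1)(\Theta)$ and $(1\otimes\Delta)(\Theta)$ depends on the sign and ordering conventions of these notes (which differ from Lusztig's), so the formulas you wrote down should be treated as indicative rather than literal.
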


\subsection{Canonical basis on modified quantum groups}
\label{subsec:CBQG}

To extend the theory of canonical basis from half a quantum group $\U^-$ to the whole quantum group, we need the notion of modified quantum groups \cite{BLM90, Lus93}, where one replaces the Cartan subalgebra $\U^0$ of $\U$ by a family of orthogonal idempotents ${\bf 1}_\zeta$, for $\zeta\in X$:
\[
\dot\U = \bigoplus_{\zeta \in X} \dot\U {\bf 1}_\zeta = \bigoplus_{\zeta \in X} \U^-\U^+ {\bf 1}_\zeta.
\]
We refer to \cite[IV]{Lus93} for an elementary definition of $\dot\U$.

For $\la \in X^+$, recall $L(\la)$ denotes the irreducible $\U$-module with highest weight $\la$ and a highest weight vector $v^+_\la$.  Let ${}^\omega L(\la)$ denote the $\U$-module with the same underlying vector space as $L(\la)$ but with action of $\U$ twisted by the Chevalley involution $\omega$; the vector $v^+_\la$ becomes a lowest weight vector  in ${}^\omega L(\la)$ and will be denoted by $v^-_{-\la}$. In other words, ${}^\omega L(\la)$ is the lowest weight $\U$-module of lowest weight $-\la$, which is isomorphic to $L(-w_0 \la)$ of highest weight $-w_0 \la \in X^+$.

Given $\zeta \in X$, we can choose and fix $\la^1, \la^2 \in X^+$ such that $\la^1 -\la^2 =\zeta$. For each $\nu \in X^+$, we note that $L(\la^1+\nu) \otimes {}^\omega L(\la^2+\nu)$ is a cyclic $\U$-module generated by $v^+_{\la^1+\nu} \otimes v^-_{-\la^2-\nu}$, and its canonical basis is defined by Corollary~\ref{cor:CBtensor}. Following Lusztig \cite{Lus93}, sending $v^+_{\la^1+\nu} \otimes v^-_{-\la^2-\nu}$ to $v^+_{\la^1} \otimes v^-_{-\la^2}$ (both of weight $\zeta$) defines a surjective $\U$-module homomorphism
\[
\pi_{\la^1,\la^2, \nu}: L(\la^1+\nu) \otimes {}^\omega L(\la^2+\nu) \longrightarrow L(\la^1) \otimes {}^\omega L(\la^2).
\]
Indeed, the homomorphism $\pi_{\la^1,\la^2, \nu}$ arises as a composition of the following $\U$-module homomorphisms from \cite[\S25.1.1]{Lus93} including a contraction map $\delta_\nu: L(\nu) \otimes {}^\omega L(\nu) \rightarrow \Q(q)$:
\begin{align} \label{xy:pi}
  \xymatrix{
 L(\la^1+\nu) \otimes {}^\omega L(\la^2+\nu)     \ar[dr]_{\pi_{\la^1,\la^2, \nu}} \ar[r]
              & L(\la^1)\otimes L(\nu) \otimes {}^\omega L(\nu) \otimes {}^\omega L(\la^2)  \ar[d]^{1 \otimes \delta_\nu \otimes 1}  \\
             &   L(\la^1) \otimes {}^\omega L(\la^2)
                }
\end{align}
Remarkably, $\pi_{\la^1,\la^2, \nu}$ is a homomorphism of based modules, i.e., sending a canonical basis element to a canonical basis element or 0. Hence, we obtain a projective system of $\U$-modules $\big\{ L(\la^1+\nu) \otimes {}^\omega L(\la^2+\nu) \big\}_{\nu \in X^+}$ with compatible canonical bases. This leads to a canonical basis on its inverse limit, while the inverse limit can be naturally identified with $\dot \U {\bf 1}_\zeta$.

In this way, the canonical basis on tensor products leads to a construction of the canonical basis on $\dot\U$.
See \cite[IV]{Lus93} for more details.

\section{$q$-Schur duality}
  \label{sec:qSchur}

In this section, we will establish the $q$-Schur duality (or Jimbo-Schur duality) between quantum group and Hecke algebra of type A, which goes back to \cite{Jim86}. The R-matrix gives rise to an action of the Hecke algebra on the tensor power $\V^{\otimes m}$ of the natural representation of $\U$. The type A Kazhdan-Lusztig basis and canonical basis on the tensor power coincide; cf. \cite{FKK98}.

\subsection{$q$-Schur duality of type A}
\label{sub:Hecke}

Given $a<b \in \mathbb R$ with $b-a \in \N$, introduce the ``integral interval" notation:
\begin{align}  \label{eq:interval}
[a..b] := \big\{a, a+1, a+2,\ldots,b \big \}.
\end{align}
The $q$-tensor space $\V^{\otimes m}$ has a basis
\[
\Big\{M_f:=v_{f(1)}\otimes \cdots \otimes v_{f(m)}~|~f: [1..m] \rightarrow [1..N] \Big\}.
\]
We identify  $f: [1..m] \rightarrow [1..N]$ with a $m$-tuple $(f(1), \ldots, f(m)) \in [1..N]^m$.

In this subsection, we restrict ourselves to the type A  Hecke algebra $\H =\H_q(\mf S_m) =\langle H_i~|~1\le i \le m-1 \rangle$; cf. Definition~\ref{def:Hecke}. In particular, $(H_i+q)(H_i-q^{-1})=0$.

The symmetric group $\mf S_m$ acts on $[1..N]^m$ on the right by $(f \sigma) (i) =f(\sigma i)$, for $1\le i \le m$. The following lemma can be verified by a direct computation.

\begin{lemma} \label{lem:HtypeA}
  A (right) action of the Hecke algebra $\H =\H_q(\mf S_m)$ on $\V^{\otimes m}$ is given by, for $1\le i\le m-1$,
  $$M_fH_i=\left\{\begin{array}{ll}
    M_{fs_i}, & \mbox{if $f(i)<f(i+1)$};\\
    M_{fs_i}+(q^{-1}-q)M_f, & \mbox{if $f(i)>f(i+1)$};\\
    q^{-1}M_f, & \mbox{if $f(i)=f(i+1)$}.
  \end{array} \right.$$
\end{lemma}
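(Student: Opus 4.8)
The plan is to check directly that the assignment $H_i\mapsto\big(M_f\mapsto M_fH_i\big)$ respects the three families of defining relations of $\H=\H_q(\mf S_m)$ from Definition~\ref{def:Hecke}: the quadratic relations $(H_i+q)(H_i-q^{-1})=0$, the braid relations $H_iH_{i+1}H_i=H_{i+1}H_iH_{i+1}$, and the commutations $H_iH_j=H_jH_i$ for $|i-j|>1$. The organizing observation is \emph{locality}: the operator attached to $H_i$ sends $M_f$ to a $\Q(q)$-linear combination of vectors $M_{f'}$ with $f'(k)=f(k)$ for all $k\notin\{i,i+1\}$, and the coefficients depend only on the pair $\big(f(i),f(i+1)\big)$. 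Hence the operators for $H_i$ and $H_j$ with $|i-j|>1$ act on disjoint pairs of tensor legs and visibly commute; moreover, verifying the quadratic relation reduces to the case $m=2$, and verifying the braid relation reduces to the case $m=3$ with $i=1$.

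For the quadratic relation, rewrite it as
\[
H_i^2=(q^{-1}-q)\,H_i+1,
\]
and evaluate both sides on $M_f$ with $f=(a,b)\in[1..N]^2$, in the three cases $a<b$, $a>b$, $a=b$. When $a=b$ one gets $M_fH_i^2=q^{-2}M_f$ against $(q^{-1}-q)q^{-1}M_f+M_f=q^{-2}M_f$; in the cases $a<b$ and $a>b$ one application of $H_i$ lands in a configuration whose behaviour is governed by the other case, and the $(q^{-1}-q)M_f$ corrections cancel as required. Each is a one- or two-line computation.

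For the braid relation, by locality it suffices to evaluate $M_fH_1H_2H_1$ and $M_fH_2H_1H_2$ on each $M_f$ with $f=(a,b,c)\in[1..N]^3$. Since the formulas only see the order relations among $a,b,c$, one may relabel the values by an order-preserving bijection, so that only finitely many configurations need be examined: the six with $\{a,b,c\}$ pairwise distinct (one per ordering), the six with exactly two of the entries equal, and the one with $a=b=c$. Each of these is a short but bookkeeping-heavy computation, the only subtlety being to keep track of the $(q^{-1}-q)$ terms produced whenever an inversion is created or destroyed; I expect assembling these cases to be the main, if entirely routine, obstacle. (One can trim the list slightly using the value-reversal $j\mapsto N{+}1{-}j$ combined with $q\mapsto q^{-1}$, though this is not essential.)

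There is also a conceptual alternative, which is the route Section~\ref{sec:qSchur} ultimately takes: the operators above are, up to a common renormalization, the braid-group operators $R_i\in\mathrm{End}_{\U}(\V^{\otimes m})$ of \eqref{eq:Ri}. The braid relations for them then follow immediately from the Yang--Baxter equation, Theorem~\ref{thm:YB} (see \eqref{eq:braidR}), and the quadratic relation $(H_i+q)(H_i-q^{-1})=0$ follows once one knows the explicit action of $R$ on $\V\otimes\V$ — it acts by a scalar on each of the (at most two) simple summands there, so a suitable rescaling of $R_i$ has exactly the two eigenvalues $-q$ and $q^{-1}$. I would nonetheless keep the self-contained direct verification as the proof of the lemma, since the explicit form of $R$ on $\V\otimes\V$ is only pinned down later in this section.
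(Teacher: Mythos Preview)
Your proposal is correct and matches the paper's approach: the paper simply states that ``the following lemma can be verified by a direct computation'' without further detail, and what you have written is precisely that direct computation, organized via the natural locality reduction to $m=2$ and $m=3$. Your aside about the R-matrix route is also apt, since this is exactly what Theorem~\ref{thm:Jimbo2} establishes afterward.
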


As $\U$ admits a natural triangular decomposition $\U =\U^- \U^0 \U^+$, the highest weight theory for $\U$-modules makes sense, and we denote by $L_q(\la)$ the irreducible $\U$-module (generated by a highest weight vector $v^+_\la$) of highest weight $\la \in \mathcal{P}_m(N)$, where $D_i v^+_\la =q^{\la_i} v^+_\la$, for $1\le i \le N$. It is known that $L_q(\la)$ shares the same character as the usual $\mathfrak{gl}_N$-module $L(\la)$; cf. \cite{Lus93, Jan95}.

On the other hand, the Specht module $S_q^\la$ over $\H$, for $\la \in \mathcal{P}_m$, can be defined (cf. \cite{Mat99}); they are a deformation of the Specht module $S^\la$ and in particular have the same dimension.

\begin{theorem} \label{thm:Jimbo}
\cite{Jim86} {\quad}
\begin{itemize}
  \item[(1)] The actions
  \[
\U=\U_q(\mathfrak{gl}_N)\stackrel{\Phi}{\curvearrowright} \V^{\otimes m}\stackrel{\Psi}{\curvearrowleft} \H_q(\mathfrak{S}_m)=\H
\]
commute, and they form double centralizers:
  \begin{align*}
    \Phi(\U)= &\mathrm{End}_{\H} (\V^{\otimes m}),\\
    &\mathrm{End}_{\U}(\V^{\otimes m})^{\mathrm{op}} =\Psi(\H).
  \end{align*}
  \item[(2)] As a $(\U,\H)$-module, $\V^{\otimes m}\cong \bigoplus_{\lambda\in\mathcal{P}_m(N)}L_q(\lambda)\otimes S_q^\lambda$.
\end{itemize}
\end{theorem}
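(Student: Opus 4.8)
The plan is to mirror the classical argument (Theorem~\ref{schurduality1} and Theorem~\ref{schurduality2}) but with the $q$-deformed ingredients, using Lemma~\ref{lem:1} as the abstract engine. First I would check that the two actions $\Phi$ and $\Psi$ commute. The cleanest route is to observe that the $\H$-action in Lemma~\ref{lem:HtypeA} is, up to normalization and a sign, realized by the R-matrix: each generator $H_i$ acts (essentially) as $\pm R_i$ from \eqref{eq:Ri}, and by Theorem~\ref{thm:R}(3) each $R_i$ lies in $\mathrm{End}_{\U}(\V^{\otimes m})$; alternatively one can just verify $[\Phi(E_j),\Psi(H_i)]=[\Phi(F_j),\Psi(H_i)]=[\Phi(D_a),\Psi(H_i)]=0$ by a direct computation on basis vectors $M_f$, checking the three cases $f(i)<f(i+1)$, $f(i)>f(i+1)$, $f(i)=f(i+1)$ against the explicit formulas for $E_j,F_j,D_a$ on $\V^{\otimes m}$. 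This gives $\Phi(\U)\subseteq\mathrm{End}_{\H}(\V^{\otimes m})$ and $\Psi(\H)\subseteq\mathrm{End}_{\U}(\V^{\otimes m})^{\mathrm{op}}$.

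Next I would feed this into Lemma~\ref{lem:1}. The issue is that $\U$ itself is infinite-dimensional, so I would pass to the finite-dimensional image algebra $A := \mathrm{End}_{\H}(\V^{\otimes m})$, and take $B := \Psi(\H)$. To apply Lemma~\ref{lem:1} I need $B$ to be semisimple, i.e. that $\H=\H_q(\mf S_m)$ acts semisimply on $\V^{\otimes m}$ over $\Q(q)$; this holds because $\H_q(\mf S_m)$ is a semisimple algebra over $\Q(q)$ (the Hecke algebra of type $A$ over a field with $q$ not a root of unity is split semisimple, cf. \cite{Mat99}), so every $\H$-module, in particular $\V^{\otimes m}$, is semisimple, hence $B=\Psi(\H)$ is a semisimple quotient. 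Lemma~\ref{lem:1}(2) then yields $\mathrm{End}_A(\V^{\otimes m})=B$, i.e. $\mathrm{End}_{\U}(\V^{\otimes m})^{\mathrm{op}}=\mathrm{End}_{\mathrm{End}_{\H}(\V^{\otimes m})}(\V^{\otimes m})=\Psi(\H)$, which is the second centralizer identity. For the first identity $\Phi(\U)=\mathrm{End}_{\H}(\V^{\otimes m})=A$, I need that $\Phi(\U)$ is already all of $A$; since $A$ is semisimple by Lemma~\ref{lem:1}(1) and $\Phi(\U)\subseteq A$ is a subalgebra containing enough elements, the standard way is a dimension/multiplicity count: decompose $\V^{\otimes m}$ as an $\H$-module using the $q$-analogue $(\V^{\otimes m})_\mu\cong\mathrm{Ind}_{\H_\mu}^{\H}\mathbf{1}_\mu$ of the permutation-module decomposition (Lemma~\ref{lem:HtypeA} shows $(\V^{\otimes m})_\mu$ is exactly the $q$-permutation module $C_{w_{0,\mu}}$-type module, with the same $q$-Kostka multiplicities $K_{\la\mu}$ as in the classical case since Specht modules $S_q^\la$ have the same dimensions), conclude $\mathrm{End}_{\H}(\V^{\otimes m})\cong\bigoplus_{\la\in\mathcal P_m(N)}\mathrm{End}(U_\la)$ where $U_\la=\mathrm{Hom}_{\H}(S_q^\la,\V^{\otimes m})$, and then show $\dim\Phi(\U)$ matches by identifying $U_\la$ with $L_q(\la)$ (which has the correct dimension because $L_q(\la)$ and $L(\la)$ share characters).

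For part (2): by the semisimplicity of $\H$ on $\V^{\otimes m}$ we get $\V^{\otimes m}\cong\bigoplus_\la L^{[\la]}\otimes S_q^\la$ with $L^{[\la]}:=\mathrm{Hom}_{\H}(S_q^\la,\V^{\otimes m})$, the sum over those $\la\in\mathcal P_m$ with $S_q^\la$ appearing, which by the $q$-Kostka count is exactly $\la\in\mathcal P_m(N)$. Part (1) plus Lemma~\ref{lem:1}(1) makes each $L^{[\la]}$ an irreducible $\Phi(\U)$-module, hence irreducible $\U$-module; finally the highest-weight analysis — $L^{[\la]}_\la=\Q(q)$ and $L^{[\la]}_\mu=0$ unless $\la\geq\mu$, seen from the weight structure of $(\V^{\otimes m})_\mu$ — identifies $L^{[\la]}\cong L_q(\la)$, exactly as in the proof of Theorem~\ref{schurduality2}. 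The main obstacle, and the one place genuine $q$-input is needed rather than a verbatim transcription of the classical proof, is establishing that $\H_q(\mf S_m)$ acts semisimply on $\V^{\otimes m}$ over $\Q(q)$ (equivalently that the relevant $q$-Schur algebra is semisimple) and that the $q$-permutation module $(\V^{\otimes m})_\mu$ decomposes with the \emph{same} multiplicities $K_{\la\mu}$ as classically; granting those, everything else follows the classical template via Lemma~\ref{lem:1} with only bookkeeping changes.
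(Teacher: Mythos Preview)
Your argument contains the right ingredients but has a genuine ordering gap. You apply Lemma~\ref{lem:1}(2) with $B=\Psi(\H)$ and $A=\mathrm{End}_B(\V^{\otimes m})$ to get $\mathrm{End}_A(\V^{\otimes m})=B$, and then write ``i.e.\ $\mathrm{End}_{\U}(\V^{\otimes m})^{\mathrm{op}}=\mathrm{End}_{\mathrm{End}_{\H}(\V^{\otimes m})}(\V^{\otimes m})=\Psi(\H)$''. But $\mathrm{End}_{\U}(\V^{\otimes m})=\mathrm{End}_{\Phi(\U)}(\V^{\otimes m})$, and at this stage you only know $\Phi(\U)\subseteq A$, which gives $\mathrm{End}_{\Phi(\U)}(\V^{\otimes m})\supseteq\mathrm{End}_A(\V^{\otimes m})$, not equality. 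So Lemma~\ref{lem:1}(2) hands you back only $\Psi(\H)\subseteq\mathrm{End}_{\U}(\V^{\otimes m})$, which you already had from commutativity. The second centralizer identity does not close until you have the first one, $\Phi(\U)=A$. The same circularity lurks in your Part~(2): Lemma~\ref{lem:1}(1) tells you $L^{[\lambda]}$ is irreducible over $A$, not over $\Phi(\U)$, so ``Part~(1) plus Lemma~\ref{lem:1}(1)'' cannot be invoked to get $\U$-irreducibility before $\Phi(\U)=A$ is in hand.

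The fix is to reorder: establish the decomposition (2) first, independently of (1). Use that $\V^{\otimes m}$ is completely reducible as a $\U$-module over $\Q(q)$ (a fact you did not state but need), so each $L^{[\lambda]}=\mathrm{Hom}_{\H}(S_q^\lambda,\V^{\otimes m})$ is a completely reducible $\U$-module; the $q$-Kostka count gives $\dim L^{[\lambda]}=\sum_\mu K_{\lambda\mu}=\dim L(\lambda)=\dim L_q(\lambda)$, and the highest-weight analysis forces $L^{[\lambda]}\cong L_q(\lambda)$. Now (1) follows: Jacobson density on the pairwise non-isomorphic $L_q(\lambda)$ gives $\Phi(\U)=\bigoplus_\lambda\mathrm{End}(L_q(\lambda))=A$, and then $\mathrm{End}_{\U}(\V^{\otimes m})=\mathrm{End}_A(\V^{\otimes m})=\Psi(\H)$. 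This is exactly the route the paper takes (in one sentence): prove (2) by comparison with the classical Theorem~\ref{schurduality2} using that $L_q(\lambda)$ and $S_q^\lambda$ have the classical characters and dimensions, then read off (1).
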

Let us sketch a proof of the above theorem. We can establish (2) above first. By comparing with its classical counterpart in Theorem~\ref{schurduality2}, we know the composition factors of the $\U$-module $\V^{\otimes m}$ are as indicated with the expected multiplicity spaces (which become irreducible $\H$-modules). Part (1) follows from (2).

The finite-dimensional $\Q(q)$-algebra
\[
S_q(N,m) :=\mathrm{End}_{\H} (\V^{\otimes m}) =\Phi(\U)
\]
is called the {\em $q$-Schur algebra}. There are different (algebraic and geometric) approaches  to the $q$-Schur duality using the $q$-Schur algebra $\mathrm{End}_{\H} (\V^{\otimes m})$ developed in \cite{DJ89} and in \cite{BLM90}; these approaches have been generalized to arbitrary finite type \cite{LW20}.

\subsection{Jimbo-Schur duality via R-matrix}

Recall the action of R-matrix $R_i$ on the tensor power $\V^{\otimes m}$ from \eqref{eq:Ri}. The following theorem improves the $q$-Schur duality in Theorem~\ref{thm:Jimbo}.

\begin{theorem} \cite{Jim86}
 \label{thm:Jimbo2}
  Let $\U=\U_q(\mathfrak{sl}_N)$ or $\U_q(\mathfrak{gl}_N)$ and $\V$ its natural representation. Then the action of $\H$ on $\V^{\otimes m}$ (see Lemma~\ref{lem:HtypeA}) is realized via R-matrix: $H_i\mapsto R_i^{-1}$, for $1\le i \le m-1$.
\end{theorem}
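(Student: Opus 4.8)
The plan is to verify the identity $M_f H_i = M_f R_i^{-1}$ directly on the standard basis $\{M_f\}$ of $\V^{\otimes m}$, reducing immediately to the rank-one situation. Since $R_i$ acts only on the $i$-th and $(i+1)$-th tensor factors (with the identity elsewhere), and the formula for $H_i$ in Lemma~\ref{lem:HtypeA} likewise only involves $f(i)$ and $f(i+1)$, it suffices to compute the action of $R = \Theta \circ \widetilde f \circ P$ on $\V \otimes \V$ explicitly and then invert. So the first step is to write down $R$ on $\V\otimes \V$ for $\U = \U_q(\mathfrak{sl}_N)$ (or $\U_q(\mathfrak{gl}_N)$): the quasi R-matrix $\Theta$ acting on $\V\otimes \V$ truncates to $\Theta = 1\otimes 1 + \sum_{i<j} (\text{something}) \, E_{ij}^+ \otimes F_{ij}^-$-type terms, but because $\V$ is the natural representation only the simple root contributions $\mu = \alpha_i$ survive nontrivially on each weight space, so $\Theta|_{\V\otimes\V} = 1 + (q-q^{-1})\sum_{i<j} e_{ij}\otimes e_{ji}$ where $e_{ij}$ are the elementary matrix units (acting as $v_k \mapsto \delta_{jk} v_i$). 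One then needs the scalar $f(\lambda,\mu)$ from Lemma~\ref{lem:2}; for the natural representation the relevant normalization gives $\widetilde f (v_k \otimes v_l) = q^{-\delta_{kl}} \cdot q^{?} v_k\otimes v_l$, and the precise power is pinned down by requiring that $R$ be a $\U$-module map — this is exactly the content of Theorem~\ref{thm:R}(1), so no independent work is needed there.

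The second step is the bookkeeping: assemble $R = \Theta \circ \widetilde f \circ P$ on $\V\otimes\V$, obtaining a formula of the shape
\[
R(v_k \otimes v_l) = \begin{cases} q^{-1} v_k \otimes v_k, & k = l,\\ v_l \otimes v_k, & k > l,\\ v_l\otimes v_k + (q^{-1}-q) v_k \otimes v_l, & k < l,\end{cases}
\]
(up to an overall normalization constant which one fixes so that the braid relations of Theorem~\ref{thm:YB} hold and the eigenvalues are $q^{-1}$ and $-q$, matching $(H_i+q)(H_i-q^{-1})=0$ under $H_i \mapsto R_i^{-1}$). The third step is to invert: from the quadratic relation $(R + q)(R - q^{-1}) = 0$ on $\V\otimes\V$ (which one checks from the explicit matrix, noting $R$ acts by $q^{-1}$ on the ``symmetric'' part spanned by $v_k\otimes v_k$ and $v_k\otimes v_l + q\, v_l\otimes v_k$ for $k<l$, and by $-q$ on the ``antisymmetric'' part), one gets $R^{-1} = R + (q - q^{-1})$. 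Comparing $R^{-1}(v_k\otimes v_l)$ term by term with the formula for $H_i$ in Lemma~\ref{lem:HtypeA} — the three cases $f(i) < f(i+1)$, $f(i) > f(i+1)$, $f(i) = f(i+1)$ — then yields $M_f R_i^{-1} = M_f H_i$ on every basis vector, which is the claim.

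The main obstacle is purely one of conventions rather than substance: the overall scalar in $R$ depends on the precise choice of $f$ in Lemma~\ref{lem:2} and on the sign/$q$-power conventions in the quasi R-matrix (the paper has already flagged, after Theorem~\ref{thm:quasi}, that its conventions differ from Lusztig's and Jantzen's). I would therefore not try to re-derive $f$ from scratch; instead I would fix the normalization abstractly by two constraints that are guaranteed by earlier results — that $R_i^{-1}$ satisfies the Hecke quadratic relation with the correct eigenvalues $q^{-1}, -q$ (forced by Theorem~\ref{thm:YB} giving the braid relations, plus the two-eigenvalue structure on $\V\otimes\V$), and that $R_i$ commutes with the $\U$-action (Theorem~\ref{thm:R}(3)) — and then observe that these two properties determine the action on $\V\otimes\V$ uniquely up to nothing, so it must coincide with the Hecke action of Lemma~\ref{lem:HtypeA} composed with the anti-automorphism $H_i\mapsto H_i$. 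A clean way to phrase the conclusion: both $\Psi(\H)$ and the algebra generated by the $R_i^{-1}$ are quotients of $\H_q(\mathfrak S_m)$ acting on $\V^{\otimes m}$, they agree on the rank-one computation, and hence agree globally. I expect the write-up to be short once the normalization of $\widetilde f$ restricted to $\V\otimes\V$ is stated.
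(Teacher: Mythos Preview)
Your approach is essentially the paper's: reduce to $m=2$ via the braid relations \eqref{eq:braidR}, compute $R|_{\V\otimes\V}$ explicitly, and compare with the Hecke action. The paper is more direct about the one point you try to finesse, namely the normalization of $\widetilde f$: rather than fixing the scalar abstractly after the fact, it simply takes $f(\epsilon_i,\epsilon_j)=q^{\delta_{ij}}$ (which satisfies the conditions of Lemma~\ref{lem:2}), writes out $\Theta^f|_{\V\otimes\V}$ in matrix units, and checks $R=H_1^{-1}$ by hand. Your abstract route works too, but the direct computation is shorter.

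Two corrections to your sketch. First, the braid relations do \emph{not} help pin down the overall scalar in $R$: the Yang-Baxter equation is homogeneous, so it holds for $cR$ just as well as for $R$; only the quadratic/eigenvalue constraint does any work here. Second, with the paper's sign convention in \eqref{eq:dual} one finds $\Theta|_{\V\otimes\V}=1\otimes 1+(q^{-1}-q)\sum_{i<j}E_{ij}\otimes E_{ji}$ (opposite sign to what you wrote) and $\widetilde f(v_k\otimes v_l)=q^{\delta_{kl}}v_k\otimes v_l$; consequently $R(v_k\otimes v_k)=q\,v_k\otimes v_k$, not $q^{-1}$, so that it is $R^{-1}$ (not $R$) which acts by $q^{-1}$ on the diagonal, matching the third case of Lemma~\ref{lem:HtypeA}.
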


\begin{proof}
As $R_i^{-1}$ already satisfies the braid relations \eqref{eq:braidR},
it suffices to restrict to the case when $m=2$, $R_1=R$ and to verify $H_1^{-1} =R|_{\V\otimes \V}$.

  The $\Theta =\Theta_\alpha$ for $\mathfrak{sl}_2$ is given by \eqref{eq:dual}, and this gives us
  \begin{align}
    \Theta_\alpha|_{\V\otimes \V}
    &=1\otimes 1+\sum_{n\geq1}  (-q)^{-n} (\theta^n, \theta^n)^{-1}E_{\alpha}^n\otimes F_{\alpha}^n|_{\V\otimes \V}
    \notag \\
    &=1\otimes 1-(q^{-1}-q)E_\alpha\otimes F_\alpha|_{\V\otimes \V}. \label{eq:7}
  \end{align}

 The $\Theta$ admits a factorization as an ordered product over $\alpha\in\Phi^+:\Theta=\overrightarrow{\prod}_{\alpha\in\Phi^+}\Theta_\alpha$. From this one shows that $\Theta|_{\V\otimes \V}=1\otimes 1-(q^{-1}-q)\sum_{i<j}E_{ij}\otimes E_{ji}$ where any higher power vanishes.
     (Here $E_{ij}$ are $q$-root vectors, but can be identified with classical matrix units $E_{ij}$ when acting on the standard basis of $\V$.)

  Choose $f$ in Lemma~\ref{lem:2} with
  \begin{align} \label{ftilde}
  f(\epsilon_i,\epsilon_i)=q, \quad f(\epsilon_i,\epsilon_j)=1 \; (\text{for } i \neq j).
  \end{align}
  Then
 \[
 \Theta^f|_{\V\otimes \V}=q\sum_{i}E_{ii}\otimes E_{ii}+\sum_{i\neq j}E_{ii}\otimes E_{jj}-(q^{-1}-q)\sum_{i<j}E_{ij}\otimes E_{ji}.
 \]
Now it is straightforward to check $H_1^{-1} =R|_{\V\otimes \V}$ via $R =\Theta^f\circ P$.
\end{proof}

\subsection{Canonical basis vs KL basis of type A}

In this subsection, again $\U =\U_q(\mf{sl}_N)$ and $\H =\H_q(\mf S_m)$.
Recall $\V$ is the natural representation of $\U$ with basis $\{v_1, \ldots, v_N\}$.
Let $\VA =\A\langle v_1,\ldots,v_N\rangle$ be the $\A$-submodule of $\V$.
Write
\[
M_f=v_{f(1)}\otimes \cdots\otimes v_{f(m)},
\quad \text{ for } f\in [1..N]^m.
\]
We shall refer to
$\{M_f~|~f\in[1..N]^m\}$ as the {\em standard basis} for $\V_\A^{\otimes m}$.

Let
\[
\Lambda^-(N,m) =\{f\in[1..N]^m~|~f(1)\leq \cdots \leq f(m)\}
\]
be the set of {\em anti-dominant} weights (with respect to $\U$).
We have the following isomorphism of $\H$-modules:
\begin{equation}\label{eq:11}
  \V_\A^{\otimes m}\cong\bigoplus_{f\in\Lambda^-(N,m)} C_{J(f)}\H,
  \quad M_f\mapsto C_{J(f)}
\end{equation} where $J(f)=\{j~|~fs_j=f\}$. The parabolic KL bases on $C_{J(f)}\H$, for various $f$, induce via the isomorphism \eqref{eq:11} a basis for $\V^{\otimes m}$, which we shall refer to as a {\em KL basis}.

On the other hand, by Lemma~\ref{lem:barMN} and Corollary~\ref{cor:CBtensor}, as a $\U$-module $\V^{\otimes m}$ admits a bar invlution $\psi$ and a canonical basis.

\begin{lemma}
 \label{lem:3bar}
The bar involution $\psi$ on $\V^{\otimes m}$ as a $\U$-module satisfies that
\[
    \psi (uxh)=\psi (u)\psi (x)\overline{h}, \quad \text{ for } u\in\U, x\in\V^{\otimes m}, h\in\H.
\]
Moreover, such a bar involution is unique by requiring that $\psi (M_f)=M_f$ for all $f \in \Lambda^-(N,m)$, i.e., for all  anti-dominant $f$.
\end{lemma}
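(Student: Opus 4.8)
The plan is to prove the two assertions in turn: first the compatibility identity $\psi(uxh)=\psi(u)\psi(x)\bar h$, and then the uniqueness/normalization statement.

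\textbf{Step 1: the compatibility identity.} Since $\V^{\otimes m}$ is built by iterating Lemma~\ref{lem:barMN}, the bar involution is $\psi = \Theta^{(m)} \circ (\bar{\cdot}^{\otimes m})$, where $\Theta^{(m)} \in \widehat{\U\otimes\cdots\otimes\U}$ is the iterated quasi R-matrix. The $\U$-compatibility $\psi(u\cdot y)=\bar u\cdot\psi(y)$ for $u\in\U$, $y\in\V^{\otimes m}$, is already contained in Lemma~\ref{lem:barMN} (applied repeatedly). So the new content is the $\H$-side identity $\psi(xh)=\psi(x)\,\bar h$ for $h\in\H$. By Theorem~\ref{thm:Jimbo2}, the $\H$-action is realized by $H_i\mapsto R_i^{-1}$, where $R_i$ is built from $\Theta^f$ supported on the $i$-th and $(i{+}1)$-th tensor factors composed with the transposition $P$ and the diagonal twist $\widetilde f$. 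The key point is that $\psi\circ R_i$ versus $R_i^{-1}\circ\psi$ (or rather $\bar R_i\circ\psi$ after conjugating) differ in a controlled way: applying the bar map to the defining relation of $\Theta$ gives $\overline\Theta=\Theta^{-1}$ (Corollary~\ref{cor:Theta}), and the twist $\widetilde f$ and $P$ interact with $\bar{\cdot}$ so that $\psi\circ R_i = R_i^{-1}\circ\psi$ on $\V^{\otimes m}$; equivalently, the anti-linear operator $\psi$ intertwines the right $\H$-action with its bar-twisted version. Since the $H_i$ generate $\H$ and $\overline{H_i}=H_i^{-1}$, the identity $\psi(xH_i)=\psi(x)\overline{H_i}$ extends to all $h\in\H$ by induction on length (using property (4) of Section~\ref{subsec:Hecke} and anti-linearity). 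I would carry out the verification only for $m=2$, $h=H_1$, using the explicit formula \eqref{eq:7} for $\Theta_\alpha|_{\V\otimes\V}$ and the explicit $\H$-action in Lemma~\ref{lem:HtypeA}, matching the two sides case by case on $f(1)<f(2)$, $f(1)>f(2)$, $f(1)=f(2)$; the general $m$ then follows since each $R_i$ acts only on adjacent factors and $\psi$ is the iterated construction.

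\textbf{Step 2: uniqueness.} Suppose $\psi'$ is another anti-linear involution satisfying the same compatibility identity and $\psi'(M_f)=M_f$ for all anti-dominant $f$. Then $\phi:=\psi^{-1}\circ\psi'$ is a $\Q(q)$-linear endomorphism of $\V^{\otimes m}$ commuting with both the $\U$-action and the $\H$-action (the anti-linearity cancels), i.e. $\phi\in\End_{\U\otimes\H}(\V^{\otimes m})$. By the double centralizer property and the multiplicity-free decomposition $\V^{\otimes m}\cong\bigoplus_{\la\in\mathcal P_m(N)}L_q(\la)\otimes S_q^\la$ from Theorem~\ref{thm:Jimbo}, together with Schur's lemma, $\phi$ acts as a scalar $c_\la\in\Q(q)$ on each isotypic summand. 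Now for each anti-dominant $f$, $M_f=v_{f(1)}\otimes\cdots\otimes v_{f(m)}$ with $f(1)\le\cdots\le f(m)$; these vectors span $\V^{\otimes m}$ under the $\U$-action (since every weight is $\U$-conjugate, via lowering operators $F_i$, to an anti-dominant one — indeed $M_f$ for $f$ anti-dominant are highest-weight-ish generators, or more precisely the set $\{M_f \cdot u : f \text{ anti-dominant}, u\in\U\}$ spans). Since $\phi$ commutes with $\U$ and fixes each such $M_f$, it fixes a $\U$-generating set, hence $\phi=\mathrm{id}$, giving $\psi'=\psi$.

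\textbf{Main obstacle.} The delicate point is Step 1: carefully tracking the interaction of the anti-linear map with $\Theta^f = \Theta\circ\widetilde f$, the permutation $P$, and the bar involution, to confirm the precise sign/power conventions so that $\psi\circ R_i=R_i^{-1}\circ\psi$ rather than, say, $\psi\circ R_i=R_i\circ\psi$ or off by a twist. One must use $\overline{\widetilde f}=\widetilde f^{\,-1}$ (which follows from $\overline{f(\la,\mu)}=f(\la,\mu)^{-1}$, a consequence of Lemma~\ref{lem:2} since $q^{\nu\cdot\mu}\mapsto q^{-\nu\cdot\mu}$) and $\overline\Theta=\Theta^{-1}$ together, and check that $P$ commutes with the bar map — all routine but convention-sensitive. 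A secondary subtlety in Step 2 is justifying that the anti-dominant $M_f$ generate $\V^{\otimes m}$ as a $\U$-module, which follows because any $M_g$ can be obtained from an anti-dominant $M_f$ by applying a sequence of $E_i$'s (or from a dominant one by $F_i$'s), up to lower-order terms that are handled by downward induction on the weight.
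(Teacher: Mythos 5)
Your Step~1 correctly identifies the main ingredients ($\overline{\Theta}=\Theta^{-1}$, $\overline{\widetilde f}=\widetilde f^{-1}$, $\widetilde f\circ P=P\circ\widetilde f$) for the $m=2$ case, and this matches the paper's treatment. However, your passage from $m=2$ to general $m$ is where the real content lies, and it is not justified by the remark that ``each $R_i$ acts only on adjacent factors and $\psi$ is the iterated construction.'' The iterated quasi R-matrix $\Theta^{(m)}$ is \emph{not} supported on adjacent pairs; it is a global element of a completion of $\U^{\otimes m}$. The paper's inductive step, writing $x=v\otimes x'$ for $2\le i\le m-1$, requires showing
$\Theta\bigl(\psi(v)\otimes\psi(x')\overline{H_i}\bigr)=\Theta\bigl(\psi(v)\otimes\psi(x')\bigr)\overline{H_i}$,
and the justification is that $\Theta$ lies in a completion of $\U\otimes\U$ while $H_i$ acts within the second tensor slot, so the two commute by the $q$-Schur duality already established; a symmetric argument with $x=x'\otimes v$ handles $i=1$. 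Your sketch does not surface this commutation step, which is the crux of the induction.

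You also omit the part of the lemma asserting that the $\psi$ defined by iterating Lemma~\ref{lem:barMN} actually fixes $M_f$ for anti-dominant $f$. The paper proves this by observing that $\Theta_\mu\in\U^+_\mu\otimes\U^-_\mu$ forces $\psi(M_f)\in M_f+\sum_{g<f}\Q(q)M_g$ in the Bruhat order within each $\U$-weight space; for anti-dominant $f$ there is no smaller $g$ of the same weight. Without this, the normalization in the uniqueness statement is not known to be attained.

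For Step~2, your approach works but is an unnecessary detour. After observing $\phi=\psi^{-1}\psi'$ is $(\U,\H)$-bilinear, the Schur's-lemma/multiplicity-free paragraph does not immediately yield $c_\la=1$ (the anti-dominant $M_f$ are not isotypic vectors and do not sit in a single summand $L_q(\la)\otimes S_q^\la$), and your pivot to ``anti-dominant $M_f$ generate $\V^{\otimes m}$ as a $\U$-module'' is nontrivial and left unproved. The paper's route is much shorter and only uses the $\H$-side: $\V^{\otimes m}=\bigoplus_{f\ \text{anti-dom}}M_f\H$ as right $\H$-modules, and $\psi(M_fh)=M_f\bar h$ is forced, which determines $\psi$ completely. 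You should replace the Schur's-lemma argument by this $\H$-module observation (or, if you keep $\phi$, note that $\phi$ being right $\H$-linear and fixing the $\H$-generators $M_f$ already gives $\phi=\mathrm{id}$; no $\U$-generation is needed).
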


\begin{proof}
By definition, the action of $\psi$ on $\V^{\otimes m}$ is defined by iteration via $\psi =\Theta (\psi \otimes \psi)$. Since $\Theta =\sum_{\mu \in \N \I} \Theta_\mu$ with $\Theta_\mu \in \U^+_\mu \otimes \U^-_{\mu}$ by Theorem~\ref{thm:quasi}, we see inductively that $\psi (M_f) \in M_f +\sum_{g} \Q(q) M_g$, summed over $g$ such that $M_g$ has the same $\U$-weight as $M_f$ and $g<f$ in the Bruhat ordering of $\mf S_m$. It follows that $\psi (M_f) =M_f$, for $f$ anti-dominant.

The uniqueness is clear as $\V^{\otimes m}$ is a direct sum of permutation modules $M_f  \H$, summed over all anti-dominant $f$.

We already know that $\V^{\otimes m}$ is an involutive $\U$-module, i.e., $\psi (ux)=\psi (u)\psi (x)$.
It remains to check that
\begin{equation}
 \label{mHi}
\psi (x H_i)= \psi (x) H_i^{-1}, \quad \text{ for all } x \in\V^{\otimes m},  1\le i \le m-1.
\end{equation}

The proof of \eqref{mHi} proceeds by induction on $m$. One first checks the case when $m=2$ by using $H_i^{-1} =\Theta \widetilde f P$ as follows. One computes that $\psi(x H_1) =\Theta (\overline{x} \overline{H_1}) =\Theta (\overline{\Theta} \overline{\widetilde f}  P)^{-1} (\overline{x}),$
and $\psi(x) H_1^{-1} = (\Theta \widetilde f  P) ( \Theta (\overline{x}) ).$
They are equal thanks to $\overline{\Theta}^{-1} =\Theta$ and $(\overline{\widetilde f}  P)^{-1} =P \overline{\widetilde f}^{-1} =P \widetilde f =\widetilde{f} P$; recall $f$ from \eqref{ftilde} is symmetric.

Assume that \eqref{mHi} holds for $(m-1)$th tensor, for $m\ge 3$. Then, for $2\le i \le m-1$, denoting $x = v\otimes x'$, for $v\in \V$, we verify  \eqref{mHi} by
\begin{align*}
\psi(x H_i) &= \Theta (\psi(v) \otimes \psi(x' H_i))
\\
&= \Theta (\psi(v) \otimes \psi(x') \overline{ H_i})
\\
& = \Theta (\psi(v) \otimes \psi(x')) \overline{ H_i}
= \psi(x) \overline{ H_i},
\end{align*}
where the second equality above uses the inductive assumption and the third equality uses $q$-Schur duality (and that $\Theta$ lies in a completion of $\U\otimes \U$).
For $H_i$ with $i=1$ (or more generally, for $1\le i \le d-2$), a similar computation with $x =x'\otimes v$ for $v\in \V$ verifies \eqref{mHi}. The lemma is proved.
\end{proof}

\begin{theorem}  (cf. \cite[Theorem 2.5]{FKK98})
  \label{thm:CBKLa}
  The canonical (and respectively, dual canonical) basis and KL (and respectively, dual KL) basis on $\V^{\otimes m}$ coincide.
\end{theorem}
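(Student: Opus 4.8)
The plan is to establish the coincidence of bases on the nose by matching the defining characterizations. Both the canonical basis element on $\V^{\otimes m}$ attached to an anti-dominant $f$'s orbit and the parabolic KL basis element are characterized by (i) bar invariance under a bar involution and (ii) a unitriangularity condition with respect to the standard basis $\{M_g\}$ and the Bruhat order on $\mf S_m$, with leading coefficient $1$. So the strategy is: first show the two bar involutions agree, then observe that the two unitriangularity conditions involve the \emph{same} partial order (Bruhat order, refined through weight spaces), and conclude by the uniqueness statements behind Theorem~\ref{thm:Ltensor} and Theorem~\ref{thm:Deo}.

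Concretely, first I would invoke Lemma~\ref{lem:3bar}: the bar involution $\psi$ on $\V^{\otimes m}$ coming from the quantum group (via the quasi R-matrix) is characterized by $\psi(M_f) = M_f$ for anti-dominant $f$ together with $\psi(x h) = \psi(x)\overline{h}$ for $h \in \H$. On the other side, the isomorphism \eqref{eq:11} identifies $\V_\A^{\otimes m}$ with $\bigoplus_{f \in \Lambda^-(N,m)} C_{J(f)}\H$, and the bar involution on the right-hand side (restricting the bar involution of $\H$ on each permutation module, noting $\overline{C_{J(f)}} = C_{J(f)}$) satisfies exactly these two properties: it fixes each $M_f = C_{J(f)}$ for $f$ anti-dominant and is anti-linear for the right $\H$-action. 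By the uniqueness clause of Lemma~\ref{lem:3bar}, the two bar involutions coincide. Next, I would compare the triangularity. On the quantum group side, Theorem~\ref{thm:Ltensor}(2) gives $b \diamond b' \in b \otimes b' + \sum_{(b_2,b_2') < (b,b')} q\Z[q]\, b_2 \otimes b_2'$, and since the canonical basis of $\V$ is its standard basis, the canonical basis of $\V^{\otimes m}$ has the form $\mathcal{C}_f \in M_f + \sum_{g} q\Z[q]\, M_g$ where the sum is over $g$ with $\mathrm{wt}(M_g) = \mathrm{wt}(M_f)$ and $g < f$ in Bruhat order — this is precisely the content worked out inside the proof of Lemma~\ref{lem:3bar}. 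On the Hecke side, Theorem~\ref{thm:Deo}(2) gives $C_\sigma^{J(f)} \in C_{J(f)}(H_\sigma + \sum_{w \in {}^{J(f)}W, w < \sigma} q\Z[q] H_w)$; translating through \eqref{eq:11} yields elements of $M_{f\sigma} + \sum q\Z[q] M_{g}$ with $g$ in the same weight space and below $f\sigma$ in Bruhat order. So both families are bar-invariant and lie in (the same) $M_h + \sum_{g < h, \text{same weight}} q\Z[q] M_g$.

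Then the conclusion follows from uniqueness: by the standard argument (the one used repeatedly starting from the proof of Theorem~\ref{thm:KL}, via Claim 2 there), there is at most one bar-invariant element in $M_h + \sum_{g < h,\ \mathrm{wt}(g)=\mathrm{wt}(h)} q\Z[q] M_g$. Hence $\mathcal{C}_h$ equals the KL basis element indexed by $h$, for every $h \in [1..N]^m$. The dual statement is entirely parallel, replacing $q$ by $q^{-1}$ and using Proposition~\ref{prop:dualKL} and the parabolic dual KL basis; I would simply remark that the same argument applies verbatim.

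The main obstacle is bookkeeping the two partial orders and making sure they literally match: the Bruhat order on minimal coset representatives ${}^{J(f)}W$ (appearing in Deodhar's theorem) versus the order on $B \times B'$ in Theorem~\ref{thm:Ltensor} (which compares weights), all pushed through the isomorphism \eqref{eq:11}. One must check that, restricted to a fixed $\U$-weight space of $\V^{\otimes m}$ — equivalently a fixed $\mf S_m$-orbit of functions, equivalently a fixed coset $W_{J(f)} \sigma$ — the weight-refined order used on the quantum side agrees with the Bruhat order on ${}^{J(f)}W$ used on the Hecke side. This compatibility is essentially the statement that the two dominance orders in the excerpt are compatible, but it needs to be spelled out carefully; once it is in hand, the uniqueness argument closes the proof immediately. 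I would also note that one could instead cite \cite[Theorem~2.5]{FKK98} directly, but the self-contained argument above is short given everything already assembled.
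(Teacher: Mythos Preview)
Your proposal is correct and follows essentially the same approach as the paper: identify the two bar involutions via Lemma~\ref{lem:3bar}, note that the KL basis elements are bar invariant and lie in $M_h + \sum_g q\Z[q]\,M_g$, and conclude by uniqueness. The ``main obstacle'' you flag about matching partial orders is in fact a non-issue: the uniqueness statement in Theorem~\ref{thm:Ltensor}(1) (equivalently, Claim~2 in the proof of Theorem~\ref{thm:KL}) only requires bar invariance and membership in $M_h + q\Z[q]\cdot(\text{standard basis})$, with no reference to any partial order, so no compatibility check is needed.
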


\begin{proof}
We only explain the identification ``canonical =KL", as the dual version is similar.
By Lemma~\ref{lem:3bar}, we can think the bar involution on $\V^{\otimes m}$ from either the Hecke algebra or quantum group viewpoint. Each KL basis element on $\V^{\otimes m}$ is bar invariant, and of the form $M_f + \sum_g q \Z[q] M_g$ (cf. Theorem~\ref{thm:Deo}). By the uniqueness of the canonical basis (cf. Theorem~\ref{thm:Ltensor}), it must be a canonical basis element.
\end{proof}

\begin{example}
  \begin{itemize}
    \item[(1)]
    The canonical basis $\{v_1,\ldots,v_N\}$ for $\V$ coincides with its standard basis.
    \item[(2)]
    The canonical basis for $\V^{\otimes 2}$ is
    \[
    \{v_a\otimes v_b, \quad v_a\otimes v_a, \quad v_b\otimes v_a+qv_a\otimes v_b~|~a<b\}.
    \]
  \end{itemize}
\end{example}

%

\section{$\imath$Quantum groups and $\imath$-canonical basis}
\label{sec:iQG}

The $\imath$quantum groups arising from quantum symmetric pairs are introduced, and $\imath$-canonical basis on $\U$-modules regarded as $\Ui$-modules is formulated. A key ingredient is the formulation and construction of the quasi K-matrix.

\subsection{Quantum symmetric pairs and $\imath$Quantum groups}

The theory of quantum symmetric pairs of finite type was systematically formulated by G.~Letzter \cite{Let99}, though examples of $\imath$quantum groups (of classical type) were known earlier. It has been extended to Kac-Moody setting by Kolb \cite{Ko14}. Unlike Letzter's papers, Kolb's formulation of quantum symmetric pairs and $\imath$quantum groups is compatible with Lusztig's book, which makes it a standard working reference in the field.

Recall we are given a simple system $\I$ of finite type,  a weight lattice $X$, and a bilinear form $\cdot: \Z \I \times X \rightarrow \Z$.
We call a permutation $\tau$ of the set $\I$ an {\em involution} of the Cartan datum $(\I, \cdot)$ if $\tau^2 =\mathrm{id}$ and $\tau i \cdot \tau j = i \cdot j$ for $i$, $j \in \I$.  Note we allow $\tau =\mathrm{id}$.
The permutation $\tau$ of $\I$ induces a $\Q(q)$-algebra automorphism of $\U$,
defined by
\begin{align}
 \label{eq:tau}
\tau:E_i \mapsto E_{\tau i}, \quad F_i \mapsto F_{\tau i}, \quad K_i \mapsto K_{\tau i}, \qquad \mbox{for all $i\in \I$}.
\end{align}

\begin{remark}
 \label{rem:QSP}
Let us briefly put the $\imath$quantum groups in perspective.
  \begin{itemize}
    \item[(1)]
    A symmetric pair $(\mathfrak{g},\mathfrak{g}^{\theta})$ consists of a simple Lie algebra (over $\mathbb{C}$) and its fixed point subalgebra, where $\theta$ is an involution on $\mathfrak{g}$. There is a one-to-one correspondence between the classification of symmetric pairs and the classification of real forms of simple Lie algebras over $\mathbb{C}$; they can be classified by Satake diagrams, $(\I =\Iw \cup \Ib, \tau)$, which consists of a bicolor partition of the Dynkin diagram vertex set $\I$ and a diagram involution $\tau$ (which is allow to be $\mathrm{id}$), subject to some constraints.
    \item[(2)]
A quantum symmetric pair (QSP) $(\U,\Ui)$, as formulated by G.~Letzter  (see \cite{Let99}), consists of a quantum group $\U$ and a coideal subalgebra $\Ui$ of $\U$ (i.e., $\Delta: \Ui \longrightarrow \Ui \otimes \U$), such that the $q\mapsto 1$ limit of $(\U,\Ui)$ becomes $(U(\mathfrak{g}),U(\mathfrak{g}^\theta))$. We call $\Ui$ an {\em $\imath$quantum group} ($\imath$QG).
An $\imath$QG is called {\em quasi-split} if $\I=\I_\circ$. 
    \item[(3)]
A quantum group is an $\imath$quantum group (just as a complex Lie group can be viewed as a real Lie group). Indeed, 
$(\U\otimes\U,\U)$ forms a QSP of diagonal type, where the embedding $(\omega \otimes  1) \circ\Delta:\U\rightarrow\U\otimes\U$ makes $\U$ a coideal subalgebra of $\U\otimes \U$, and $\omega$ is the Chevalley involution in Lemma~\ref{lem:inv}.
  \end{itemize}
\end{remark}

{\em In these lecture notes, we shall only consider a subclass of {\em quasi-split} $\imath$quantum groups, which can be formulated without use of braid group actions.} For example, the $\imath$quantum groups arising from $\imath$Schur duality and KL theory of (super) classical type in later sections will be quasi-split and type AIII.

A distinguished feature of the $\imath$quantum groups is that they depend on certain parameters $\bvs, \kappa$ (besides the quantum parameter $q$). Let us fix a set of representatives $\Ii$ for the regular $\tau$-orbits in $\I$.

\begin{definition}  [\cite{Let99, Ko14}]
  \label{def:Ui}
The {\em quasi-split $\imath${}quantum group}, denoted by $\Ui_{\vs,\kappa}$ or $\Ui$, is the $\Q(q)$-subalgebra of $\U$ generated by
\begin{align}
B_i :=E_{i}  &+ \vs_i F_{\tau i} K^{-1}_i + \kappa_i K^{-1}_{i}  \; (i \in \I), \quad k_i :=K_i K_{\tau i}^{-1}  \; (i \in \I).
  \label{eq:def:ff}
\end{align}
(Note that $E, F$ in \eqref{eq:def:ff} would be switched if we used Lusztig's comuliplication.)
Note $k_{\tau i} = k_i^{-1}$, for all $i\in \I$, and $k_{\tau i}=1$ if $\tau i=i$; hence it suffices to use $k_i$ for $i\in \Ii$ as generators of $\Ui$.
Here the parameters
\begin{equation}
  \label{parameters}
  \bvs=(\vs_i)_{i\in \I}\in (\Q(q)^\times)^\I,\qquad
  \kappa=(\kappa_i)_{i\in \I}\in\Q(q)^\I
\end{equation}
satisfy Conditions \eqref{kappa}--\eqref{vs=} below:
\begin{align}
 \label{kappa}
\kappa_i &=0, \quad \text{ unless } \tau i =i \text{ and } c_{ki} \in 2\Z \; (\mbox{for all $k = \tau k$});
\\
\vs_{i} & =\vs_{{\tau i}}, \quad \text{ if }    c_{i,\tau i} =0.
\label{vs=}
\end{align}
Here $c_{ki}$ is the $(k,i)$-th entry of the Cartan matric $C=(c_{ij})_{i,j\in\I}$.
\end{definition}

$\triangleright$ The conditions \eqref{kappa}--\eqref{vs=} on the parameters ensure that $\Ui$ has the expected size. Additional constraints on parameters will be needed for quasi K-matrix and $\imath$-canonical basis.

$\triangleright$ The quasi-split QSP $(\U, \Ui)$ or $\Ui$ in Definition~\ref{def:Ui} correspond to Satake diagrams without black nodes.

$\triangleright$ We call a quasi-split $\imath$quantum group $\Ui$  {\em split} if in addition $\tau=\mathrm{id}$. Note that a split $\imath$quantum group $\Ui$ is generated by $B_i \, \,(i \in \I)$.

$\triangleright$ It often occurs that $\kappa_i=0$ thanks to the condition \eqref{kappa}.
The algebras $\Ui_{\vs,\kappa}$ for various $\kappa$ are canonically isomorphic, and so it often suffices to consider
$\Ui_{\vs,0}$ on the algebra level; cf. \cite{Let02}.

$\triangleright$ The universal $\imath$quantum group $\tUi$ \cite{LuW19a} arising from the $\imath$Hall algebra construction is a subalgebra of the Drinfeld double $\tU$ of the quantum group $\U$. The algebra $\tUi$ contains various central elements, and then the $\imath${}quantum groups $\Ui_{\vs,0}$ are obtained from $\tUi$ by central reductions. This provides a conceptual explanation of the parameters $\bvs$ and a right setting for braid group symmetries.

\begin{remark}
\label{rem:iSerre}
The $\imath${}quantum groups $\Ui_{\vs,\kappa}$ of finite type admits a Serre type presentation \cite{Let99, Let02} (also cf. \cite{Ko14}).

A Serre presentation for {\em quasi-split} $\imath$quantum groups $\Ui_{\vs,\kappa}$ is much simpler; cf. for example, \cite[Theorem 3.1]{CLW18}. Actually the formulation therein is valid for {\em arbitrary Kac-Moody} type, where the $\imath$divided powers (introduced in \cite{BW18a, BeW18}) play a key role. We will not recall the Serre presentation in this generality, but we will make them explicit for $\Ui$ of type AIII in \S\ref{sec:AIII} below.
\end{remark}

\subsection{Quasi K-matrix}
 \label{subsec:K}

 In this section, we shall impose stronger conditions on the parameters $(\bvs, \kappa)$ besides \eqref{kappa}--\eqref{vs=}:
\begin{align}
 \vs_i, \kappa_i \in \A, & \quad \overline{\kappa_i} = \kappa_i;  \label{kappa2}
 \\
 \overline{\vs_iq_i} = \vs_iq_i, & \quad \text{ if } \tau i = i; 
 \\
 \overline{\vs_i} =\vs_i = \vs_{\tau i}, & \quad \text{ if } \tau i \neq i \text{ and } c_{i,\tau i} =0;
 \\
\vs_{{\tau i}} = q_i^{-c_{i,\tau i}} \overline{\vs_i}, & \quad \text{ if }    \tau i \neq i, c_{i,\tau i} \neq 0.   \label{vs=bar}
\end{align}
\noindent As we only consider $\Ui$ of finite type, \eqref{vs=bar} greatly simplifies as it occurs only in type AIII, where $c_{i,\tau i} =-1$; see \eqref{eq:Bi1} in \S\ref{subsec:odd} below.

Recall the bar involution $\psi: \U \rightarrow \U$ from Lemma~\ref{lem:inv}(3). As we restrict ourselves to quasi-split finite type cases, the following lemma easily follows by a direct inspection of the Serre presentation of $\Ui$; cf. Remark~\ref{rem:iSerre}. This was basically known to Bao and Wang \cite{BW18a}, who expected it to hold in general. The bar involution for $\imath$quantum groups of Kac-Moody type (without quasi-split condition) is then established in \cite{BK15} under some mild conditions on the Cartan matrices, where the conditions on parameters are determined precisely.

\begin{lemma}
  \label{lem:psi}
Assume that $(\bvs, \kappa)$ satisfy the conditions  \eqref{kappa}--\eqref{vs=} and \eqref{kappa2}--\eqref{vs=bar}. There exists a bar involution $\psi_\imath$ on the quasi-split $\imath$quantum group $\Ui$ such that $\psi_\imath(B_i)=B_i, \psi_\imath(k_j)=k_j^{-1}$ for all $i\in\I, j\in\Ii$.
\end{lemma}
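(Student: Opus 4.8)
The plan is to exhibit $\psi_\imath$ directly as the restriction to $\Ui$ of the composition $\psi \circ \Xi$, where $\psi$ is the bar involution of $\U$ from Lemma~\ref{lem:inv}(3) and $\Xi$ is the $\Q(q)$-algebra automorphism of $\U$ rescaling the Chevalley generators, $\Xi: E_i \mapsto E_i$, $F_i \mapsto F_i$, $K_i \mapsto K_i$ twisted by suitable scalars so that the composite stabilizes the subalgebra $\Ui$. Concretely, one checks that an anti-linear algebra automorphism of $\U$ of the form $\psi_\imath = \psi \circ (\text{conjugation by an element of }\U^0)$ sends each generator $B_i = E_i + \vs_i F_{\tau i} K_i^{-1} + \kappa_i K_i^{-1}$ back into $\Ui$; the point of conditions \eqref{kappa2}--\eqref{vs=bar} is precisely that the scalars produced when applying $\psi$ to the three summands of $B_i$ can be simultaneously corrected. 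First I would compute $\psi(B_i)$: since $\psi$ fixes $E_i$ and $F_{\tau i}$, sends $K_i^{-1} \mapsto K_i$, and sends each $\vs_i, \kappa_i$ to $\overline{\vs_i}, \overline{\kappa_i}$, we get $\psi(B_i) = E_i + \overline{\vs_i}\, F_{\tau i} K_i + \overline{\kappa_i}\, K_i$, which is visibly not a multiple of any $B_j$; this is why the extra $\U^0$-conjugation (equivalently, a rescaling automorphism) is needed to convert $F_{\tau i}K_i$ back to $F_{\tau i}K_i^{-1}$ up to the correct scalar.

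The key steps, in order, would be: (i) record the (simpler) Serre presentation of the quasi-split $\Ui$ of finite type (Remark~\ref{rem:iSerre}), so that it suffices to define $\psi_\imath$ on the generators $B_i$ ($i \in \I$) and $k_j$ ($j \in \Ii$) and then verify it preserves each defining relation; (ii) guess the correct element (or rescaling) $t \in \U^0$, determined uniquely by the $\vs$-data, such that $\psi_\imath := \mathrm{Ad}(t) \circ \psi$ satisfies $\psi_\imath(B_i) = B_i$ and $\psi_\imath(k_j) = k_j^{-1}$ — here one uses that $\mathrm{Ad}(t)$ multiplies $E_i, F_{\tau i}$ by inverse scalars and fixes $K_i$, and solves the resulting scalar equations using \eqref{kappa2}--\eqref{vs=bar} (with $\tau i = i$, $\tau i \neq i$ & $c_{i,\tau i}=0$, and $\tau i \neq i$ & $c_{i,\tau i}\neq 0$ handled as the three cases of those conditions); (iii) check $\psi_\imath^2 = \mathrm{id}$ on $\Ui$ — this is automatic once $\psi_\imath$ fixes the $B_i$ and inverts the $k_j$, since those generate $\Ui$ and $\psi_\imath$ is an (anti-linear) algebra homomorphism, so it is an involution on the subalgebra they generate; (iv) confirm that $\psi_\imath$ really restricts from an automorphism of $\U$ so that it is well-defined as an algebra map on $\Ui$ — alternatively, verify directly that the images $\psi_\imath(B_i), \psi_\imath(k_j)$ satisfy the $\imath$Serre relations, which is immediate because $\psi_\imath$ coincides with an anti-linear automorphism of the ambient $\U$.

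The main obstacle I expect is step (ii): pinning down the scalar bookkeeping so that a \emph{single} correction element $t \in \U^0$ works uniformly for \emph{all} $i \in \I$ at once. For a split $\Ui$ this is a one-line check, but in type AIII with a nontrivial $\tau$ and $c_{i,\tau i} = -1$ the relation \eqref{vs=bar} reads $\vs_{\tau i} = q_i \,\overline{\vs_i}$, and one must see that the scalar adjustments demanded by the pair $\{i, \tau i\}$ are mutually consistent (not over-determined). Once the conditions \eqref{kappa2}--\eqref{vs=bar} are in hand this is a finite, type-by-type verification, and the excerpt's remark that ``\eqref{vs=bar} greatly simplifies as it occurs only in type AIII'' signals exactly that only this one case needs care. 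After that, the compatibility $\psi_\imath(k_{\tau i}) = \psi_\imath(k_i)^{-1}$ and the relation $k_{\tau i} = 1$ when $\tau i = i$ are formal consequences of $\psi_\imath(K_i) = K_i$ together with $k_i = K_iK_{\tau i}^{-1}$, so no further work is needed there.
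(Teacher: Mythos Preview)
Your main approach has a genuine gap: $\psi_\imath$ \emph{cannot} be realized as the restriction to $\Ui$ of an anti-linear automorphism of $\U$ of the form $\mathrm{Ad}(t)\circ\psi$ (or a Chevalley rescaling composed with $\psi$). Any such automorphism must send $K_i\mapsto K_i^{-1}$ (forced by the relations $K_iE_jK_i^{-1}=q^{i\cdot j}E_j$ together with $q\mapsto q^{-1}$), while $\mathrm{Ad}(t)$ for $t\in\U^0$ fixes each $K_i$. Hence such a map sends the summand $\vs_iF_{\tau i}K_i^{-1}$ of $B_i$ to a scalar multiple of $F_{\tau i}K_i$, and the summand $\kappa_iK_i^{-1}$ to a scalar multiple of $K_i$; no choice of scalars converts $F_{\tau i}K_i$ back into $F_{\tau i}K_i^{-1}$. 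So step~(ii) has no solution, and step~(iv) (``immediate because $\psi_\imath$ coincides with an anti-linear automorphism of the ambient $\U$'') collapses. In fact the paper notes right after the lemma that $\psi\circ\imath\neq\imath\circ\psi_\imath$; this discrepancy is exactly what the quasi K-matrix $\Upsilon$ measures, and it would be trivial if $\psi_\imath$ extended to $\U$ in the way you propose.

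The paper's approach is the ``alternative'' you mention only in passing: work \emph{intrinsically} with the Serre presentation of the quasi-split $\Ui$ (Remark~\ref{rem:iSerre}; see Propositions~\ref{prop:SP1}--\ref{prop:SP2} for the type~AIII relations that serve as the local building blocks), define the anti-linear map on generators by $B_i\mapsto B_i$, $k_j\mapsto k_j^{-1}$, and verify relation by relation that it is well-defined. This check is where conditions \eqref{kappa2}--\eqref{vs=bar} enter: they guarantee that the (few) coefficients appearing in the $\imath$Serre relations are bar-invariant after the substitution $k_j\leftrightarrow k_j^{-1}$. That verification is short and does not use the embedding $\imath:\Ui\hookrightarrow\U$ at all.
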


Denote by $\imath: \Ui\rightarrow \U$ the embedding. Observe that the diagram
$$
\xymatrix{
  \Ui \ar[d]_{\psi_\imath} \ar[r]^{\imath}
                & \mathbf{U} \ar[d]^{\psi}  \\
  \Ui  \ar[r]^{\imath}
                & \mathbf{U}}$$
  is not commutative because $B_i=E_i+qF_{\tau i}K_i^{-1}\neq\psi(B_i)$.
  In other words, $$\overline{\imath}:=\psi\circ\imath\circ\psi_{\imath}\neq\imath.$$ Note the analogy with the fact that $\overline{\Delta} \neq \Delta$ on $\U$, i.e., the diagram
  $$
  \xymatrix{
  \mathbf{U} \ar[d]_{\psi} \ar[r]^{\Delta}
                & \mathbf{U}\otimes\mathbf{U} \ar[d]^{\psi\otimes\psi}  \\
  \mathbf{U}  \ar[r]^{\Delta}
                & \mathbf{U}\otimes\mathbf{U}}
 $$
is not commutative. Recall the quasi R-matrix $\Theta$ is used to intertwine $\overline{\Delta}$ and $\Delta$, i.e., $\Delta(\mu)\circ\Theta=\Theta\circ\overline{\Delta}(u)$, for all $u\in\mathbf{U}$; see Theorem~\ref{thm:quasi}.

  \begin{theorem}  \cite{BW18a} (also see \cite{BK19})
  \label{thm:quasi2}
Assume that $(\U, \Ui)$ is quasi-split. $(\bvs, \kappa)$ satisfy the conditions  \eqref{kappa}--\eqref{vs=} and \eqref{kappa2}--\eqref{vs=bar}. There exists a unique $\Upsilon=\sum_{\mu\in\mathbb{N}\I}\Upsilon_{\mu}$, where
    $\Upsilon_\mu\in\mathbf{U}_{-\mu}^{-}$ and $\Upsilon_0=1$, such that
    \[
    \imath(u)\circ\Upsilon=\Upsilon\circ\overline{\imath}(u), \quad \mbox{for all $u\in\Ui$}.
    \]
\end{theorem}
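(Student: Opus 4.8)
The strategy is to construct $\Upsilon = \sum_{\mu \in \N\I} \Upsilon_\mu$ recursively on $\mu$ (ordered by height), extracting from the intertwining identity a system of recursive equations that determine each $\Upsilon_\mu \in \U^-_{-\mu}$ uniquely given the $\Upsilon_\nu$ with $\nu < \mu$. First I would observe that it suffices to impose the identity $\imath(u)\Upsilon = \Upsilon \overline{\imath}(u)$ on the algebra generators $u = B_i$ and $u = k_j$ ($i \in \I$, $j \in \Ii$), since both sides are algebra-like in $u$ (more precisely, the set of $u$ for which the identity holds is a subalgebra). For the $k_j$ relation, since $k_j \in \U^0$ normalizes $\U^-$ with $k_j F_\nu^- k_j^{-1} = q^{-(\alpha_j - \alpha_{\tau j})\cdot \nu} F_\nu^-$ and $\overline{\imath}(k_j) = \psi(\imath(\psi_\imath(k_j))) = \psi(k_j^{-1}) = k_j$, the condition $k_j \Upsilon = \Upsilon k_j$ is automatic since each $\Upsilon_\mu$ has a fixed weight; this forces no new constraint but is consistent. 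The real content is the $B_i$ relation.

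The key computation is to expand $B_i \Upsilon - \Upsilon \overline{\imath}(B_i)$ weight-component by weight-component. Here $\imath(B_i) = E_i + \vs_i F_{\tau i} K_i^{-1} + \kappa_i K_i^{-1}$ and $\overline{\imath}(B_i) = \psi(\imath(\psi_\imath(B_i))) = \psi(\imath(B_i)) = E_i + \overline{\vs_i} F_{\tau i} K_i + \overline{\kappa_i} K_i$ (using $\psi(E_i) = E_i$, $\psi(F_i) = F_i$, $\psi(K_i) = K_i^{-1}$, $\psi(q) = q^{-1}$); the bar conditions \eqref{kappa2}--\eqref{vs=bar} are precisely what is needed so that $\overline{\imath}(B_i)$ has this clean form and so that the resulting recursion has solutions in $\A$. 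Multiplying by $\Upsilon_\mu$ on appropriate sides and collecting terms landing in $\U^+_1 \cdot \U^-_{-\mu}$, $\U^-_{-(\mu+\alpha_i - \alpha_{\tau i})}$, and $\U^-_{-\mu}$ (using $E_i \Upsilon_\mu = \Upsilon_\mu E_i + [E_i, \Upsilon_\mu]$ and the known formula $[E_i, x^-] = \frac{r_i(x)^- K_i - K_i^{-1}\, {}_i r(x)^-}{q_i - q_i^{-1}}$ for $x \in \ff$, where $r_i, {}_ir$ are the twisted derivations from Lemma~\ref{lem:BF}), one obtains, after matching weight components, a recursive formula of the shape
\[
(q_i - q_i^{-1})\big({}_i r(\Upsilon_{\mu})^- - \text{lower terms}\big) = \vs_i\, F_{\tau i}\, \Upsilon_{\mu - \alpha_{\tau i}} - \overline{\vs_i}\, \Upsilon_{\mu - \alpha_i}\, F_{\tau i} + (\text{terms in } \Upsilon_{<\mu}),
\]
which expresses (a nondegenerate linear functional of) $\Upsilon_\mu$ in terms of strictly lower components. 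The nondegeneracy of the bilinear form $(\cdot,\cdot)$ on $\ff$ (Lemma~\ref{lem:BF}) guarantees that $\Upsilon_\mu$, if it exists, is uniquely determined; this gives uniqueness immediately and reduces existence to a consistency check.

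\textbf{The main obstacle} is the existence half: one must verify that the recursively defined $\Upsilon_\mu$ are actually \emph{consistent} — i.e., that the over-determined system coming from all generators $B_i$ simultaneously (there are $|\I|$ relations but the unknown $\Upsilon_\mu$ in each degree is a single element) has a solution. Equivalently, one shows that the ``defect'' $B_i \Upsilon - \Upsilon \overline{\imath}(B_i)$, a priori an element of a completion of $\U$, actually vanishes once the recursion is satisfied in each degree; this requires checking that the higher-weight leftover terms (those in $\U^+_{\geq 1}$) cancel, which in turn relies on the $\imath$-Serre relations for $\Ui$ (Remark~\ref{rem:iSerre}) and on compatibility of $\psi_\imath$ with the presentation (Lemma~\ref{lem:psi}). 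In the quasi-split finite-type setting this was done in \cite{BW18a} for type AIII and extended by Bao--Wang in general (\cite[Remark 4.9]{BW18b}); the fully general Kac--Moody case is \cite{BK19}. I would therefore sketch the recursion and uniqueness in detail, verify the base case $\Upsilon_0 = 1$ and the rank-one case (where it reduces to the explicit $\imath$divided-power computation of \cite{BW18a, BeW18}), and for the consistency of the general recursion refer to \cite{BW18a, BK19}, as the full verification is lengthy and not illuminating to reproduce here.
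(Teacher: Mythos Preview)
Your approach is essentially the same as the one the paper defers to: the paper's own proof is simply a reference to \cite[\S2.4]{BW18a} and \cite[\S6]{BK19}, and the recursive construction you sketch (reduce to generators, use the commutator identity $[E_i,x^-]=\frac{r_i(x)^-K_i-K_i^{-1}\,{}_ir(x)^-}{q_i-q_i^{-1}}$ to turn the $B_i$-intertwining relation into a weight-by-weight recursion determining $\Upsilon_\mu$ from lower components, with existence reduced to a consistency check handled via the $\imath$-Serre relations) is exactly the argument carried out in those references.

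One minor inaccuracy: the $k_j$-relation is \emph{not} automatic merely because $\Upsilon_\mu$ has a fixed weight. For $\Upsilon_\mu\in\U^-_{-\mu}$ one has $k_j\Upsilon_\mu k_j^{-1}=q^{(\alpha_{\tau j}-\alpha_j)\cdot\mu}\Upsilon_\mu$, so the relation $k_j\Upsilon=\Upsilon k_j$ imposes the genuine constraint $\Upsilon_\mu=0$ unless $\tau(\mu)=\mu$. This is compatible with the $B_i$-recursion (which produces only $\tau$-invariant weights from $\Upsilon_0=1$), so it does not obstruct the argument, but your stated reason for its triviality is wrong and you should record the $\tau$-invariance of the support of $\Upsilon$ as an honest consequence rather than as vacuous.
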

\noindent (If we have used the comultiplication in \cite{Lus93} and formulate the $\imath$quantum groups accordingly as in \cite{Ko14}, then $\Upsilon$ would lie in a completion of $\U^+$; cf. \cite{BK19, BW18b}.)

\begin{proof}
We refer to \cite[\S2.4]{BW18a} and \cite[\S6]{BK19} for details.
\end{proof}

The uniqueness of $\Upsilon$ implies readily that $\overline{\Upsilon}^{-1} =\Upsilon$, which can be reformulated as follows.

\begin{corollary}  \label{cor:inv2}
The quasi K-matrix $\Upsilon$ satisfies
  $\overline{\Upsilon}=\Upsilon^{-1}$.
\end{corollary}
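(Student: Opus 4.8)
The plan is to mimic exactly the argument already used for Corollary~\ref{cor:Theta} (the analogous statement $\overline{\Theta} = \Theta^{-1}$ for the quasi R-matrix), using the uniqueness clause in Theorem~\ref{thm:quasi2} in place of that in Theorem~\ref{thm:quasi}. First I would start from the defining intertwining relation
\[
\imath(u) \circ \Upsilon = \Upsilon \circ \overline{\imath}(u), \qquad \forall u \in \Ui,
\]
where $\overline{\imath} = \psi \circ \imath \circ \psi_\imath$. The strategy is to apply the bar involution $\psi$ of $\U$ to this identity and then massage it into a form that lets the uniqueness statement for $\Upsilon$ apply to $\overline{\Upsilon}^{-1}$.

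Concretely: replace $u$ by $\psi_\imath(u)$ in the defining relation, so that it reads $\imath(\psi_\imath(u)) \circ \Upsilon = \Upsilon \circ \overline{\imath}(\psi_\imath(u))$. Now $\overline{\imath} \circ \psi_\imath = \psi \circ \imath \circ \psi_\imath \circ \psi_\imath = \psi \circ \imath$ since $\psi_\imath^2 = \mathrm{id}$ (Lemma~\ref{lem:psi}), while $\imath \circ \psi_\imath = \psi \circ (\psi \circ \imath \circ \psi_\imath) = \psi \circ \overline{\imath}$. Hence the relation becomes $\psi(\overline{\imath}(u)) \circ \Upsilon = \Upsilon \circ \psi(\imath(u))$. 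Applying $\psi$ to both sides (noting that $\psi$ is an anti-linear algebra automorphism of $\U$, so it reverses composition order on operators built from left multiplications only trivially — more precisely, $\psi$ intertwines left multiplication by $x$ with left multiplication by $\psi(x)$), we get $\overline{\imath}(u) \circ \overline{\Upsilon} = \overline{\Upsilon} \circ \imath(u)$ for all $u \in \Ui$, where $\overline{\Upsilon} := \psi(\Upsilon)$ acts as left multiplication by $\psi(\Upsilon)$. Composing on the left and right with $\overline{\Upsilon}^{-1}$ yields $\imath(u) \circ \overline{\Upsilon}^{-1} = \overline{\Upsilon}^{-1} \circ \overline{\imath}(u)$, which is precisely the intertwining relation defining $\Upsilon$.

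It remains to check the normalization: $\overline{\Upsilon}^{-1}$ must have $\mu = 0$ component equal to $1$ and live in the appropriate completion of $\U^-$. Since $\Upsilon = 1 + \sum_{\mu > 0} \Upsilon_\mu$ with $\Upsilon_\mu \in \U^-_{-\mu}$ and $\psi$ preserves $\U^-$ with $\psi(\U^-_{-\mu}) = \U^-_{-\mu}$, we have $\overline{\Upsilon} = 1 + \sum_{\mu > 0} \psi(\Upsilon_\mu)$, so $\overline{\Upsilon}^{-1} = 1 + (\text{higher weight terms in } \U^-)$ as well. By the uniqueness part of Theorem~\ref{thm:quasi2}, $\overline{\Upsilon}^{-1} = \Upsilon$, i.e., $\overline{\Upsilon} = \Upsilon^{-1}$. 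The only mildly delicate point — and the step I would be most careful about — is the bookkeeping in passing $\psi$ through the operator composition and through the inversion, making sure the anti-linearity of $\psi$ interacts correctly with the completion $\widehat{\U^-}$ in which $\Upsilon$ lives; but this is entirely parallel to the proof of Corollary~\ref{cor:Theta} and presents no genuine obstacle.
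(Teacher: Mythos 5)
Your proof is correct and is exactly the argument the paper has in mind: the paper simply writes ``The uniqueness of $\Upsilon$ implies readily that $\overline{\Upsilon}^{-1}=\Upsilon$,'' leaving the details to the reader, and your write-up fills them in by precisely mimicking the proof of Corollary~\ref{cor:Theta}. The substitution $u\mapsto \psi_\imath(u)$, the use of $\psi^2=\mathrm{id}$, $\psi_\imath^2=\mathrm{id}$, and the normalization check that $\overline{\Upsilon}^{-1}$ has degree-zero component $1$ in $\widehat{\U^-}$ are all exactly as intended.
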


Let  $(M,B)$ be a based $\mathbf{U}$-module. In particular, $M$ is involutive (i.e. $\psi(ux)=\psi(u)\psi(x)$, for all $u\in\mathbf{U}, x\in M$) as a $\mathbf{U}$-module. Define an anti-linear map \cite{BW18a}
\begin{align}  \label{eq:ibar}
\psi_\imath:=\Upsilon\circ\psi:M\rightarrow M.
\end{align}

\begin{lemma}
  \label{lem:ibar}
Let  $(M,B)$ be a based $\mathbf{U}$-module. Then
  \begin{itemize}
    \item[(1)] the map $\psi_\imath$ is a bar involution on $M$, i.e., $\psi_\imath^2=\text{id}$;
    \item[(2)] as a $\Ui$-module, $M$ is $\imath$-involutive, that is, $\psi_\imath(ux)=\psi_\imath(u)\psi_\imath(x)$, for all $u\in\Ui, x\in M$.
  \end{itemize}
\end{lemma}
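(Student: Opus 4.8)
The plan is to obtain both parts as formal consequences of three facts already in hand: the defining intertwining property $\imath(u)\circ\Upsilon=\Upsilon\circ\overline{\imath}(u)$ of the quasi K-matrix (Theorem~\ref{thm:quasi2}), the inversion identity $\overline{\Upsilon}=\Upsilon^{-1}$ (Corollary~\ref{cor:inv2}), and the relation $\overline{\imath}=\psi\circ\imath\circ\psi_\imath$ together with the $\mathbf{U}$-involutivity of $(M,B)$. First I would record two preliminary observations. Since $M$ is a weight module and $\Upsilon=\sum_\mu\Upsilon_\mu$ with $\Upsilon_\mu\in\mathbf{U}^-_{-\mu}$, only finitely many terms act nontrivially on a given vector, so $x\mapsto \Upsilon x$ is a well-defined $\Q(q)$-linear operator on $M$; hence $\psi_\imath=\Upsilon\circ\psi$ is anti-linear. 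Moreover, because $M$ is $\mathbf{U}$-involutive, one has $\psi(\Upsilon x)=\overline{\Upsilon}\,\psi(x)$ for all $x\in M$ (apply $\psi$ termwise, using $\overline{\Upsilon}=\psi(\Upsilon)$).

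For (1), I would simply compute
\[
\psi_\imath^2(x)=\Upsilon\,\psi\big(\Upsilon\,\psi(x)\big)=\Upsilon\,\overline{\Upsilon}\,\psi^2(x)=\Upsilon\,\overline{\Upsilon}\,x=x,
\]
where the second equality uses the preliminary observation, the third uses $\psi^2=\mathrm{id}$ on $M$, and the last uses $\overline{\Upsilon}=\Upsilon^{-1}$ from Corollary~\ref{cor:inv2}. Thus $\psi_\imath$ is an anti-linear involution of $M$.

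For (2), fix $u\in\Ui$ and $x\in M$, and write the action of $u$ on $M$ as $\imath(u)x$. Using $\mathbf{U}$-involutivity and then $\psi\circ\imath=\overline{\imath}\circ\psi_\imath$ (which is a restatement of $\overline{\imath}=\psi\circ\imath\circ\psi_\imath$ together with $\psi_\imath^2=\mathrm{id}$ on $\Ui$),
\[
\psi_\imath(ux)=\Upsilon\,\psi\big(\imath(u)x\big)=\Upsilon\,\big(\psi\,\imath(u)\big)\psi(x)=\Upsilon\,\overline{\imath}\big(\psi_\imath(u)\big)\,\psi(x).
\]
Now apply Theorem~\ref{thm:quasi2} with $u$ replaced by $\psi_\imath(u)\in\Ui$ to get $\Upsilon\,\overline{\imath}(\psi_\imath(u))=\imath(\psi_\imath(u))\,\Upsilon$ as operators on $M$; substituting,
\[
\psi_\imath(ux)=\imath\big(\psi_\imath(u)\big)\,\Upsilon\,\psi(x)=\imath\big(\psi_\imath(u)\big)\,\psi_\imath(x)=\psi_\imath(u)\cdot\psi_\imath(x),
\]
which is exactly the $\imath$-involutivity asserted in (2).

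Finally I would remark that there is no genuine obstacle here: the only care needed is notational bookkeeping --- distinguishing the two roles of the symbol $\psi_\imath$, namely the algebra bar involution of $\Ui$ from Lemma~\ref{lem:psi} and the operator $\Upsilon\circ\psi$ on $M$ --- and the routine verification that $\Upsilon$ acts on the weight module $M$ and commutes past $\psi$ as claimed. All the real content is packaged in Theorem~\ref{thm:quasi2} and Corollary~\ref{cor:inv2}; the lemma is precisely the statement that these suffice to produce a bar involution on $M$ compatible with $\Ui$, which is what allows the subsequent $\imath$-canonical basis construction to run in parallel with the $\mathbf{U}$-module case of Lemma~\ref{lem:barMN}.
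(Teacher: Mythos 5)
Your proof is correct and follows essentially the same route as the paper's: both parts are obtained by direct computation using the quasi K-matrix intertwining relation (Theorem~\ref{thm:quasi2}), the inversion $\overline{\Upsilon}=\Upsilon^{-1}$ (Corollary~\ref{cor:inv2}), and the $\mathbf{U}$-involutivity of $M$. Your treatment of part (2) merely makes one intermediate step explicit --- inserting $\Upsilon\,\overline{\imath}(\psi_\imath(u))\,\psi(x)$ before applying the intertwining relation --- which the paper's one-line chain compresses, but the argument is identical in substance.
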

\begin{proof}
This follows by direct computations by Theorem~\ref{thm:quasi2} and Corollary~\ref{cor:inv2}: $$\psi_\imath^2(x)=\Upsilon\psi(\Upsilon\psi(x))=
        \Upsilon\overline{\Upsilon}(\psi^2(x))=\Upsilon\overline{\Upsilon}(x)=x;$$ furthermore,   $$\psi_\imath(ux)=\Upsilon\psi(\imath(u)x)=\Upsilon\psi(\imath(u))\psi(x)
        =\imath(\psi_\imath(u))\Upsilon\psi(x)=\psi_\imath(u)\psi_\imath(x).$$
\end{proof}

Recall the $\A$-form $\mathbf{U}_{\A}^{-}$ of $\mathbf{U}^{-}$.

\begin{theorem}  \cite{BW18b, BW18c}
\label{thm:ZK}
  $\Upsilon\in\widehat{\mathbf{U}_{\A}^{-}}$, i.e., $\Upsilon_{\mu}\in\mathbf{U}_{\A}^{-}$ for any $\mu\in\mathbb{N}\I$.
\end{theorem}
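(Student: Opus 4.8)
The plan is to establish the integrality $\Upsilon_\mu \in \U_\A^-$ by induction on the height of $\mu \in \N\I$, using the recursive characterization of $\Upsilon$ that comes out of the defining intertwining relation in Theorem~\ref{thm:quasi2}. First I would extract from $\imath(B_i)\Upsilon = \Upsilon \,\overline{\imath}(B_i)$ an explicit recursion for the components $\Upsilon_\mu$. Writing $\imath(B_i) = E_i + \vs_i F_{\tau i}K_i^{-1} + \kappa_i K_i^{-1}$ and $\overline{\imath}(B_i) = \psi(\imath(B_i)) = E_i + \overline{\vs_i}\, F_{\tau i} K_i + \overline{\kappa_i} K_i$, and using that $\Upsilon_\mu \in \U^-_{-\mu}$, the $E_i$-terms contribute via the commutator $[E_i, \Upsilon_\mu]$, which by the standard formula equals $\frac{r_i(\Upsilon_\mu)K_i - K_i^{-1}\, {}_ir(\Upsilon_\mu)}{q_i - q_i^{-1}}$ for Lusztig's skew-derivations $r_i, {}_ir$ on $\U^-$. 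Collecting the weight-$(-\mu)$ homogeneous piece of the identity, one obtains a relation expressing (a nonzero scalar multiple of) $r_i(\Upsilon_\mu) \cdot q^{(\cdots)}$ and ${}_ir(\Upsilon_\mu)$ in terms of lower components $\Upsilon_{\mu - \alpha_i}$, $\Upsilon_{\mu-\alpha_{\tau i}}$, together with the parameters $\vs_i, \kappa_i, \overline{\vs_i}, \overline{\kappa_i}$. Because the pair $(r_i)_{i\in\I}$ separates points of $\U^-$ (an element killed by all $r_i$ lies in $\Q(q)$), this determines $\Upsilon_\mu$ from the $\Upsilon_{\mu'}$ with $\mathrm{ht}(\mu') < \mathrm{ht}(\mu)$.

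Granting the recursion, the induction step is then a matter of integrality bookkeeping. The base case $\Upsilon_0 = 1 \in \U_\A^-$ is immediate. For the inductive step, I would assume $\Upsilon_{\mu'} \in \U_\A^-$ for all $\mu'$ of smaller height; the right-hand side of the recursion is then an $\A$-linear combination of elements of $\U_\A^-$, using condition \eqref{kappa2}--\eqref{vs=bar} which guarantees $\vs_i, \kappa_i, \overline{\vs_i}, \overline{\kappa_i} \in \A$. The remaining point is that solving for $\Upsilon_\mu$ itself stays inside $\U_\A^-$: one must know that if $x \in \U^-$ has $r_i(x) \in \U_\A^-$ for all $i$ (equivalently ${}_ir(x) \in \U_\A^-$ for all $i$, after the appropriate weight-twist) then $x \in \U_\A^-$. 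This is the integral form of the ``$r_i$ separates points'' statement: the canonical/PBW $\A$-basis of $\U_\A^-$ behaves well under $r_i$, so that the matrix of $r_i$ in degree $\mu$ has a left inverse over a suitable integral ring — more precisely one uses the nondegeneracy of Lusztig's bilinear form and the fact (Theorem~\ref{thm:CBade} and its integral refinements in \cite{Lus93}) that $\U_\A^-$ is spanned by divided-power monomials, on which $r_i$ acts by explicitly integral formulas. I would invoke the relevant lemma from \cite{BW18b, BW18c} here rather than reprove it.

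The main obstacle is exactly this last integrality-of-the-quotient step: showing that dividing out by the $q_i - q_i^{-1}$ denominators (and by whatever scalar appears when inverting the $r_i$-map in degree $\mu$) does not leave $\U_\A^-$. Naively the recursion produces $\Upsilon_\mu$ with possible denominators in $q_i - q_i^{-1}$, and one needs a genuine argument — not a formal manipulation — that these cancel. The cleanest route is to phase the induction through the canonical basis $B$ of $\U_\A^-$: expand $\Upsilon_\mu = \sum_{b} c_{\mu,b}\, b$ with $c_{\mu,b} \in \Q(q)$ a priori, apply the recursion, and use that $r_i$ and ${}_ir$ send canonical basis elements to $\Z[q,q^{-1}]$-combinations of canonical basis elements (a positivity/integrality property of $B$) to conclude $c_{\mu,b} \in \A$ inductively. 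This is essentially the argument of \cite{BW18b}, and I would present it at the level of ``set up the recursion, reduce to the known integrality of $r_i$ on $B$, conclude by induction on height,'' citing \cite{BW18b, BW18c, Lus93} for the two structural inputs (separation by $r_i$, integrality of $r_i$ on the canonical basis) rather than reproving them.
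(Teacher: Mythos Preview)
The paper does not prove this; it simply refers to \cite[Theorem~6.9]{BW18c}. Your outline goes further and is broadly in the right spirit: the recursion you describe (extracting $r_i(\Upsilon_\mu)$ and ${}_ir(\Upsilon_\mu)$ from the intertwining relation for $B_i$) is indeed how existence and uniqueness of $\Upsilon$ are established, and once the factor $(q_i - q_i^{-1})$ from the commutator formula is moved to the other side it does yield inductively that $r_i(\Upsilon_\mu) \in \UA^-$ for all $i$.

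The gap is exactly where you locate it, but your proposed fix does not close it. Knowing that $r_i$ sends each canonical basis element to an $\A$-combination of canonical basis elements gives only $r_i(\UA^-) \subset \UA^-$; it does not give the converse direction you need, namely that $\bigcap_i r_i^{-1}(\UA^-) \cap \U^-_{-\mu} \subset \UA^-$. Concretely, writing $\Upsilon_\mu = \sum_b c_b\, b$ and knowing $\sum_b c_b\, r_i(b) \in \UA^-$ for each $i$ is a system of $\A$-linear constraints on the $c_b$, but there is no reason the combined map $\bigoplus_i r_i$ admits a left inverse over $\A$; injectivity over $\Q(q)$ is not enough. This step is not a lemma one can simply cite from \cite{Lus93}; it is the substantive content of the theorem, and the arguments in \cite{BW18b, BW18c} handle it by other means. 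So while your reduction is accurate, deferring this last step to the references amounts to deferring the whole proof --- which is, of course, exactly what the paper itself does.
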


\begin{proof}
We refer to \cite[Theorem 6.9]{BW18c} for details.
\end{proof}
\subsection{K-matrix}
 \label{subsec:KK}

Recall from Theorem~\ref{thm:R}(3) that some suitable twisting turns the quasi R-matrix $\Theta$ into the R-matrix $R:=\Theta\circ\widetilde{f}\circ P$, which is a $\U$-module isomorphism of the tensor product of $\U$-modules.

With a suitable twisting, we also turn the quasi K-matrix $\Upsilon$ into  a $\Ui$-module isomorphism $\mathcal T$ on a $\U$-module; for details we refer to \cite[Theorem~2.18]{BW18a} \cite[Theorem 7.5]{BK19} (also see \cite[Theorem 4.18, Remark 4.19]{BW18b}). This in particular induces a $\Ui$-module isomorphism:
\begin{align}   \label{eq:K}
\mathcal T: {}^\omega L(\la) \longrightarrow L(\la^\tau), \quad v^-_{-\la} \mapsto v^+_{\la^\tau}.
\end{align}
where we write $\la^\tau =\tau (\la)$. For us,  $\mathcal T$ in \eqref{eq:K} is all we need below in \eqref{xy:pi}.

Nowadays $\mathcal T$ has come to be known as a K-matrix, as it is shown \cite{BK19} that it provides a solution to the reflection equation, an $\imath$-analogue of Yang-Baxter equation.

\subsection{$\imath$Canonical basis on based $\U$-modules}
 \label{subsec:iCB}

A based $\U$-module is an involutive $\Ui$-module, by Lemma~\ref{lem:ibar}.

\begin{theorem}  \cite{BW18a, BW18b}
  Let $(M,B)$ be a based $\mathbf{U}$-module. Then there exists a unique basis $B^{\imath}=\{b^\imath~|~b\in B\}$ for $M$ such that
  $\psi_\imath(b^\imath)=b^{\imath}$ and $b^{\imath}\in b+\sum_{b'\in B,b'<b}q\Z[q]b'$, where $b'<b$ means $\mathrm{wt}(b)-\mathrm{wt}(b')\in\mathbb{N\I}$ and $b' \neq b$.

\noindent (The basis $B^\imath$ is called {\em $\imath$-canonical basis}, and $(M, B^\imath)$ a based $\Ui$-module.)
\end{theorem}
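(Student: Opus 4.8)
The plan is to mimic the proof of Theorem~\ref{thm:KL} (and Theorem~\ref{thm:Ltensor}), reducing everything to the standard ``bar-invariant lift'' lemma once the key triangularity of $\psi_\imath$ with respect to the original canonical basis $B$ is in hand. The main work is therefore to prove that, for each $b\in B$,
\begin{equation*}
\psi_\imath(b)\in b+\sum_{b'\in B,\ \mathrm{wt}(b)-\mathrm{wt}(b')\in\N\I,\ b'\neq b}\Z[q,q^{-1}]\,b'.
\end{equation*}
Granting this, the argument is purely formal: order $B$ by the partial order $b'<b\iff \mathrm{wt}(b)-\mathrm{wt}(b')\in\N\I$ (a finite poset on each weight-restricted piece since $M$ is finite-dimensional, or more generally has finite-dimensional weight spaces and the order is interval-finite); then proceed by induction on this order exactly as in Claim~1 of Theorem~\ref{thm:KL}, writing $\psi_\imath(b)=b+\sum_{b'<b}p_{b'}(q)\,b'$ with $p_{b'}\in\A$, using $\psi_\imath^2=\mathrm{id}$ (Lemma~\ref{lem:ibar}(1)) to get $p_{b'}(q^{-1})=-\overline{\text{(already corrected terms)}}$ in the usual triangular fashion, and solving $p_{b'}(q)=Q_{b'}(q)-Q_{b'}(q^{-1})$ with $Q_{b'}\in q\Z[q]$ to define $b^\imath:=b+\sum_{b'<b}Q_{b'}(q)\,b'+(\text{lower corrections})$. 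Uniqueness follows from Claim~2 of Theorem~\ref{thm:KL}: a bar-invariant element of $\sum q\Z[q]\,b'$ is $0$.

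So the real content is the triangularity of $\psi_\imath=\Upsilon\circ\psi$. Here is how I would establish it. Since $(M,B)$ is a based $\U$-module, $\psi$ is the given bar involution on $M$ and satisfies $\psi(b)\in b+\sum_{b'\in B}q\Z[q]\,b'$ with $\mathrm{wt}(b')=\mathrm{wt}(b)$ for the nonzero terms — in fact $\psi(b)=b$ is false in general, but $\psi$ preserves the $\Z[q]$-lattice $M_{\Z[q]}$, is weight-preserving, and acts as identity mod $q$; more precisely $\psi(b)\in b+q M_{\Z[q]}$ with all terms in the same weight space. Next, by Theorem~\ref{thm:ZK}, $\Upsilon=\sum_{\mu\in\N\I}\Upsilon_\mu$ with $\Upsilon_\mu\in\U^-_{\A,-\mu}$, so $\Upsilon_\mu$ lowers weight by $\mu$ and preserves the $\A$-form $M_\A=\A B$. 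Hence applying $\Upsilon$ to $\psi(b)$ produces, for each basis vector $b''$ with $\mathrm{wt}(b'')=\mathrm{wt}(b)$ appearing in $\psi(b)$, a sum of $\Upsilon_\mu b''\in M_\A$ with $\mathrm{wt}(\Upsilon_\mu b'')=\mathrm{wt}(b)-\mu$, which expands into $\sum_{b'}\A\,b'$ over $b'$ with $\mathrm{wt}(b)-\mathrm{wt}(b')\in\N\I$. The $\mu=0$ term is $\Upsilon_0=1$, contributing $\psi(b)$ itself, and since $\psi(b)\equiv b\pmod{qM_{\Z[q]}}$ while all $\mu>0$ contributions lie in strictly lower weights, the coefficient of $b$ in $\psi_\imath(b)$ is $1$. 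This gives exactly the displayed triangularity with $\A$-coefficients; combined with the lattice statement one gets the sharper $q\Z[q]$-form needed, precisely as in the proof of Theorem~\ref{thm:Ltensor} where $\Theta\in\widehat{\U^+_\A\otimes\U^-_\A}$ played the analogous role.

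The step I expect to be the genuine obstacle — or at least the one requiring care rather than formal manipulation — is controlling that the coefficient of $b$ itself is exactly $1$ (not merely a unit or a general element of $1+q\Z[q]$) and that the expansion is honestly \emph{unitriangular} over $q\Z[q]$ rather than just over $\A$. This is where one must genuinely use that $(M,B)$ is a \emph{based} module in the strong sense of \S\ref{subsec:based}: $M_{\Z[q]}=\Z[q]B$ is a $\psi$-stable $\Z[q]$-lattice and the image of $B$ is a basis of $M_{\Z[q]}/qM_{\Z[q]}$, together with the integrality $\Upsilon\in\widehat{\U^-_\A}$ from Theorem~\ref{thm:ZK} (itself the deep input, quoted from \cite{BW18c}). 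Once the unitriangularity over $q\Z[q]$ modulo lower-weight terms is secured, the existence-and-uniqueness machinery is the same one used three times already in the notes, so I would present it compactly by reference to the proof of Theorem~\ref{thm:KL}. Finally I would remark, as the excerpt does for $B\diamond B'$, that $M_\A^\imath:=\A B^\imath=M_\A$ and $\Z[q]B^\imath=M_{\Z[q]}$, so $(M,B^\imath)$ is indeed a based $\Ui$-module.
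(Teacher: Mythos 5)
Your proposal follows essentially the same route as the paper: establish $\A$-triangularity of $\psi_\imath(b)=\Upsilon\psi(b)$ with respect to $B$ from the integrality $\Upsilon\in\widehat{\U^-_\A}$ of Theorem~\ref{thm:ZK}, then run the standard bar-invariant-lift argument of Theorem~\ref{thm:KL} — and the paper's own (very terse) proof does exactly this.

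One point where you introduced unnecessary worry: in the definition of based $\U$-module in \S\ref{subsec:based}, $B$ is a \emph{bar invariant basis}, meaning $\psi(b)=b$ for every $b\in B$ (this is Lusztig's definition, where $\psi$ on $M$ is \emph{constructed} by conjugating coefficients in the $B$-expansion). So your hedge ``$\psi(b)=b$ is false in general'' is wrong, and your anticipated ``genuine obstacle'' evaporates: $\psi_\imath(b)=\Upsilon b=b+\sum_{\mu\neq 0}\Upsilon_\mu b$, so the coefficient of $b$ is exactly $1$ because $\Upsilon_0=1$ and all $\mu\neq 0$ contributions land in strictly lower weights. Relatedly, you need not upgrade the triangularity of $\psi_\imath(b)$ from $\A$ to $q\Z[q]$ before invoking the KL machinery — $\A$-triangularity with leading coefficient $1$ is the input; the $q\Z[q]$ form is a property of the \emph{output} $b^\imath$ produced by that machinery, exactly as in Claim~1 of Theorem~\ref{thm:KL}.
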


\begin{proof}
Note that $B$ is a basis for the $\A$-form $M_\A$ of $M$.
By definition \eqref{eq:ibar} of $\psi_\imath$ on $M$ and Theorem~\ref{thm:ZK}, we have, for $b\in B$,
\begin{align}  \label{eq:12i}
\psi_\imath (b) \in b +\sum_{b'\in B,b'<b}\A b'.
\end{align}
The theorem now follows from \eqref{eq:12i}, by exactly the same argument as in the proof of Theorem~\ref{thm:KL} for KL basis starting from the equation \eqref{eq:9} (which is the Hecke counterpart of \eqref{eq:12i}).
\end{proof}

\begin{corollary}  \label{cor:iCBtensor}
  Let $\lambda,\lambda^1,\ldots,\lambda^k\in X^+$. Then $L(\lambda)$ and $L(\lambda^1)\otimes\cdots\otimes L(\lambda^k)$ admit  $\imath$-canonical bases.
\end{corollary}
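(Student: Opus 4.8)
The plan is to reduce the statement entirely to the theorem proved immediately above, which equips \emph{any} based $\U$-module $(M,B)$ with an $\imath$-canonical basis $B^\imath$. Thus all that remains is to check that $L(\la)$ and $L(\la^1)\otimes\cdots\otimes L(\la^k)$ are based $\U$-modules in the sense of \S\ref{subsec:based}, and then invoke that theorem with $M$ taken to be each of them in turn.

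First I would recall that $(L(\la),B(\la))$ is a based $\U$-module: the canonical basis $B(\la)$ is bar-invariant, is compatible with the weight decomposition of $L(\la)$, the $\A$-span $L(\la)_\A=\UA^- v^+_\la$ is a $\UA$-module, and the $\Z[q]$-lattice condition holds; this was observed right after the notion of based module was introduced. Next, for the tensor product, I would iterate Theorem~\ref{thm:Ltensor} (equivalently, cite Corollary~\ref{cor:CBtensor}): the module $L(\la^1)\otimes\cdots\otimes L(\la^k)$ carries the bar involution $\psi=\Theta\circ(\overline{\cdot}\otimes\cdots\otimes\overline{\cdot})$ built from the quasi R-matrix via Lemma~\ref{lem:barMN} (independent of the bracketing of the factors), together with the canonical basis $B(\la^1)\diamond\cdots\diamond B(\la^k)$, and hence is a based $\U$-module.

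Finally I would apply the preceding theorem to each of these based $\U$-modules. Since the parameters $(\bvs,\kappa)$ satisfy the standing conditions \eqref{kappa}--\eqref{vs=} and \eqref{kappa2}--\eqref{vs=bar}, the quasi K-matrix $\Upsilon$ exists by Theorem~\ref{thm:quasi2} and lies in $\widehat{\UA^-}$ by Theorem~\ref{thm:ZK}; consequently $\psi_\imath=\Upsilon\circ\psi$ is a well-defined bar involution on the module with the triangularity property $\psi_\imath(b)\in b+\sum_{b'<b}\A\,b'$ as in \eqref{eq:12i}. The standard argument (the one used for Theorem~\ref{thm:KL}, starting from \eqref{eq:9}) then produces the unique bar-invariant basis $B^\imath$ with $b^\imath\in b+\sum_{b'<b}q\Z[q]b'$, which is the desired $\imath$-canonical basis.

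I do not expect a genuine obstacle here; the only point requiring a word of care is that $L(\la)$ and the tensor products are \emph{finite-dimensional weight} $\U$-modules, so that the formal infinite sum $\Upsilon=\sum_\mu\Upsilon_\mu$ acts as a finite sum on each vector and $\psi_\imath$ is genuinely defined — this is automatic in the present setting. Beyond assembling these ingredients, there is no additional content.
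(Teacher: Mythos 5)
Your proposal is correct and takes essentially the same route as the paper: the corollary is an immediate application of the preceding theorem on $\imath$-canonical bases for based $\U$-modules, once one recalls that $(L(\la),B(\la))$ is a based $\U$-module and that, by Theorem~\ref{thm:Ltensor} and Corollary~\ref{cor:CBtensor}, the tensor product $L(\la^1)\otimes\cdots\otimes L(\la^k)$ with its canonical basis is one as well.
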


\subsection{$\imath$Canonical basis on modified $\imath$quantum groups}
 \label{subsec:iCB}

The quasi-split assumption on $\imath$quantum groups $\Ui$ in these notes greatly simplifies the constructions below, and we shall follow closely \cite{BW18a} instead of the more general constructions in \cite{BW18b}.

 Recall from \S3.1 X denotes the weight lattice. Define the $\imath$-weight lattice
\begin{equation}
X^\imath =X/\{ \la + \la^\tau~|~\la \in X\}, \qquad Y^\imath=\{\nu-\nu^\tau~|~\nu\in\mathbb{N}\I\},
\end{equation}
where we have denoted $\la^\tau =\tau(\la)$.
We shall denote the projection $X \rightarrow  X^\imath$, $\la \mapsto \overline \la$.

We define the modified version (i.e., idempotented version) $\dot\U^\imath $ of the $\imath$quantum groups similar to \cite[IV]{Lus93}; cf. \cite{BW18b, BW18c, Wat21}.

We define an $X_\imath$-grading on $\Ui$ by assigning $B_i$,  $k_i$ degree $-\overline{\alpha_i}$, $0$, respectively; cf. Definition~\ref{def:Ui}. We denote by $\U^\imath(\zeta)$ the homogeneous subspace of $\Ui$ of degree $\zeta \in X_\imath$. Note that $\U^\imath(\zeta)$ is non-trivial if and only if $\zeta \in \overline{\Z\I} \subset X_\imath$.

For any $\lambda', \lambda'' \in X_\imath$ with $\zeta = \lambda '' -\lambda'$, we define
\[
_{\lambda'}{\U}^\imath_{\lambda''} = \Ui(\zeta) \Big/ \Big( \sum_{\mu \in Y^\imath} (K_\mu - q^{( \mu, \lambda' )}) \Ui(\zeta) +  \sum_{\mu \in Y^\imath}  \Ui(\zeta) (K_\mu - q^{( \mu, \lambda'' )}) \Big).
\]
We denote by $\pi_{\lambda', \lambda''}: \Ui \rightarrow {}_{\lambda'}{\U}^\imath_{\lambda''}$ the quotient map, where $\pi_{\lambda', \lambda''}(x) =0$ if $x \not \in \Ui(\lambda'' - \lambda')$. We write ${\bf 1}_{\lambda'} = \pi_{\lambda', \lambda'}(1)$ for the orthogonal idempotents.

Following \cite[IV]{Lus93}, we then define an associative $\Q(q)$-algebra structure (without unit) on
\[
	\dot\U^\imath = \bigoplus_{\lambda', \lambda'' \in X_\imath} {}_{\lambda'}{\U}^\imath_{\lambda''}.
\]

For $\la \in X^+$, recall ${}^\omega L(\la)$ denotes the lowest weight $\U$-module of lowest weight $-\la$ with lowest weight vector $v^-_{-\la}$. One observes that ${}^\omega L(\la)$ is a cyclic module over $\Ui$ generated by $v^-_{-\la}$.

Given $\zeta \in X^\imath$, note that $\overline{\nu +\nu^\tau} =0 \in X^\imath$ for $\nu \in X$, so we can choose and fix $\la  \in X^+$ such that $-\overline \la =\zeta$. For each $\nu \in X^+$, ${}^\omega L(\nu +\nu^\tau +\la)$ admits $\imath$-canonical basis by Corollary~\ref{cor:iCBtensor}. Sending $v^-_{-\la-\nu-\nu^\tau}$ to $v^-_{-\la}$ (both of $\imath$-weight $\zeta$) defines a surjective $\Ui$-module homomorphism (see \cite[Proposition~ 4.4]{BW18a})
\[
\pi_{\la, \nu}: {}^\omega L(\nu +\nu^\tau +\la) \longrightarrow {}^\omega L(\la).
\]
More explicitly, the map $\pi_{\la, \nu}$ is a composition of several $\U$-module maps from \cite[\S25.1.1]{Lus93} and an $\Ui$-module isomorphism from the K-matrix $\mathcal T$ \eqref{eq:K}:
\begin{align} \label{xy:pi}
  \xymatrix{
  {}^\omega L(\nu +\nu^\tau +\la) \ar[d]_{\pi_{\la, \nu}} \ar[r]
                & {}^\omega L(\nu) \otimes {}^\omega L(\nu^\tau) \otimes {}^\omega L(\la) \ar[d]^{\mathcal T \otimes 1 \otimes 1}  \\
  {}^\omega L(\la)
                &  L(\nu^\tau) \otimes  {}^\omega L(\nu^\tau) \otimes {}^\omega L(\la)  \ar[l]_{\delta_{\nu^\tau} \otimes 1\qquad\quad}
                }
\end{align}

A basic property  (cf. \cite[Proposition 6.16]{BW18b}) is that $\pi_{\la, \nu}$ is {\em asymptotically compatible} with $\imath$-canonical bases (while being weaker than a homomorphism of based modules, it suffices for the purpose). Consequently, the projective system of $\Ui$-modules $\big\{ {}^\omega L(\la+\nu+\nu^\tau) \big\}_{\nu \in X^+}$ with asymptotically compatible $\imath$-canonical bases leads to an $\imath$-canonical basis on its inverse limit, and the inverse limit can be naturally identified with $\dot \U^\imath {\bf 1}_\zeta$.
In this way, we obtain an $\imath$-canonical basis on $\dot\U^{\imath}$. We refer to \cite[\S6]{BW18b} for details.

The $\imath$-canonical basis for $\Ui_q(\mf{sl}_2)$, known as $\imath$divided powers, admits explicit closed formulas \cite{BeW18}.

\section{$\imath$Schur duality}
  \label{sec:iSchur}

In this section, we shall formulate the $\imath$Schur duality between an $\imath$quantum group and a Hecke algebra of type B. The type B/D Kazhdan-Lusztig basis and $\imath$-canonical basis (with suitable parameters) on a tensor space are shown to coincide.

\subsection{Hecke algebras $\mathbb H$ with unequal parameters}

Let $$W=W_{B_m}=\langle s_i~|~0\leq i \le m-1 \rangle$$ be the Weyl group of type $B_m$, whose Dynkin/Coxter diagram is
\begin{figure}[ht!]
\caption{Dynkin/Coxter diagram of type $B_m$}
\begin{tikzpicture}[start chain]
\dnode{$0$}
\dnodenj{$1$}
\dydots
\dnode{$m-2$}
\dnode{$m-1$}
\path (chain-1) -- node[anchor=mid] {\(=\joinrel=\joinrel=\)} (chain-2);
\end{tikzpicture}
\end{figure}

The symmetric group $\mathfrak{S}_m$ is the subgroup of $W_{B_m}$ generated by $s_1,\ldots,s_{m-1}$.

The type B Hecke algebra allows 2 generic parameters $q, p$. For the sake of simplicity (keeping all constructions within the field $\Q(q)$), we shall work with a single parameter by assuming
\[
p\in q^\Z
\]
(or more generally, specifying any integral weight function in the sense of \cite{Lus03}). By definition, the Hecke algebra $\mathbb{H}=\mathbb{H}_{p,q}(B_m)$ (of type $B_m$) is a $\mathbb{Q}(q)$-algebra generated by $H_0,H_1,\ldots,H_{m-1}$ subject to the following relations:
\begin{itemize}
  \item quadratic relation:
  \begin{align*}
    (H_0+p)(H_0-p^{-1})=0,\quad (H_i+q)(H_i-q^{-1})=0,\ \mbox{for $1\leq i<m$};
  \end{align*}
  \item braid relation:
 $$\begin{array}{ll}
    H_0H_1H_0H_1=H_1H_0H_1H_0, &\\
    H_iH_{i+1}H_i=H_{i+1}H_iH_{i+1}, & \mbox{for $\leq i\leq d-2$},\\
    H_iH_j =H_jH_i, &\mbox{for $|i-j|>1$}.
  \end{array}$$
\end{itemize}
The $\Q(q)$-algebra $\mathbb H$ admits a bar involution $\overline{\phantom{x}}$: $q\mapsto q^{-1}$, $H_i \mapsto H_i^{-1}$, for each $i$. The type $A$ Hecke algebra $\H_q(\mathfrak{S}_m)$ is naturally a subalgebra of $\mathbb{H}$ generated by $H_1,\ldots,H_{m-1}$.

\begin{remark}
 \label{rem:Bpq}
In the same way as KL basis for equal parameter Hecke algebras was defined, we can define a canonical basis for $\mathbb{H}$ with unequal parameters; cf. \cite{Lus03}.
\end{remark}

\begin{remark}
 \label{rem:BD}
There are two important special cases of $\mathbb{H}$:
\begin{enumerate}
\item[(1)]
For $p=q$, $\mathbb H$ becomes the (equal parameter) Hecke algebra of type $B_m$, i.e.,  $\mathbb{H}_{q,q}(B_m) =\H_q(B_m)$.

\item[(2)]
For $p=1$, $H_0 =s_0 \in \mathbb{H}_{1,q}(B_m)$ satisfies $s_0^2=1$.
The type D Hecke algebra  $\H_q(D_m) =\langle H_0^{\mf d}, H_1, \ldots,H_{m-1} \rangle$ is naturally a subalgebra of $\mathbb{H}_{1,q}(B_m)$ via the embedding $H_0^{\mf d} \mapsto s_0 H_1 s_0, H_i \mapsto H_i$, for $1\le i <m$. A fundamental fact is that the type D KL basis for  $\H_q(D_m)$ is part of the canonical basis for $\mathbb{H}_{1,q}(B_m)$; this follows from the uniqueness of KL/canonical basis.
\end{enumerate}
\end{remark}

\subsection{Action of Hecke algebra $\mathbb{H}$ on $\mathbb{V}^{\otimes m}$}

As we shall consider the actions of the type B Weyl group, Hecke algebra and related quantum algebras, it is natural and convenient to switch the indexing set for the basis $\{v_i\}$ for the $N$-dimensional $\Q(q)$-space $\V$ to the integral interval $[{\scriptstyle \frac{1-N}{2}..\frac{N-1}{2}}]$ in the sense of \eqref{eq:interval}.
For example, $\mathbb{V}$ is spanned by $\{v_{-\frac{1}{2}},v_{\frac{1}{2}} \}$ if $N=2$, and by $\{v_{-1},v_{0},v_1\}$ if $N=3$.

As in \S\ref{sub:Hecke}, the $m$-fold tensor space $\mathbb{V}^{\otimes m}$ has a basis
\[
\left\{M_f=v_{f(1)}\otimes \cdots \otimes v_{f(m)}~\middle|~f \in \big[{\textstyle \frac{1-N}{2}..\frac{N-1}{2} } \big]^m \right\}.
\]
Set $fs_0=(-f(1),f(2),\ldots,f(m))$. We extend the action on  $\mathbb{V}^{\otimes m}$ of the type A Hecke algebra in Lemma~\ref{lem:HtypeA} (i.e., $H_i$ with $i\neq 0$) to type B as follows:
\begin{equation} \label{eq:HBaction}
  M_fH_i=\left\{
  \begin{array}{ll}
    M_{fs_i}, & \mbox{if $i\neq0$ and $f(i)<f(i+1)$};\\
    M_{fs_i}+(q^{-1}-q)M_f, & \mbox{if $i\neq0$ and $f(i)>f(i+1)$};\\
    q^{-1}M_f, & \mbox{if $i\neq0$ and $f(i)=f(i+1)$};\\
    M_{fs_0}, & \mbox{if $i=0$ and $f(1)>0$};\\
     M_{fs_0}+(p^{-1}-p)M_f, & \mbox{if $i=0$ and $f(1)<0$};\\
      p^{-1}M_{f}, & \mbox{if $i=0$ and $f(1)=0$}.
  \end{array}
  \right.
\end{equation}

The following lemma, extending the equal parameter case \cite{BW18a},  follows by a direct computation for $m=2$.

\begin{lemma} \cite{BWW18}
  The formula \eqref{eq:HBaction} defines an action of $\mathbb{H}$ on $\mathbb{V}^{\otimes m}$.
\end{lemma}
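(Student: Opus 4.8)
The plan is to check that the operators $H_0, H_1, \ldots, H_{m-1}$ defined on $\mathbb{V}^{\otimes m}$ by \eqref{eq:HBaction} satisfy the defining relations of $\mathbb{H}_{p,q}(B_m)$. All relations involving only the generators $H_1, \ldots, H_{m-1}$ — the quadratic relations $(H_i+q)(H_i-q^{-1})=0$ for $1\le i<m$, the type A braid relations $H_iH_{i+1}H_i=H_{i+1}H_iH_{i+1}$, and the commutations $H_iH_j=H_jH_i$ for $|i-j|>1$ with $i,j\ge 1$ — already hold by Lemma~\ref{lem:HtypeA}, since \eqref{eq:HBaction} restricts to the type A action on the subalgebra $\H_q(\mathfrak{S}_m)$. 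Hence only three families of relations remain to be verified: (i) the quadratic relation $(H_0+p)(H_0-p^{-1})=0$; (ii) the commutation $H_0H_j=H_jH_0$ for $2\le j\le m-1$; and (iii) the length-four braid relation $H_0H_1H_0H_1=H_1H_0H_1H_0$.

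Relations (i) and (ii) are formal. For (ii), note that $H_0$ acts only on the first tensor factor, i.e. $H_0=h_0\otimes\mathrm{id}^{\otimes(m-1)}$ for a suitable $h_0\in\mathrm{End}(\mathbb{V})$, while $H_j$ acts only on the $j$th and $(j+1)$st factors; since slot $1$ is disjoint from slots $j,j+1$ when $j\ge 2$, the two operators act on mutually commuting tensor positions and therefore commute. For (i), again because $H_0=h_0\otimes\mathrm{id}^{\otimes(m-1)}$, it suffices to check $(h_0+p)(h_0-p^{-1})=0$ on $\mathbb{V}$ itself, equivalently $h_0^2=(p^{-1}-p)h_0+\mathrm{id}$ on each $v_a$; this is a one-line case check according to whether $a$ is positive, negative, or zero, exactly parallel to the verification of the type A quadratic relation.

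The content of the lemma is relation (iii). Since $H_0$ and $H_1$ both act only on the first two tensor slots, each side of $H_0H_1H_0H_1=H_1H_0H_1H_0$ has the form (operator on slots $1,2$) $\otimes\,\mathrm{id}^{\otimes(m-2)}$, so it suffices to prove the identity for $m=2$, on $\mathbb{V}\otimes\mathbb{V}$. I would then evaluate both words on each standard basis vector $M_f=v_a\otimes v_b$ of $\mathbb{V}\otimes\mathbb{V}$, organizing the computation by the sign pattern of $(a,b)$ (each of $a,b$ positive, negative, or zero) and by whether $a=b$ or $a=-b$; since the $W_{B_2}$-orbit of any such $M_f$ involves at most eight basis vectors, each case is a bounded, finite trace through \eqref{eq:HBaction}, with the parameter $p$ appearing exactly at the steps where a sign is flipped in the first slot and $q$ at the type A steps. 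The main obstacle is simply the bookkeeping in this case analysis — confirming that the two orderings of the elementary moves yield the same linear combination of standard basis vectors in every case — but it is entirely mechanical, and for $p=q$ it reduces to the computation already recorded in \cite{BW18a}.
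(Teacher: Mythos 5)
Your proof is correct and takes the same approach as the paper, which simply remarks that the lemma ``follows by a direct computation for $m=2$.'' You have filled in the routine structure, correctly isolating the length-four braid relation $H_0H_1H_0H_1=H_1H_0H_1H_0$ as the only relation needing a genuine $m=2$ case check, after dispatching the $H_0$-quadratic relation and the commutations $H_0H_j=H_jH_0$ ($j\ge 2$) via the observation that $H_0=h_0\otimes\mathrm{id}^{\otimes(m-1)}$ acts only on the first tensor slot.
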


\subsection{$\imath$Quantum groups of type AIII}
  \label{sec:AIII}

The quantum symmetric pair $(\U,\Ui)$ used throughout this paper is quasi-split and type AIII, that is, $(\U,\Ui)\xrightarrow{q\mapsto1} \big(U(\mathfrak{sl}_N),U(\mathfrak{sl}_N^\theta) \big)$, where $\theta$ rotates $N\times N$ matrices by $180^\circ$, and $\tau$ is the nontrivial diagram involution. Note that $\mathfrak{sl}_N^\theta \cong \mathfrak{sl}_{\frac{N}2} \oplus \mathfrak{gl}_{\frac{N}2}$ for $N$ even, and $\mathfrak{sl}_N^\theta \cong \mathfrak{sl}_{\frac{N+1}2} \oplus \mathfrak{gl}_{\frac{N-1}2}$ for $N$ odd.

\subsubsection{$\Ui_q(\mathfrak{sl}_N)$, for $N$ even}

Now we consider the case for $N$ even, i.e.,
\[
N=2r+2, \qquad \text{ for } r\ge 0.
\]
Set \begin{align*}
&\I =\I_{2r+1} :=\{ -r, \ldots, -1, 0, 1, \ldots, r\},
\\
&\Ii :=\I \cap \Z_{>0} =\{1, \ldots, r\}.
\end{align*}

\begin{figure}[ht!]
\caption{Satake diagram of type AIII: $A_{2r+1}$ with involution $\tau \neq \mathrm{id}$}
   \label{figure:ii}
\begin{tikzpicture}
\matrix [column sep={0.6cm}, row sep={0.5 cm,between origins}, nodes={draw = none,  inner sep = 3pt}]
{
	&\node(U1) [draw, circle, fill=white, scale=0.6, label = $\scriptstyle -r$] {};
	&\node(U2) {$\cdots$};
	&\node(U3)[draw, circle, fill=white, scale=0.6, label =$\scriptstyle -1$] {};
\\
	&&&&
	\node(R)[draw, circle, fill=white, scale=0.6, label =$\scriptstyle 0$] {};
\\
	&\node(L1) [draw, circle, fill=white, scale=0.6, label =below:$\scriptstyle r$] {};
	&\node(L2) {$\cdots$};
	&\node(L3)[draw, circle, fill=white, scale=0.6, label =below:$\scriptstyle 1$] {};
\\
};
\begin{scope}
\draw (U1) -- node  {} (U2);
\draw (U2) -- node  {} (U3);
\draw (U3) -- node  {} (R);
\draw (L1) -- node  {} (L2);
\draw (L2) -- node  {} (L3);
\draw (L3) -- node  {} (R);
\draw (R) edge [color = blue,loop right, looseness=40, <->, shorten >=4pt, shorten <=4pt] node {} (R);
\draw (L1) edge [color = blue,<->, bend right, shorten >=4pt, shorten <=4pt] node  {} (U1);
\draw (L3) edge [color = blue,<->, bend left, shorten >=4pt, shorten <=4pt] node  {} (U3);
\end{scope}
\end{tikzpicture}
\end{figure}

The generators (or the simple roots) for $\U=\U_q(\mathfrak{sl}_{2r+2})$ are indexed by $\I$.
Define $\Ui = \Ui_q(\mathfrak{sl}_{2r+2})$  to be the $\mathbb{Q}(q)$-subalgebra of $\U$ generated by
\begin{align}
 k_i&=K_iK_{-i}^{-1} \; (i\in\Ii);
 \notag \\
 B_i&=E_i+F_{-i}K_i^{-1} \; (0\neq i\in\I),\quad B_0=E_0+qF_0K_0^{-1}+ \frac{p-p^{-1}}{q-q^{-1}} K_0^{-1}.
 \label{eq:Bi}
\end{align}
One checks that the $\mathbb{Q}(q)$-algebra $\Ui$ is a right coideal subalgebra of $\U$, i.e.,
$\Delta (\Ui) \subset \Ui\otimes \U.$

The pair $(\mathbf{U},\Ui)$ forms a QSP of type $AIII$, cf. \cite{Let99}. Note that the embedding $\Ui \rightarrow \U$ depends on the second parameter $p$ of type B Hecke algebra. However, as an abstract algebra, $\Ui$ admits a presentation below is independent of $p$.

\begin{proposition}   \cite{Let99}
 \label{prop:SP1}
The $\mathbb{Q}(q)$-algebra $\Ui$ admits a presentation as follows:
  \begin{itemize}
    \item generators: $B_i \, (i\in\I)$,\quad $k_j^{\pm1} \, (j\in\Ii)$;
    \item relations: $\left\{\begin{array}{l}
      k_ik_j=k_jk_i, \\
      k_iB_jk_i^{-1}=q^{c_{ij} -c_{-i,j}}B_j, \\
      {[B_i,B_{-i}]}=\frac{k_i-k_i^{-1}}{q-q^{-1}},  \qquad\qquad\mbox{ for $i \in \Ii$},\\
      {[B_i,B_j]}=0, \qquad \qquad\qquad\mbox{ unless $j=-i$ or $|i-j|=1$},\\
      B_i^2B_j-[2]B_iB_jB_i+B_jB_i^2=0, \quad\mbox{for $|i-j|=1$ and $i\neq0$},\\
      B_0^2B_j-[2]B_0B_jB_0+B_jB_0^2=B_j, \quad\mbox{for $j=\pm1$}.
    \end{array}\right.$
  \end{itemize}
\end{proposition}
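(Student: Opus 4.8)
The plan is to prove Proposition~\ref{prop:SP1} by exhibiting both a surjection and an injection between the abstract algebra $\Ui_{\mathrm{abs}}$ defined by the stated generators and relations and the concrete subalgebra $\Ui \subset \U$ generated by the elements in \eqref{eq:Bi}. First I would verify that the concrete generators $B_i, k_j^{\pm 1}$ of $\Ui$ indeed satisfy all the listed relations; this gives a surjective $\Q(q)$-algebra homomorphism $\phi \colon \Ui_{\mathrm{abs}} \twoheadrightarrow \Ui$. The Cartan-type relations $k_i k_j = k_j k_i$ and $k_i B_j k_i^{-1} = q^{c_{ij}-c_{-i,j}} B_j$ are immediate from the defining relations of $\U$ in Definition~2.1 (since $K_i K_{-i}^{-1}$ acts on $E_j, F_{-j}$ by the indicated powers of $q$). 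The relation $[B_i, B_{-i}] = (k_i - k_i^{-1})/(q-q^{-1})$ for $i \in \Ii$ and the vanishing $[B_i, B_j] = 0$ for $j \ne -i$, $|i-j| \ne 1$ follow from expanding $B_i = E_i + F_{-i} K_i^{-1}$, using $[E_i, F_j] = \delta_{ij}(K_i - K_i^{-1})/(q_i - q_i^{-1})$, and tracking the $K$-conjugations; note here $q_i = q$ since we are in type A. The two Serre-type relations (the homogeneous one for $i \ne 0$ and the inhomogeneous $\imath$-Serre relation for $i = 0$, with the correction term $B_j$ on the right) are the genuinely computational part: one expands $B_i^2 B_j - [2] B_i B_j B_i + B_j B_i^2$ inside $\U$ using the $q$-Serre relations \eqref{Serre} for $\U$ and collects terms; the extra term $B_j$ when $i=0$ arises from the commutator of $E_0$ with $F_0$ combined with the factor $q$ in $B_0 = E_0 + qF_0 K_0^{-1} + \tfrac{p-p^{-1}}{q-q^{-1}} K_0^{-1}$ (and one checks the $\kappa_0 = \tfrac{p-p^{-1}}{q-q^{-1}}$ term contributes nothing extra to this particular relation because of the condition $c_{k0} \in 2\Z$).

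Next, for the reverse direction, I would show $\phi$ is injective by a dimension/PBW argument. One constructs a spanning set for $\Ui_{\mathrm{abs}}$ of the form: ordered monomials in the $B_i$ for $i \ne 0$ (following a fixed total order on the roots), times ordered monomials in $B_0$, times Laurent monomials in the $k_j$, $j \in \Ii$ — using the listed relations to straighten any word into this form. Then one shows the images of these monomials under $\phi$ are linearly independent in $\U$; this can be done by using the triangular decomposition $\U = \U^-\U^0\U^+$ and the fact that the ``leading term'' of such a monomial (in a suitable filtration where $B_i$ has leading term $E_i$, or alternatively $F_{-i}K_i^{-1}$) picks out a PBW-type monomial in $\U^+$ (resp. $\U^-\U^0$), which are linearly independent. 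Combined with the surjectivity, this forces $\phi$ to be an isomorphism. Alternatively — and this is the route I would actually prefer for an exposition — I would simply cite Letzter's general Serre presentation \cite{Let99, Let02} (or \cite{Ko14}), noting that the quasi-split type AIII case specializes to exactly the relations above, and only verify the specialization explicitly for small rank to orient the reader; the excerpt already flags in Remark~\ref{rem:iSerre} that a clean quasi-split presentation is available in \cite[Theorem~3.1]{CLW18}.

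The main obstacle is the PBW/linear independence half: proving that $\Ui_{\mathrm{abs}}$ is not strictly larger than $\Ui$, i.e., that the listed relations suffice to straighten every word. In full generality this is precisely the content of Letzter's theorem and is not short; the ``expected size'' of $\Ui$ is guaranteed abstractly by the conditions \eqref{kappa}--\eqref{vs=} on parameters, as remarked after Definition~\ref{def:Ui}. So in practice the honest thing to do is: (i) check the relations hold in $\U$ (a finite, if tedious, computation, genuinely needed only for the $\imath$-Serre relation at the node $0$), and (ii) invoke \cite{Let99, Ko14} for the fact that these relations are complete. I would present (i) with enough detail that the reader sees where the inhomogeneous term $B_j$ comes from, and defer (ii) to the references, consistent with the stated philosophy of these notes.
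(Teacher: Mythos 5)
The paper offers no proof of this proposition: it simply cites Letzter \cite{Let99}, and the surrounding text (Remark~\ref{rem:iSerre}) points the reader to \cite{Let99, Let02, Ko14, CLW18} for the Serre presentation. Your proposal ends in the same place — verify the relations hold in $\U$ (surjectivity of $\phi$) and cite Letzter/Kolb for completeness (injectivity of $\phi$) — so you have correctly reproduced the paper's strategy, and your outline of the injectivity half (PBW spanning set for $\Ui_{\mathrm{abs}}$, straightening, leading-term argument via the triangular decomposition of $\U$) is a fair sketch of what Letzter's proof actually does.

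One small inaccuracy worth flagging: you attribute the irrelevance of the $\kappa_0 = \frac{p-p^{-1}}{q-q^{-1}}$ term in $B_0$ to the condition $c_{k0}\in 2\Z$. That condition (\ref{kappa}) only governs \emph{when} a nonzero $\kappa_0$ is permitted; it is not by itself the reason the presentation is insensitive to $\kappa_0$. The correct reason, noted in the text following Definition~\ref{def:Ui}, is that the algebras $\Ui_{\bvs,\kappa}$ for varying $\kappa$ are canonically isomorphic (via $B_0 \mapsto B_0 - \kappa_0 K_0^{-1}$ conjugated into the generators, cf.\ \cite{Let02}), so the abstract presentation cannot depend on $\kappa_0$. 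Concretely in your computation, the cross terms involving $\kappa_0 K_0^{-1}$ do not all vanish termwise; rather they cancel as a whole, and the cleanest way to see this is to perform the change of variables once and for all. Otherwise your plan is sound and consistent with the level of detail the notes intend.
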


\subsubsection{$\Ui_q(\mathfrak{sl}_N)$, for $N$ odd}
 \label{subsec:odd}
Now we consider the case for $N$ odd, i.e.,
\[
N=2r+1, \qquad \text{ for } r\ge 1.
\]
Introduce the indexing sets
\begin{align*}
&\I =\I_{2r}:= \left\{ {\textstyle \frac{1-2r}{2}, \ldots,  -\frac{1}{2},   \frac{1}{2},\frac{3}{2},\ldots,\frac{2r-1}{2} } \right \},
\\
&\Ij :=\I \cap \N= \left\{ {\textstyle \frac{1}{2},\frac{3}{2},\ldots, \frac{2r-1}{2} } \right \}.
\end{align*}
\begin{figure}[ht!]
\caption{Satake diagram of type AIII: $A_{2r}$ with involution $\tau \neq \mathrm{id}$}
   \label{figure:ij}
\begin{tikzpicture}
\matrix [column sep={0.6cm}, row sep={0.5 cm,between origins}, nodes={draw = none,  inner sep = 3pt}]
{
	&\node(U1) [draw, circle, fill=white, scale=0.6, label = $\scriptstyle \frac{1}{2}-r$] {};
	&\node(U3) {$\cdots$};
	&\node(U4)[draw, circle, fill=white, scale=0.6, label =$\scriptstyle -\frac{3}{2}$] {};
	&\node(U5)[draw, circle, fill=white, scale=0.6, label =$\scriptstyle -\frac{1}{2}$] {};
\\
	&&&&
\\
	&\node(L1) [draw, circle, fill=white, scale=0.6, label =below:$\scriptstyle r-\frac{1}{2}$] {};
	&\node(L3) {$\cdots$};
	&\node(L4)[draw, circle, fill=white, scale=0.6, label =below:$\scriptstyle \frac{3}{2}$] {};
	&\node(L5)[draw, circle, fill=white, scale=0.6, label =below:$\scriptstyle \frac{1}{2}$] {};
\\
};
\begin{scope}
\draw (U1) -- node  {} (U3);
\draw (U3) -- node  {} (U4);
\draw (U4) -- node  {} (U5);
\draw (U5) -- node  {} (L5);
\draw (L1) -- node  {} (L3);
\draw (L3) -- node  {} (L4);
\draw (L4) -- node  {} (L5);
\draw (L1) edge [color = blue,<->, bend right, shorten >=4pt, shorten <=4pt] node  {} (U1);
\draw (L4) edge [color = blue,<->, bend left, shorten >=4pt, shorten <=4pt] node  {} (U4);
\draw (L5) edge [color = blue,<->, bend left, shorten >=4pt, shorten <=4pt] node  {} (U5);
\end{scope}
\end{tikzpicture}
\end{figure}

Recall $p \in q^\Z$.

Let $\Ui =\Ui_q(\mathfrak{sl}_{2r+1})$ be the $\mathbb{Q}(q)$-subalgebra of $\mathbf{U}=\U_q(\mathfrak{sl}_{2r+1})$ generated by
\begin{align}
 k_i =K_iK_{-i}^{-1}\, (i\in\Ij),  & \qquad
 B_i=E_i+F_{-i}K_i^{-1}\, (i\in\I \backslash \{{\textstyle \pm \frac12} \}),
  \notag \\
 B_{{\frac12}} =E_{{\frac12}} + p^{-1} F_{-{\frac12}} K_{{\frac12}}^{-1},
  & \qquad
 B_{- {\frac12}} =E_{- {\frac12}} + p K_{- {\frac12}}^{-1} F_{ {\frac12}}.
 \label{eq:Bi1}
 \end{align}
The pair $(\mathbf{U},\Ui)$ forms a QSP of type $AIII$, cf. \cite{Let99, BWW18}.

\begin{proposition}   \cite{Let99}
 \label{prop:SP2}
  The $\mathbb{Q}(q)$-algebra $\Ui$ admits a presentation as follows:
  \begin{itemize}
    \item generators: $B_i \, (i\in\I)$,\quad $k_j^{\pm1} \, (j\in\Ij)$;
    \item relations:
    \[
    \left\{\begin{array}{l}
      k_ik_j=k_jk_i, \\
      k_iB_jk_i^{-1}=q^{(\alpha_i-\alpha_{-i},\alpha_j)}B_j, \\
      {[B_i,B_{-i}]}=\frac{k_i-k_i^{-1}}{q-q^{-1}},   \quad\qquad\mbox{ for $i \in \Ij$},\\
      {[B_i,B_j]}=0, \qquad \qquad\qquad\mbox{unless $j=-i$ or $|i-j|=1$},\\
      B_i^2B_j-[2]B_iB_jB_i+B_jB_i^2=0, \quad\mbox{for $|i-j|=1$ and $\{i,j\}\neq\{\pm\frac{1}{2}\}$},\\
      B_{\frac{1}{2}}^2B_{-\frac{1}{2}}-[2]B_{\frac{1}{2}}B_{-\frac{1}{2}}B_{\frac{1}{2}}
      +B_{-\frac{1}{2}}B_{\frac{1}{2}}^2=-[2] B_{\frac{1}{2}} (pq k_{\frac{1}{2}}+ p^{-1}q^{-1}k^{-1}_{\frac{1}{2}}),\\
      B_{-\frac{1}{2}}^2B_{\frac{1}{2}}-[2]B_{-\frac{1}{2}}B_{\frac{1}{2}}B_{-\frac{1}{2}}
      +B_{\frac{1}{2}}B_{-\frac{1}{2}}^2=-[2] (pq k_{\frac{1}{2}}+ p^{-1}q^{-1}k^{-1}_{\frac{1}{2}}) B_{-\frac{1}{2}}.
    \end{array}\right.
    \]
  \end{itemize}
\end{proposition}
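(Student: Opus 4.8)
The presentation in Proposition~\ref{prop:SP2} is due to Letzter \cite{Let99} (see also \cite{Ko14}), and the plan is to sketch the two-step strategy that is standard for such Serre-type presentations of coideal subalgebras. Write $\widetilde{\Ui}$ for the $\Q(q)$-algebra defined abstractly by the generators $B_i\ (i\in\I)$, $k_j^{\pm1}\ (j\in\Ij)$ and the relations listed in the proposition. First I would check that the explicit elements of $\U=\U_q(\mathfrak{sl}_{2r+1})$ defined in \eqref{eq:Bi1} actually satisfy all of those relations; this is a direct computation inside $\U$ using only its defining relations (the comultiplication plays no role here, and is only needed separately for the coideal property). The relations $k_ik_j=k_jk_i$ and $k_iB_jk_i^{-1}=q^{(\alpha_i-\alpha_{-i},\alpha_j)}B_j$ are immediate from $K_iK_j=K_jK_i$ and $K_iE_jK_i^{-1}=q^{i\cdot j}E_j$; the relation $[B_i,B_{-i}]=(k_i-k_i^{-1})/(q-q^{-1})$ and the commutations $[B_i,B_j]=0$ for $|i-j|>1,\ j\neq-i$ follow by a short computation from $[E_i,F_i]=(K_i-K_i^{-1})/(q_i-q_i^{-1})$, with the only non-routine case being the nodes near the middle $\pm\tfrac12$; and the $\imath$-Serre relations (homogeneous for $|i-j|=1,\ \{i,j\}\neq\{\pm\tfrac12\}$, and inhomogeneous with correction term $-[2]B_{\pm\frac12}(pqk_{\frac12}+p^{-1}q^{-1}k_{\frac12}^{-1})$ otherwise) follow from the ordinary $q$-Serre relations for $\U$ together with a rank-$2$ calculation; this last calculation is exactly where the second parameter $p$ enters. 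Altogether this produces a surjective algebra homomorphism $\Phi:\widetilde{\Ui}\twoheadrightarrow\Ui$.

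The remaining, and substantive, task is to show $\Phi$ is injective, i.e.\ that the listed relations are a complete set. I would do this by producing an explicit spanning set of $\widetilde{\Ui}$ of the "expected size" and then matching it with a known basis of $\Ui$. Using the relations one straightens an arbitrary word in the generators: the relations $k_iB_jk_i^{-1}=q^{\bullet}B_j$ push all $k_j^{\pm1}$ to the right; $[B_i,B_j]=0$ for $|i-j|>1$ together with $[B_i,B_{-i}]=(k_i-k_i^{-1})/(q-q^{-1})$ lets one reorder commuting or "opposite" pairs modulo lower-degree terms; and the $\imath$-Serre relations eliminate, for $|i-j|=1$, the configurations $B_iB_jB_i$ modulo lower-degree terms, exactly as the $q$-Serre relations are used in straightening $\U^-$. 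Iterating, one concludes that $\widetilde{\Ui}$ is spanned by ordered monomials $\big(\overrightarrow{\prod}_{\beta} X_\beta^{a_\beta}\big)\, k_{j_1}^{b_1}\cdots k_{j_r}^{b_r}$, where $X_\beta$ are the "root vectors" built out of the $B_i$'s by the same recipe that produces a PBW basis for $\U^-$ (see Remark~\ref{rem:iSerre} and \cite{CLW18}), $a_\beta\in\N$, and $(b_1,\dots,b_r)\in\Z^r$; equivalently, a set in bijection with (a PBW basis of $\U^-$) $\times$ (Laurent monomials in the $k_j$).

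Finally I would show $\Phi$ carries this spanning set to a linearly independent subset of $\Ui\subset\U$, which forces $\Phi$ to be an isomorphism. There are two ways to close this. The first is to invoke Letzter's PBW theorem for quantum symmetric pairs \cite{Let99} (cf.\ also \cite{Let02, Ko14}), which already exhibits a basis of $\Ui$ of precisely the shape of the spanning set above; a weight-space-by-weight-space count, carried out over $\A$, then finishes. The second, more self-contained, route is a leading-term argument: equip $\U=\U^-\U^0\U^+$ with a suitable filtration in which the leading term of $B_i$ is $F_{-i}K_i^{-1}$ (resp.\ $F_{\mp\frac12}K_{\pm\frac12}^{-1}$ near the middle) and $k_j$ has degree zero, verify that the filtration restricts to $\Ui$, and identify the associated graded $\operatorname{gr}\Ui$ with $\U^-\otimes\Q(q)[k_j^{\pm1}]$; linear independence of the PBW monomials in $\operatorname{gr}\Ui$ is then inherited from $\U^-$ and lifts back. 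I expect this last paragraph to be the main obstacle: one must confirm that the straightening above really terminates using only the listed relations (i.e.\ that no higher-rank $\imath$-Serre-type identities are lurking, the type-AIII analogue of the classical fact that Serre relations suffice in finite type), and simultaneously that the leading terms of the $B_i$ behave as claimed so that $\operatorname{gr}\Ui$ is no larger than $\U^-\otimes\Q(q)[k_j^{\pm1}]$; the delicate part of both checks is concentrated at the two central nodes $\pm\tfrac12$, where the inhomogeneous $\imath$-Serre relations and the parameter $p$ interact.
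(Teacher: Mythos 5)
The paper itself offers no proof of Proposition~\ref{prop:SP2} --- the statement is cited directly from Letzter \cite{Let99}, with Remark~\ref{rem:iSerre} additionally pointing to \cite{Let02, Ko14, CLW18} for Serre-type presentations of $\imath$quantum groups. So there is nothing in these notes to compare your argument against; I can only assess your sketch against the standard references. Your two-step outline is the correct one and matches Letzter's and Kolb's strategy: (i) check that the elements in \eqref{eq:Bi1} satisfy the listed relations inside $\U$, giving a surjection from the abstract algebra; (ii) show injectivity by a size/PBW comparison (Letzter via a specialization argument at $q\to1$ plus a weight-space count, Kolb via a filtration and passage to the associated graded). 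You also correctly identify the substantive step as (ii) and the delicate locus as the two central nodes $\pm\tfrac12$, where the inhomogeneous $\imath$-Serre relations and the parameter $p$ live.

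One caution on the filtration route: your claim that the associated graded of $\Ui$ is literally $\U^-\otimes\Q(q)[k_j^{\pm1}]$ needs a small adjustment, because the leading term of $B_i$ is $\vs_i F_{-i}K_i^{-1}$ rather than $F_{-i}$, so products of leading terms accumulate torus factors $K_{i_1}^{-1}\cdots K_{i_k}^{-1}$ that do not generally lie in the group generated by the $k_j=K_jK_{-j}^{-1}$. What one actually gets is a $\Q(q)[k_j^{\pm1}]$-module that is free with basis in weight-preserving bijection with a PBW basis of $\U^-$; the leading $K$-factors are determined by the $F$-part and absorbed into the identification of weight spaces. This is still enough for the dimension count, but it is exactly the kind of bookkeeping that makes the injectivity step nontrivial, and it is why the literature closes the argument either by Letzter's specialization-to-$U(\mathfrak{g}^\theta)$ comparison or by Kolb's more careful filtration analysis rather than a naive $\operatorname{gr}$ isomorphism.
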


\begin{remark}
 \label{rem:relations}
All defining relations in a Serre presentation for a general $\imath$quantum group $\Ui$ are local and arise from rank 1 or rank 2 relations. In particular, for (quasi-)split ADE type, a presentation for $\Ui$ can be formulated using the relevant (rank 1 or 2) relations listed in Propositions~\ref{prop:SP1}--\ref{prop:SP2}.
\end{remark}

\subsection{$\imath$Schur duality}

Now we consider $\Ui =\Ui_q(\mathfrak{sl}_N)$ for all $N\ge 2$ together. The action of $\U$ on $\V^{\otimes m}$ restricts to the subalgebra $\Ui$. On the other hand, The action of $\H_q(B_m)$ on $\V^{\otimes m}$ is given by the formula \eqref{eq:HBaction} (where the last case when $f(1)=0$ does not occur for $N$ odd).

The following is a type B generalization of the Jimbo-Schur duality, cf. Theorem~\ref{thm:Jimbo}. Recall $p\in q^\Z$, including the most fundamental cases when $p=q$ or $p=1$.

\begin{theorem}[$\imath$-Schur duality  \cite{BW18a, Bao17, BWW18}]
 \label{thm:iSchur}
 The actions of $\Ui$ and $\mathbb{H}_{p,q}(B_m)$ on $\V^{\otimes m}$
commute and satisfy the double centralizer property.
\end{theorem}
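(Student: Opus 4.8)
The plan is to follow the template of the $q$-Schur duality (Theorem~\ref{thm:Jimbo}): first prove that the two actions commute, then deduce the double centralizer property from Lemma~\ref{lem:1} together with the semisimplicity of $\mathbb{H}=\mathbb{H}_{p,q}(B_m)$.

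\emph{Commutativity.} The type A generators $H_1,\ldots,H_{m-1}$ of $\mathbb{H}$ commute with the whole $\U$-action on $\V^{\otimes m}$ by $q$-Schur duality (Theorem~\ref{thm:Jimbo}), hence a fortiori with $\Ui\subseteq\U$; so only the extra generator $H_0$ needs to be treated. Since $H_0$ acts on $\V^{\otimes m}$ only through the first tensor factor, I would write $\V^{\otimes m}=\V\otimes\V^{\otimes(m-1)}$ and use the right coideal property $\Delta(\Ui)\subseteq\Ui\otimes\U$: for $u\in\Ui$ one has $\Delta(u)=\sum u_{(1)}\otimes u_{(2)}$ with $u_{(1)}\in\Ui$, so the action of $u$ on $\V^{\otimes m}$ has its first leg in $\Ui$ while $H_0$ is trivial on the remaining $m-1$ slots; hence commutativity of $\Ui$ with $H_0$ on $\V^{\otimes m}$ reduces to the case $m=1$, i.e.\ to commutativity of $\Ui$ with $H_0$ on $\V$. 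On $\V$, $H_0$ is --- up to the scalar corrections on $v_0$ (for $N$ odd) or on $\{v_{-1/2},v_{1/2}\}$ (for $N$ even) forced by its quadratic relation and by $p$ --- the flip $v_c\leftrightarrow v_{-c}$; and since $\tau$ is the diagram involution $i\mapsto-i$, each generator $B_i=E_i+F_{-i}K_i^{-1}$, together with the modified $B_0$ from \eqref{eq:Bi} and $B_{\pm 1/2}$ from \eqref{eq:Bi1}, is designed precisely to intertwine this flip. Checking $v_c B_i H_0=v_c H_0 B_i$ and $v_c k_j H_0=v_c H_0 k_j$ on the basis $\{v_c\}$ is then a finite explicit computation, in which the coefficients involving $p$ are exactly those needed to match the $p$-dependence of $H_0$ on the distinguished basis vectors.

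\emph{Reduction of the double centralizer property.} Once commutativity holds we have $\Phi(\Ui)\subseteq\mathrm{End}_{\mathbb{H}}(\V^{\otimes m})$ and $\Psi(\mathbb{H})\subseteq\mathrm{End}_{\Ui}(\V^{\otimes m})^{\mathrm{op}}$. Because $q$ is transcendental and $p\in q^{\Z}$, the genericity conditions for type B Hecke algebras hold, so $\mathbb{H}$ is split semisimple over $\Q(q)$. I would then invoke Lemma~\ref{lem:1} with $W=\V^{\otimes m}$, $B=\Psi(\mathbb{H})$ and $A=\mathrm{End}_{\mathbb{H}}(\V^{\otimes m})$, which gives at once that $A$ is semisimple, that $\mathrm{End}_A(\V^{\otimes m})=\Psi(\mathbb{H})$, and that $\V^{\otimes m}\cong\bigoplus_\lambda U_\lambda\otimes S^\lambda$ with $\{S^\lambda\}$ the irreducible $\mathbb{H}$-modules, $U_\lambda=\mathrm{Hom}_{\mathbb{H}}(S^\lambda,\V^{\otimes m})$, and $A\cong\bigoplus_\lambda\mathrm{End}(U_\lambda)$. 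Thus the whole theorem will follow once we show $\Phi(\Ui)=A$: indeed, Lemma~\ref{lem:1}(2) then yields $\mathrm{End}_{\Ui}(\V^{\otimes m})^{\mathrm{op}}=\mathrm{End}_A(\V^{\otimes m})=\Psi(\mathbb{H})$. Since $\Phi(\Ui)$ preserves each $U_\lambda$ and $\Phi(\Ui)\subseteq A$, it is enough to prove that every nonzero $U_\lambda$ is an \emph{irreducible} $\Ui$-module and that $U_\lambda\not\cong U_\mu$ for $\lambda\neq\mu$.

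\emph{The main obstacle.} Establishing the irreducibility and pairwise non-isomorphism of the $\Ui$-multiplicity spaces $U_\lambda$ is the heart of the matter, and I expect it to be the only substantive difficulty. I see two routes. The first is a specialization argument: fix $\A$-forms of $\V^{\otimes m}$, of $\Ui$, and of $\mathbb{H}$ stable under the two actions, and specialize $q\mapsto 1$ (so $p=q^a\mapsto 1$); then $\Ui$ degenerates to the classical symmetric-pair subalgebra $U(\mathfrak{g}^\theta)\subseteq U(\mathfrak{g})$ with $\theta$ the $180^{\circ}$ rotation, and $\mathbb{H}$ degenerates to $\Q W_{B_m}$ acting on $V^{\otimes m}$ by permutations together with $s_0$ acting as $v_c\mapsto v_{-c}$ in the first slot. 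Invoking the classical double centralizer in this situation, together with semicontinuity of $\dim\Phi(\Ui)$ under specialization and the inclusion $\Phi(\Ui)\subseteq\mathrm{End}_{\mathbb{H}}(\V^{\otimes m})$ valid in every specialization, one forces $\dim_{\Q(q)}\Phi(\Ui)=\dim_{\Q(q)}\mathrm{End}_{\mathbb{H}}(\V^{\otimes m})$ and hence equality. The second route, which is the one taken in \cite{BW18a, BWW18} (and which is also confirmed by the geometric construction of the $\imath$Schur algebra in Section~\ref{sec:geom}), is to compute the $\Ui$-module structure of $\V^{\otimes m}$ directly, starting from the $\U$-decomposition $\V^{\otimes m}\cong\bigoplus_{\lambda\in\mathcal{P}_m(N)}L_q(\lambda)\otimes S_q^\lambda$ of Theorem~\ref{thm:Jimbo} and analyzing the branching of the simple modules $L_q(\lambda)$ from $\U$ to $\Ui$ against the enlarged $\mathbb{H}$-action; the parameter $p$ enters only through $H_0$ and does not affect the resulting multiplicities, so all values $p\in q^{\Z}$ --- in particular $p=q$ and $p=1$ --- are handled uniformly.
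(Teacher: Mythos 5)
Your proposal is correct and takes essentially the same approach as the paper's (very brief) proof sketch: commutativity with $H_1,\dots,H_{m-1}$ is inherited from the Jimbo--Schur duality, commutativity with $H_0$ is reduced to the $m=1$ case and checked there directly (you helpfully make explicit the right-coideal argument $\Delta(\Ui)\subset\Ui\otimes\U$ that justifies this reduction, which the paper leaves implicit), and the double centralizer property follows by a deformation argument specializing to $q\mapsto 1$. You additionally outline an alternative branching-based route to the double centralizer, but your primary argument matches the paper's.
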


\begin{proof}
We shall only check the commutativity of the two actions (and then the double centralizer property follows by some deformation argument by specializing to $q\mapsto 1$.)

Since the actions of $\Ui$ and $\mathbb{H}_{p,q}(B_m)$ here are compatible with the commuting actions of $\U$ and $\mathbb{H}_1(\mathfrak S_d)$ in Jimbo-Schur duality,  it remains to verify the commutativity of the actions of $H_0$ and $\Ui$. This reduces further to the case when $m=1$, which is then checked directly.
\end{proof}

\begin{remark}
 \label{rem:K}
In the spirit of Jimbo's realization of type A Hecke algebra via R-matrix (Theorem~\ref{thm:Jimbo2}),
the action of $H_0$ can be realized as $\mathcal T^{-1} \otimes \text{id}^{\otimes m-1}$ via the K-matrix $\mathcal T$ in \eqref{eq:K} (note that $L(\la^\tau) \cong {}^\omega L(\la)$ as $\U$-modules); see \cite[Theorem 5.4]{BW18a}.
\end{remark}

\begin{remark}
The commuting action between $\Ui$ for $p=1$ and Hecke algebra of type $D_m$ on $\V^{\otimes m}$ was independently observed in \cite{ES18}.
\end{remark}

\subsection{$\imath$Canonical basis vs KL basis of classical type}

Recall the $\imath$-Schur duality $\Ui\curvearrowright \V^{\otimes m}\curvearrowleft \mathbb{H}_{p,q}(B_m)$; see Theorem~\ref{thm:iSchur}. A weight $f \in \big[{\textstyle \frac{1-N}{2}..\frac{N-1}{2} } \big]^m$ is called {\em $\imath$-anti-dominant} if $0\leq f(1)\leq\cdots\leq f(m)$.

\begin{lemma} \cite{BW18a, BWW18}
\label{lem:3barb}
The bar involution $\psi_\imath$ on $\V^{\otimes m}$ as a $\Ui$-module satisfies that
\[
    \psi_\imath(uxh)=\psi_\imath(u)\psi_\imath(x)\overline{h},
    \quad \text{ for } u\in\Ui, x\in\V^{\otimes m}, h\in\mathbb{H}_{p,q}(B_m).
\]
Moreover, such a bar involution is unique by requiring that $\psi (M_f)=M_f$ for $f$ being $\imath$-anti-dominant.
\end{lemma}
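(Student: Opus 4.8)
The plan is to mirror closely the proof of Lemma~\ref{lem:3bar} (the type A analogue), transplanting each step to the type B setting. First I would establish \emph{uniqueness}: the tensor space $\V^{\otimes m}$, viewed as a right $\mathbb H_{p,q}(B_m)$-module, decomposes as a direct sum of ``permutation modules'' $M_f \mathbb H$, one for each $\imath$-anti-dominant $f$ (this follows from the explicit action \eqref{eq:HBaction}, exactly as in the type A case where anti-dominant weights indexed the summands). Consequently an anti-linear map $\psi_\imath$ on $\V^{\otimes m}$ that is $\overline{\phantom{x}}$-semilinear over $\mathbb H_{p,q}(B_m)$ (i.e. $\psi_\imath(xh)=\psi_\imath(x)\overline{h}$) is completely determined by its values on the $M_f$ with $f$ $\imath$-anti-dominant, so requiring $\psi_\imath(M_f)=M_f$ there pins it down uniquely.

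Next I would verify that the $\imath$-canonical bar involution $\psi_\imath=\Upsilon\circ\psi$ on $\V^{\otimes m}$ (from \eqref{eq:ibar}, using that $\V^{\otimes m}$ is a based $\U$-module by Corollary~\ref{cor:iCBtensor}, hence an $\imath$-involutive $\Ui$-module by Lemma~\ref{lem:ibar}) actually \emph{does} fix the $\imath$-anti-dominant $M_f$. Here I would use that $\Upsilon=\sum_\mu \Upsilon_\mu$ with $\Upsilon_\mu\in\U^-_{-\mu}$ (Theorem~\ref{thm:quasi2}) together with the formula $\psi=\Theta\circ(\overline{\cdot}\otimes\cdots\otimes\overline{\cdot})$ and $\Theta_\mu\in\U^+_\mu\otimes\U^-_\mu$ to argue, weight by weight, that $\psi_\imath(M_f)\in M_f+\sum_{g}\Q(q)M_g$ where each $g$ has strictly smaller weight in the appropriate ordering; for $f$ $\imath$-anti-dominant there is no such lower $g$ within the relevant weight space (this is where the $0\le f(1)$ condition, reflecting the extra type B node, is used), forcing $\psi_\imath(M_f)=M_f$. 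Combined with the uniqueness already shown, this identifies the two descriptions of $\psi_\imath$.

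It then remains to prove the intertwining identity $\psi_\imath(uxh)=\psi_\imath(u)\psi_\imath(x)\overline h$ for $u\in\Ui$, $x\in\V^{\otimes m}$, $h\in\mathbb H_{p,q}(B_m)$. The $\Ui$-part, $\psi_\imath(ux)=\psi_\imath(u)\psi_\imath(x)$, is Lemma~\ref{lem:ibar}(2), so the content is the Hecke side, $\psi_\imath(xH_i)=\psi_\imath(x)H_i^{-1}$ for all generators $H_i$, $0\le i\le m-1$. For $i\ge 1$ this is literally Lemma~\ref{lem:3bar} (the relation $\psi(xH_i)=\psi(x)H_i^{-1}$ there; note $\psi_\imath=\Upsilon\circ\psi$ and $\Upsilon$ commutes with the type A Hecke action since that action commutes with $\U\supset\Ui$ — more carefully, one reduces via $\imath$Schur duality and the fact that $\Upsilon$ lies in a completion of $\U^-$). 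The genuinely new case is $i=0$. Following Remark~\ref{rem:K}, the action of $H_0$ on $\V^{\otimes m}$ is realized as $\mathcal T^{-1}\otimes\mathrm{id}^{\otimes m-1}$ via the K-matrix $\mathcal T$ of \eqref{eq:K}; so I would reduce to $m=1$ and check $\psi_\imath(vH_0)=\psi_\imath(v)H_0^{-1}$ on $\V$ directly, using $\overline{\Upsilon}=\Upsilon^{-1}$ (Corollary~\ref{cor:inv2}), the quasi K-matrix identity $\imath(\psi_\imath(u))\circ\Upsilon=\Upsilon\circ\overline\imath(u)$ (Theorem~\ref{thm:quasi2}), and the explicit twisting that produces $\mathcal T$ — the $m=1$ computation is a short matrix calculation on the finitely many basis vectors $v_j$, just as in the type A base case. \textbf{The main obstacle} I anticipate is precisely this $i=0$ base case: making the bookkeeping between $\Upsilon$, the twist defining $\mathcal T$, the bar involution $\overline{\phantom{x}}$ on $\mathbb H$ (which sends $H_0\mapsto H_0^{-1}$ and involves the parameter $p$), and the semilinearity conventions come out consistently — in particular tracking how $p\in q^{\Z}$ and $\overline p=p^{-1}$ interact with $\overline{\Upsilon}=\Upsilon^{-1}$, analogous to how $\overline{\widetilde f}^{-1}=\widetilde f$ was needed in the type A argument.
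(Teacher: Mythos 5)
The paper itself gives no argument for this lemma (it defers to the proof of Theorem~5.8 in the cited BW18a), so a line-by-line comparison is not available; your proposal is a reasonable attempt to reconstruct a direct argument by transplanting the proof of Lemma~\ref{lem:3bar}. The overall architecture — uniqueness via the permutation-module decomposition, $\psi_\imath(M_f)=M_f$ for $\imath$-anti-dominant $f$, the $\Ui$-half from Lemma~\ref{lem:ibar}(2), the $H_i$ ($i\ge1$) half by letting $\Upsilon$ slide past the type~A Hecke action via $q$-Schur duality (note: it is Jimbo's $q$-Schur duality, not $\imath$Schur duality, that gives commutation of $\Upsilon\in\widehat{\U^-}$ with $H_1,\dots,H_{m-1}$) — is sound, and you correctly isolate $H_0$ as the genuinely new case.

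However, there is a real gap in the $H_0$ step as written. You propose to ``reduce to $m=1$'' and then do a direct check on $\V$. But $\psi_\imath=\Upsilon\circ\psi$ on $\V^{\otimes m}$ has $\Upsilon$ acting through the iterated coproduct $\Delta^{(m-1)}(\Upsilon)$, which couples \emph{all} $m$ tensor factors; and $\psi$ itself is built from iterated $\Theta$'s. So $\psi_\imath$ does not split off the first factor, and a computation on $\V$ alone does not propagate to $\V^{\otimes m}$ merely because $H_0$ acts as $\mathcal T^{-1}\otimes\mathrm{id}^{\otimes m-1}$. In the type~A proof of Lemma~\ref{lem:3bar} the induction from $(m-1)$ to $m$ rests on the factorization $\psi^{(m)}=\Theta\circ(\psi^{(1)}\otimes\psi^{(m-1)})$, with $\Theta$ living in a completion of $\U\otimes\U$ and hence commuting with the Hecke generators on the other factors. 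To carry out the analogous induction here you need the $\imath$-version of this factorization, namely an identity of the form $\psi_\imath^{(m)}=\Theta^\imath\circ(\psi_\imath^{(1)}\otimes\psi^{(m-1)})$ for a suitable ``$\imath$-quasi-$R$-matrix'' $\Theta^\imath$ lying in a completion of $\U^-\otimes\U$ (built from $\Delta(\Upsilon)$, $\Upsilon^{-1}\otimes 1$, and $\Theta$). This is a substantive ingredient — one of the pivots of the \cite{BW18a,BW18b} framework — and without naming it, the ``reduce to $m=1$'' sentence is not yet a proof. Once $\Theta^\imath$ is available, the $m=1$ computation plus the induction (with $x=v\otimes x'$, using that $\Theta^\imath$ commutes with $H_0$ via $\imath$Schur duality on $\V$ and with $H_i$, $i\ge 2$, via $q$-Schur duality) closes the argument. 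Secondarily, in your paragraph showing $\psi_\imath(M_f)=M_f$ for $\imath$-anti-dominant $f$, the phrase ``there is no such lower $g$ within the relevant weight space'' should be sharpened: the relevant weight space is an $\imath$-weight space (a $W_{B_m}$-orbit of $f$'s), and $\imath$-anti-dominance makes $f$ the Bruhat-minimal element of that orbit, which is why no lower term survives.
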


\begin{proof}
We refer to the proof of \cite[Theorem 5.8]{BW18a} for details.
\end{proof}

Let us take $p=q$ for now.
As a $\H_q(B_m)$-module,
\[
\V^{\otimes m} \cong\bigoplus_{f} C_{J(f)}\H_q(B_m), \quad
M_f\mapsto C_{J(f)},
\]
where $f$ runs over the set of all $\imath$-anti-dominant weights. Then $\V^{\otimes m}$ admits a type $B$ Kazhdan-Lusztig basis via the isomorphism above.

Similarly, for $p=1$, $\V^{\otimes m}$ admits a type $D$ Kazhdan-Lusztig basis by its decomposition as a direct sum of permutation modules over $\mathbb{H}_{1,q}(B_m)$; cf. Remark~\ref{rem:BD}.

Recall the embedding $\Ui \rightarrow \U$ depends on $p\in q^\Z$; cf. \eqref{eq:Bi}--\eqref{eq:Bi1}.

\begin{theorem} \cite{BW18a, Bao17}
 \label{thm:CBKLb}
 (1) The $\imath$-canonical basis (for $p=q$) and the type $B$ KL basis on $\V^{\otimes m}$ coincide;

 (2) The $\imath$-canonical basis (for $p=1$) and the type $D$ KL basis on $\V^{\otimes m}$ coincide.
\end{theorem}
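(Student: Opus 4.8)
The plan is to follow exactly the strategy already used for the type $A$ statement in Theorem~\ref{thm:CBKLa}, transporting it to the type $B/D$ setting via $\imath$Schur duality. First I would fix $p=q$ (resp. $p=1$) and invoke Lemma~\ref{lem:3barb}, which says that the $\imath$-canonical bar involution $\psi_\imath$ on $\V^{\otimes m}$, built from the quasi K-matrix $\Upsilon$ as in \eqref{eq:ibar}, agrees with the bar involution coming from the Hecke algebra $\mathbb{H}_{q,q}(B_m)=\H_q(B_m)$ (resp. $\mathbb{H}_{1,q}(B_m)$) side, in the sense that it is the unique anti-linear involution fixing all $\imath$-anti-dominant $M_f$ and satisfying $\psi_\imath(xh)=\psi_\imath(x)\overline h$. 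Thus the \emph{same} bar involution underlies both the $\imath$-canonical basis (defined by the $\Ui$-module structure, Section~\ref{subsec:iCB}) and the parabolic type $B$ (resp. type $D$) KL basis (defined by the decomposition $\V^{\otimes m}\cong\bigoplus_f C_{J(f)}\mathbb{H}$, resp. the analogous type $D$ permutation-module decomposition from Remark~\ref{rem:BD}).

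Next I would pin down the unitriangularity with respect to a common ordering. Each parabolic KL basis element on $C_{J(f)}\mathbb{H}$ has, by Theorem~\ref{thm:Deo}, the form $C_{J(f)}(H_\sigma + \sum_{w<\sigma} q\Z[q]H_w)$; translated through the isomorphism $M_f\mapsto C_{J(f)}$ this is of the shape $M_g + \sum_{g'} q\Z[q]\,M_{g'}$ for suitable $g,g'$ in $\big[\tfrac{1-N}{2}..\tfrac{N-1}{2}\big]^m$. The key point to verify is that the ordering on weights $g'$ occurring here (the Bruhat-type ordering on $W_{B_m}$-coset representatives, refined by the parabolic structure) refines the weight ordering $b'<b$ on $\V^{\otimes m}$ used in the definition of the $\imath$-canonical basis, namely $\mathrm{wt}(b)-\mathrm{wt}(b')\in\N\I$. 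This is the type $B$ analogue of the observation in the proof of Lemma~\ref{lem:3bar} that $\psi(M_f)\in M_f+\sum_{g<f}\Q(q)M_g$ with $g<f$ compatible with weights; it follows from the fact that $\Upsilon\in\widehat{\U^-_\A}$ (Theorem~\ref{thm:ZK}) together with the explicit $H_0$-action in \eqref{eq:HBaction}, which lowers/reorders weights in the expected direction. Granting this, each KL basis element is a bar-invariant vector of the form $M_g + \sum_{g'<g}q\Z[q]M_{g'}$, hence by the uniqueness in the defining theorem of the $\imath$-canonical basis (Section~\ref{subsec:iCB}), it must \emph{be} the corresponding $\imath$-canonical basis element. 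Running the same argument with $q^{-1}$ in place of $q$ would handle the dual bases if desired.

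The main obstacle, I expect, is the compatibility-of-orderings step: one must check that the Bruhat ordering on ${}^{J}W_{B_m}$ (or the type $D$ Weyl group, for $p=1$), restricted along the surjection to the weight poset of $\V^{\otimes m}$, is compatible with the $\N\I$-weight ordering, \emph{including} the new generator $H_0$ and the reflections $s_0$ it introduces, which act by the sign flip $f(1)\mapsto -f(1)$. In type $A$ this compatibility is essentially the classical statement that deleting letters from a reduced word decreases in Bruhat order and lowers the weight; in type $B$ one additionally needs that the $H_0$-moves in \eqref{eq:HBaction} — in particular the case $f(1)<0$, which produces a lower term with coefficient in $(p^{-1}-p)\Z[q,q^{-1}]$, and for $p=1$ the involutive case — are consistent with $\imath$-weight lowering and with the leading-term normalization $M_g$ being the unique $\imath$-anti-dominant representative in its orbit. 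Once this poset compatibility is in hand, everything else is a formal appeal to uniqueness, exactly as in Theorem~\ref{thm:CBKLa}; for the detailed verification I would refer the reader to \cite{BW18a} for part (1) and \cite{Bao17} for part (2).
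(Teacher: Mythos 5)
Your proposal is essentially the paper's proof: invoke Lemma~\ref{lem:3barb} to identify the two bar involutions, then appeal to uniqueness of bar-invariant bases in the $\Z[q]$-lattice $\Z[q]\{M_f\}$, exactly as in Theorem~\ref{thm:CBKLa}. The paper's own argument is precisely two lines: ``Follows from Lemma~\ref{lem:3barb} and the uniqueness of such bar-invariant bases (which lie in the same $\Z[q]$-lattice).''

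One remark on the ``main obstacle'' you flag: the compatibility-of-orderings step you worry about (that the Bruhat-type triangularity on $W_{B_m}$-coset representatives refines the $\N\I$-weight ordering $\mathrm{wt}(b)-\mathrm{wt}(b')\in\N\I$) is in fact not needed as a \emph{prerequisite} for the argument, and the paper does not prove it. The relevant uniqueness statement is weaker than you suggest: it says that any $\psi_\imath$-invariant element of $q\Z[q]\{M_f\}$ is zero (the Claim~2 pattern in the proof of Theorem~\ref{thm:KL}). That only requires the bar map to be unitriangular with respect to \emph{some} ordering — and both the Hecke side (Bruhat) and the $\Ui$ side ($\N\I$-weight) supply one independently. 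Once each of the two bases is known to be bar-invariant and to lie in $M_f + q\Z[q]\{M_g\}$ (with no constraint on which $g$ appear), their difference is a bar-invariant element of $q\Z[q]\{M_f\}$, hence zero. The compatibility of the two partial orders is then a \emph{consequence} of the identification, not an input to it. So your proof would go through, but it carries a step that the paper's argument shows is dispensable.
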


\begin{proof}
Follows from Lemma~\ref{lem:3barb} and the uniqueness of such bar-invariant bases (which lie in the same $\Z[q]$-lattice).
\end{proof}

\begin{example}
  The natural representation $\V$ of $\U_q(\mathfrak{sl}_{2r+2})$ admits a canonical basis $\big\{v_i|~i \in   [{\scriptstyle -r-\frac{1}{2} .. r+\frac{1}{2}}] \big\}$. For $p=q$, the $\imath$-canonical basis for $\V$ is $\big\{v_i, v_{-i}+qv_i~|~i\in [{\scriptstyle \frac{1}{2}..r+\frac{1}{2} }] \big\}$.
\end{example}

\begin{remark}
The $\imath$-canonical basis (for any $p\in q^\Z$) and the corresponding unequal parameter KL basis on $\V^{\otimes m}$ coincide, cf.  \cite{BWW18}. The two distinguished cases as in Theorem~\ref{thm:CBKLb} are most fundamental.
\end{remark}

%

\section{Character formulas via canonical bases}
  \label{sec:KL}

In this section, we reformulate Kazhdan-Lusztig theory for category $\mathcal{O}$ of type A via  $q$-Schur duality and Lusztig's canonical basis on a tensor product of modules.
We also reformulate Kazhdan-Lusztig theory of classical type via  $\imath$Schur duality and $\imath$-canonical basis. Then we generalize the ($\imath$-) canonical basis character formulas to BGG category $\mathcal O$ for Lie superalgebras of classical types \cite{CLW15, BW18a, Bao17}.

References for the basics on the BGG category $\mathcal O$ and its super version are \cite{Hum08} and \cite{CW12}.

\subsection{Kazhdan-Lusztig theory} 

Let $\mathfrak{g}=\mf{n}^-\oplus\mf{h}\oplus\mf{n}$ be a semisimple (or reductive) Lie algebra over $\mathbb{C}$. The BGG category $\mathcal{O}$ consists of $\mathfrak{g}$-modules satisfying
\begin{itemize}
  \item[($\mathcal{O}1$)] $M$ is a finitely generated $U(\mathfrak{g})$-module;
  \item[($\mathcal{O}2$)] $M=\bigoplus_{\mu\in\mathfrak{h}^*}M_\mu$ is a weight module;
  \item[($\mathcal{O}3$)] $M$ is locally $\mathfrak{n}$-finite, that is, the subspace $U(\mathfrak{n})\cdot v$ is finite dimensional for any $v\in M$.
\end{itemize}
There is a duality functor $\vee:\mathcal{O}\rightarrow\mathcal{O}$ sending $M=\bigoplus_{\mu\in\mathfrak{h}^*}M_\mu$ to $M^\vee:=\bigoplus_{\mu\in\mathfrak{h}^*}M_\mu^*$.

Let $M(\lambda)$ be the Verma module with highest weight $\lambda$ and $L(\lambda)$ be its unique irreducible quotient. The projective module $P(\lambda)$ is the projective cover of $L(\lambda)$. The tilting module $T(\lambda)$ is the unique indecomposable module with highest weight $\lambda$ that has both Verma filtration and dual Verma filtration. It is known that the tilting module $T(\lambda)$ is self-dual with respect to $\vee$.

The BGG category has a block decomposition $$\mathcal{O}=\bigoplus_{[\lambda]\in X/W\bullet}\mathcal{O}_\lambda, $$ where $X/W\bullet$ is the set of $W$-orbits (under dot action $\sigma\bullet\lambda=\sigma(\lambda+\rho)-\rho$) in $X$.
We call $\mathcal{O}_0$ (which contains the trivial representation $L(0)$) the principal block. If $\lambda\in X$ is dot regular (i.e. $|W\bullet\lambda|=|W|$), then $\mathcal{O}_{\lambda}\cong\mathcal{O}_0$.

In the principal block $\mathcal{O}_0$, the simple objects $\{L(\sigma):=L(\sigma\bullet(-2\rho))~|~\sigma\in W\}$ are parameterized by $W$. There are similar notations $M(\sigma):=M(\sigma\bullet(-2\rho))$ and $T(\sigma):=T(\sigma\bullet(-2\rho))$.

Both $\mathcal{O}$ and $\mathcal{O}_0$ are abelian categories.
Let $[\mathcal{O}_0]$ be the Grothendieck group of $\mathcal{O}_0$. The standard, canonical, and dual canonical bases on the group algebra $\Z W$ are obtained from the corresponding bases on $\HA$, thanks to the base change $\Z W=\Z\otimes_{\A}\HA$, where $\A$ acts on $\Z$ with $q \mapsto 1$. Below is a modern formulation of the celebrated Kazhdan-Lusztig conjecture \cite{KL79}, where the tilting module statement follows by the Sorgel duality \cite{So98}.

\begin{theorem}  [Brylinski-Kashiwara \cite{BK81}, Beilinson-Bernstein \cite{BB81}]
  \label{thm:KLH}\
  \\
  The $\Z$-isomorphism $[\mathcal{O}_0]\rightarrow \Z W$, $[M(\sigma)]\mapsto H_\sigma \; (\sigma \in W)$, sends
  \begin{equation*}
    [L(\sigma)]\mapsto L_{\sigma};\quad
    [T(\sigma)]\mapsto C_\sigma, \qquad \text{for }\sigma \in W.
  \end{equation*}
\end{theorem}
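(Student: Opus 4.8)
The plan is to deduce this from the Beilinson--Bernstein / Brylinski--Kashiwara localization theorem together with the geometric interpretation of Kazhdan--Lusztig polynomials as intersection cohomology stalks; the tilting statement is then extracted from Soergel's duality.

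First I would check that $\{[M(\sigma)]~|~\sigma\in W\}$ is a $\Z$-basis of $[\mathcal O_0]$, so that $[M(\sigma)]\mapsto H_\sigma$ really does define a $\Z$-module isomorphism $[\mathcal O_0]\xrightarrow{\sim}\Z W$ (here $\Z W=\Z\otimes_\A\HA$, $q\mapsto 1$). This is standard: $M(\sigma)$ has simple head $L(\sigma)$, and every other composition factor $L(\tau)$ satisfies $\tau\bullet(-2\rho)<\sigma\bullet(-2\rho)$ in the linkage order, so the change-of-basis matrix between $\{[M(\sigma)]\}$ and $\{[L(\sigma)]\}$ is unitriangular; hence both are $\Z$-bases. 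It then remains to identify the Jordan--H\"older multiplicities $[M(\tau):L(\sigma)]$ with values at $q=1$ of Kazhdan--Lusztig polynomials and, dually, the Verma multiplicities $(T(\sigma):M(\tau))$ with the coefficients of the KL basis element $C_\sigma$ expressed in $\{H_w\}$.

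The core step is geometric. Via Beilinson--Bernstein localization, $\mathcal O_0$ is equivalent to the category of $B$-equivariant regular holonomic $\mathcal D$-modules on the flag variety $G/B$, and via Riemann--Hilbert to the category of perverse sheaves on $G/B$ constructible along the Schubert stratification. Under this chain, Verma modules correspond to the standard objects supported on Schubert cells and simple modules to the intersection cohomology complexes $\mathrm{IC}(\overline{X_w})$ of Schubert varieties; hence $[M(\tau):L(\sigma)]$ becomes the total stalk dimension of such an $\mathrm{IC}$ complex at a suitable point. The final ingredient is Kazhdan--Lusztig's theorem that these stalk dimensions are exactly the coefficients of the polynomials $P_{x,w}$ -- a consequence of the purity of $\mathrm{IC}$ sheaves on Schubert varieties (proved via the Weil conjectures over finite fields, or alternatively by mixed Hodge modules) together with the decomposition theorem. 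Matching conventions (which accounts for the appearance of $w_0$, since $-2\rho$ is antidominant and $M(\sigma)$ plays the role of a dual standard object) then yields $[L(\sigma)]\mapsto L_\sigma$, the dual KL basis of Proposition~\ref{prop:dualKL}.

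For the tilting statement I would use that $T(\sigma)$ has both a Verma and a dual Verma flag, with $(T(\sigma):M(\tau))$ governed by the inverse KL polynomials, i.e.\ by the coefficients of $C_\sigma$ in $\{H_w\}$. This follows from Soergel's duality \cite{So98} (equivalently, the Ringel self-duality of $\mathcal O_0$ exchanging $T(\sigma)\leftrightarrow P(\sigma)$), combined with BGG reciprocity $[P(\sigma)]=\sum_\tau[M(\tau):L(\sigma)]\,[M(\tau)]$ and the already established $[L(\sigma)]\mapsto L_\sigma$; transporting through $[M(\sigma)]\mapsto H_\sigma$ gives $[T(\sigma)]\mapsto C_\sigma$. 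The main obstacle is unquestionably the geometric step -- setting up the localization equivalence and, above all, proving that $\mathrm{IC}$ stalks on Schubert varieties compute KL polynomials; this is the substance of \cite{BB81, BK81} and lies far beyond any routine verification. (A later, purely algebraic route to this step runs through Soergel bimodules together with Elias--Williamson's Hodge-theoretic proof of KL positivity.)
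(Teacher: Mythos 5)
Your outline is correct and follows essentially the route the paper points to: it cites Beilinson--Bernstein and Brylinski--Kashiwara for the $[L(\sigma)]\mapsto L_\sigma$ statement (localization plus IC-stalk interpretation of KL polynomials) and Soergel duality \cite{So98} for the tilting statement; the paper itself offers no proof beyond these references. Your sketch correctly identifies the triangularity of the change of basis, the geometric core, and the reduction of the tilting case via Ringel/Soergel duality and BGG reciprocity.
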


\subsection{(Super) type A character formulas via canonical bases}

We specialize to type A, and consider the BGG category $\mathcal{O}^{m}$ for modules over Lie algebra $\mathfrak{g} =\mathfrak{gl}_{m}$ with weights in the weight lattice $X^m :=\bigoplus_{i=1}^m\Z\epsilon_i$. We can take the Weyl vector for $\mathfrak{g}$ to be
\[
\rho= m \epsilon_1+(m-1)\epsilon_2+\ldots+\epsilon_{m},
\]
which is the half sum of positive roots up to a shift by a multiple of $(\epsilon_1+\epsilon_2+\cdots+\epsilon_m)$.
Let $\V=\langle v_i~|~i\in\Z\rangle$ be the natural $\U_q(\mathfrak{sl}_{\infty})$-module. It has a basis $\{M_f~|~f\in\Z^m\}$. There exists a natural bijection $X^m  \stackrel{\cong}{\rightarrow} \Z^m$, which maps
\[
\lambda \mapsto f_\lambda :=(f_\lambda(1),\ldots,f_\lambda(m)), \quad \text{ where } \lambda+\rho =\sum_{i=1}^m f_\lambda(i) \epsilon_i.
\]

Below is a reformulation of Kazhdan-Lusztig theory of type $A$ via the canonical basis. Denote by $\V_\Z:=\Z\otimes_{\A}\V_{\A}$, a $\Z$-module.  The standard basis $\{M_f\}$, canonical basis $\{C_f\}$, and dual canonical basis $\{L_f\}$ on $\V_\Z^{\otimes m}$ are descended from the corresponding bases on $\V^{\otimes m}$.

\begin{theorem} [Type A KL reformulated]
\label{thm:KLa}
  The $\Z$-module isomorphism $[\mathcal{O}^m]\rightarrow \V_\Z^{\otimes m}$, $[M(\lambda)]\mapsto M_{f_\lambda}$, for $\lambda \in X$, sends
  \begin{equation*}
    [L(\lambda)]\mapsto L_{f_{\lambda}}, \qquad [T(\lambda)]\mapsto C_{f_{\lambda}},
    \quad \text{ for all }\lambda \in X.
  \end{equation*}
\end{theorem}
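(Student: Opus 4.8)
The plan is to reduce Theorem~\ref{thm:KLa} to the classical Kazhdan--Lusztig theorem (Theorem~\ref{thm:KLH}) via the parabolic KL theory (Theorem~\ref{thm:Deo}) and the $q$-Schur-duality identification of canonical and KL bases (Theorem~\ref{thm:CBKLa}). The key point is that the category $\mathcal O^m$ for $\mathfrak{gl}_m$ decomposes into blocks indexed by the integral (anti-)dominant weights in the sense of the diagonal subalgebra, and each singular/integral block is governed by a parabolic Hecke module $C_{J}\H$, exactly the combinatorics that sits on the right-hand side of the isomorphism \eqref{eq:11} $\V_\Z^{\otimes m}\cong\bigoplus_{f\in\Lambda^-(N,m)}C_{J(f)}\H$ after base change $q\mapsto 1$.

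First I would set up the combinatorial dictionary carefully. Under the bijection $X^m\xrightarrow{\cong}\Z^m$, $\lambda\mapsto f_\lambda$ with $\lambda+\rho=\sum f_\lambda(i)\epsilon_i$, the $W=\mathfrak S_m$ dot-orbit of $\lambda$ corresponds to the $\mathfrak S_m$-orbit of $f_\lambda$ under the place-permutation action on $\Z^m$; the stabilizer of $\lambda$ under the dot-action is the Young subgroup $\mathfrak S_{J(f_\lambda)}$ where $J(f_\lambda)=\{j\mid f_\lambda s_j=f_\lambda\}$. Thus the antidominant weight in each dot-orbit corresponds to the unique weakly increasing $f$ in that orbit, i.e.\ to an element of $\Lambda^-(\infty,m)$, and the block $\mathcal O^m_{[\lambda]}$ maps to the summand $C_{J(f)}\mathbb Z\mathfrak S_m$ of $\V_\Z^{\otimes m}$. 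Next I would invoke Theorem~\ref{thm:KLH} in its parabolic form (Deodhar, via \cite{So97, So98}): for a singular integral block of $\mathcal O$ with stabilizer $W_J$, the isomorphism $[\mathcal O^m_{[\lambda]}]\xrightarrow{\cong}C_J\mathbb Z\mathfrak S_m$ sending $[M(\sigma\bullet\lambda^-)]\mapsto C_J H_\sigma$ (for $\sigma\in{}^JW$, $\lambda^-$ the antidominant weight) intertwines $[L(\cdot)]$ with the parabolic dual KL basis $L^J_\sigma$ and $[T(\cdot)]$ with the parabolic KL basis $C^J_\sigma$; the tilting statement is Soergel duality. Assembling these block isomorphisms over all antidominant $\lambda$ gives a $\Z$-isomorphism $[\mathcal O^m]\to\V_\Z^{\otimes m}$ sending $[M(\lambda)]\mapsto M_{f_\lambda}$, $[L(\lambda)]\mapsto$ (image of parabolic dual KL basis), $[T(\lambda)]\mapsto$ (image of parabolic KL basis).

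Finally I would identify the images of the parabolic (dual) KL bases with the (dual) canonical bases $L_{f_\lambda}$, $C_{f_\lambda}$ on $\V_\Z^{\otimes m}$. This is precisely the content of Theorem~\ref{thm:CBKLa} together with \eqref{eq:11}: the KL basis on $\V^{\otimes m}$, defined as the union over antidominant $f$ of the parabolic KL bases on $C_{J(f)}\H$ transported via \eqref{eq:11}, coincides with Lusztig's canonical basis on $\V^{\otimes m}$ as a tensor product of $\U$-modules; specializing $q\mapsto 1$ yields the equality on $\V_\Z^{\otimes m}$. Dually for $L_{f_\lambda}$. Combining this with the previous paragraph gives $[L(\lambda)]\mapsto L_{f_\lambda}$ and $[T(\lambda)]\mapsto C_{f_\lambda}$, as claimed.

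The main obstacle is bookkeeping rather than a deep new idea: one must check that the normalization of $\rho$ chosen here (the half sum of positive roots shifted by a multiple of $\epsilon_1+\cdots+\epsilon_m$, so that $f_\lambda\in\Z^m$ rather than $\tfrac12+\Z$) matches the normalization under which Theorem~\ref{thm:KLH} is stated, and that the antidominance convention (lowest weight in a dot-orbit $\leftrightarrow$ weakly increasing $f$ $\leftrightarrow$ $C_J$ rather than $L_J$) is consistent with the trivial-module convention $1\cdot H_j=q^{-1}$ used for $C_J\H$ and with the direction of the Bruhat order in \eqref{eq:11}. A secondary subtlety is that $\mathcal O^m$ contains non-integral and non-dominant blocks; but every block of $\mathcal O^m$ with weights in $X^m$ is integral, and dot-singular blocks are handled by the parabolic (Deodhar) theory, while the translation principle ensures regular blocks with the same integral Weyl group are equivalent --- so no block is left uncovered. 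Once the conventions are pinned down, the theorem follows formally by assembling the block-wise statements.
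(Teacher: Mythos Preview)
Your proposal is correct and follows essentially the same route as the paper: reduce block by block to the (parabolic) Kazhdan--Lusztig theorem for singular integral blocks, then invoke Theorem~\ref{thm:CBKLa} to identify the parabolic KL/dual KL bases transported via \eqref{eq:11} with the canonical/dual canonical bases on $\V_\Z^{\otimes m}$. The paper's proof is a two-line sketch citing \cite{So90} for the parabolic version of Theorem~\ref{thm:KLH} and then Theorems~\ref{thm:CBKLa} and~\ref{thm:KLH}; you have simply unpacked the combinatorial dictionary (dot-orbits $\leftrightarrow$ $\mathfrak S_m$-orbits, stabilizers $\leftrightarrow$ Young subgroups $\mathfrak S_{J(f)}$, antidominant $\leftrightarrow$ weakly increasing $f$) and the bookkeeping on conventions that the paper leaves to the reader.
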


\begin{proof}
It is established \cite{So90} that the parabolic KL basis solve the irreducible character problem in a singular block of $\mathcal O$ -- this is a parabolic version of Theorem \ref{thm:KLH}. Now the type A KL reformulation follows by Theorems~\ref{thm:CBKLa} and \ref{thm:KLH}.
\end{proof}

One great advantage of the above reformulation of type A Kazhdan-Lusztig theory is that it allows a natural generalization to Lie superalgebras. A basic fact here  is that the Chevalley generators $E_i, F_i$ in $\U$ act on $[\mathcal{O}^m]$ by translation functors; this makes the above $\Z$-module isomorphism a $\U$-module isomorphism at $q=1$. This continues to make sense for a suitable formulation of super type A Kazhdan-Lusztig theory \cite{Br03}, which will be outlined below.

Now we take $\mathfrak{g}=\mathfrak{gl}(m|n)$ to be the general linear Lie superalgebra, cf. \cite{CW12}. Again, $\mathfrak{g}$ admits a standard triangular decomposition, and the BGG category $\mathcal O^{m|n}$ of $\mathfrak{g}$-modules (with weights in $X^{m|n}$) can be similarly defined, where
\[
X^{m|n}=\bigoplus_{i=1}^m\Z\epsilon_i\oplus\bigoplus_{j=1}^n \Z\delta_j.
\]
The category $\mathcal O^{m|n}$  admits Verma modules $M(\lambda)$, tilting modules $T(\lambda)$, and simple modules $L(\lambda)$, for all $\lambda \in X^{m|n}$.
The Weyl vector $\rho$ for $\mathfrak{g}=\mathfrak{gl}(m|n)$ can be taken to be
\[
\rho = m \epsilon_1+(m-1)\epsilon_2+\ldots+\epsilon_{m}  -\delta_1 -2\delta_2 -\ldots -n \delta_n.
\]

The Weyl group of $\mathfrak{g}$, which is $\mathfrak{S}_m\times\mathfrak{S}_n$, no longer controls the linkage principle in $\mathcal{O}^{m|n}$, for $m, n\geq 1$. One implication of this is that we can not expect to formulate a Kazhdan-Lusztig theory and solve the irreducible character problem for $\mathfrak{gl}(m|n)$ via Hecke algebra directly.

However, by Corollary~\ref{cor:CBtensor}, the $\U_q(\mathfrak{sl}_{\infty})$-module $\V^{\otimes m}\otimes \V^{*\otimes n}$ admits a canonical basis $\{C_f\}$ (and a dual canonical basis $\{L_f\}$ analogous to the dual KL basis in Proposition~\ref{prop:dualKL}). There is a natural bijection
\begin{align*}
  X^{m|n}
  &\xrightarrow[\mbox{\tiny $\rho$-shift}]{\cong}\Z^{m+n},\quad
  \lambda \mapsto f_\lambda  := \big(f_\lambda(1),\ldots,f_\lambda(m+n) \big),
\end{align*}
where we have denoted
\begin{align} \label{eq:shift}
\lambda+\rho =\sum_{i=1}^m f_\lambda(i) \epsilon_i + \sum_{j=1}^n f_\lambda(m+j) \delta_j.
\end{align}
This bijection provides us a $\Z$-module isomorphism
\begin{align}  \label{eq:Psi}
\Psi: [\mathcal{O}^{m|n}]\longrightarrow \V_\Z^{\otimes m} \otimes \V_{\Z}^{*\otimes n},
\qquad [M(\lambda)]\mapsto M_{f_\lambda}\; \; (\lambda \in X^{m|n}).
\end{align}

We have the following generalization of Theorem~\ref{thm:KLa}, which was conjectured by Brundan \cite{Br03}. We refer to \cite{CLW15} for details of a proof.

\begin{theorem} [Cheng-Lam-Wang \cite{CLW15}]
\label{thm:KLa2}
  The $\Z$-module isomorphism $\Psi: [\mathcal{O}^{m|n}]\rightarrow \V_\Z^{\otimes m} \otimes \V_{\Z}^{*\otimes n}$ in \eqref{eq:Psi} sends
  \begin{equation*}
[L(\lambda)]\mapsto L_{f_{\lambda}}, \quad [T(\lambda)]\mapsto C_{f_{\lambda}}, \quad \text{ where } \lambda \in X^{m|n}.
  \end{equation*}
\end{theorem}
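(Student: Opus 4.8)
The plan is to deduce the theorem from three ingredients: the parabolic Kazhdan--Lusztig conjecture for ordinary reductive Lie algebras, the \emph{super duality} equivalence of Cheng--Lam--Wang between parabolic BGG categories at infinite rank, and the compatibility of all the relevant functors with the combinatorial ($\U_q(\mathfrak{sl}_\infty)$-module) side. Since both $\mathcal O^{m|n}$ and $\V_\Z^{\otimes m}\otimes\V_\Z^{*\otimes n}$ arise by truncation from infinite-rank objects, the first move is to lift the statement to an infinite-rank Lie superalgebra $\widetilde{\mathfrak g}$ of type $\mathfrak{gl}$ and its parabolic BGG category $\widetilde{\mathcal O}$, whose Grothendieck group $[\widetilde{\mathcal O}]$ is identified with a Fock space carrying an $\mathfrak{sl}_\infty$-module structure and standard, canonical, and dual canonical bases extending those on $\V^{\otimes m}\otimes\V^{*\otimes n}$. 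A truncation functor $\widetilde{\mathcal O}\to\mathcal O^{m|n}$, exact and sending standard modules, tilting modules, and irreducibles to Verma modules, tilting modules, and irreducibles (or to $0$), matches the dual truncation of Fock spaces onto $\V^{\otimes m}\otimes\V^{*\otimes n}$ and carries the three bases to the three bases; hence it suffices to prove the statement inside $\widetilde{\mathcal O}$.

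The key combinatorial fact, due to Brundan \cite{Br03}, is that the Chevalley generators of $\U_q(\mathfrak{sl}_\infty)$ act on $[\widetilde{\mathcal O}]$ (at $q=1$) through translation functors, so that $\Psi$ and its ordinary counterpart are isomorphisms of $\mathfrak{sl}_\infty$-modules. Let $\widehat{\mathfrak g}$ be the corresponding infinite-rank Lie \emph{algebra} of type $\mathfrak{gl}$, with parabolic BGG category $\widehat{\mathcal O}$. On this ordinary side the statement is already available: each integral block of $\widehat{\mathcal O}$ is governed by the parabolic/singular Kazhdan--Lusztig conjecture --- the parabolic refinement of Theorem~\ref{thm:KLH}, established in \cite{So90} (cf. the proof of Theorem~\ref{thm:KLa}), together with Soergel duality \cite{So98} for the tilting statement --- so that $[T(\lambda)]$ is a parabolic canonical basis element and $[L(\lambda)]$ a parabolic dual canonical basis element in $[\widehat{\mathcal O}]$.

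The decisive step is super duality: the functor $T\colon\widehat{\mathcal O}\to\widetilde{\mathcal O}$ of \cite[Chapter~6]{CW12} is an equivalence of categories sending parabolic Verma modules to parabolic Verma modules, tilting modules to tilting modules, and irreducibles to irreducibles, and inducing an isomorphism $[\widehat{\mathcal O}]\xrightarrow{\ \sim\ }[\widetilde{\mathcal O}]$ of $\mathfrak{sl}_\infty$-modules matching standard bases (under the $\epsilon/\delta$ relabeling underlying \eqref{eq:shift}). Granting this, the identification of $[T(\lambda)]$ with a canonical basis element and of $[L(\lambda)]$ with a dual canonical basis element transports verbatim from $\widehat{\mathcal O}$ to $\widetilde{\mathcal O}$, and then descends along the truncation functor to $\mathcal O^{m|n}$; this yields $\Psi([T(\lambda)])=C_{f_\lambda}$ and $\Psi([L(\lambda)])=L_{f_\lambda}$.

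The main obstacle is the proof that $T$ is an equivalence, which is the technical core of \cite{CLW15} (building on \cite[Chapter~6]{CW12}): one interpolates between $\widehat{\mathfrak g}$ and $\widetilde{\mathfrak g}$ by a chain of finite-rank hybrid Lie superalgebras and shows that the associated functor is exact, fully faithful, and essentially surjective on each integral block, controlling along the way the matching of highest weights and of Verma, projective, tilting, and simple objects (via BGG reciprocity and the $\vee$-self-duality of tilting modules) as well as compatibility with the $\mathfrak{sl}_\infty$-actions. For the complete argument we refer to \cite{CLW15} and \cite[Chapter~6]{CW12}.
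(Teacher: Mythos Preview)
Your proposal is correct and aligns with the approach the paper itself points to: the paper does not give an argument but simply refers to \cite{CLW15}, noting (in the introduction) that the main ingredient is the super duality equivalence between parabolic BGG categories for Lie algebras and Lie superalgebras of infinite rank. Your sketch---lifting to infinite rank, invoking parabolic KL on the ordinary side via \cite{So90, So98}, transporting through the super duality equivalence of \cite[Chapter~6]{CW12} and \cite{CLW15}, and truncating back---is precisely the strategy of \cite{CLW15}, so there is nothing to add beyond what you have written.
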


\subsection{Classical type character formulas via $\imath$-canonical bases}

Another infinite family of Lie superalgebras is the ortho-symplectic superalgebras, including $\mathfrak{osp}(2m+1|2n)$ of type B and $\mathfrak{osp}(2m|2n)$ of type $D$ (the case when $m=1$ is also known as type C), cf. \cite{CW12}.

Let us treat the type B case: $\mathfrak{g}=\mathfrak{osp}(2m+1|2n)$ in detail. In this case, the Weyl vector $\rho$ has coefficients in ${\textstyle \frac{1}{2} } +\Z$. There exists a $\rho$-shift bijection for the set of integer weights
\begin{align}   \label{XZ}
X^{m|n}=\bigoplus_{i=1}^m\Z\epsilon_i\oplus\bigoplus_{j=1}^n \Z\delta_j
  \xrightarrow[\mbox{\tiny $\rho$-shift}]{\cong}({\textstyle \frac{1}{2} } +\Z)^{m+n}, \quad \lambda\mapsto f_\lambda,
  \end{align}
 where $f_\lambda$ is formally defined in the same way as in \eqref{eq:shift}.
Similarly, there exists a $\rho$-shift bijection for the set of half integer weights
 \begin{align}   \label{XZhalf}
  X^{m|n}_{{\scriptstyle \frac{1}{2}}}
  =\bigoplus_{i=1}^m({\textstyle \frac{1}{2} } +\Z)\epsilon_i\oplus
  \bigoplus_{j=1}^n ({\textstyle \frac{1}{2} } +\Z)\delta_j
  \xrightarrow[\mbox{\tiny $\rho$-shift}]{\cong}\Z^{m+n}, \quad \lambda\mapsto f_\lambda.
  \end{align}
  Again we can introduce the BGG category $\mathcal{O}^{m|n}_{\mf b}$ (respectively, $\mathcal{O}^{m|n}_{\mf b, \frac12}$), which contains the Verma modules $M(\lambda)$, tilting modules $T(\lambda)$ and simple modules $L(\lambda)$, for $\la \in X^{m|n}$ (respectively, $\la \in X^{m|n}_{{\scriptstyle \frac{1}{2}}}$). The Weyl group ($\cong B_m\times \mathfrak{S}_n$) does not control the linkage principle in the BGG categories for $\mathfrak{g}=\mathfrak{osp}(2m+1|2n)$.

We shall need the quantum symmetric pairs of type AIII, $(\U_q(\mathfrak{sl}_N), \Ui)$, from \S\ref{sec:AIII}, where we fix $p=q$ in \eqref{eq:Bi} and \eqref{eq:Bi1}; also recall the natural representation $\V$ with basis $\{v_i~|~i \in [\textstyle{\frac{1-N}{2}..\frac{N-1}{2}}]\}$, for $N$ even and odd, allowing $N =\infty$ (with parity!). Then we can identify the indexing set $[\textstyle{\frac{1-N}{2}..\frac{N-1}{2}}]$ with ${\textstyle \frac{1}{2} } +\Z$  for $N=\infty$ (even), and with $\Z$  for $N=\infty$ (odd).

  By Corollary~\ref{cor:iCBtensor}, the $\U_q(\mathfrak{sl}_{\infty})$-module $\V^{\otimes m}\otimes \V^{*\otimes n}$ (regarded as $\Ui$-module with $p=q$) admits an $\imath$-canonical basis, denoted by $\{C^\imath_f\}$, and a dual $\imath$-canonical basis, denoted by $\{L^\imath_f\}$, where $f \in ({\textstyle \frac{1}{2} } +\Z)^{m+n}$ or $\Z^{m+n}$, respectively.

  Define the following $\Z$-module isomorphisms
  \begin{align}
  \label{eq:P2}
  \begin{split}
  \Psi_{\mf b}: [\mathcal{O}^{m|n}_{\mf b}]\rightarrow \V_\Z^{\otimes m} \otimes \V_{\Z}^{*\otimes n}, & \quad
  [M(\lambda)]\mapsto M_{f_\lambda}\;\; (\la \in X^{m|n}),
  \\
  \Psi_{\mf b, \frac12}: [\mathcal{O}^{m|n}_{\mf b,\frac12}]\rightarrow \V_\Z^{\otimes m} \otimes \V_{\Z}^{*\otimes n}, & \quad
  [M(\lambda)]\mapsto M_{f_\lambda} \;\; (\la \in X^{m|n}_{{\scriptstyle \frac12}}).
  \end{split}
  \end{align}
  A basic fact here \cite[Proposition~11.9]{BW18a} is that the Chevalley generators $B_i$ in $\Ui$ act on $[\mathcal{O}^{m|n}_{\mf b}]$ and $[\mathcal{O}^{m|n}_{\mf b,\frac12}]$ by translation functors; thus the above $\Z$-module isomorphisms become $\Ui_\mathbb{Z}$-module isomorphisms at $q=1$. (A similar observation on translation functors and $B_i$ is valid for $p=1$ \cite{Bao17}, and it was made independently in \cite{ES18} in  the non-super setting, i.e.,  $m=0$.)

\begin{theorem} \cite{BW18a}
  \label{thm:KLb}
  The $\Z$-module isomorphism $  \Psi_{\mf b}$ (respectively, $\Psi_{\mf b, \frac12}$) in \eqref{eq:P2} sends
  \begin{equation*}
  [L(\lambda)]\mapsto L^\imath_{f_{\lambda}}, \quad [T(\lambda)]\mapsto C^\imath_{f_{\lambda}},
  \end{equation*}
 for $\la \in X^{m|n}$ (and respectively, $\la \in X^{m|n}_{{\scriptstyle \frac12}}$).
\end{theorem}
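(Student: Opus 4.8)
The plan is to reduce Theorem~\ref{thm:KLb} to the already-established non-super statements via the \emph{super duality} functor, exactly as in \cite{BW18a}. The strategy mirrors the proof of the type A super statement (Theorem~\ref{thm:KLa2} following \cite{CLW15}): one never works directly with $\mathfrak{osp}(2m+1|2n)$, but instead embeds its BGG category into a limit category of infinite-rank Lie (super)algebras, where the combinatorics is governed by an $\imath$quantum group at infinite rank and the $\imath$-canonical basis is manifestly the right answer.

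\smallskip

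\textbf{Step 1: Infinite-rank setup and the $\imath$-canonical basis module.} First I would pass to the infinite-rank ortho-symplectic Lie algebra/superalgebra and set up parabolic BGG categories $\mathcal O^{\mathrm{par}}$ of ``polynomial-truncated'' weight modules, together with the truncation functors to $\mathcal O^{m|n}_{\mf b}$ (and $\mathcal O^{m|n}_{\mf b,\frac12}$). The crucial point is that, on the level of Grothendieck groups, the translation functors given by the Chevalley generators $B_i$ of $\Ui_q(\mathfrak{sl}_\infty)$ at $q=1$ upgrade $\Psi_{\mf b}$ and $\Psi_{\mf b,\frac12}$ to isomorphisms of $\Ui$-modules onto $\V_\Z^{\otimes m}\otimes\V_\Z^{*\otimes n}$ (with the appropriate parity of $N=\infty$), compatibly with the truncation maps. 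This identifies the standard basis $\{M_{f_\lambda}\}$ with $\{[M(\lambda)]\}$ by construction.

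\smallskip

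\textbf{Step 2: The non-super case via type B/D KL theory.} For $n=0$, the category $\mathcal O^{m|0}_{\mf b}$ is an honest BGG category for $\mathfrak{osp}(2m+1)$ (type B), and by Theorem~\ref{thm:CBKLb}(1) the $\imath$-canonical basis on $\V^{\otimes m}$ at $p=q$ equals the type B KL basis (and at $p=1$ the type D KL basis on $\V^{\otimes m}$, relevant for the half-integer lattice and the type D statement). The classical Kazhdan--Lusztig theory of type B/D (Theorem~\ref{thm:KLH}, in its parabolic form \cite{So90} since $\V^{\otimes m}$ is a sum of parabolic permutation modules) then gives $[L(\lambda)]\mapsto L^\imath_{f_\lambda}$ and $[T(\lambda)]\mapsto C^\imath_{f_\lambda}$ in the non-super case. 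This is the base case of the induction.

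\smallskip

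\textbf{Step 3: Super duality and descent.} The main engine is super duality: there is an equivalence of (suitably completed/truncated) parabolic BGG categories between the Lie algebra and Lie superalgebra sides of infinite rank, which matches irreducibles with irreducibles, tiltings with tiltings, and intertwines the $\Ui_q(\mathfrak{sl}_\infty)$-actions on the Grothendieck groups. Because the $\imath$-canonical and dual $\imath$-canonical bases of $\V^{\otimes m}\otimes\V^{*\otimes n}$ are characterized uniquely by bar-invariance together with triangularity in the $\Z[q]$-lattice spanned by the standard basis (Lemma~\ref{lem:3barb} and the uniqueness in the construction of $\imath$-canonical bases), super duality forces the character data on the super side to be computed by the \emph{same} $\imath$-canonical basis as on the non-super side. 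Then I would apply the truncation functor to descend from infinite rank to $\mathfrak{osp}(2m+1|2n)$, using that truncation sends $M(\lambda)\mapsto M(\lambda)$ or $0$, $L(\lambda)\mapsto L(\lambda)$ or $0$, $T(\lambda)\mapsto T(\lambda)$ or $0$, compatibly with the bases; the half-integer weight case $\Psi_{\mf b,\frac12}$ is handled identically, using $N=\infty$ odd in place of even. The type D/C analogue (Bao's theorem, the $p=1$ case) runs in parallel, replacing type B KL data by type D KL data throughout.

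\smallskip

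\textbf{Main obstacle.} The hard part is establishing super duality in the precise form needed here --- an equivalence of highest-weight categories at infinite rank that is compatible with the $\imath$quantum group action on Grothendieck groups and with the integral structures/bar involutions. This requires the machinery of \cite[Chapter 6]{CW12} (Fock space realizations, odd reflections to match parabolic Verma flags, and a careful comparison of linkage) adapted to the ortho-symplectic setting as in \cite{BW18a}; the quantization (keeping track of $q$ rather than just $q=1$) and the verification that the $\imath$-canonical basis property is preserved under the equivalence are the technically delicate points. For these we refer to \cite{BW18a, Bao17, CW12} rather than reproducing the argument.
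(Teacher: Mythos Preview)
Your approach matches the paper's: its one-line proof points to exactly the super-duality reduction you outline (non-super base case via type B KL theory and Theorem~\ref{thm:CBKLb}, then transfer via super duality as in \cite{CLW15, CW12, BW18a}). One small slip in Step~2: both $\mathcal O^{m|n}_{\mf b}$ and $\mathcal O^{m|n}_{\mf b,\frac12}$ use $p=q$, the difference between them being only whether the basis of $\V$ is indexed by $\frac12+\Z$ or by $\Z$; the parameter $p=1$ belongs to the separate type~D statement for $\mathfrak{osp}(2m|2n)$ (Bao's theorem), not to the half-integer case of Theorem~\ref{thm:KLb}.
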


\begin{proof}
The strategy is quite similar to the proof of Theorem~\ref{thm:KLa} for super type A, once we have the (dual) $\imath$-canonical bases available.
\end{proof}

\begin{example}
  Take $n=0$ and $m=1$, so that $\mathfrak{g}=\mathfrak{so}_3\cong\mathfrak{sl}_2$. If the standard basis (which is the same as canonical basis) $\{v_i~|~i\in\Z\}$ for $\V$ is indexed by $\Z$, then $\V$ admits an $\imath$-canonical basis is $\{v_0, v_i, v_{-i}+q v_i~|~i\in\Z_{>0}\}$ and a dual $\imath$-canonical basis $\{v_0, v_i,v_{-i}-q^{-1}v_i~|~i\in\Z_{>0}\}$. Similar formulas hold when the standard basis of $\V$ is indexed by ${\textstyle \frac{1}{2} } +\Z$.
\end{example}

\begin{remark}
\begin{itemize}
\item[(1)]
Theorem~\ref{thm:KLb} for $m=0$ is a reformulation for the usual type B KL theory, thanks to Theorems~\ref{thm:CBKLb}(1) and \ref{thm:KLH}.
\item[(2)]
Theorem~\ref{thm:KLb} can be conceptually adapted to the super type D case, $\mathfrak{g}=\mathfrak{osp}(2m|2n)$, according to Bao \cite{Bao17}: one just needs to set the parameter $p=1$ (instead of $p=q$) in \eqref{eq:Bi} and \eqref{eq:Bi1}; compare Theorem~\ref{thm:CBKLb}(2).
\item[(3)]
The character formulas in Theorem~\ref{thm:KLa2} for super type A, Theorem~\ref{thm:KLb} for super type B, and its super type D analogue admit a natural generalization for a general parabolic BGG category of $\mathfrak{g}$-modules, where $ \V_\Z^{\otimes m} \otimes \V_{\Z}^{*\otimes n}$ is replaced by a tensor product of $q$-wedge spaces (which reflects the shape of a chosen Levi subalgebra of $\mathfrak{g}$). The formulation in the full generality for type B/D can be found in \cite{BWW20}.
\end{itemize}
\end{remark}

%



\section{A geometric setting for Schur and $\imath$Schur dualities}
 \label{sec:geom}

In this section, a geometric realization via flag varieties of $q$-Schur duality and $\imath$Schur duality as well as the corresponding ($\imath$-)canonical bases is provided.

\subsection{Iwahori's geometric realization of $\H$}

Let $\mathbb{F}_{\mathbf{q}}$ be a finite field of $\mathbf{q}$ elements of characteristic $> 3$. Let $\mathbf{G}$ be a connected algebraic group
defined over $\mathbb{F}_\mathbf{q}$. Assume $G=\mathbf{G}(\mathbb{F}_\mathbf{q})$ admits a split maximal torus and Borel subgroup, denoted by $T\subset B$. Let $W=N(T)/T=\langle s_i~|~i\in\I\rangle$ be the Weyl group of $G$.

\begin{lemma}
  \begin{itemize}
    \item[(1)] There are natural bijections:
     \[
     G/B\times_{G} G/B \quad\longleftrightarrow \quad B\setminus G/B \quad \longleftrightarrow \quad W.
     \]
  In particular, we have the Bruhat decompositions:
  \[
  G=\bigsqcup_{\sigma\in W}B\sigma B, \quad\quad
      G/B=\bigsqcup_{\sigma\in W}B\sigma B/B.
      \]
    \item[(2)] $B\sigma B\cdot Bs_iB=\left\{\begin{array}{ll}
      B\sigma s_i B, & \mbox{if $\ell(\sigma s_i)=\ell(\sigma)+1$};\\
      B\sigma s_iB\cup B\sigma B, & \mbox{otherwise}.
    \end{array}\right.$
  \end{itemize}
\end{lemma}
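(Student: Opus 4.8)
The plan is to derive both parts from the fact that $(B,N)$, with $N=N(T)$, $B\cap N=T$ and Weyl group $W=N/T$, is a Tits system (BN-pair) on $G$; I will take the Tits system axioms as known (either by citation, or by the direct Gaussian-elimination argument available in the classical cases $G=\mathrm{GL}_N(\mathbb F_\mathbf{q})$ and the split orthosymplectic groups of type B considered later).

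For part (1), I would first treat the left bijection, which is purely formal: $G$ acts diagonally on $G/B\times G/B$, the assignment $(g_1B,g_2B)\mapsto Bg_1^{-1}g_2B$ is well defined and constant on diagonal $G$-orbits, and it identifies $G/B\times_G G/B$ with $B\backslash G/B$, with inverse sending $BgB$ to the orbit of $(B,gB)$. For the bijection $B\backslash G/B\xrightarrow{\ \sim\ }W$, $BwB\mapsto w$: covering follows since $G=\langle B, s_i : i\in\I\rangle$ and the Tits inclusion $Bs_iB\cdot BwB\subseteq BwB\cup Bs_iwB$ shows $\bigcup_{w\in W}BwB$ is stable under left multiplication by the generators, hence equals $G$; disjointness ($BwB=Bw'B\Rightarrow w=w'$) is the Bruhat uniqueness, which I would prove by induction on $\min(\ell(w),\ell(w'))$ using the non-degeneracy axiom $s_iBs_i\neq B$. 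Quotienting on the right by $B$ then gives $G/B=\bigsqcup_{\sigma\in W}B\sigma B/B$.

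Part (2) is the refined product rule in a Tits system. The inclusion $B\sigma B\cdot Bs_iB\subseteq B\sigma B\cup B\sigma s_iB$ is a Tits axiom in its right-handed form (apply $g\mapsto g^{-1}$ to the standard left-handed axiom). If $\ell(\sigma s_i)=\ell(\sigma)+1$ I would show the product equals the single coset $B\sigma s_iB$ by propagating the identity $Bw'B\cdot Bs_iB=Bw's_iB$ (valid whenever $\ell(w's_i)>\ell(w')$) along a reduced expression of $\sigma$, each partial product lengthening. If $\ell(\sigma s_i)=\ell(\sigma)-1$, set $\sigma'=\sigma s_i$ so that $\ell(\sigma's_i)=\ell(\sigma')+1$; then
\[
B\sigma B\cdot Bs_iB=B\sigma'B\cdot Bs_iB\cdot Bs_iB=B\sigma'B\cdot(B\cup Bs_iB)=B\sigma'B\cup B\sigma B=B\sigma s_iB\cup B\sigma B,
\]
where the equality $Bs_iB\cdot Bs_iB=B\cup Bs_iB$ (not merely the inclusion) is exactly the point at which $s_iBs_i\neq B$ is used, and this is also what guarantees the final union consists of two \emph{distinct} cosets rather than collapsing.

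The main obstacle is not this formal bookkeeping but the input on which it rests: establishing that $(B,N)$ is a genuine Tits system, and in particular verifying the non-degeneracy $s_iBs_i\neq B$. In a fully self-contained treatment I would instead prove the Bruhat decomposition and the sharp form of (2) directly by row and column operations — for $\mathrm{GL}_N$, every matrix reduces uniquely to a permutation matrix under left/right multiplication by the upper-triangular group — and this elementary computation (together with its type B analogue) is where the real work would lie.
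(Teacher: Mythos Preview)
Your proposal is correct and follows the standard Tits-system argument. The paper itself states this lemma without proof, treating it as background material (with implicit reference to \cite{Iw64} and \cite{DDPW}), so there is nothing to compare against; your derivation via the BN-pair axioms, together with your remark that for $\mathrm{GL}_N(\mathbb{F}_\mathbf{q})$ these can be verified by row reduction, is precisely the standard route one would take to fill in the details.
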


The Iwahori-Hecke algebra
\[
\mathcal{IH}:=\mathrm{End}_G(\mathrm{Ind}_B^G \C),
\]
where $\C$ is the trivial $B$-module, can be reformulated in several equivalent ways:
\[
\mathcal{IH} \cong
\mathrm{End}_G(\mathbb{C}[G/B])=\mathbb{C}[B\setminus G/B]=\mathrm{Fun}_G(G/B\times G/B) \cong e\mathbb{C}Ge,
\]
where $e=\frac{1}{|B|}\sum\limits_{b\in B}b$ is idempotent. There is a standard basis for $\mathcal{IH}$: $T_w=|B|^{-1}\sum\limits_{x\in BwB}x$, for $w\in W$. The algebra structure on $\mathcal{IH}$ corresponds to a convolution product on $\mathrm{Fun}_G(G/B\times G/B)$. Recall the (generic) Hecke algebra $\H =\H_q(W)$ from \S\ref{subsec:Hecke}, and we have a $\C$-algebra $\H_q(W)|_{q=-\mathbf{q}^{\frac{1}{2}}} =\C \otimes_{\A} \H_q(W)$, where $q\in \A$ acts on $\C$ by $-\mathbf{q}^{\frac{1}{2}}$.

\begin{theorem} [Iwahori \cite{Iw64}, cf. \cite{DDPW}]
There is an $\A$-algebra isomorphism
  \begin{align*}
    \mathcal{IH}\cong \H_q(W)|_{q=-\mathbf{q}^{\frac{1}{2}}}, \quad -\mathbf{q}^{-\frac{1}{2}}T_{s_i}\mapsto H_i.
  \end{align*}
\end{theorem}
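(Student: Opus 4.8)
The plan is to identify $\mathcal{IH}$ with the convolution algebra $\mathrm{Fun}_G(G/B\times G/B)$ of $G$-invariant $\mathbb{C}$-valued functions on $G/B\times G/B$, with product $(f*g)(x,z)=\sum_{y\in G/B}f(x,y)g(y,z)$, and to read off its structure from the Bruhat decomposition. By the preceding Lemma, the diagonal $G$-orbits on $G/B\times G/B$ are indexed by $W$ via $w\mapsto O_w:=G\cdot(B,wB)$, and the characteristic functions $\mathbf{1}_{O_w}$ — which under the identifications $\mathrm{Fun}_G(G/B\times G/B)\cong \mathbb{C}[B\backslash G/B]\cong e\mathbb{C}Ge$ correspond precisely to the elements $T_w=|B|^{-1}\sum_{x\in BwB}x$ — form a $\mathbb{C}$-basis, so $\dim_{\mathbb{C}}\mathcal{IH}=|W|$. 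It then suffices to (i) establish the relations among the $T_{s_i}$ and (ii) observe a basis-to-basis correspondence with $\H_q(W)$.

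The computational core is the product $T_w*T_{s_i}$. Using part (2) of the preceding Lemma, $BwB\cdot Bs_iB$ equals $Bws_iB$ if $\ell(ws_i)=\ell(w)+1$ and equals $Bws_iB\sqcup BwB$ otherwise; one then counts, for each coset $z$ in the target, the number of pairs $(x,y)\in BwB\times Bs_iB$ with $xy=z$, using $|Bs_iB/B|=\mathbf{q}$ and $|BwB/B|=\mathbf{q}^{\ell(w)}$. When lengths add, this count is constant equal to $|B|$; in the other case a comparison of cardinalities pins the two structure constants down as $\mathbf{q}$ and $\mathbf{q}-1$. The outcome is
\[
T_w*T_{s_i}=\begin{cases} T_{ws_i}, & \text{if } \ell(ws_i)=\ell(w)+1,\\ \mathbf{q}\,T_{ws_i}+(\mathbf{q}-1)\,T_w, & \text{if } \ell(ws_i)=\ell(w)-1,\end{cases}
\]
and in particular $T_{s_i}^{*2}=(\mathbf{q}-1)T_{s_i}+\mathbf{q}\,T_e$, equivalently $(T_{s_i}-\mathbf{q}\,T_e)(T_{s_i}+T_e)=0$. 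Substituting $q=-\mathbf{q}^{1/2}$ and the normalization $H_i=-\mathbf{q}^{-1/2}T_{s_i}$ turns the displayed recursion and quadratic relation into, respectively, property (4) of $\H$ and the quadratic relation $(H_i+q)(H_i-q^{-1})=0$ of $\H_q(W)$, the only issue being routine sign bookkeeping in the square root of $\mathbf{q}$.

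It remains to verify the braid relations for the $T_{s_i}$. Iterating Case (1) of the displayed formula shows $T_{s_{i_1}}*\cdots*T_{s_{i_k}}=T_w$ whenever $s_{i_1}\cdots s_{i_k}$ is a reduced expression for $w$; since the two sides of a braid relation $\underbrace{s_is_j\cdots}_{m_{ij}}=\underbrace{s_js_i\cdots}_{m_{ij}}$ are reduced expressions for the same element (the longest element of $\langle s_i,s_j\rangle$), the corresponding products of $T$'s coincide. Hence $H_i\mapsto -\mathbf{q}^{-1/2}T_{s_i}$ respects all defining relations of $\H_q(W)|_{q=-\mathbf{q}^{1/2}}$ and extends to an algebra homomorphism; it is surjective because the multiplication formula exhibits every $T_w$ as a product of the $T_{s_i}$, and it carries the basis $\{H_w\}$ to the basis $\{(-\mathbf{q}^{-1/2})^{\ell(w)}T_w\}$ of $\mathcal{IH}$, so it is an isomorphism.

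The main obstacle is the point-counting in the product formula: proving that when $\ell(ws_i)=\ell(w)+1$ every fiber of $(x,y)\mapsto xy$ on $BwB\times Bs_iB$ has exactly $|B|$ elements, and extracting the coefficients $\mathbf{q}$ and $\mathbf{q}-1$ in the opposite case. This is where the parameter $\mathbf{q}$ genuinely enters, and it rests on the root-subgroup structure of $B$, the behaviour of $B\cap wBw^{-1}$, and the dichotomy $\ell(ws_i)=\ell(w)\pm1$ (equivalently, whether $w^{-1}\alpha_i$ is a positive or a negative root). Everything else in the argument is formal once this is in hand.
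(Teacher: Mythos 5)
Your route is the classical Iwahori argument: realize $\mathcal{IH}$ as the convolution algebra $\mathrm{Fun}_G(G/B\times G/B)$ with basis indexed by Bruhat cells, deduce the multiplication rule $T_w * T_{s_i}$ from the Bruhat decomposition by point-counting, and then match the quadratic and braid relations. The paper itself supplies no proof here (it only cites Iwahori and DDPW), so there is no in-paper argument to compare against; at the level of outline your proposal is correct, and the reduction of the fiber count to root-subgroup structure is a fair place to stop in a sketch.

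But the one step you explicitly defer, ``routine sign bookkeeping in the square root of $\mathbf{q}$'', is exactly where the argument fails to close as stated, and a blind write-up is the place where this must be caught rather than waved off. From $T_{s_i}^2=(\mathbf{q}-1)T_{s_i}+\mathbf{q}T_e$ and a rescaling $H_i=a\,T_{s_i}$ one gets $H_i^2=a(\mathbf{q}-1)H_i+a^2\mathbf{q}$; matching against $(H_i+q)(H_i-q^{-1})=0$, i.e.\ $H_i^2=(q^{-1}-q)H_i+1$, forces $a^2\mathbf{q}=1$ and $a(\mathbf{q}-1)=q^{-1}-q$. Substituting $q=-\mathbf{q}^{1/2}$, the second equation becomes $a(\mathbf{q}-1)=\mathbf{q}^{1/2}-\mathbf{q}^{-1/2}=\mathbf{q}^{-1/2}(\mathbf{q}-1)$, which pins $a=+\mathbf{q}^{-1/2}$, not $a=-\mathbf{q}^{-1/2}$. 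With the pairing exactly as printed ($q\mapsto-\mathbf{q}^{1/2}$ together with $H_i=-\mathbf{q}^{-1/2}T_{s_i}$), the quadratic relation picks up an overall sign and does not hold. The fix is one sign: either keep $q=-\mathbf{q}^{1/2}$ and send $+\mathbf{q}^{-1/2}T_{s_i}\mapsto H_i$, or keep $H_i=-\mathbf{q}^{-1/2}T_{s_i}$ and specialize $q\mapsto\mathbf{q}^{1/2}$. Your claim that the substitution ``turns the displayed recursion \dots into \dots the quadratic relation'' is the assertion that needed an actual check, and the check fails; everything else (dimension count, braid relations via reduced-word iteration, basis-to-basis bijectivity) is fine.
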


\subsection{$q$-Schur algebras in geometry}

We shall realize the $q$-Schur algebras and $\imath$Schur algebra via flag varieties of classical type.

\subsubsection{Type $A$}

Let $G=\mathrm{GL}_m(\mathbf{q})$.
The complete flag variety (of type $A$) is
$$\mathscr{B}:=\{0=V_0\subset V_1\subset\cdots\subset V_m=\mathbb{F}_\mathbf{q}^m~|~\dim V_i=i, 0\leq i\leq m\}$$
and the $N$-step flag variety is
\[
\mathscr{F}=\mathscr{F}_{N,m}:= \Big\{0=V_0\subset V_1\subset\cdots\subset V_N=\mathbb{F}_\mathbf{q}^m \Big\}.
\]
It is decomposed into connected components (which are partial flag varieties) as
\[
\mathscr{F}= \bigsqcup_{\underline{m} =(m_1,\ldots,m_N)\models m} \mathscr{F}_{\underline{m}}.
\]
As varieties, $$\mathscr{B}\simeq G/B\quad \mbox{and}\quad \mathscr{F}\simeq \bigsqcup_{\underline{m}=(m_1,\ldots,m_N)\models m}G/P_{\underline{m}}.$$

Clearly $G$ acts on $\mathscr{F}$ and $\mathscr{B}$. Let $G$ act diagonally on $\mathscr{F}\times \mathscr{F}$, $\mathscr{F}\times \mathscr{B}$ and $\mathscr{B}\times\mathscr{B}$, respectively.
Denote by $\Theta$ the set of $G$-orbits in $\mathscr{F}\times\mathscr{F}$. There is a convolution product $\ast$ on the space of $G$-invariant functions $\mathrm{Fun}_G(\mathscr{F}\times\mathscr{F})$ defined as follows. For a triple $(\xi,\xi',\xi'')\in\Theta\times\Theta\times\Theta$, we fix an element $(f_1,f_2)$ in the $G$-orbit $\xi''$, and let $\kappa_{\xi,\xi',\xi'';\mathbf{q}}$ be the number of $f\in\mathscr{F}$ such that $(f_1,f)\in\xi$ and $(f,f_2)\in\xi'$. One can prove that there exists a polynomial $\kappa_{\xi,\xi',\xi''}\in\Z[q^2]$ such that $\kappa_{\xi,\xi',\xi'';\mathbf{q}}=\kappa_{\xi,\xi',\xi''}|_{q^{-1}=\mathbf{q}}$ for all prime powers $\mathbf{q}=p^r$ with $p \ge 3$. The convolution product $\ast$ on $\mathrm{Fun}_G(\mathscr{F}\times\mathscr{F})$ is defined by
$$\chi_\xi\ast\chi_{\xi'}=\sum_{\xi''}\kappa_{\xi,\xi',\xi''}\chi_{\xi''}.$$
Under the convolution product $\ast$,  $\mathrm{Fun}_G(\mathscr{F}\times\mathscr{F})$ becomes an associative algebra.
A similar construction equips $\mathrm{Fun}_G(\mathscr{F}\times\mathscr{B})$ with a left action of $\mathrm{Fun}_G(\mathscr{F}\times\mathscr{F})$ and a right action of $\mathrm{Fun}_G(\mathscr{B}\times\mathscr{B})$.

Recall $S_q(N,m)$ is the $q$-Schur algebra (of type $A$).
A geometric interpretation of the $q$-Schur algebra and $q$-Schur duality is provided by the following theorem.

 \begin{theorem} [\cite{BLM90}, \cite{GL92}]
 \label{thm:Sa}
 We have the following commutative diagrams:
 \begin{equation*}
\begin{array}{ccccc}
  \mathrm{Fun}_G(\mathscr{F}\times\mathscr{F}) &\curvearrowright
 & \mathrm{Fun}_G(\mathscr{F}\times \mathscr{B}) & \curvearrowleft  \; & \mathrm{Fun}_G(\mathscr{B} \times \mathscr{B})
  \\
  \downarrow\simeq\;\; & & \downarrow\simeq & & \downarrow\simeq
 \\
S_q(N,m)|_{q=-\mathbf{q}^{\frac{1}{2}}}     & \curvearrowright
& \V^{\otimes m}|_{q=-\mathbf{q}^{\frac{1}{2}}} &  \curvearrowleft  \; & \H_q(\mathfrak{S}_m)|_{q=-\mathbf{q}^{\frac{1}{2}}}
\end{array}.
 \end{equation*}
 \end{theorem}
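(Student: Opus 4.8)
The plan is to identify each of the three function spaces with its algebraic counterpart by first fixing a convenient parametrization of $G$-orbits and then computing convolution products against a small set of ``near-diagonal'' generators.

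First I would set up the orbit parametrizations. The $G$-orbits on $\mathscr{B}\times\mathscr{B}$ are indexed by $\mathfrak{S}_m$ (relative position of two complete flags), and the Iwahori isomorphism above already identifies $\mathrm{Fun}_G(\mathscr{B}\times\mathscr{B})$ with $\H_q(\mathfrak{S}_m)|_{q=-\mathbf{q}^{1/2}}$ via $-\mathbf{q}^{-1/2}T_{s_i}\mapsto H_i$. The $G$-orbits on $\mathscr{F}_{N,m}\times\mathscr{F}_{N,m}$ are indexed by the set $\Theta$ of $N\times N$ matrices $A=(a_{ij})$ with $a_{ij}\in\N$ and $\sum_{i,j}a_{ij}=m$, where $a_{ij}=\dim\big((V_i\cap V_j')/(V_{i-1}\cap V_j'+V_i\cap V_{j-1}')\big)$, the row (resp.\ column) sums of $A$ recording the component $\mathscr{F}_{\underline m}$ of the first (resp.\ second) factor. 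The $G$-orbits on $\mathscr{F}_{N,m}\times\mathscr{B}$ are indexed by maps $f\colon[1..m]\to[1..N]$, i.e.\ by the standard basis $\{M_f\}$ of $\V^{\otimes m}$, again by relative position. Granting the polynomiality of the structure constants $\kappa_{\xi,\xi',\xi''}$ asserted in the text, each function space carries a standard basis and a generic $\A$-form, and specializing $q=-\mathbf{q}^{1/2}$ reproduces the algebras and modules in the bottom row; it therefore suffices to match the $\A$-forms.

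Next I would establish the right-hand square. Convolving the orbit function $\chi_f$ on $\mathscr{F}\times\mathscr{B}$ with $T_{s_i}$ reduces to a count of $\mathbb{F}_{\mathbf{q}}$-points of a suitable $\mathbb{P}^1$ or of a singleton, according to whether $f(i)<f(i+1)$, $f(i)>f(i+1)$, or $f(i)=f(i+1)$; the outcome reproduces exactly the three cases of Lemma~\ref{lem:HtypeA} after the rescaling $q=-\mathbf{q}^{1/2}$ and $-\mathbf{q}^{-1/2}T_{s_i}\mapsto H_i$, so $\mathrm{Fun}_G(\mathscr{F}\times\mathscr{B})\cong\V^{\otimes m}$ as right $\H$-modules, compatibly with the Iwahori isomorphism. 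For the left-hand square I would single out the near-diagonal orbit functions: the elements $\mathbf{1}_{\underline m}$ attached to diagonal orbits (projections onto the components $\mathscr{F}_{\underline m}$) and the elements $e_i,f_i$ attached to orbits where the two partial flags coincide outside the $i$-th step, which differs in codimension/dimension one. A recursive computation of $\chi_A\ast e_i$ and $\chi_A\ast f_i$, once more a Schubert-cell point count, yields formulas identical to those for the multiplication in $S_q(N,m)$, equivalently for the action of $\Phi(E_i),\Phi(F_i),\Phi(K_i^{\pm1})$ on $\V^{\otimes m}$. Since the near-diagonal functions generate $\mathrm{Fun}_G(\mathscr{F}\times\mathscr{F})$ as an algebra (the key lemma of \cite{BLM90}), this defines a surjection onto $S_q(N,m)|_{q=-\mathbf{q}^{1/2}}$, which is an isomorphism by comparing standard bases; alternatively, one checks the near-diagonal functions act on $\mathrm{Fun}_G(\mathscr{F}\times\mathscr{B})$ exactly as $\Phi(E_i),\Phi(F_i)$ on $\V^{\otimes m}$ and invokes the double centralizer property of Theorem~\ref{thm:Jimbo}.

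Finally, the commutativity of the two squares and the compatibility of the left and right actions in the middle column follow automatically from associativity of the convolution product once all identifications are in place. I expect the main obstacle to be the combinatorial heart of the argument: the explicit evaluation of the convolution products with near-diagonal elements as counts of $\mathbb{F}_{\mathbf{q}}$-rational points of intersections of Schubert varieties, the verification that these counts are polynomial in $\mathbf{q}$ (so that the generic $\A$-forms exist), and the proof that the near-diagonal functions generate the convolution algebra; all three are carried out in \cite{BLM90}, and for the $q$-Schur algebra also in \cite{GL92}.
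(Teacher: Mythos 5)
Your proposal is a correct sketch of the argument from \cite{BLM90} and \cite{GL92}, which is exactly what the paper cites for this theorem (the paper states the result without giving its own proof beyond these references). The orbit parametrizations (matrices $A$ for $\mathscr{F}\times\mathscr{F}$, maps $f\colon[1..m]\to[1..N]$ for $\mathscr{F}\times\mathscr{B}$, $\mathfrak{S}_m$ for $\mathscr{B}\times\mathscr{B}$), the point-counting convolution computations against near-diagonal Chevalley-type generators, the BLM generation lemma for the $q$-Schur convolution algebra, and the observation that commutativity of the two squares follows from associativity of convolution are precisely the standard ingredients carried out in the cited sources.
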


\subsubsection{Type $B$}

Fix a non-degenerate symmetric bilinear form $$(\cdot,\cdot): \mathbb{F}_\mathbf{q}^{2m+1}\times\mathbb{F}_\mathbf{q}^{2m+1}\rightarrow\mathbb{F}_\mathbf{q}.$$
 Let $G=\mathrm{SO}_{2m+1}(\mathbf{q})$ be the orthogonal group associated with $(\cdot,\cdot)$. Let
\[
N=2r+1.
\]
Introduce the full flag varieties of type B:
\begin{align*}
\mathscr{B}^\imath & =\{0=V_0\subset\cdots\subset V_{2m+1}=\mathbb{F}_\mathbf{q}^{2m+1}~|~\dim V_i=i, V_i=V_{2m+1-i}^\perp, \forall i \},
\end{align*}
and the $N$-step isotropic flag variety of type B:
\begin{align*}
\mathscr{F}^\imath & =\{0=V_0\subset\cdots\subset V_{2r+1}=\mathbb{F}_\mathbf{q}^{2m+1}~|~
V_i=V_{2r+1-i}^\perp, 0\leq i\leq 2r+1\},
\end{align*}
where $(-)^\perp$ denotes the orthogonal complement with respect to $(\cdot,\cdot)$.

Similarly, $\mathrm{Fun}_G(\mathscr{F}^\imath\times\mathscr{F}^\imath)$ is an associative algebra under a convolution product. The space $\mathrm{Fun}_G(\mathscr{F}^\imath\times \mathscr{B}^\imath)$ admits a left action of $\mathrm{Fun}_G(\mathscr{F}^\imath\times\mathscr{B}^\imath)$ and a right action of $\mathrm{Fun}_G(\mathscr{B}^\imath\times\mathscr{B}^\imath)$.

Recall the $\imath$Schur algebra $S_q^{\imath}(N,m) =\mathrm{End}_{\H}(\V^{\otimes m})$, where $\dim \V =N=2r+1$, is a homomorphic image of $\Ui_q(\mathfrak{sl}_{2r+1})$ from the $\imath$Schur duality. The following is a geometric interpretation of the $\imath$Schur algebra  and $\imath$Schur duality.

\begin{theorem}  \cite{BKLW18}
 \label{thm:Sb}
We have the following commutative diagrams:
\begin{equation*}
\begin{array}{ccccc}
  \mathrm{Fun}_G(\mathscr{F}^\imath\times\mathscr{F}^\imath) &\curvearrowright
 & \mathrm{Fun}_G(\mathscr{F}^\imath\times \mathscr{B}^\imath) & \curvearrowleft  \; & \mathrm{Fun}_G(\mathscr{B}^\imath\times \mathscr{B}^\imath)
  \\
  \downarrow\simeq\;\; & & \downarrow\simeq & & \downarrow\simeq
 \\
S_q^{\imath}(N,m)|_{q=-\mathbf{q}^{\frac{1}{2}}}     & \curvearrowright
& \V^{\otimes m}|_{q=-\mathbf{q}^{\frac{1}{2}}} &  \curvearrowleft  \; & \H_q(B_m)|_{q=-\mathbf{q}^{\frac{1}{2}}}
\end{array}.
 \end{equation*}
\end{theorem}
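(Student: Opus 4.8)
## Proof Proposal for Theorem~\ref{thm:Sb}

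The plan is to mirror the type $A$ construction (Theorem~\ref{thm:Sa}) step by step, replacing the ordinary flag/partial-flag varieties by their isotropic counterparts $\mathscr{B}^\imath$ and $\mathscr{F}^\imath$ of type $B$. The three vertical isomorphisms of the diagram are established separately, and then one checks that they intertwine the horizontal convolution actions.

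\textbf{Step 1: The right-hand vertical isomorphism.} First I would identify $\mathrm{Fun}_G(\mathscr{B}^\imath\times\mathscr{B}^\imath)$ with the specialized Hecke algebra $\H_q(B_m)|_{q=-\mathbf{q}^{1/2}}$. The group $G=\mathrm{SO}_{2m+1}(\mathbf{q})$ acts on $\mathscr{B}^\imath\cong G/B$ with $B$ a Borel, and the $G$-orbits on $\mathscr{B}^\imath\times\mathscr{B}^\imath$ are indexed by the Weyl group $W_{B_m}$ via a Bruhat decomposition (the isotropy condition $V_i=V_{2m+1-i}^\perp$ is precisely what cuts the type $A$ Weyl group down to $W_{B_m}$). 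Then Iwahori's theorem, exactly as stated in \S\ref{sec:geom} but now for the group $\mathrm{SO}_{2m+1}$, gives the algebra isomorphism, sending $-\mathbf{q}^{-1/2}$ times the characteristic function of the orbit indexed by $s_i$ to $H_i$. The subtlety here is that the simple reflection $s_0$ has a different orbit size (governed by the short root / the special vertex of the $B_m$ diagram), which is exactly what produces the unequal parameter $p$; one must check that with $G=\mathrm{SO}_{2m+1}(\mathbf{q})$ the relevant orbit count yields $p = -\mathbf{q}^{1/2}$ as well, i.e.\ $p=q$, consistently with the conventions fixed before Theorem~\ref{thm:iSchur}.

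\textbf{Step 2: The middle vertical isomorphism.} Next I would set up the bijection between $G$-orbits on $\mathscr{F}^\imath\times\mathscr{B}^\imath$ and the standard basis $\{M_f\}$ of $\V^{\otimes m}$. The $N$-step isotropic flag variety $\mathscr{F}^\imath$ decomposes into $G$-orbits indexed by sequences of dimensions compatible with the isotropy condition; pairing with a complete flag records, for each step $i$ of the complete flag, which block of the $N$-step flag it lands in, giving a function $[1..m]\to[\tfrac{1-N}{2}..\tfrac{N-1}{2}]$, i.e.\ precisely an $f$ as in \S\ref{sec:iSchur}. A dimension/orbit count then matches the $\mathbf{q}$-specialization of $\V^{\otimes m}$ as a vector space. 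The left-hand vertical isomorphism $\mathrm{Fun}_G(\mathscr{F}^\imath\times\mathscr{F}^\imath)\cong S_q^\imath(N,m)|_{q=-\mathbf{q}^{1/2}}$ is obtained as the double centralizer: by construction $\mathrm{Fun}_G(\mathscr{F}^\imath\times\mathscr{F}^\imath)$ acts on $\mathrm{Fun}_G(\mathscr{F}^\imath\times\mathscr{B}^\imath)$ commuting with the right $\mathrm{Fun}_G(\mathscr{B}^\imath\times\mathscr{B}^\imath)$-action, and one identifies its image with $\mathrm{End}_{\H_q(B_m)}(\V^{\otimes m}) = S_q^\imath(N,m)$ after specialization, invoking Theorem~\ref{thm:iSchur} (the $\imath$Schur duality, with $p=q$).

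\textbf{Step 3: Intertwining the actions, and the polynomiality.} Finally, with the three identifications in hand, I would verify that the horizontal convolution arrows correspond to the algebraic actions of $\Ui$-image and $\mathbb{H}$ on $\V^{\otimes m}$; by faithfulness of the standard-basis description it suffices to check this on generators --- the convolution by the orbit closures of minimal codimension (the "simple" correspondences) reproduce the formulas for $B_i$ and $H_i$ from \eqref{eq:Bi}, \eqref{eq:Bi1}, \eqref{eq:HBaction}. The main obstacle, as in the type $A$ case of \cite{BLM90}, is the \emph{polynomiality of the structure constants}: one must show that the orbit-intersection counts $\kappa_{\xi,\xi',\xi'';\mathbf{q}}$ in the isotropic setting are given by a universal polynomial in $q^2$ (independent of $\mathbf{q}$), so that the convolution algebra is genuinely a specialization of a $\Z[q,q^{-1}]$-algebra and the comparison with $S_q^\imath(N,m)$ over $\mathbb{Q}(q)$ makes sense. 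This requires a careful analysis of the fibers of the relevant incidence varieties of isotropic subspaces over $\mathbb{F}_\mathbf{q}$ --- the type $B$ analogue of the Schubert-cell counting in \cite{BLM90} --- together with a base-change argument to pass from the finite-field counts back to the generic parameter $q$. All of this is carried out in detail in \cite{BKLW18}, to which we refer for the complete proof.
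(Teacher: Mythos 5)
The paper gives no proof of Theorem~\ref{thm:Sb}; it simply cites \cite{BKLW18}. Your sketch matches the broad strategy of that reference: parameterize the $G$-orbits, set up the three vertical isomorphisms, verify the convolution action on Chevalley-type generators, and control polynomiality of the structure constants by a base-change argument. As an outline it is plausible and correctly identifies the main technical burdens.

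There is, however, a factual error in Step~1. For the \emph{split} group $G=\mathrm{SO}_{2m+1}(\mathbf{q})$, every Bruhat cell $B s_i B / B$ --- including the one for $s_0$ --- has exactly $\mathbf{q}$ points, because each such cell is an affine line: $\dim B s_i B/B = \ell(s_i) = 1$, irrespective of whether $\alpha_i$ is a long or short root. Root lengths are simply irrelevant to the cardinalities of cells in the complete flag variety of a split group, so there is no ``different orbit size'' at $s_0$ and the short root produces no unequal parameter. The Iwahori--Hecke algebra of any split reductive group over $\mathbb{F}_\mathbf{q}$ is automatically equal parameter; the conclusion $p=q$ is correct but is built in, not the outcome of a delicate orbit count. (Unequal parameters would arise geometrically from a \emph{non-split} form, e.g.\ a quasi-split unitary group, or from a parabolic picture --- neither occurs here.) The error does not propagate, since you land on $p=q$ anyway, but the claimed mechanism is wrong and would mislead a reader into expecting a computation that doesn't exist.

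A secondary, structural point: in Step~2 you present the left vertical map as ``obtained as the double centralizer'' by invoking Theorem~\ref{thm:iSchur}, but that glosses over the content. What is immediate is only a homomorphism from $\mathrm{Fun}_G(\mathscr{F}^\imath\times\mathscr{F}^\imath)$ to the $\H$-commutant; that it is an isomorphism needs proof, and its surjectivity is exactly what your Step~3 establishes by matching the simple-correspondence convolutions against \eqref{eq:Bi}, \eqref{eq:Bi1}, \eqref{eq:HBaction}. So Step~3 should logically precede the conclusion of Step~2, not follow it.
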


\begin{remark}
The $\imath$Schur algebra $S_q^{\imath}(N,m)$, for $N=2r+2$ (which is also a homomorphic image of $\Ui_q(\mathfrak{sl}_{2r+2})$ from the $\imath$Schur duality), also admits a geometric realization using a variant of $N$-step isotropic flag variety of type B.
Moreover, $S_q^{\imath}(N,m)$, for all $N$, can be realized using suitable isotropic flag varieties of type C. See \cite{BKLW18}.
\end{remark}

\subsubsection{Canonical bases revisited}

Thanks to the geometric realization via flag varieties, the canonical (= KL) basis on Hecke algebra can be realized as perverse sheaves supported on Schurbert varieties \cite{KL80}.

Similarly, the canonical bases on the tensor space $\V^{\otimes m}$ and the $q$-Schur algebra $S_q(N,m)$ can be realized as perverse sheaves \cite{BLM90, GL92}. So are the $\imath$-canonical bases on the tensor space $\V^{\otimes m}$ and the $\imath$Schur algebra $S_q^\imath (N,m)$ \cite{BKLW18, FL15}.

Such a perverse sheaf interpretation arising from Schubert varieties of type A (and classical type) also provides an explanation why the ($\imath$-)canonical bases can be used to reformulate the KL theory of type A (and classical type); see Section~\ref{sec:KL}.

Thanks to the geometric interpretation, the ($\imath$-)canonical bases (used in the formulation for KL theory of super classical type) admit various positivity properties.

\subsection{Geometric realizations of modified ($\imath$-)quantum groups}

Beilinson-Lusztig-MacPherson \cite{BLM90} constructed the (modified) quantum group of type A and its canonical basis from the family of $q$-Schur algebras and their canonical bases (via the geometric realization in Theorem~\ref{thm:Sa}); this was carried out through a stabilization procedure which relies on certain explicit multiplication formulas on $q$-Schur algebras with Chevalley generators. This stabilization or inverse limit procedure can be carried out by a different approach via transfer maps \cite{Lus99}. This stabilization can be summarized informally as
\[
\lim_{\stackrel{\longleftarrow}{m}} S_q(N,m)  \cong \dot \U_q(\mathfrak{gl}_N).
\]

The canonical basis on $\U_q(\mathfrak{gl}_N)$ arising this way do not have positivity (see \cite{LiW18}), as a part of the limiting process is not completely geometric. Nevertheless, a variant of this stabilization process (see \cite{Mc12}; also see \cite{LiW18}) leads to
$\dot \U_q(\mathfrak{sl}_N)$ and its canonical basis, which matches completely with the canonical basis defined in \S\ref{subsec:CBQG}. The canonical basis on $\dot \U_q(\mathfrak{sl}_N)$ inherits the positivity property from that on $q$-Schur algebras.

A type B (or C) generalization of the BLM type construction has been carried out in \cite{BKLW18} by studying the multiplicative structures of the family of $\imath$Schur algebras (via the geometric realization in Theorem~\ref{thm:Sb}).
The stabilization procedure in this setting produces the modified $\imath$quantum group and its $\imath$-canonical basis:
\[
\lim_{\stackrel{\longleftarrow}{m}} S_q^\imath (N,m)  \cong \dot \Ui_q(\mathfrak{gl}_N).
\]
(The case for $N$ even is more subtle; see \cite[Appendix A]{BKLW18}.) The $\imath$-canonical basis on $\dot \Ui_q(\mathfrak{gl}_N)$ do not admit the positivity. A variant of the stabilization procedure produces $\Ui_q(\mathfrak{sl}_N)$ with an $\imath$-canonical basis which exhibits the positivity.

%

\begin{remark}
The results in this section admit affine generalizations.
We refer to \S\ref{subsec:out} for discussions and references.
\end{remark}



\end{document}